\newcommand{\red}{\textcolor{red}}
\def\E{\ifmmode{\mathbb E}\else{$\mathbb E$}\fi} 
\def\N{\ifmmode{\mathbb N}\else{$\mathbb N$}\fi} 
\def\R{\ifmmode{\mathbb R}\else{$\mathbb R$}\fi} 
\def\Q{\ifmmode{\mathbb Q}\else{$\mathbb Q$}\fi} 
\def\C{\ifmmode{\mathbb C}\else{$\mathbb C$}\fi} 
\def\H{\ifmmode{\mathbb H}\else{$\mathbb H$}\fi} 
\def\Z{\ifmmode{\mathbb Z}\else{$\mathbb Z$}\fi} 
\def\P{\ifmmode{\mathbb P}\else{$\mathbb P$}\fi} 
\def\T{\ifmmode{\mathbb T}\else{$\mathbb T$}\fi} 
\def\SS{\ifmmode{\mathbb S}\else{$\mathbb S$}\fi} 
\def\DD{\ifmmode{\mathbb D}\else{$\mathbb D$}\fi} 
\newcommand{\e}{\varepsilon}
\newcommand{\del}{\partial}
\newcommand{\Cont}{{\operatorname{Cont}}}
\newcommand{\Hom}{{\operatorname{Hom}}}
\newcommand{\ben}{\begin{enumerate}}
\newcommand{\een}{\end{enumerate}}
\newcommand{\be}{\begin{equation}}
\newcommand{\ee}{\end{equation}}
\newcommand{\bea}{\begin{eqnarray}}
\newcommand{\eea}{\end{eqnarray}}
\newcommand{\beastar}{\begin{eqnarray*}}
\newcommand{\eeastar}{\end{eqnarray*}}
\newcommand{\bc}{\begin{center}}
\newcommand{\ec}{\end{center}}
\theoremstyle{theorem}
\newtheorem{thm}{Theorem}[section]
\newtheorem{cor}[thm]{Corollary}
\newtheorem{lem}[thm]{Lemma}
\newtheorem{prop}[thm]{Proposition}
\theoremstyle{definition}
\newtheorem{defn}[thm]{Definition}
\newtheorem{rem}[thm]{Remark}
\newtheorem{exm}[thm]{Example}
\newtheorem{hypo}[thm]{Hypothesis}
\newtheorem*{thm*}{Theorem}
\numberwithin{equation}{section}
\def\R{{\mathbb R}}
\def\E{{\mathbb E}}
\def\Z{{\mathbb Z}}
\def\C{{\mathbb C}}
\def\R{{\mathbb R}}
\def\P{{\mathbb P}}
\def\N{{\mathbb N}}
\def\11{{\mathbb I}}
\def\delbar{{\overline \partial}}
\def\C{\mathbb{C}}
\def\Z{\mathbb{Z}}
\def\T{\mathbb{T}}
\def\L{\mathbb{L}}
\def\Q{\mathbb{Q}}
\def\E{\ifmmode{\mathbb E}\else{$\mathbb E$}\fi} 
\def\N{\ifmmode{\mathbb N}\else{$\mathbb N$}\fi} 
\def\R{\ifmmode{\mathbb R}\else{$\mathbb R$}\fi} 
\def\Q{\ifmmode{\mathbb Q}\else{$\mathbb Q$}\fi} 
\def\C{\ifmmode{\mathbb C}\else{$\mathbb C$}\fi} 
\def\H{\ifmmode{\mathbb H}\else{$\mathbb H$}\fi} 
\def\Z{\ifmmode{\mathbb Z}\else{$\mathbb Z$}\fi} 
\def\P{\ifmmode{\mathbb P}\else{$\mathbb P$}\fi} 
\def\SS{\ifmmode{\mathbb S}\else{$\mathbb S$}\fi} 
\def\DD{\ifmmode{\mathbb D}\else{$\mathbb D$}\fi} 
\def\R{{\mathbb R}}
\def\E{{\mathbb E}}
\def\Z{{\mathbb Z}}
\def\C{{\mathbb C}}
\def\R{{\mathbb R}}
\def\N{{\mathbb N}}
\def\delbar{{\overline \partial}}
\def\e{\varepsilon}
\def\CE{{\mathcal E}}
\def\CF{{\mathcal F}}
\def\CH{{\mathcal H}}
\def\CI{{\mathcal I}}
\def\CJ{{\mathcal J}}
\def\CL{{\mathcal L}}
\def\CN{{\mathcal N}}
\def\CQ{{\mathcal Q}}
\def\CR{{\mathcal R}}
\def\CT{{\mathcal T}}
\def\CU{{\mathcal U}}
\def\CV{{\mathcal V}}
\def\grad#1{\,\nabla\!_{{#1}}\,}
\def\darr#1{\raise1.5ex\hbox{$\leftrightarrow$}
\mkern-16.5mu #1}
\def\roughly#1{\raise.3ex\hbox{$#1$\kern-.75em
\lower1ex\hbox{$\sim$}}}
\def\opname#1{\mathop{\kern0pt{\rm #1}}\nolimits}
\def\dim{\opname{dim}}
\def\grad{\opname{grad}}
\def\span{\operatorname{span}}
\begin{document}

\quad \vskip1.375truein

\def\mq{\mathfrak{q}}
\def\mp{\mathfrak{p}}
\def\mH{\mathfrak{H}}
\def\mh{\mathfrak{h}}
\def\ma{\mathfrak{a}}
\def\ms{\mathfrak{s}}
\def\mm{\mathfrak{m}}
\def\mn{\mathfrak{n}}
\def\mz{\mathfrak{z}}
\def\mw{\mathfrak{w}}
\def\Hoch{{\tt Hoch}}
\def\mt{\mathfrak{t}}
\def\ml{\mathfrak{l}}
\def\mT{\mathfrak{T}}
\def\mL{\mathfrak{L}}
\def\mg{\mathfrak{g}}
\def\md{\mathfrak{d}}
\def\mr{\mathfrak{r}}

\title[Exponential convergence for Morse-Bott case]{Analysis of contact Cauchy-Riemann maps II:
canonical neighborhoods and exponential convergence for the Morse-Bott case}

\author{Yong-Geun Oh, Rui Wang}
\address{Center for Geometry and Physics, Institute for Basic Science (IBS),
77 Cheongam-ro, Nam-gu, Pohang, Korea 37673
\& Department of Mathematics, POSTECH, Pohang, Korea, 37673}
\email{yongoh1@postech.ac.kr}
\address{University of California, Irvine, 340 Rowland Hall (Bldg.\# 400),
Irvine, CA 92697-3875, USA}
\email{ruiw10@math.uci.edu}
\thanks{This work is supported by the IBS project \# IBS-R003-D1}

\date{}

\begin{abstract} This is a sequel to the papers \cite{oh-wang1}, \cite{oh-wang2}.
In \cite{oh-wang1}, the authors introduced a canonical
affine connection on $M$ associated to the contact triad $(M,\lambda,J)$.
In \cite{oh-wang2}, they used the connection to establish a priori $W^{k,p}$-coercive estimates
for maps $w: \dot \Sigma \to M$ satisfying $\delbar^\pi w= 0, \, d(w^*\lambda \circ j) = 0$
\emph{without involving symplectization}. We call such a pair $(w,j)$  a contact instanton.
In this paper, we first prove a canonical neighborhood theorem of the locus $Q$ foliated by
closed Reeb orbits of a Morse-Bott contact form.
Then using a general framework of the three-interval method,
we establish exponential decay estimates for contact instantons $(w,j)$ of the triad $(M,\lambda,J)$,
with $\lambda$ a Morse-Bott contact form and
$J$ a CR-almost complex structure adapted to $Q$,
under the condition that the asymptotic charge of $(w,j)$ at the associated puncture vanishes.

We also apply the three-interval method to the symplectization case and provide an alternative
approach via tensorial calculations to exponential decay estimates in the Morse-Bott case
for the pseudoholomorphic curves on the symplectization of contact manifolds.
This was previously established by Bourgeois \cite{bourgeois} (resp. by Bao \cite{bao}),
by using special coordinates, for the cylindrical (resp. for the asymptotically cylindrical) ends.
The exponential decay result for the Morse-Bott case is an essential ingredient in the set-up of
the moduli space of pseudoholomorphic curves which plays a central role in contact homology and
symplectic field theory (SFT).
\end{abstract}

\keywords{Contact manifolds, Morse-Bott contact form,
Morse-Bott contact set-up, canonical neighborhood theorem,
adapted $CR$-almost complex structures, contact instantons, exponential decay, three-interval method}

\maketitle

\tableofcontents

\section{Introduction}
\label{sec:intor}

Let $(M,\xi)$ be a contact manifold.
Each contact form $\lambda$ of $\xi$, i.e., a one-form with $\ker \lambda = \xi$, canonically induces a splitting
$$
TM = \R\{X_\lambda\} \oplus \xi.
$$
Here $X_\lambda$ is the Reeb vector field of $\lambda$,
which is uniquely determined by the equations
$$
X_\lambda \rfloor \lambda \equiv 1, \quad X_\lambda \rfloor d\lambda \equiv 0.
$$
We denote by $\Pi=\Pi_\lambda: TM \to TM$ the idempotent, i.e., an endomorphism satisfying
$\Pi^2 = \Pi$ such that $\ker \Pi = \R\{X_\lambda\}$ and $\operatorname{Im} \Pi = \xi$.
Denote by $\pi=\pi_\lambda: TM \to \xi$ the associated projection.

\begin{defn}[Contact Triad]
We call the triple $(M,\lambda,J)$ a contact triad of $(M,\xi)$ if
$\lambda$ is a contact form of $(M,\xi)$,
and $J$ is an endomorphism of $TM$ with $J^2 = -\Pi$ which we call
$CR$-almost complex structure, such that the triple $(\xi, d\lambda|_\xi,J|_\xi)$ defines a
Hermitian vector bundle over $M$.
\end{defn}
As long as no confusion arises, we abuse our notation $J$ also for its restriction to $\xi$.

In \cite{oh-wang2}, the authors of the present paper called the pair $(w,j)$ a contact instanton, if
$(\Sigma, j)$ is a (punctured) Riemann surface and $w:\Sigma\to M$ satisfies the following equations
\be\label{eq:contact-instanton}
\delbar^\pi w = 0, \quad d(w^*\lambda \circ j) = 0.
\ee
A priori coercive $W^{k,2}$-estimates for $w$ with $W^{1,2}$-bound was established
\emph{without involving symplectization}. Moreover, the study of $W^{1,2}$ (or the derivative) bound
and the definition of relevant energy is carried out by the first-named author in \cite{oh:energy}.

Furthermore, for the punctured domains $\dot\Sigma$ equipped with cylindrical metric near the puncture,
the present authors proved the result of asymptotic subsequence \emph{uniform} convergence to a Reeb orbit
(which must be closed
when the corresponding charge is vanishing) under the assumption that the $\pi$-harmonic energy is finite
and the $C^0$-norm of derivative $dw$ is bounded. (Refer \cite[Section 6]{oh-wang2} for precise statement and
Section \ref{sec:pseudo} in the current paper for its review.)
Based on this subsequence uniform convergence result, the present authors previously proved $C^\infty$ exponential decay
in \cite{oh-wang2} when the contact form is nondegenerate.
The proof is based on the so-called \emph{three-interval argument} which is essentially different
from the proofs for exponential convergence in existing literatures, e.g., from those
in \cite{HWZ1, HWZ2, HWZplane} which use the method of differential inequality.

The present paper is a sequel to the paper \cite{oh-wang2} and
the main purpose thereof is to generalize the exponential
convergence result to the Morse-Bott case. In Part \ref{part:exp} of the
current paper, we systematically develop the above mentioned three-interval method as a general framework
and establish the result for Morse-Bott contact forms.
(Corresponding results for pseudo-holomorphic curves in symplectizations
were provided by various authors including \cite{HWZ3, bourgeois, bao}
and we suggest readers to compare our method with theirs.)

In general, the exponential
convergence result is an important ingredient in the set-up of the Fredholm theory
and in the relevant gluing construction. In contact geometry, the moduli spaces of pseudo-holomorphic curves
with noncompact sources are used in defining the contact homology or setting up
the framework of the symplectic field theory (SFT) (see e.g. \cite{SFT} for an introduction).
In this regard, the Morse-Bott case provides important computable examples
in contact geometry and in SFT. (See \cite{bourgeois} for some examples of such computations
based on the Morse-Bott framework of contact homology.)
However, there are various subtleties in
describing the structure of the Morse-Bott moduli spaces and
the corresponding contact homology for the contact forms of Morse-Bott type,
which have not been rigorously set up yet.
One of the purposes of current paper is to provide a careful geometric description
of the locus of closed Reeb orbits and the corresponding tensorial proof of exponential decay results.
Moreover, the abstract framework of the three-interval method we develop in this paper for the exponential decay
proof can be easily applied to other evolution type of equations, and provides a general `black box'
for the exponential decay.

The proof of the exponential decay result consists of two parts, one geometric and the other analytic. Part \ref{part:coordinate} is
devoted to unveil the geometric structure, the \emph{pre-contact structure}, carried by the loci $Q$ of
the closed Reeb orbits of a Morse-Bott contact form $\lambda$ (see Section \ref{subsec:clean} for precise definition).
We prove a canonical neighborhood theorem of any pre-contact manifold which is the contact analogue to
Gotay's on presymplectic manifolds \cite{gotay}, which we call the \emph{contact thickening}
of a pre-contact manifold. By using this neighborhood theorem, we obtain a canonical splitting of
the tangent bundle $TM$ in terms of the pre-contact structure of $Q$ and its thickening. Then we
introduce the class consisting of $J$'s \emph{adapted to $Q$} (refer Section \ref{sec:adapted} for definition)
besides the standard compatibility requirement to $d\lambda$. At last we split the derivative $dw$ of contact instanton $w$
into various components and study them separately.
In this way, we are given the geometric framework which gets us ready to conduct the three-interval method
provided in Part \ref{part:exp}.
Part \ref{part:exp} is then devoted to applying the enhanced version of the three-interval framework
in proving the exponential convergence for the Morse-Bott case, which generalizes the one presented
for the nondegenerate case in \cite{oh-wang2}.

Now we outline the main results in the present paper in more details.
\subsection{Structure of the locus of closed Reeb orbits}
\label{subsec:clean}

Assume $\lambda$ is a fixed contact form of the contact manifold $(M, \xi)$.
For a closed Reeb orbit $\gamma$ of period $T > 0$, one can write $\gamma(t) = \phi^t(\gamma(0))$, where
$\phi^t= \phi^t_{X_\lambda}$ is the flow of the Reeb vector field $X_\lambda$.

Denote by $\Cont(M, \xi)$ the set of all contact one-forms with respect to the contact structure $\xi$,
and by $\CL(M)=C^\infty(S^1, M)$ the space of loops $z: S^1 = \R /\Z \to M$.
Consider the bundle $\mathcal{L}$ over the product
$(0,\infty) \times \CL(M) \times \Cont(M,\xi)$ whose fiber
at $(T, z, \lambda)$ is $C^\infty(z^*TM)$.
The assignment
$$
\Upsilon: (T,z,\lambda) \mapsto \dot z - T \,X_\lambda(z)
$$
defines a section of the bundle, where $(T,z)$ is a pair with a loop $z$ parameterized over
the unit interval $S^1=[0,1]/\sim$ defined by $z(t)=\gamma(Tt)$ for
a Reeb orbit $\gamma$ of period $T$. Notice that $(T, z, \lambda)\in \Upsilon^{-1}(0)
:=\frak{Reeb}(M,\xi)$ if and only if there exists some Reeb orbit $\gamma: \R \to M$ with period $T$, such that
$z(\cdot)=\gamma(T\cdot)$.
Denote by
$$
\frak{Reeb}(M,\lambda) : = \{(T,z) \mid (T,z,\lambda) \in \frak{Reeb}(M,\xi)\}
$$
for each $\lambda \in \Cont(M,\xi)$. From the formula of a $T$-periodic orbit $(T,\gamma)$,
$
T = \int_\gamma \lambda
$.
it follows that the period varies smoothly over $\gamma$.

The general Morse-Bott condition (Bott's notion  \cite{bott} of clean critical submanifold in general) for $\lambda$ corresponds to the statement that
every connected component of $\frak{Reeb}(M,\lambda)$ is a smooth submanifold
of $ (0,\infty) \times \CL(M)$ and its tangent space at
every pair $(T,z) \in \frak{Reeb}(M,\lambda)$ therein coincides with $\ker d_{(T,z)}\Upsilon$.
Denote by $Q$ the locus of closed Reeb orbits contained in a fixed
connected component of $\frak{Reeb}(M,\lambda)$. Throughout this paper, we also call $Q$
a Morse-Bott submanifold when we want to emphasize its manifold structure.

However, when one tries to set up the moduli space of contact instantons
for Morse-Bott contact forms, more requirements are needed and we recall the definition that Bourgeois
adopted in \cite{bourgeois}.
{(Strictly speaking, we also need to
take suitable completions of $\CL(M)$ and $\Cont(M,\xi)$
but we ignore this point which does not play any role in our main discussion.)

\begin{defn}[Equivalent to Definition 1.7 \cite{bourgeois}]\label{defn:morse-bott-intro}
A contact form $\lambda$ is called be of Morse-Bott type, if it satisfies the following:
\begin{enumerate}
\item
Every connected component of $\frak{Reeb}(M,\lambda)$ is a smooth submanifold
of $ (0,\infty) \times \CL(M)$ with its tangent space at
every pair $(T,z) \in \frak{Reeb}(M,\lambda)$ therein coincides with $\ker d_{(T,z)}\Upsilon$;
\item The locus $Q$ is embedded;
\item The 2-form $d\lambda|_Q$ associated to the locus $Q$ of closed Reeb orbits has constant rank.
\end{enumerate}
\end{defn}
Here Condition (1) corresponds to
Bott's notion of Morse-Bott critical manifolds which we name as \emph{standard Morse-Bott type}.
While $\frak{Reeb}(M,\lambda)$ is a
smooth submanifold, the orbit locus $Q \subset M$ of $\frak{Reeb}(M,\lambda)$ is
in general only an immersed submanifold and could have multiple sheets along the locus of multiple orbits.
Therefore we impose Condition (2).
In general, the restriction of the two-form $d\lambda$ to $Q$ has varying rank. It is still not clear whether
the exponential estimates we derive in this paper holds in this general context because our proof
strongly relies on the existence of canonical model of neighborhoods of $Q$. For this reason, we
also impose Condition (3).
We remark that Condition (3) means that the 2-form $d\lambda|_Q$ becomes a presymplectic form.

Depending on the type of the presymplectic form,
we say that $Q$ is of pre-quantization type if the rank of $d\lambda|_Q$ is maximal,
and is of isotropic type if the rank of $d\lambda|_Q$ is zero. The general case is
a mixture of these two. In particular when $\dim M = 3$, such $Q$ must be either of
prequantization type or of isotropic type. This is the case dealt with in
\cite{HWZ3}. The general case considered in \cite{bourgeois}, \cite{behwz} includes the mixed type.

\begin{defn}[Pre-Contact Form] We call one-form $\theta$ on a manifold $Q$ a \emph{pre-contact} form
if $d\theta$ has constant rank, i.e., if $d\theta$ is a presymplectic form.
\end{defn}

While the notion of presymplectic manifolds is well-established in symplectic geometry,
this contact analogue seems to have not been used
in literature, at least formally, as far as we know.

With this terminology introduced, we prove the following theorem.

\begin{thm}[Theorem \ref{thm:morsebottsetup}]\label{thm:morsebottsetup-intro}
Let $\lambda$ be a Morse-Bott type contact form of contact manifold $(M, \xi)$ as defined above.
Let $Q$ be an associated Morse-Bott submanifold of closed Reeb orbits.
Suppose that $Q$ is embedded and $d\lambda|_Q = i_Q^*(d\lambda)$ has constant rank. Then $Q$ carries
\begin{enumerate}
\item a locally free $S^1$-action generated by the Reeb vector field $X_\lambda|_Q$;
\item the pre-contact form $\theta$ given by $\theta = i_Q^*\lambda$ and the splitting
\be\label{eq:kernel-dtheta0}
\ker d\theta = \R\{X_\theta\} \oplus H,
\ee
such that the distribution  $H = \ker d\theta \cap \xi|_Q$ is integrable;
\item
an $S^1$-equivariant symplectic vector bundle $(E,\Omega) \to Q$ with
$$
E = (TQ)^{d\lambda}/\ker d\theta, \quad \Omega = [d\lambda]_E.
$$
\end{enumerate}
\end{thm}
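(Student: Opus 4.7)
The plan is to treat the three items in order, using (1) to feed smoothness into the later constructions and (2) to set up the distributions organizing (3). For (1), I will use that $X_\lambda$ is tangent to $Q$ (since $Q$ is foliated by Reeb orbits) together with smoothness of the period function $T\colon Q\to (0,\infty)$, $q\mapsto \int_{\gamma_q}\lambda$, noted in the preamble. The formula $t\cdot q:=\phi^{tT(q)}_{X_\lambda}(q)$ for $t\in \R/\Z$ then defines an $S^1$-action: well-defined because $T$ is constant on each orbit, smooth because $T$ and $\phi^t_{X_\lambda}$ are, and locally free because the stabilizer of any $q$ is a finite cyclic group (trivial on primitive orbits, $\Z/k$ on $k$-fold covers).

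For (2), the form $d\theta = i_Q^*d\lambda$ has constant rank by hypothesis, so $\theta$ is pre-contact by definition. With $X_\theta:=X_\lambda|_Q$, the contact identities restrict to $\theta(X_\theta)\equiv 1$ and $i_{X_\theta}d\theta\equiv 0$, giving $\R\{X_\theta\}\subset \ker d\theta$. For any $v\in \ker d\theta$, the decomposition $v=\theta(v)X_\theta+(v-\theta(v)X_\theta)$ places the second summand in $\ker\theta\cap\ker d\theta =: H$, and since $\theta(X_\theta)=1$ the sum is direct, establishing \eqref{eq:kernel-dtheta0}. Integrability of $H$ follows from Frobenius: $\ker d\theta$ is involutive because $d\theta$ is closed of constant rank (standard calculation using $i_{[X,Y]}=[\mathcal{L}_X,i_Y]$ and Cartan's formula), and for $X,Y\in H$ one has
$$
\theta([X,Y])=X(\theta(Y))-Y(\theta(X))-d\theta(X,Y)=0,
$$
so $[X,Y]\in \ker\theta\cap\ker d\theta = H$.

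For (3), the key observation is $\ker d\theta = TQ\cap (TQ)^{d\lambda}\subset (TQ)^{d\lambda}$, which makes $E:=(TQ)^{d\lambda}/\ker d\theta$ a well-defined vector bundle over $Q$. The two-form $\Omega:=[d\lambda]_E$ is well-defined since for $u\in (TQ)^{d\lambda}$ and $k\in \ker d\theta\subset TQ$ one has $d\lambda(u,k)=0$. Non-degeneracy is then a short calculation: the presymplectic bipolar identity in $(TM|_Q,d\lambda)$ gives $((TQ)^{d\lambda})^{d\lambda}=TQ + \R\{X_\lambda\} = TQ$ (the last equality because $X_\lambda\in TQ$), so any $u\in (TQ)^{d\lambda}$ with $\Omega([u],\cdot)\equiv 0$ must lie in $TQ\cap (TQ)^{d\lambda}=\ker d\theta$, i.e., $[u]=0$. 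Finally, $S^1$-equivariance is automatic: the Reeb flow preserves $\lambda$, hence $d\lambda$, $Q$ (and thus $TQ$), and therefore also $\ker d\theta$ and $(TQ)^{d\lambda}$; the rescaled $S^1$-action from (1) descends to an action on $E$ preserving $\Omega$.

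The main obstacle I anticipate lies in the bookkeeping of (3): correctly tracking whether subspaces live in $TM|_Q$ or in $TQ$ (both containing $\R\{X_\lambda\}$) and invoking the right version of the bipolar theorem in the presymplectic space $(TM|_Q, d\lambda)$ so that $\Omega$ comes out nondegenerate rather than merely presymplectic. A subtler issue is establishing smoothness of $T$ and of the $S^1$-action from the clean-and-embedded hypothesis alone, which ultimately rests on the Morse--Bott description of $\frak{Reeb}(M,\lambda)$ as a smooth submanifold of $(0,\infty)\times \CL^{1,2}(M)$ rather than any pointwise construction of periods.
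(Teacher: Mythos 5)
Your proposal is correct and follows essentially the same route as the paper: the $S^1$-action comes from the Reeb flow over one (constant) period, integrability of $H=\ker d\theta\cap\xi|_Q$ is proved by the identical Lie-bracket computation $\theta([X,Y])=X[\theta(Y)]-Y[\theta(X)]-d\theta(X,Y)=0$ combined with involutivity of the null distribution of the closed constant-rank form $d\theta$, and $(E,\Omega)$ with its $S^1$-equivariance is obtained exactly as in the paper from invariance of $\lambda$, $d\lambda$, $TQ$ and $(TQ)^{d\lambda}$ under the Reeb flow. The only difference is cosmetic: you spell out the nondegeneracy of $\Omega$ via the presymplectic bipolar identity $((TQ)^{d\lambda})^{d\lambda}=TQ+\R\{X_\lambda\}=TQ$, a verification the paper states as straightforward and omits.
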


Here we use the fact that there exists a canonical embedding
\be\label{eq:EtoNQM}
E = (TQ)^{d\lambda}/\ker d\theta \hookrightarrow T_QM/ TQ = N_QM,
\ee
and $d\lambda|_{(TQ)^{d\lambda}}$ canonically induces a bilinear form $[d\lambda]_E$
on $E = (TQ)^{d\lambda}/\ker di_Q^*\lambda$ by symplectic reduction.

\begin{defn} Let $(Q,\theta)$ be a pre-contact manifold
equipped with the splitting \eqref{eq:kernel-dtheta0}. We call such a triple $(Q,\theta,H)$
a \emph{Morse-Bott contact set-up}.
\end{defn}

Denote by $\CF$ and $\CN$ the foliations associated to the distribution $\ker d\theta$ and $H$,
respectively. We also denote by $T\CF$, $T\CN$ the associated foliation tangent bundles and
$T^*\CN$ the foliation cotangent bundle of $\CN$.

We prove the following canonical model theorem which describes a
natural way of thickening of Morse-Bott contact set-up $(Q,\theta,H)$ whenever a
symplectic vector bundle $E \to Q$ is given.

\begin{thm}\label{thm:splitting1} Let $(Q,\theta,H)$ be a Morse-Bott contact set-up.
Let a symplectic vector bundle $(E,\Omega) \to Q$ be given. Then
the bundle $F = T^*\CN \oplus E$
carries a canonical contact form $\lambda_{F;G}$ defined as in \eqref{eq:lambdaF}, for
each choice of complement $G$ such that $TQ = T\CF \oplus G$. Furthermore for two such choices of
$G, \, G'$, two induced contact structures are naturally isomorphic.
\end{thm}
Based on this theorem, we denote any such $\lambda_{F;G}$ just by $\lambda_F$
suppressing $G$ from its notation.
This normal form provides a general class of contact
manifolds equipped with a contact form of Morse-Bott type.

Finally we prove the following canonical neighborhood
theorem for $Q \subset M$ with $Q$ defined above for any
Morse-Bott contact form $\lambda$ of contact manifold $(M,\xi)$.

\begin{thm}[Theorem \ref{thm:neighborhoods2}]\label{thm:neighbohood}
Let $Q$ be the submanifold foliated by closed Reeb orbits of
Morse-Bott type contact form $\lambda$ of contact manifold $(M,\xi)$, and
$(Q,\theta)$ and $(E,\Omega)$ be the associated pair defined above.
Let $(F, \lambda_F)$ be the model contact manifold with $F = T^*\CN \oplus E$
and $\lambda_F$ be the contact form on $U_F \subset F$ given in \eqref{eq:lambdaF}.

Then there exist neighborhoods $\CU$ of $Q$ and $U_F$ of the zero section $o_F$
and a diffeomorphism $\psi: U_F \to \CU$ and a function $f: U_F \to \R$ such that
\be\label{eq:psi*lambda}
\psi^*\lambda = f\, \lambda_F, \, f|_{o_F} \equiv 1, \, df|_{o_F}\equiv 0
\ee
and
\be\label{eq:ioFpsi}
i_{o_F}^*\psi^*\lambda = \theta, \quad (\psi^*d\lambda|_{VTF})|_{o_F} = 0\oplus \Omega
\ee
where we use the canonical identification of $VTF|_{o_F} \cong T^*\CN \oplus E$ on the
zero section $o_F \cong Q$.
\end{thm}
\begin{rem}\label{rem:behwz}
We would like to remark that while the bundles $E$ and $TQ/T\CF$ carry canonical fiberwise
symplectic form and so carry canonical orientations
induced by $d\lambda$, the bundle $T\CN$ may not be orientable in general along a
Reeb orbit corresponding to an orbifold point in $P = Q/\sim$.
\end{rem}

\subsection{The three-interval method of exponential estimates}
\label{subsec:three-interval}

For the study of the asymptotic behavior of finite $\pi$-energy solutions
of contact instanton $w: \dot \Sigma \to M$ near a Morse-Bott submanifold $Q$,
we introduce the following class of $CR$-almost complex structures.

\begin{defn}[Definition \ref{defn:adapted}] Let $Q \subset M$ be a
Morse-Bott submanifold foliated by closed Reeb orbits of $\lambda$.
Suppose $J$ defines a contact triad $(M,\lambda,J)$.
We say a $CR$-almost complex structure $J$ for $(M,\xi)$ is adapted to
the submanifold $Q$ or simply is $Q$-adapted if $J$ satisfies
\be\label{eq:JTNT}
J(TQ) \subset TQ + JT\CN.
\ee
\end{defn}

Note that this condition is vacuous for the nondegenerate case,
but for the general Morse-Bott case, the class of
adapted $J$ is strictly smaller than the one of general $CR$-almost complex structures of
the triad. The set of $Q$-adapted $J$'s is contractible and the proof is given in Appendix \ref{sec:appendix-adapted}.
 As far as the applications to contact topology are concerned, requiring this
condition is not any restriction but seems to be necessary for the analysis of
contact-instanton maps or of the pseudoholomorphic maps in the symplectization
(or in the symplectic manifolds with contact-type boundary.)

Let $w: \dot\Sigma \rightarrow M$ be a contact instanton map, i.e.,
satisfying \eqref{eq:contact-instanton} at a cylindrical end
$[0, \infty)\times S^1$, which now can be written as
\be\label{eq:contact-instanton2}
\pi \frac{\del w}{\del \tau} + J \pi \frac{\del w}{\del t} = 0, \quad
d(w^*\lambda \circ j) = 0,
\ee
for $(\tau, t)\in [0, \infty)\times S^1$. We put the following basic hypotheses
for the study of exponential convergence.
\begin{hypo}\label{hypo:basic-intro}[Hypothesis \ref{hypo:basic}]
\begin{enumerate}
\item \emph{Finite $\pi$-energy}:
 $E^\pi(w): = \frac{1}{2} \int_{[0, \infty)\times S^1} |d^\pi w|^2 < \infty$;
 \item \emph{Finite derivative bound}:
 $\|dw\|_{C^0([0, \infty)\times S^1)} \leq C < \infty$;
\item \emph{Non-vanishing asymptotic action}:\\
$
\CT :=  \frac{1}{2}\int_{[0,\infty) \times S^1} |d^\pi w|^2
+ \int_{\{0\}\times S^1}(w|_{\{0\}\times S^1})^*\lambda\neq 0
$;
\item \emph{Vanishing asymptotic charge}:
$
\CQ:=\int_{\{0\}\times S^1}((w|_{\{0\}\times S^1})^*\lambda\circ j)=0$.
\end{enumerate}
\end{hypo}
Under these hypotheses, we establish the following $C^\infty$ uniform exponential convergence of
$w$ to a closed Reeb orbit $z$ of period $T=\CT$. This result was already known in
\cite{HWZ3, bourgeois,bao} in the context of pseudo-holomorphic curves $u = (w,a)$ in symplectizations.
However we emphasize that our proof presented here, which uses the three-interval framework, is
 different from the ones \cite{HWZ3,bourgeois,bao} even in the symplectization case.
Furthermore when we deal with the case of symplectization,
our method completely separates the estimates of $w$ from that of $a$'s.
(See Section \ref{sec:asymp-cylinder}.)

\begin{thm}\label{thm:expdecay} Assume $(M, \lambda)$ is a Morse-Bott contact manifold and
$w$ is a contact instanton satisfying the Hypothesis \ref{hypo:basic-intro}
at the given end. Then there exists a closed Reeb orbit $z$ with period $T=\CT$ and positive
constant $\delta$ determined by $z$, such that
$$\|d(w(\tau, \cdot), z(T\cdot))\|_{C^0(S^1)}<C e^{-\delta \tau},$$
and
\beastar
&&\left\|\pi \frac{\del w}{\del\tau}(\tau, \cdot)\right\|_{C^0(S^1)}<Ce^{-\delta\tau}, \quad
\left\|\pi \frac{\del w}{\del t}(\tau, \cdot)\right\|_{C^0(S^1)}<Ce^{-\delta\tau}\\
&&\left\|\lambda(\frac{\del w}{\del\tau})(\tau, \cdot)\right\|_{C^0(S^1)}<Ce^{-\delta\tau}, \quad
\left\|\lambda(\frac{\del w}{\del t})(\tau, \cdot)-T\right\|_{C^0(S^1)}<Ce^{-\delta\tau}\\
&&\left\|\nabla^l dw(\tau, t)\right\|_{C^0(S^1)}<C_le^{-\delta\tau} \quad \text{for any}\quad l\geq 1,
\eeastar
where $d$ is the distance function induced from the triad metric on $M$ and $C, C_{l}$ are positive constants which only depend on $l$.
\end{thm}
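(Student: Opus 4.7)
The plan is to generalize the three-interval argument of \cite{oh-wang2} from the nondegenerate case to the Morse-Bott case, using the canonical neighborhood structure of Theorem \ref{thm:neighbohood} as the geometric framework. As a starting point, I would invoke the subsequence $C^\infty$-convergence result from \cite[Section 6]{oh-wang2} to extract a sequence $\tau_k \to \infty$ and a closed Reeb orbit $z_0 \subset Q$ of period $T = \CT$ with $w(\tau_k, \cdot) \to z_0(T\,\cdot)$ in $C^\infty(S^1)$. For $k$ large the loop $w(\tau_k,\cdot)$ lies in the canonical neighborhood $U \cong U_F$, and the uniform derivative bound from Hypothesis \ref{hypo:basic-intro}(2) combined with Arzel\`a--Ascoli shows that $w([\tau_0,\infty) \times S^1) \subset U$ for $\tau_0$ sufficiently large. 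On this region I would pull back via $\psi^{-1}$ and work in the canonical coordinates on $F = T^*\CN \oplus E$, where $\lambda = f\lambda_F$ with $f|_{o_F} = 1$ and $df|_{o_F} = 0$, so that the geometry along $Q$ is normalized.

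Next, using the $Q$-adaptedness of $J$ and the splitting $TM|_Q = \R\{X_\lambda\} \oplus T\CN \oplus JT\CN \oplus E$ from Theorem \ref{thm:morsebottsetup-intro}, I would decompose
\[
dw = \lambda(dw)\, X_\lambda + dw^{T\CN} + dw^{JT\CN} + dw^{E}.
\]
The three-interval method then reduces exponential decay for each component to establishing a discrete inequality of the form
\[
\Phi(\tau) \leq \frac{1}{\gamma}\bigl(\Phi(\tau-1) + \Phi(\tau+1)\bigr), \quad \gamma > 1,
\]
for the corresponding $L^2$-energy density $\Phi(\tau) = \int_{S^1} |\cdot|^2(\tau,t)\,dt$; once such an inequality holds for all $\tau$ large, elementary iteration forces $\Phi(\tau) \leq Ce^{-\delta \tau}$ with $\delta = \log\gamma$. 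To derive the inequality, I linearize the contact instanton equations \eqref{eq:contact-instanton2} around $z_0$ in the canonical coordinates. The linearization on each circle $\{\tau\}\times S^1$ is a selfadjoint first-order operator whose nonzero spectrum is bounded away from zero by a positive gap $\delta_0$ determined by the geometry of $(Q,\theta)$ at $z_0$; integration by parts on a unit cylinder $[\tau-1,\tau+1]\times S^1$, combined with the Cauchy--Riemann equation, then yields the desired three-interval inequality up to nonlinear errors controlled by the $C^0$-bound on $dw$ and the smallness of the components for $\tau\gg 1$.

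The main obstacle, distinguishing this from the nondegenerate case of \cite{oh-wang2}, is the nontrivial kernel of the asymptotic operator, namely the directions in $\ker d\theta = \R\{X_\theta\} \oplus H$ tangent to the Morse-Bott locus. Here Hypothesis \ref{hypo:basic-intro}(4) becomes essential: the closed one-form $w^*\lambda$ on the cylindrical end has asymptotic harmonic part $T\,dt + \CQ\,d\tau$, so $\CQ = 0$ kills the would-be zero mode in the $d\tau$-direction. Combined with the pre-contact splitting \eqref{eq:kernel-dtheta0}, this lets me pair the $T\CN$-component with its $JT\CN$-partner—where $d\lambda$ is nondegenerate by construction of the adapted $J$—and absorb both into the spectrally gapped regime, leaving only the Reeb translation $T\,dt$ as the nonvanishing leading term; this is precisely the content of the convergence $w(\tau,\cdot)\to z(T\,\cdot)$. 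Finally, the $C^\infty$ estimates $\|\nabla^l dw\|_{C^0(S^1)} \leq C_l e^{-\delta\tau}$ are obtained by upgrading the $L^2$-exponential decay to $C^0$ decay via the coercive $W^{k,p}$-estimates from \cite{oh-wang2} applied on shifted unit cylinders, together with a standard elliptic bootstrap for $\delbar^\pi$.
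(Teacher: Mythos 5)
Your overall scaffolding (subsequence convergence, restriction to the canonical neighborhood $U_F$, splitting $w$ according to $TM|_Q \cong \R\{X_\lambda\}\oplus T\CN \oplus JT\CN \oplus E$, and a discrete three-interval inequality in place of a differential inequality) matches the paper's strategy, and your treatment of the $E$-component and of the final elliptic bootstrap to $C^\infty$ decay is fine. But there is a genuine gap at exactly the point that distinguishes the Morse-Bott case from the nondegenerate one: your claim that the charge vanishing $\CQ=0$, together with pairing the $T\CN$-component with its $JT\CN$-partner, ``absorbs'' everything into the spectrally gapped regime is not correct. The kernel of the asymptotic operator $B_\infty$ is spanned by the vector fields $t\mapsto d\phi_{X_\theta}^{Tt}(v)$ with $v\in T_{z(0)}Q\cap\xi$, i.e.\ by the directions tangent to the Morse-Bott locus $Q$ itself; these have nothing to do with the $d\tau$-zero mode killed by $\CQ=0$, and they are not paired away by the symplectic pairing between $T\CN$ and $T^*\CN$ (that pairing only makes the adapted $J$ well behaved, it does not change $\ker D\Upsilon(z)$, which by the very definition of the Morse-Bott condition is all of $T_{z(0)}Q$ transported by the Reeb flow). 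Consequently the integration-by-parts/spectral-gap argument you sketch fails for the tangential component: no three-interval inequality with a uniform $\gamma$ can be proved directly for the $L^2$-size of that component, because constant-in-$\tau$ kernel modes satisfy the limiting equation with no decay. You also implicitly use convergence to a single orbit $z$ (``this is precisely the content of the convergence $w(\tau,\cdot)\to z(T\cdot)$''), which at this stage is only known along subsequences and with limits that may a priori wander in $Q$; single-orbit convergence is a consequence of the exponential estimate, not an input.

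The paper's resolution, which is missing from your proposal, is twofold. First, for the tangential part one does not estimate the map (or its distance to a fixed orbit) but the derivative $\pi_\theta\frac{\del u}{\del\tau}$, writing $u=\exp^Z_{z_k}\zeta_k$ over the orbit locus and running the three-interval scheme by contradiction: if the inequality fails along a sequence of three-intervals, one rescales $\zeta_k$ by its sup norm and extracts a limit $\overline\zeta_\infty$ solving $\nabla_\tau\overline\zeta_\infty+B_\infty\overline\zeta_\infty=0$. Second, to get a contradiction one must rule out that $\nabla_\tau\overline\zeta_\infty\equiv 0$, i.e.\ that the limit is a nonzero kernel element; this is done by the center-of-mass construction on $Q$ (choosing $m(\gamma)\in Q$ and a reparametrization $h$ with $\int_{S^1}E(m,(\phi^{Th(t)}_{X_\theta})^{-1}\gamma(t))\,dt=0$), which forces $\int_{S^1}(d\phi_{X_\theta}^{Tt})^{-1}\overline\zeta_\infty\,dt=0$ and hence $\overline\zeta_\infty=0$ whenever it lies in $\ker B_\infty$, contradicting the normalization $\|\overline\zeta_\infty\|_{L^\infty}=1$. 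Without this (or some equivalent mechanism for excluding the Morse-Bott kernel in the limit), your argument does not close; with it, the decay of $\pi_\theta\frac{\del u}{\del\tau}$, $\lambda(\frac{\del w}{\del\tau})$, $\lambda(\frac{\del w}{\del t})-T$ and then the $C^0$-distance to a now well-defined limiting orbit $z$ all follow as in the paper.
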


Now comes the outline of the strategy of our proof of exponential convergence in the present paper.
Mundet i Riera and Tian in \cite{mundet-tian} elegantly used a discrete method of three-interval arguments
in proving exponential decay under the assumption of
$C^0$-convergence already established. However, for most cases of interests, the $C^0$-convergence
is not a priori given in the beginning but it is often the case that the $C^0$-convergence
can be obtained only after one proves the exponential convergence of derivatives.
(See the proofs of, for example,  \cite{HWZ1, HWZ2, HWZplane}, \cite{HWZ3}, \cite{bourgeois}, \cite{bao}).
To obtain the exponential estimates of derivatives, researchers conduct some bruit-force calculations in deriving the needed
differential inequality, and then proceed from there towards the final result.
Such calculation, especially in coordinates, becomes quite complicated for the Morse-Bott situation
and hides the geometry that explains why such a differential inequality could be expected.

Our proof is divided into two parts by writing $w=(u, s)$  in the
normalized contact triad $(U_F,\lambda_F, J_0)$ (see Definition \ref{defn:normaltriad}) with $U_F \subset F \to Q$
for any given compatible $J$ adapted to $Q$, where $J_0$ is canonical normalized
$CR$-almost complex structure associated to $J$.
We also decompose $s=(\mu, e)$  in terms of the splitting $F = T^*\CN \oplus E$.
In this decomposition, the $L^2$-exponential estimates for the $e$-component is an easy consequence of
the three-interval method which we formulate above in a general abstract framework
(see Theorem \ref{thm:three-interval} for the precise statement). This estimate belongs to the
standard realm of exponential decay proof for the asymptotically cylindrical elliptic equations.

However the study of $L^2$-exponential estimates for $(u,\mu)$ does not directly belong to this
standard realm. Although we still apply similar three-interval method for the study of $L^2$-exponential convergence,
its study is much more subtle than that of the normal component due to the presence of
non-trivial kernel of the asymptotic operator $B_\infty$ of the linearization.
To handle the $(u,\mu)$-component, we formulate the following general theorem
from the abstract framework of the three-interval argument, and
refer readers to Section \ref{sec:three-interval}, \ref{subsec:exp-horizontal} for the precise statement and
its proof.

\begin{thm}
Assume $\xi(\tau, t)$ is a section of some vector bundle on $\R\times S^1$ which satisfies the equation
$$
\nabla^\pi_\tau\zeta+J\nabla^\pi_t\zeta+S\zeta=L(\tau, t) \quad \text{ with } |L|<Ce^{-\delta_0\tau}
$$
of Cauchy-Riemann type (or more generally any elliptic PDE of evolution type), where $S$ is a bounded symmetric operator.

Suppose that there exists a sequence $\{\bar\zeta_k\}$ (e.g., by performing a suitable
rescaling of $\zeta$) such that at least one subsequence converges to a non-zero
section $\bar\zeta_\infty$ of a (trivial) Banach bundle over a fixed finite interval, say on $[0,3]$,
that satisfies the ODE
$$
\frac{D \bar\zeta_\infty}{d\tau}+B_\infty \bar\zeta_\infty =0
$$
on the associated Banach space.

Then provided $\|\zeta(\tau, \cdot)\|_{L^2(S^1)}$ converges to zero as
$\tau$ goes to $\infty$, $\|\zeta(\tau, \cdot)\|_{L^2(S^1)}$ decays
exponentially fast with the rate $\delta >0 $ for any constant $\delta < \min\{\lambda_0,\delta_0\}$
where $\lambda_0$ is the smallest absolute value of non-zero eigenvalues of $B_\infty$.
\end{thm}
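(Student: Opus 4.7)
The plan is to implement the discrete three-interval method of Mundet--Tian in this abstract PDE setting. Introduce the sampled $L^2$-energy
$$
a_k := \|\zeta(k, \cdot)\|_{L^2(S^1)}^2
$$
and aim to prove $a_{k+1} \leq e^{-2\delta} a_k$ for every $k$ beyond some threshold and every fixed $\delta < \min(\delta_0, \mu_1)$, where $\mu_1$ denotes the smallest positive eigenvalue (in absolute value) of $B_\infty$. Iterating such a bound yields $a_k \leq Ce^{-2\delta k}$, and upgrading from this discrete $L^2$-decay to pointwise $C^0$- and $C^l$-decay of $\zeta$ and of its covariant derivatives is a routine application of interior Schauder bootstrapping on the shifted cylindrical windows $[\tau,\tau+2]\times S^1$, using the exponential decay of the inhomogeneity $L$.

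The geometric inequality $a_{k+1} \leq e^{-2\delta} a_k$ would be established by contradiction and rescaling. If it fails at the chosen rate, extract a sequence $k_n \to \infty$ along which $a_{k_n+1}/a_{k_n} > e^{-2\delta}$; unique continuation for the Cauchy--Riemann system excludes $a_{k_n} = 0$. Normalize
$$
\bar\zeta_n(\tau,t) := \zeta(k_n+\tau,t)/\sqrt{a_{k_n}}, \qquad \tau \in [-1,2],
$$
so that $\|\bar\zeta_n(0,\cdot)\|_{L^2(S^1)}^2 = 1$ and $\|\bar\zeta_n(1,\cdot)\|_{L^2(S^1)}^2 > e^{-2\delta}$. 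A uniform $L^2$-bound for $\bar\zeta_n$ on $[-1,2]\times S^1$ follows from a Gronwall-type inequality
$$
\tfrac{d}{d\tau}\|\bar\zeta_n(\tau,\cdot)\|_{L^2(S^1)}^2 \leq 2\|S\|\cdot \|\bar\zeta_n(\tau,\cdot)\|_{L^2(S^1)}^2 + O(e^{-\delta_0 k_n}/\sqrt{a_{k_n}})
$$
obtained by pairing the equation with $\bar\zeta_n$ on $S^1$ and exploiting the skew-adjointness of $J\partial_t$ on $L^2(S^1)$. The rescaled inhomogeneity $L/\sqrt{a_{k_n}}$ tends to zero on $[-1,2]\times S^1$ precisely when $\sqrt{a_{k_n}}$ does not decay faster than $e^{-\delta_0 k_n}$; this mild lower bound is exactly the content of the theorem's non-zero-limit hypothesis. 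By elliptic interior compactness a subsequence converges to a non-zero $\bar\zeta_\infty$ on $[-1,2]\times S^1$ solving $D\bar\zeta_\infty/d\tau + B_\infty \bar\zeta_\infty = 0$.

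The main obstacle is the spectral analysis of this limit. Expand in the $L^2(S^1)$-eigenbasis of the self-adjoint operator $B_\infty$,
$$
\bar\zeta_\infty(\tau,\cdot) = \sum_\mu c_\mu\, e^{-\mu\tau}\, e_\mu,
$$
so that each mode scales by $e^{-2\mu}$ per unit $\tau$-translation. The a priori decay inherited from $\|\zeta(\tau,\cdot)\|_{L^2(S^1)} \to 0$ rules out all eigencomponents of $\bar\zeta_\infty$ with $\mu < 0$ (which would correspond to growing directions once traced back to $\zeta$), so $\bar\zeta_\infty$ is supported on the non-negative spectrum; and the failure estimate $\|\bar\zeta_\infty(1,\cdot)\|_{L^2(S^1)}^2 \geq e^{-2\delta}$ with $\delta < \mu_1$ forces the positive-eigenvalue modes to vanish as well, so $\bar\zeta_\infty \in \ker B_\infty$ and is $\tau$-independent. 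Feeding this kernel dominance back into the original equation, the kernel projection $P\zeta$ satisfies $\nabla_\tau P\zeta = PL + o(\|\zeta\|)$; integrating from $k_n$ forward and using $|L|\leq Ce^{-\delta_0 \tau}$ yields $\|P\zeta(k_n,\cdot)\|_{L^2(S^1)} \leq Ce^{-\delta_0 k_n}$, hence $\sqrt{a_{k_n}} \leq Ce^{-\delta_0 k_n}$. This contradicts the assumed failure of the inequality at the slower rate $\delta < \delta_0$ and closes the argument, yielding the claimed rate $\min(\delta_0,\mu_1)$.
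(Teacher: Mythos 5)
Your overall architecture differs from the paper's in one crucial way: you try to prove the \emph{two-point} inequality $a_{k+1}\le e^{-2\delta}a_k$ for all large $k$, whereas the paper proves the \emph{three-interval} convexity inequality $\|\zeta\|^2_{L^2(I_k)}\le\gamma(2\delta)\bigl(\|\zeta\|^2_{L^2(I_{k-1})}+\|\zeta\|^2_{L^2(I_{k+1})}\bigr)$ and feeds it into the discrete Mundet--Tian lemma. This is not a cosmetic difference: your intermediate statement is strictly stronger than exponential decay and can genuinely fail infinitely often. For instance, when $\ker B_\infty\neq\{0\}$ the kernel component is driven by $L$ alone, and an oscillating $L$ with $|L|\le Ce^{-\delta_0\tau}$ can make $\|\zeta(k,\cdot)\|$ dip far below the scale $e^{-\delta_0 k}$ and then recover, so that $a_{k+1}>e^{-2\delta}a_k$ at infinitely many $k$ even though the conclusion of the theorem holds. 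At exactly those $k_n$ the rescaled inhomogeneity $L/\sqrt{a_{k_n}}$ does \emph{not} tend to zero, so your blow-up produces no solution of the homogeneous limit ODE and no contradiction is available; your appeal to ``the theorem's non-zero-limit hypothesis'' to secure the lower bound $\sqrt{a_{k_n}}\gtrsim e^{-\delta_0 k_n}$ for \emph{your particular} bad subsequence is unjustified. The paper avoids this trap by splitting the bad three-intervals into two cases --- sup norm below $C_1e^{-\delta l_k}$ (handled purely by patching the good clusters with the discrete lemma, no limit needed) versus sup norm $\ge C_ke^{-\delta l_k}$ with $C_k\to\infty$ (where the rescaled $L$ does vanish and a nonzero limit exists). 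You have no counterpart for the first case, and in that case a contradiction argument simply cannot close because the statement you are contradicting is true.

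Two further steps would fail even when a nonzero limit does exist. First, your spectral reduction to $\ker B_\infty$: on a finite window $[-1,2]$ nothing excludes eigencomponents of $\bar\zeta_\infty$ with negative eigenvalue --- ``traced back to $\zeta$'' is precisely the missing propagation argument, since the windows sit at scattered locations with different normalizations and global decay of $\zeta$ does not forbid locally growing window limits. The paper never needs this because the three-interval inequality is even in the eigenvalue ($\gamma(2|\lambda|)$), so growing and decaying modes are handled symmetrically, and the residual kernel possibility is excluded separately (in the geometric application, by the center-of-mass argument). Second, your closing estimate integrates $\nabla_\tau P\zeta=PL+o(\|\zeta\|)$ over $[k_n,\infty)$ \emph{before} any decay of $\|\zeta\|$ is known; the term $\int_{k_n}^\infty o(\|\zeta\|)\,d\tau$ is uncontrolled (possibly divergent, certainly not $O(e^{-\delta_0 k_n})$), so $\|P\zeta(k_n,\cdot)\|\le Ce^{-\delta_0 k_n}$ does not follow, and even granted, the pair of bounds $c\,e^{-\delta_0 k_n}\ll\sqrt{a_{k_n}}\le Ce^{-\delta_0 k_n}$ only contradicts each other if the lower bound of the previous paragraph has been independently established. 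To repair the proof you would essentially have to replace the per-step inequality by the three-interval one and reproduce the paper's case analysis.
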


\begin{rem}For the special case when $B_\infty$ has only trivial kernel,
the result can be regarded as the discrete analogue of the differential inequality method
used by Robbin-Salamon in \cite{robbin-salamon}.
\end{rem}

In this framework, our exponential convergence proof is based on intrinsic geometric tensor calculations
which is coordinate-free. As a result, our proof make it manifest that (roughly) the exponential
decay occurs whenever the geometric PDE has bounded energy at cylindrical ends
and the limiting equation is of linear evolution type
$
\frac{\del \overline \zeta_\infty}{\del\tau}+B_\infty \overline \zeta_\infty =0
$,
where $B_\infty$ is an elliptic operator with discrete spectrum.
If $B_\infty$ has trivial kernel, the conclusion follows rather immediately from
the three-interval argument. Even when $B_\infty$ contains non-trivial kernel,
the exponential decay would still follow as long as some geometric condition, like the Morse-Bott assumption in
the current case of our interest, enables one to extract
some non-vanishing solution of the limit equation
$\frac{\del \overline\zeta_\infty}{\del\tau}+B_\infty \overline \zeta_\infty =0$
that arises in the course of three-interval arguments.
Moreover the decay rate $\delta> 0$ is always provided by the minimal eigenvalue of $B_\infty$.

Now we roughly explain how the non-vanishing limiting solution mentioned above is obtained in the current situation:
First, the canonical neighborhood provided in Part \ref{part:coordinate} is used to split the contact
instanton equations into the vertical and horizontal parts. By this way, only the horizontal equation could be
involved with the kernel of $B_\infty$ which by the Morse-Bott condition has
nice geometric structure in the sense that the kernel can be excluded by looking
higher derivative instead of the map itself.
Then, to further see the limit of the derivative is indeed non-vanishing, we apply the
geometric decomposition the derivative and study the center of mass on the Morse-Bott
submanifold $Q$. The details are presented in Section \ref{subsec:exp-horizontal}
and Section \ref{subsec:centerofmass}.

\part{Contact Hamiltonian geometry and canonical neighborhoods}\label{part:neighborhoods}
\label{part:coordinate}

The main purpose of this part is to prove a canonical neighborhood theorem for the loci of closed Reeb orbits when the contact form $\lambda$ of a contact manifold $(M, \xi)$ is of Morse-Bott type. The results of this part
provides geometric preparation for the study of asymptotic exponential convergence
of contact instanton at a puncture of the domain Riemann surface.

The outline of Part 1 in section wise is as follows.
\begin{itemize}
\item In Section 2, we review some basic facts related to contact forms of a contact manifold. We first set up a natural isomorphism between $TM$ and $T^*M$
using the contact form $\lambda$.
This is a contact analogue of the isomorphism between tangent bundle and
cotangent bundle for  symplectic manifolds.
Then we derive explicit formulae of the Reeb vector field $R_{f\lambda}$ and
the contact projection $\pi_{f\lambda}$ in terms of $X_\lambda$, $\pi_\lambda$ and $f$
respectively.
\item In Section 3, we introduce the definition of Morse-Bott contact forms.
Then we study the canonical pre-contact structure associated to the loci of closed Reeb orbits under Morse-Bott assumption.
\item In section 4, we introduce the notion of contact thickening of pre-contact structure. It is the contact analogue
of the symplectic thickening for pre-symplectic structure constructed in \cite{gotay}, \cite{oh-park}.
\item In Section 5, we prove a canonical neighborhood theorem of the loci of
closed  Reeb orbits $Q$ under the Morse-Bott assumption.
\item  In Section 6, we derive the linearization formula of a Reeb orbit in the normal form.
\item In Section 7, we express the derivative $dw = (du, \nabla_{du} f)$ of
any smooth map $w = (u,f)$ from a (punctured) surface into the normal neighborhood $F$ of $Q$
in terms of the splitting
$$
TU_F = TQ \oplus F = TQ \oplus (E \oplus JT\CF), \quad TQ = T\CF \oplus G.
$$
\item In Section 8 and Section 9,  we introduce the class of adapted CR-almost complex structure and prove its abundance.

\end{itemize}

\section{Basics on contact forms}

We recall some basic facts on the contact geometry and
contact Hamiltonian dynamics especially in relation to the perturbation of
contact forms for a given contact manifold $(M,\xi)$.

\subsection{$\lambda$-dual vector fields and $\lambda$-dual one-forms}
\label{subsec:some}

Let $(M,\xi)$ be a contact manifold and $\lambda$ be a contact form with $\ker \lambda = \xi$. Consider
its associated decomposition
\be\label{eq:decomp-TM}
TM = \R\{X_\lambda\} \oplus \xi
\ee
and denote by $\pi=\pi_\lambda: TM \to \xi$ the associated projection.
This decomposition canonically
induces the corresponding dual decomposition
\be\label{eq:decomp-T*M}
T^*M = \xi^\perp \oplus (\R\{X_\lambda\})^\perp
\ee
where $(\cdot)^\perp$ is the annihilator of $(\cdot)$. This
gives rise to a decomposition
\be\label{eq:alpha-decomp}
\alpha = \alpha(X_\lambda)\, \lambda + \alpha \circ \pi_\lambda.
\ee

Then we have the following general lemma whose proof immediately follows from \eqref{eq:decomp-T*M}.
\begin{lem}\label{lem:decompose}
For any given one-form $\alpha$, there exists a unique $Y_\alpha \in \xi$ such that
$$
\alpha = Y_\alpha \rfloor d\lambda + \alpha(X_\lambda) \lambda.
$$
\end{lem}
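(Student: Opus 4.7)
The plan is to reduce the statement to the fact that contraction with $d\lambda$ provides a bundle isomorphism between $\xi$ and the annihilator of the Reeb line. I would start by invoking the already-established decomposition \eqref{eq:alpha-decomp}, which gives
$$
\alpha - \alpha(X_\lambda)\lambda = \alpha \circ \pi_\lambda,
$$
and the right-hand side annihilates $X_\lambda$ because $\pi_\lambda(X_\lambda)=0$. Thus it suffices to find a unique $Y_\alpha \in \xi$ satisfying $Y_\alpha \rfloor d\lambda = \alpha \circ \pi_\lambda$.

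The key step is to establish that the pointwise linear map
$$
\Phi_\lambda : \xi \longrightarrow (\R\{X_\lambda\})^\perp, \qquad Y \longmapsto Y \rfloor d\lambda
$$
is an isomorphism. That the image lies in $(\R\{X_\lambda\})^\perp$ is immediate from $(Y \rfloor d\lambda)(X_\lambda) = -d\lambda(X_\lambda, Y) = 0$, using the defining property $X_\lambda \rfloor d\lambda \equiv 0$ of the Reeb vector field. Injectivity of $\Phi_\lambda$ follows from the contact condition: since $(\xi, d\lambda|_\xi, J|_\xi)$ is Hermitian, in particular $d\lambda|_\xi$ is non-degenerate, so $Y \rfloor d\lambda|_\xi = 0$ forces $Y=0$. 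A dimension count (both $\xi_x$ and $(\R\{X_\lambda\})^\perp_x$ have rank $\dim M - 1$) then upgrades injectivity to bijectivity at every point.

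Given this isomorphism, I would simply set $Y_\alpha := \Phi_\lambda^{-1}(\alpha \circ \pi_\lambda) \in \xi$. Then
$$
Y_\alpha \rfloor d\lambda + \alpha(X_\lambda)\,\lambda = \alpha \circ \pi_\lambda + \alpha(X_\lambda)\,\lambda = \alpha
$$
by \eqref{eq:alpha-decomp}, proving existence. For uniqueness, if $Y, Y' \in \xi$ both satisfy the asserted identity, then $(Y - Y')\rfloor d\lambda = 0$, and by injectivity of $\Phi_\lambda$, $Y = Y'$.

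I do not expect any genuine obstacle here: the entire argument is linear algebra fiber by fiber, and the only non-trivial input is the non-degeneracy of $d\lambda|_\xi$, which is built into the contact hypothesis. The mild bookkeeping point to get right is that the decomposition \eqref{eq:decomp-T*M} into $\xi^\perp$ (the $\lambda$-line) and $(\R\{X_\lambda\})^\perp$ is precisely the dual of \eqref{eq:decomp-TM}, so identifying the image of $\Phi_\lambda$ with the second summand is consistent with the two terms in the claimed decomposition of $\alpha$.
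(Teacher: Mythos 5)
Your proof is correct and is essentially the argument the paper has in mind: the paper simply declares that the lemma "immediately follows from" the dual decomposition \eqref{eq:decomp-T*M}, and what you have written out — the decomposition $\alpha = \alpha(X_\lambda)\lambda + \alpha\circ\pi_\lambda$ together with the fiberwise isomorphism $Y \mapsto Y \rfloor d\lambda$ from $\xi$ onto the annihilator of $\R\{X_\lambda\}$, whose injectivity comes from the non-degeneracy of $d\lambda|_\xi$ — is exactly the omitted verification.
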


\begin{defn}[$\lambda$-Dual Vector Field and One-Form] Let $\lambda$ be a given contact form of $(M,\xi)$.
We define the \emph{$\lambda$-dual vector field} of a one-form $\alpha$ to be
$$
\flat_\lambda(\alpha): = Y_\alpha + \alpha(X_\lambda)\, X_\lambda.
$$
Conversely for any given vector field $X$ we define its $\lambda$-dual one-form by
$$
\sharp_\lambda(X)= X \rfloor d\lambda + \lambda(X)\, \lambda.
$$
\end{defn}
For the simplicity of notation, we will denote $\alpha_X: = \sharp_\lambda(X)$.
By definition, we have the identity
\be\label{eq:obvious-id}
\lambda(\flat_\lambda(\alpha)) = \alpha(X_\lambda).
\ee
The following proposition is immediate from the definitions of the dual vector field and the dual
one-forms.

\begin{prop}\label{prop:inverse} The map $\flat_\lambda: \Omega^1(M) \to \frak X(M), \, \alpha \mapsto \flat_\lambda(\alpha)$
and the map $\sharp^\lambda: \frak X(M) \to \Omega^1(M), \, X \mapsto \alpha_X$ are inverse to each other.
In particular any vector field  can be written as $\flat_\lambda(\alpha)$ for a unique one-form $\alpha$ and
any one-form can be written as $\alpha_X$ for a unique vector field $X$.
\end{prop}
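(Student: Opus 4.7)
The plan is to verify the two composition identities $\sharp_\lambda \circ \flat_\lambda = \mathrm{id}_{\Omega^1(M)}$ and $\flat_\lambda \circ \sharp_\lambda = \mathrm{id}_{\frak X(M)}$ directly, using Lemma \ref{lem:decompose} and the two defining identities $X_\lambda \rfloor d\lambda = 0$ and $\lambda|_\xi = 0$ that control the decomposition \eqref{eq:decomp-TM}. Once these identities are established, the ``in particular'' clause is immediate.

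For the first direction, I would start with an arbitrary one-form $\alpha$ and unfold the definitions:
\[
\sharp_\lambda(\flat_\lambda(\alpha)) = \bigl(Y_\alpha + \alpha(X_\lambda)X_\lambda\bigr)\rfloor d\lambda + \lambda\bigl(Y_\alpha + \alpha(X_\lambda)X_\lambda\bigr)\,\lambda.
\]
Using $X_\lambda \rfloor d\lambda = 0$ together with $Y_\alpha \in \xi = \ker \lambda$ and $\lambda(X_\lambda) = 1$, the right-hand side collapses to $Y_\alpha \rfloor d\lambda + \alpha(X_\lambda)\,\lambda$, which is exactly $\alpha$ by Lemma \ref{lem:decompose}.

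For the second direction, I would take an arbitrary vector field $X$ and split it via \eqref{eq:decomp-TM} as $X = X^\pi + \lambda(X)X_\lambda$ with $X^\pi \in \xi$. Then
\[
\sharp_\lambda(X) = X^\pi \rfloor d\lambda + \lambda(X)\,(X_\lambda \rfloor d\lambda) + \lambda(X)\,\lambda = X^\pi \rfloor d\lambda + \lambda(X)\,\lambda,
\]
again because $X_\lambda \rfloor d\lambda = 0$. Comparing with the canonical representation of a one-form in Lemma \ref{lem:decompose}, this identifies $Y_{\sharp_\lambda(X)} = X^\pi$ and $\sharp_\lambda(X)(X_\lambda) = \lambda(X)$; therefore $\flat_\lambda(\sharp_\lambda(X)) = X^\pi + \lambda(X)X_\lambda = X$.

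There is essentially no hard step here: the only point requiring any care is checking that $\sharp_\lambda(X)$ is indeed in the form to which Lemma \ref{lem:decompose} applies, i.e.\ that its $\xi$-dual summand is $X^\pi$ and the coefficient of $\lambda$ is $\sharp_\lambda(X)(X_\lambda) = \lambda(X)$, where the latter uses $X^\pi \rfloor d\lambda (X_\lambda) = -d\lambda(X_\lambda, X^\pi) = 0$. The uniqueness clause in Lemma \ref{lem:decompose}, which ultimately rests on the nondegeneracy of $d\lambda|_\xi$, is what makes $Y_\alpha$ well-defined and closes the argument.
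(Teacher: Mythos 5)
Your proposal is correct: both composition identities are verified exactly as needed, including the key check that $\sharp_\lambda(X)(X_\lambda)=\lambda(X)$ via $X_\lambda\rfloor d\lambda=0$, with uniqueness supplied by Lemma \ref{lem:decompose}. The paper offers no written proof, declaring the proposition ``immediate from the definitions,'' and your argument is precisely that direct unwinding of the definitions.
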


By definition, we have $\flat_\lambda(\lambda) = X_\lambda$  which
corresponds to the $\lambda$-dual to the contact form $\lambda$ itself for which $Y_\lambda =0$
by definition.
Obviously when an exact one-form $\alpha$ is given, the choice of $h$ with $\alpha = dh$ is
unique modulo addition by a constant (on each connected component of $M$).

To equip readers with some feeling on the above decomposition which is
not common in the literature, we now provide the coordinate expression of $\flat_\lambda(\alpha)$ and $\alpha_X$
in the Darboux chart $(q_1,\cdots, q_n, p_1, \cdots, p_n,z)$ with respect to the canonical
one-form $\lambda_0 = dz - \sum_{i=1}^n p_i\, dq_i$ on $\R^{2n+1}$ or more generally on the
one-jet bundle $J^1(N)$ of a smooth $n$-manifold $N$.
\emph{However this coordinate expression will not be used in the rest of the present paper.}

We recall that for this contact form,
the associated Reeb vector field is nothing but
$$
X_{\lambda_0} = \frac{\del}{\del z}.
$$
We start with the expression of $\flat_\lambda(\alpha)$ for a given one-form
$$
\alpha = \alpha_0 \, dz + \sum_{i=1}^n a_i\, dq_i + \sum_{j=1}^n b_j\, dp_j.
$$
We denote
$$
\flat_\lambda(\alpha) = v_0 \frac{\del}{\del z} + \sum_{i=1}^n v_{i;q} \frac{\del}{\del q_i}
+ \sum_{j=1}^n v_{j;p} \frac{\del}{\del p_j}.
$$
A direct computation using the defining equation of $\flat_\lambda(\alpha)$ leads to
\begin{prop}\label{prop:expressioninDarbouxchart}
Consider the standard contact structure $\lambda = dz - \sum_{i=1}^np_i\, dq_i$
on $\R^{2n+1}$.  Then for the given one-form $\alpha = \alpha_0 dz + \sum_{i=1}^na_i\, dq_i + \sum_{j=1}^nb_j \, dp_j$,
\be\label{eq:vis}
\flat_\lambda(\alpha) = \left(\alpha_0 + \sum_{k=1}^n p_k\, b_k\right) \frac{\del}{\del z} +\sum_{i=1}^n b_i\frac{\del}{\del q_i}
+ \sum_{j=1}^n (-a_j - p_j\, \alpha_0) \frac{\del}{\del p_j}.
\ee
Conversely, for given $X =  v_0 \frac{\del}{\del z} + \sum_{i=1}^n v_{i;q} \frac{\del}{\del q_i} + \sum_{j=1}^n v_{j;p} \frac{\del}{\del p_j}$, we obtain
\bea\label{eq:alphais}
\alpha_X & = & \left(v_0 - \sum_{j=1}^n p_j\, v_{j;q}\right)\,dz \nonumber \\
&{}& \quad - \sum_{i=1}^n \left(  v_{i;p} + \left((v_0 - \sum_{j=1}^n p_j\, v_{j;q})p_i\right)\right)  dq_i
+\sum_{j=1}^n v_{j;q}\, dp_j.
\eea
\end{prop}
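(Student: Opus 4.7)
The proof is a direct coordinate computation, so the plan is to unwind the definitions of $\flat_\lambda$ and $\sharp_\lambda$ against the explicit Darboux data. First I would record the basic ingredients in the chart $(q,p,z)$: the Reeb vector field is $X_\lambda = \partial/\partial z$, the differential is
$$
d\lambda = -\sum_{i=1}^n dp_i \wedge dq_i,
$$
and for any vector field $Y = \sum A_i \partial_{q_i} + \sum B_j \partial_{p_j} + C\,\partial_z$ one has
$$
Y \rfloor d\lambda = -\sum_{i=1}^n B_i\, dq_i + \sum_{i=1}^n A_i\, dp_i,
$$
together with $\lambda(Y) = C - \sum_i p_i A_i$. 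The value $\alpha(X_\lambda) = \alpha_0$ is immediate from the given expression for $\alpha$.

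For the formula \eqref{eq:vis}, I would apply Lemma \ref{lem:decompose}: write $Y_\alpha \in \xi$ in the coordinate basis and match the coefficients of $dq_i$, $dp_j$, $dz$ in the equation
$$
\alpha = Y_\alpha \rfloor d\lambda + \alpha_0\, \lambda.
$$
The $dp_j$-coefficients give $A_j = b_j$, the $dq_i$-coefficients give $B_i = -a_i - p_i\,\alpha_0$, and the condition $Y_\alpha \in \xi$, i.e., $\lambda(Y_\alpha)=0$, pins down $C = \sum_k p_k\,b_k$. Adding $\alpha(X_\lambda)\,X_\lambda = \alpha_0\,\partial/\partial z$ to $Y_\alpha$ then produces exactly the right-hand side of \eqref{eq:vis}.

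For \eqref{eq:alphais}, no solving is needed: I would simply plug $X$ into the defining formula
$$
\alpha_X = X \rfloor d\lambda + \lambda(X)\,\lambda,
$$
using $\lambda(X) = v_0 - \sum_j p_j\, v_{j;q}$ and the $Y \rfloor d\lambda$ formula above applied to $X$, and then collect coefficients of $dz$, $dq_i$, $dp_j$. As a consistency check (and really the only place where one could slip), I would verify directly that formulas \eqref{eq:vis} and \eqref{eq:alphais} are mutually inverse on the nose, which recovers Proposition \ref{prop:inverse} in these coordinates.

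There is no substantial obstacle; the only thing to be careful about is bookkeeping of signs coming from $d\lambda = -\sum dp_i \wedge dq_i$ and the fact that the interior product convention must be applied consistently so that the coefficient of $dq_i$ in $Y \rfloor d\lambda$ receives a minus sign. Once the sign conventions are fixed, both identities reduce to linear algebra in each Darboux fiber and the assertion follows.
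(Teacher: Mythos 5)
Your proposal is correct and is essentially the paper's own argument: the paper likewise determines $\flat_\lambda(\alpha)$ from the defining relation $\alpha = \flat_\lambda(\alpha) \rfloor \sum_i dq_i\wedge dp_i + \lambda(\flat_\lambda(\alpha))\bigl(dz - \sum_i p_i\, dq_i\bigr)$ and then gets $\alpha_X$ either by inverting \eqref{eq:vis} or by plugging into $\alpha_X = X\rfloor d\lambda + \lambda(X)\lambda$, omitting exactly the coefficient bookkeeping you spell out. One caveat worth flagging: if you actually run your final consistency check, your (correct) computation gives the $dq_i$-coefficient $-v_{i;p} - \bigl(v_0 - \sum_j p_j v_{j;q}\bigr)p_i$, whereas \eqref{eq:alphais} as printed reads $-\bigl(-v_{i;p} - (v_0 - p_i v_{i;q})p_i\bigr)$, so the displayed formula contains a sign and a summation typo; your method (and inverting \eqref{eq:vis}) yields the corrected version, which is the one consistent with Proposition \ref{prop:inverse}.
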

\begin{proof} Here we first recall the basic identity \eqref{eq:obvious-id}.

By definition, $\flat_\lambda(\alpha)$ is determined by the equation
\be\label{eq:alpha-lambda0}
\alpha = \flat_\lambda(\alpha) \rfloor \sum_{i=1}^n dq_i \wedge dp_i
+ \lambda(\flat_\lambda(\alpha))\, \left(dz - \sum_{i=1}^n p_i\, dq_i\right)
\ee
in the current case.
A straightforward computation leads to the formula \eqref{eq:vis}.
Then \eqref{eq:alphais} can be derived either by inverting this formula or by
using the defining equation of $\alpha_X$, which is further reduced to
$$
\alpha_X = X \rfloor d\lambda + \lambda(X)\, \lambda = X \rfloor \sum_{i=1}^n dq_i \wedge dp_i
+ (dz - \sum_{j=1}^n p_j\, dq_j)(X)\, \lambda.
$$
We omit the details of the computation.
\end{proof}

\begin{exm}
Again consider the canonical one-form $dz - \sum_{i=1}^n p_i\, dq_i$. Then we obtain the
following coordinate expression as a special case of \eqref{eq:vis}
\be\label{eq:hamvis}
\flat_\lambda(dh) = \left(\frac{\del h}{\del z} + \sum_{i=1}^np_i\frac{\del h}{\del p_i}\right)\frac{\del}{\del z}
+ \sum_{i=1}^n \frac{\del h}{\del p_i}\frac{\del}{\del q_i}
+ \sum_{i=1}^n\left(- \frac{\del h}{\del q_i} - p_i\frac{\del h}{\del z}\right)\frac{\del}{\del p_i}.
\ee
\end{exm}

\subsection{Perturbation of contact forms of $(M,\xi)$}
\label{subsec:perturbed-forms}

In this section, we exploit the discussion on the $\lambda$-dual vector fields and
express the Reeb vector fields $X_{f\lambda}$ and the projection $\pi_{f\lambda}$
associated to the contact form $f \lambda$ for a positive function $f > 0$,
in terms of those associated to the given contact form $\lambda$ and
the $\lambda$-dual vector fields of $df$.

Recalling Lemma \ref{lem:decompose}, we can write
$$
dg = Y_{dg} \rfloor d\lambda + dg(X_\lambda) \lambda
$$
with $Y_{dg} \in \xi$ in a unique way for any smooth function $g$. Then by definition, we have
$Y_{dg} = \pi_\lambda(\flat_\lambda(dg))$.

We first compute the following useful explicit formula
for the associated Reeb vector fields $X_{f\lambda}$ in terms of $X_\lambda$
and $Y_{dg}$.

\begin{prop}[Perturbed Reeb Vector Field]\label{prop:eta} Denote $Y_{dg}  = \pi_\lambda(\flat_\lambda(dg))$
as above. Then we have
$$
X_{f\lambda} = \frac{1}{f}(X_{\lambda} + Y_{dg}), \quad g = \log f.
$$
\end{prop}
\begin{proof}
It turns out to be easier to
consider $f\, X_{f\lambda}$ which we compute below. First we have
\be\label{eq:fXflambda}
f\,X_{f\lambda}= c \cdot X_\lambda+\eta
\ee
with respect to the splitting $TM = \R\{X_\lambda\} \oplus \xi$ for some constant $c$
and $\eta \in \xi$. We evaluate
\beastar
c = \lambda(fX_{f\lambda})=(f\lambda)(X_{f\lambda})= 1.
\eeastar
It remains to derive the formula for $\eta$. Using the formula
\beastar
d(f\lambda)= f d\lambda+df\wedge \lambda
\eeastar
and $\lambda(\eta) = 0$,
we compute
\beastar
\eta\rfloor d\lambda&=&(fX_{f\lambda})\rfloor d\lambda\\
&=& X_{f\lambda}\rfloor d(f\lambda)-X_{f\lambda}\rfloor(df\wedge\lambda)\\
&=&-X_{f\lambda}\rfloor(df\wedge\lambda)\\
&=&-X_{f\lambda}(f)\lambda+\lambda(X_{f\lambda})df\\
&=&-\frac{1}{f}(X_\lambda+\eta)(f)\lambda+\frac{1}{f}\lambda(X_\lambda+\eta)df\\
&=&-\frac{1}{f}X_\lambda(f)\lambda-\frac{1}{f}\eta(f)\lambda+\frac{1}{f}df.
\eeastar
Take value of $X_\lambda$ for both sides, we get
$
\eta(f)=0,
$
and hence
$$
\eta\rfloor d\lambda=-\frac{1}{f}X_\lambda(f)\lambda+\frac{1}{f}df.
$$
Setting $g =\log f$, we can rewrite this into
$$
\eta\rfloor d\lambda=-dg(X_\lambda)\lambda+dg.
$$
In other words, we obtain
\be
dg = \eta \rfloor d\lambda + dg(X_\lambda)\lambda.\label{eq:eta1}
\ee
Therefore by Lemma \ref{lem:decompose}, we have obtained
$\eta = Y_{dg}$. Substituting this into \eqref{eq:fXflambda} and dividing it
by $f$, we have finished the proof.
\end{proof}

Next we compare the contact projections
$\pi_{\lambda}$ with $\pi_{f\lambda}$ associated to $\lambda$ and $f\lambda$
respectively.

\begin{prop}[Perturbed Contact Projection]\label{prop:xi-projection}
Let $(M,\xi)$ be a contact manifold and let $\lambda$ be a contact form
i.e., $\ker \lambda = \xi$. Let $f$ be a positive smooth function and $f \, \lambda$
be its associated contact form. Denote by $\pi_{\lambda}$ and $\pi_{f\, \lambda}$ be
their associated $\xi$-projection. Then
\be\label{eq:xi-projection}
\pi_{f\, \lambda}(Z) = \pi_{\lambda}(Z)- \lambda(Z) Y_{dg}
\ee
for the function $g = \log f$.
\end{prop}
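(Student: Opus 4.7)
The plan is to derive the identity directly from the defining property of the contact projection, using the formula for $X_{f\lambda}$ already established in Proposition \ref{prop:eta}. The computation is short and conceptually routine; the real content sits in Proposition \ref{prop:eta}, so my job here is just to unwind the definitions carefully.

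First, I recall that for any contact form $\alpha$ on $(M,\xi)$ with $\ker \alpha = \xi$, the splitting $TM = \R\{X_\alpha\} \oplus \xi$ gives every $Z \in TM$ a unique decomposition
\begin{equation*}
Z = \alpha(Z)\, X_\alpha + \pi_\alpha(Z),
\end{equation*}
since $\alpha(X_\alpha) = 1$ and $\pi_\alpha(Z) \in \xi = \ker \alpha$. Applying this to both $\lambda$ and $f\lambda$, I get
\begin{equation*}
\pi_\lambda(Z) = Z - \lambda(Z)\, X_\lambda, \qquad \pi_{f\lambda}(Z) = Z - (f\lambda)(Z)\, X_{f\lambda} = Z - f\,\lambda(Z)\, X_{f\lambda}.
\end{equation*}

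Next, I substitute the formula $X_{f\lambda} = \frac{1}{f}\bigl(X_\lambda + X_{dg}^{\pi_\lambda}\bigr)$ from Proposition \ref{prop:eta} into the expression for $\pi_{f\lambda}(Z)$. The factor $f$ cancels, yielding
\begin{equation*}
\pi_{f\lambda}(Z) = Z - \lambda(Z)\, X_\lambda - \lambda(Z)\, X_{dg}^{\pi_\lambda} = \pi_\lambda(Z) - \lambda(Z)\, X_{dg}^{\pi_\lambda},
\end{equation*}
which is exactly \eqref{eq:xi-projection}.

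As a sanity check that this is internally consistent, one can verify that the right-hand side lies in $\ker(f\lambda) = \xi$: indeed $\pi_\lambda(Z) \in \xi$ by construction, and $X_{dg}^{\pi_\lambda} = \pi_\lambda(\flat_\lambda(dg)) \in \xi$ as well, so the whole expression lies in $\xi$ as required. Since there is no real obstacle, the only thing to be careful about is the bookkeeping in keeping track of $\lambda$ versus $f\lambda$ and the factor of $f$ attached to the Reeb vector field; once Proposition \ref{prop:eta} is in hand, the identity follows in two lines.
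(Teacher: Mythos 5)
Your proposal is correct and follows essentially the same route as the paper: both write $\pi_{f\lambda}(Z) = Z - f\lambda(Z)\,X_{f\lambda}$ from the defining splitting and then substitute $f\,X_{f\lambda} = X_\lambda + X_{dg}^{\pi_\lambda}$ from Proposition \ref{prop:eta} to arrive at \eqref{eq:xi-projection}. The bookkeeping and the sanity check are fine; nothing is missing.
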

\begin{proof}
We compute
\beastar
\pi_{f\lambda}(Z)&=& Z - f\lambda(Z) X_{f\lambda}
= Z-\lambda(Z)(f X_{f\lambda})\\
&=&Z-\lambda(Z) X_{\lambda}+(\lambda(Z) X_{\lambda}-\lambda(Z)(fX_{f\lambda}))\\
&=&\pi_{\lambda}Z+\lambda(Z)(X_{\lambda}-f X_{f\lambda})\\
&=&\pi_{\lambda}Z-\lambda(Z) Y_{dg}.
\eeastar
This finishes the proof.
\end{proof}

\subsection{Linearization formula for the perturbed contact form}

We next study the relationship between the linearization of
$\Upsilon_\lambda (z) = \dot z - T X_{f \lambda}(z)$ which we denote by
$$
D^\pi \Upsilon(z)(Z) = \nabla_t^\pi Z - T\nabla_Z X_{f\, \lambda}
$$
with respect to the triad connection of $(M,\lambda,J)$ (see Proposition 7.6 in \cite{oh-wang2})
for a given function $f$. Substituting
$$
X_{f\, \lambda}  = \frac{1}{f}(X_{\lambda} + Y_{dg})
$$
into this formula, we derive

\begin{lem}[Linearization]\label{lem:DUpsilon}
Let $\nabla$ be the triad connection of $(M, f \lambda,J)$. Then
for any vector field $Z$ along a Reeb orbit $z = (\gamma(T\cdot),o_{\gamma(T\cdot)})$,
\be\label{eq:DUpsilon}
D^\pi \Upsilon(z)(Z)
=  \nabla_t^\pi Z - T\left(\frac{1}{f}\nabla_Z X_\lambda + Z[1/f] X_\lambda \right)
- T\left(\frac{1}{f} \nabla_Z Y_{dg} + Z[1/f] Y_{dg}\right)
\ee
\end{lem}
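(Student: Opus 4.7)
The plan is to view Lemma \ref{lem:DUpsilon} as a direct Leibniz-rule computation built on top of Proposition \ref{prop:eta}. The general linearization formula
$$
D\Upsilon(z)(Y) = \nabla_t^\pi Y - \nabla_Y X_{f\lambda}
$$
stated just above the lemma (and established as Proposition 3.3 in \cite{oh-wang2}) is taken as the starting point, so all that remains is to expand the term $\nabla_Y X_{f\lambda}$ in a way that exposes the dependence on $f$ and on the data attached to the reference form $\lambda$.

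First I would invoke Proposition \ref{prop:eta} to rewrite
$$
X_{f\lambda} = \tfrac{1}{f}\bigl(X_\lambda + X_{dg}^{\pi_\lambda}\bigr), \qquad g = \log f.
$$
Since the triad connection $\nabla$ of $(M,f\lambda,J)$ is a linear connection on $TM$, it satisfies the Leibniz rule with respect to multiplication by the scalar $\tfrac{1}{f}$, so
$$
\nabla_Y\!\left(\tfrac{1}{f}(X_\lambda + X_{dg}^{\pi_\lambda})\right)
= Y[\tfrac{1}{f}]\,X_\lambda + \tfrac{1}{f}\nabla_Y X_\lambda
+ Y[\tfrac{1}{f}]\,X_{dg}^{\pi_\lambda} + \tfrac{1}{f}\nabla_Y X_{dg}^{\pi_\lambda}.
$$
Pairing the four terms as $\bigl(\tfrac{1}{f}\nabla_Y X_\lambda + Y[\tfrac{1}{f}]\,X_\lambda\bigr)$ and $\bigl(\tfrac{1}{f}\nabla_Y X_{dg}^{\pi_\lambda} + Y[\tfrac{1}{f}]\,X_{dg}^{\pi_\lambda}\bigr)$ and substituting back into the linearization formula yields exactly \eqref{eq:DUpsilon}.

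There is no real analytic obstacle here; the only minor subtlety is bookkeeping about which connection is being used. Because the lemma fixes $\nabla$ to be the triad connection of $(M,f\lambda,J)$ throughout, the same $\nabla$ applies in every occurrence on the right-hand side, and no conversion between the triad connections of $(M,\lambda,J)$ and $(M,f\lambda,J)$ is needed. The genuine geometric content — the identification of the $\xi$-component $\eta$ in the splitting $f X_{f\lambda} = X_\lambda + \eta$ with $X_{dg}^{\pi_\lambda}$ — has already been absorbed into Proposition \ref{prop:eta}, so the proof of Lemma \ref{lem:DUpsilon} reduces to the Leibniz step above.
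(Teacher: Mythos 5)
Your proposal is correct and follows essentially the same route as the paper: starting from $D\Upsilon(z)(Y) = \nabla_t^\pi Y - \nabla_Y X_{f\lambda}$ (which the paper justifies via the torsion axiom $T(X_\lambda,Y)=0$ of the triad connection, citing \cite{oh-wang2}), substituting $X_{f\lambda} = \frac{1}{f}(X_\lambda + X_{dg}^{\pi_\lambda})$ from Proposition \ref{prop:eta}, and expanding by the Leibniz rule. Your remark that the single triad connection of $(M,f\lambda,J)$ is used throughout also matches the paper's proof.
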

\begin{proof} Let $\nabla$ be the triad connection of $(M,f\lambda,J)$. Then
by definition its torsion $T$ satisfies the axiom $T(X_\lambda,Z) = 0$ for any vector field $Z$ on $M$
(see Theorem 1 in \cite{oh-wang1}). Using this property, as in section 7 of \cite{oh-wang2},
we compute
\beastar
D^\pi \Upsilon(z)(Z) & = &  \nabla_t^\pi Z - T\nabla_Z X_{f\lambda} \\
& = & \nabla_t^\pi Z - T\nabla_Z\left(\frac{1}{f}(X_\lambda + Y_{dg})\right) \\
& = & \nabla_t^\pi Z -TZ[1/f](X_\lambda + Y_{dg}) - \frac{T}{f}\nabla_Z(X_\lambda + Y_{dg})\\
& = & \nabla_t^\pi Z - T\left(\frac{1}{f}\nabla_Z X_\lambda + Z[1/f] X_\lambda \right)
- T\left(\frac{1}{f} \nabla_Z Y_{dg} + Z[1/f] Y_{dg}\right).
\eeastar
which finishes the proof.
\end{proof}

We note that when $f\equiv 1$, the above formula reduces to the standard formula
$$
D^\pi \Upsilon(z)(Z)
=  \nabla_t^\pi Z - T\nabla_Z X_\lambda
$$
which is  further reduced to
$$
D^\pi \Upsilon(z)(Z) = \nabla_t^\pi Z - \frac{T}{2}(\CL_{X_\lambda}J) J Z
$$
for any contact triad $(M,\lambda,J)$.
(See section 7 \cite{oh-wang2} for some discussion on this formula.)

\section{The locus foliated by closed Reeb orbits}
\label{sec:neighbor}

\subsection{Definition of Morse-Bott contact form}
\label{subsec:morse-bott}

Let $(M,\xi)$ be a contact manifold and $\lambda$ be a contact form of $\xi$.
We would like to study the linearization of the equation $\dot x  = X_\lambda(x)$
along a closed Reeb orbit.
Let $\gamma$ be a closed Reeb orbit of period $T > 0$. In other words,
$\gamma: \R \to M$ is a solution of $\dot \gamma = X_\lambda(\gamma)$ satisfying $\gamma(t+T) = \gamma(t)$.

By definition, we can write $\gamma(t) = \phi^t_{X_\lambda}(\gamma(0))$
for the Reeb flow $\phi^t= \phi^t_{X_\lambda}$ of the Reeb vector field $X_\lambda$.
In particular $p = \gamma(0)$ is a fixed point of the diffeomorphism
$\phi^T$ when $\gamma$ is a closed Reeb orbit of period $T$.
Since $\CL_{X_\lambda}\lambda = 0$, the contact diffeomorphism $\phi^T$ canonically induces the isomorphism
$$
\Psi_{z;p} : = d\phi^T(p)|_{\xi_p}: \xi_p \to \xi_p
$$
which is the linearized Poincar\'e return map $\phi^T$ restricted to $\xi_p$ via the splitting
$$
T_p M=\xi_p\oplus \R\cdot \{X_\lambda(p)\}.
$$

\begin{defn} Let $\gamma$ be a closed Reeb orbit with period $T> 0$ and
denote by $z:S^1 \to M$ the map defined by $z(t) = \gamma(Tt)$.
We say a $T$-closed Reeb orbit $(T,z)$ is \emph{nondegenerate}
if the linearized return map $\Psi_{z;p}:\xi_p \to \xi_p$ with $p = \gamma(0)$ has no eigenvalue 1.
\end{defn}

Denote $\Cont(M, \xi)$ the set of contact one-forms with respect to the contact structure $\xi$
and $\CL(M)=C^\infty(S^1, M)$ the space of loops $z: S^1 = \R /\Z \to M$.
 We would like to consider the bundle
$\mathcal{L}$ over the product $(0,\infty) \times \CL(M) \times \Cont(M,\xi)$ whose fiber
at $(T, z, \lambda)$ is given by the space $C^\infty(z^*TM)$ of sections of the bundle $z^*TM \to S^1$.
We consider the assignment
$$
\Upsilon: (T,z,\lambda) \mapsto \dot z - T \,X_\lambda(z)
$$
which is a section. Then $(T, z, \lambda)\in \Upsilon^{-1}(0)
:=\mathfrak{Reeb}(M,\xi)$ if and only if there exists some closed Reeb orbit $\gamma: \R \to M$ with period $T$, such that
$z(\cdot)=\gamma(T\cdot)$.

We first start with the standard notion, applied to the set of
closed Reeb orbits, of Morse-Bott critical manifolds introduced by Bott in \cite{bott}:

\begin{defn} We call a contact form $\lambda$ \emph{standard Morse-Bott type} if
every connected component of $\frak{Reeb}(M,\lambda)$ is a smooth submanifold
of $ (0,\infty) \times \CL(M)$ with its tangent space at
every pair $(T,z) \in \frak{Reeb}(M,\lambda)$ therein coincides with $\ker d_{(T,z)}\Upsilon$.
\end{defn}

The following is an immediate consequence of this definition.

\begin{lem}\label{lem:T-constant} Suppose $\lambda$ is of standard Morse-Bott type, then
on each connected component of $\mathfrak{Reeb}(M,\lambda)$, the period remains constant.
\end{lem}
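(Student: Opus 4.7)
The plan is to identify the period function $T$ on the submanifold with the action functional $\mathcal{A}(z) := \int_{S^1} z^*\lambda$, and then show that $d\mathcal{A}$ vanishes at every Reeb loop, so that $dT = 0$ on $T\mathfrak{Reeb}(M,\lambda)$. The excerpt already records the identity $T = \int_{S^1} z^*\lambda$ for $(T,z) \in \mathfrak{Reeb}(M,\lambda)$, which is the direct consequence of $\dot z = T\, X_\lambda(z)$ and $\lambda(X_\lambda) \equiv 1$; hence the two smooth functions agree on the submanifold, and it suffices to show $d\mathcal{A}(z_0) = 0$ at every loop $z_0$ underlying a point of $\mathfrak{Reeb}(M,\lambda)$.

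For the variational computation, I would take a smooth variation $z_\epsilon$ with $\partial_\epsilon z_\epsilon|_{\epsilon=0} = Y$, form $F(\epsilon,t) := z_\epsilon(t)$, and decompose $F^*\lambda = A\, d\epsilon + B\, dt$. Then $\int_{S^1} z_\epsilon^*\lambda = \int_0^1 B(\epsilon, t)\, dt$, and differentiating under the integral together with $\partial_\epsilon B - \partial_t A = (F^*d\lambda)(\partial_\epsilon, \partial_t)$ and $\int_0^1 \partial_t A\, dt = 0$ gives the standard first variation formula
$$
d\mathcal{A}(z_0)(Y) = \int_0^1 d\lambda(Y(t), \dot z_0(t))\, dt.
$$
At a Reeb loop, $\dot z_0 = T_0\, X_\lambda(z_0)$, so the integrand equals $-T_0\, (X_\lambda \rfloor d\lambda)(Y)$, which vanishes identically by the defining axiom of the Reeb vector field.

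The standard Morse-Bott hypothesis then closes the argument: any tangent vector $(s, Y) \in \ker d_{(T_0, z_0)}\Upsilon$ is realized as $(T'(0), \partial_\epsilon z_\epsilon|_{\epsilon=0})$ for a smooth path $(T(\epsilon), z_\epsilon)$ lying in $\mathfrak{Reeb}(M,\lambda)$, along which the identity $T(\epsilon) = \mathcal{A}(z_\epsilon)$ holds. Differentiating at $\epsilon = 0$ yields $s = d\mathcal{A}(z_0)(Y) = 0$, so $dT \equiv 0$ on the tangent space. As each connected component is (path-)connected, $T$ is constant on it.

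I do not expect any serious obstacle, since the argument reduces entirely to the single identity $X_\lambda \rfloor d\lambda = 0$ together with Stokes' theorem on $S^1$. The only step that merits attention is justifying the variational formula in the $W^{1,2}$ category, but this is routine in the Banach manifold setup $(0,\infty) \times \mathcal{L}^{1,2}(M)$ already employed in the excerpt.
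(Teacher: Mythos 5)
Your proof is correct and takes essentially the same route as the paper's: both differentiate the action $\int_{S^1} z^*\lambda$ (which equals the period on $\mathfrak{Reeb}(M,\lambda)$) along variations and kill the $d\lambda$-term using $\dot z \parallel X_\lambda$, $X_\lambda \rfloor d\lambda = 0$, and periodicity/Stokes on $S^1$. The only cosmetic difference is that you phrase this as criticality of the action at every Reeb loop combined with the tangent-space identification, while the paper differentiates directly along a smooth path of orbits joining two points of the component; in fact your appeal to the Morse-Bott kernel condition is not needed for this step, since smoothness of the component (path-realizability of tangent vectors) already suffices.
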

\begin{proof}
Let $(T_0, z_0)$ and $(T_1, z_1)$ be two elements in the same
connected component of $\mathfrak{Reeb}(M,\lambda)$.
We connect them by a smooth one-parameter family
$(T_s, z_s)$ for $0 \leq s \leq 1$. Since $\dot{z_s}=T_sX_\lambda(z_s)$ and then
$T_s=\int_{S^1}z^*_s\lambda$, it is enough to prove
$$
\frac{d}{ds} T_s =\frac{d}{ds} \int_{S^1}z^*_s\lambda\equiv 0.
$$
We compute
\beastar
\frac{d}{ds} z^*_s\lambda &=& z_s^*(d (z_s' \rfloor \lambda) +z_s' \rfloor d\lambda)\\
&=&d(z_s^*(z_s' \rfloor \lambda))+z_s^*(z_s' \rfloor d\lambda)\\
&=&d(z_s^*(z_s' \rfloor \lambda)).
\eeastar
Here we use $z_s'$ to denote the derivative with respect to $s$.
The last equality comes from the fact that $\dot{z_s}$ is parallel to $X_\lambda$.
Therefore we obtain by Stokes formula that
$$
\frac{d}{ds} \int_{S^1}z^*_s\lambda = \int_{S^1} d(z_s^*(z_s' \rfloor \lambda)) = 0
$$
and finish the proof.
\end{proof}
Now we prove
\begin{lem}\label{lem:localfree} Let $\lambda$ be standard Morse-Bott type.
Fix a connected component $\CR \subset \frak{Reeb}(M,\lambda)$
and denote by $Q \subset M$ the locus of the corresponding closed Reeb orbits. Then $Q$ is a smooth immersed submanifold
which carries a natural locally free $S^1$-action induced by the Reeb flow over one period.
\end{lem}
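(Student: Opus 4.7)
The plan is to realize $Q$ as the image of an injective immersion $\mathrm{ev}\colon \CR \to M$, $(T,z)\mapsto z(0)$, whose injectivity and immersion property both reduce to ODE uniqueness on $z^*TM$, and then to transport the reparametrization action on $\CR$ to the Reeb-flow action on $Q$.

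First I would invoke Lemma \ref{lem:T-constant} to conclude that the period is constant, equal to some $T>0$, on the connected component $\CR$, and combine this with the Morse--Bott identification $T_{(T,z)}\CR = \ker d_{(T,z)}\Upsilon$. A direct linearization of $\Upsilon(T,z) = \dot z - TX_\lambda(z)$ reads
\[
d\Upsilon(s,Y) = \nabla_t Y - sX_\lambda(z) - T\nabla_Y X_\lambda,
\]
and constancy of $T$ on $\CR$ forces the $s$-component of any tangent vector to vanish, so tangent vectors reduce to periodic sections $Y$ of $z^*TM$ satisfying the linear ODE $\nabla_t Y = T\nabla_Y X_\lambda$. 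Since this is a linear ODE on a finite-rank bundle, its periodic solution space is finite-dimensional, and $\CR$ is a smooth finite-dimensional manifold.

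Next I would show $\mathrm{ev}$ is an injective immersion. Injectivity is immediate from uniqueness for the autonomous ODE $\dot z = TX_\lambda(z)$: if $z_1(0)=z_2(0)$ then $z_1\equiv z_2$. The derivative reads $d\mathrm{ev}(0,Y)=Y(0)$, and the same uniqueness principle applied to the linearized equation above forces $Y(0)=0 \Rightarrow Y\equiv 0$, so $d\mathrm{ev}$ is injective. Hence $Q=\mathrm{ev}(\CR)$ inherits the structure of an immersed submanifold of $M$.

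Finally, every $p\in Q$ satisfies $\phi^T_{X_\lambda}(p)=p$, so the Reeb flow descends to a smooth action of $S^1=\R/T\Z$ on $Q$ via $s\cdot p = \phi^{Ts}_{X_\lambda}(p)$; via $\mathrm{ev}$ this is precisely the transport of the reparametrization action $(s\cdot z)(t) = z(t+s)$ on $\CR$, since $\mathrm{ev}(s\cdot z) = z(s) = \phi^{Ts}(z(0))$. The isotropy at $p$ is a finite cyclic subgroup of $S^1$ of order $T/T_{\min}(p)$, where $T_{\min}(p)>0$ is the minimal period of the Reeb orbit through $p$; hence the action is locally free. The main care point is verifying that the $s$-component of any tangent vector to $\CR$ vanishes, which is what identifies $\ker d\Upsilon$ with an ODE on $z^*TM$; once this is in place, both injectivity of $\mathrm{ev}$ and injectivity of $d\mathrm{ev}$ follow from the same standard ODE uniqueness argument.
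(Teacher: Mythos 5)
Your argument is correct, and it in fact supplies the substance that the paper's own three-line proof leaves out, along a somewhat different route. The paper works with the sweep map $ev_{\CR}\colon \CR\times S^1\to M$, $((T,z),t)\mapsto z(t)$, asserts without computation that it is a local immersion, and gets local freeness from constancy and positivity of the period; note that, as literally written, this map is redundant in the circle direction, since $\CR$ already contains every rotation $z(\cdot+s)$ of each loop, so $ev_{\CR}$ has circle fibers over $Q$ and is an immersion only after passing to the space of unparametrized orbits (or restricting to a slice of the rotation action). Your time-zero evaluation $\mathrm{ev}(T,z)=z(0)$ avoids that wrinkle: constancy of the period on $\CR$ (Lemma \ref{lem:T-constant}) plus uniqueness for $\dot z=TX_\lambda(z)$ gives injectivity, and the Morse--Bott identification $T_{(T,z)}\CR=\ker d_{(T,z)}\Upsilon$ together with $dT=0$ along $\CR$ reduces injectivity of $d\mathrm{ev}$ to uniqueness for the linear ODE $\nabla_tY=T\nabla_YX_\lambda$ with $Y(0)=0$ --- precisely the computation the paper waves off as ``easy.'' What your approach buys is an honest injective immersion $\CR\to M$, which makes both the immersed-submanifold structure on $Q$ and the transported $S^1$-action transparent; what the paper's sweep map buys is that the circle direction is exhibited explicitly in the domain. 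The one step you should state explicitly is that $\mathrm{ev}(\CR)=Q$: this needs the rotation-invariance of the component $\CR$ (the path $s\mapsto z(\cdot+s)$ stays in $\frak{Reeb}(M,\lambda)$, hence in $\CR$), which is the same fact you already use when transporting the reparametrization action to the Reeb-flow action; with that noted, your identification of the isotropy at $p$ as the finite cyclic group of order $T/T_{\min}(p)$ (finite because $X_\lambda$ is nowhere vanishing, so no point is a fixed point of the flow) is the precise version of the paper's one-sentence local-freeness claim.
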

\begin{proof} Consider the evaluation map $ev_{\CR}: \mathfrak{Reeb}(M,\lambda) \to M$ defined by
$ev_{\CR}(T,z) = z(0)$.  It is easy to prove that the map is a local immersion and so $Q$ is an
immersed submanifold. Since the closed Reeb orbits have constant period $T>0$ by Lemma \ref{lem:T-constant},
the action is obviously locally free and so $ev_{\CR}$ is an immersion
and so $Q$ is immersed in $M$. This finishes the proof.
\end{proof}

However $Q$ may not be embedded in general along the locus of multiple orbits.

Partially following \cite{bourgeois}, from now on in the rest of the paper,
\emph{we always assume $Q$ is embedded and compact}.
Denote $\omega_Q : = i_Q^*d\lambda$ and
$$
\ker \omega_Q =\{e\in TQ \mid \omega(e, e')= 0 \quad \text{for any} \quad e'\in TQ\}.
$$
We warn readers that the Morse-Bott condition does not imply that the form $\omega_Q$
has constant rank, and hence the dimension of this kernel may vary pointwise on $Q$.
However if it does, $\ker \omega_Q$ defines an integrable distribution and so
defines a foliation, denoted by $\CF$, on $Q$.
Since $Q$ is also foliated by closed Reeb orbits and $\CL_{X_\lambda}d\lambda=0$,
it follows that $\CL_{X_{\lambda}}\omega_Q=0$ when we restrict everything on $Q$. Therefore
each leaf of the foliation consisting of closed Reeb orbits. Motivated by this, we also
impose the condition that \emph{the two-form $\omega_Q$ has constant rank}.

\begin{defn}[Compare with Definition 1.7 \cite{bourgeois}]
\label{defn:morse-bott}
We say that the contact form $\lambda$ is of Morse-Bott type if it satisfies the following:
\begin{enumerate}
\item
Every connected component of $\frak{Reeb}(M,\lambda)$ is a smooth submanifold
of $ (0,\infty) \times \CL(M)$ with its tangent space at
every pair $(T,z) \in \frak{Reeb}(M,\lambda)$ therein coincides with $\ker d_{(T,z)}\Upsilon$.
\item $Q$ is embedded.
\item $\omega_Q$ has constant rank on $Q$.
\end{enumerate}
\end{defn}

\subsection{Structure of the locus of closed Reeb orbits}
\label{subsec:structure}

Let $\lambda$ be a Morse-Bott contact form of $(M,\xi)$ and $X_\lambda$
its Reeb vector field. Let $Q$ be as in Definition \ref{defn:morse-bott}. In general, $Q$
carries a natural locally free $S^1$-action induced by the Reeb flow $\phi_{X_\lambda}^T$
(see Lemma \ref{lem:localfree}).
Then by the general theory of compact Lie group actions (see \cite{helgason} for example), the action has a finite
number of orbit types which have their minimal periods, $T/m$ for some integer $m \geq 1$.
The set of orbit spaces $Q/S^1$ carries natural
orbifold structure at each multiple orbit with its isotropy group $\Z/m$ for some $m$.

\begin{rem}\label{rem:noneffective}
Here we would like to mention that the $S^1$-action induced by
$\phi_{X_\lambda}^T$ on $Q$ may not be effective:
It is possible that the connected component $\frak{R}$ of $\frak{Reeb}(M,\lambda)$
can consist entirely of multiple orbits.
\end{rem}

Now we fix a connected component of $Q$ and just denote it by $Q$ itself.
Denote $\theta = i_Q^*\lambda$.
We note that the two-form $\omega_Q = d\theta$ is assumed to
have constant rank on $Q$ by the definition of Morse-Bott contact form in Definition \ref{defn:morse-bott}.

The following is an immediate consequence of the definition but exhibits a
particularity of the null foliation of the presymplectic manifold $(Q,\omega_Q)$
arising from the locus of closed Reeb orbits. We note that $\ker \omega_Q$ carries a natural splitting
$$
\ker \omega_Q = \R\{X_\lambda\} \oplus (\ker \omega_Q \cap \xi|_Q).
$$
\begin{lem}\label{lem:integrableCN}
The distribution $(\ker \omega_Q) \cap \xi|_Q$
on $Q$ is integrable.
\end{lem}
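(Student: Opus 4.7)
The plan is to verify the Frobenius involutivity condition for the distribution $H := \ker \omega_Q \cap \xi|_Q$ directly, using the fact that $\omega_Q = i_Q^* d\lambda = d\theta$ is closed. Throughout, let $Y, Z$ denote arbitrary local sections of $H$.

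\medskip

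\textbf{Step 1: Tangency to $Q$.} Since $H \subset TQ$ by definition, the bracket $[Y,Z]$ is automatically tangent to $Q$. So it suffices to check that $[Y,Z]$ lies in $\xi|_Q$ (i.e.\ $\lambda([Y,Z]) = 0$) and in $\ker \omega_Q$ (i.e.\ $\omega_Q([Y,Z], W) = 0$ for every $W \in TQ$).

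\medskip

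\textbf{Step 2: $\lambda$-vanishing of the bracket.} Apply Cartan's formula:
\[
d\lambda(Y,Z) = Y(\lambda(Z)) - Z(\lambda(Y)) - \lambda([Y,Z]).
\]
Since $Y, Z \in \xi|_Q$ we have $\lambda(Y) = \lambda(Z) = 0$, and since $Y \in \ker \omega_Q$ while $Z \in TQ$ we also have $d\lambda(Y,Z) = \omega_Q(Y,Z) = 0$. This forces $\lambda([Y,Z]) = 0$, hence $[Y,Z] \in \xi|_Q$.

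\medskip

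\textbf{Step 3: Kernel condition from closedness.} This is the main conceptual step, though it is really just the standard integrability of the characteristic distribution of a presymplectic form. Since $\omega_Q = d\theta$, we have $d\omega_Q = 0$. Expand
\begin{align*}
0 = d\omega_Q(Y,Z,W)
&= Y\,\omega_Q(Z,W) - Z\,\omega_Q(Y,W) + W\,\omega_Q(Y,Z) \\
&\quad - \omega_Q([Y,Z],W) + \omega_Q([Y,W],Z) - \omega_Q([Z,W],Y)
\end{align*}
for any $W \in TQ$. The three function-derivative terms vanish because $\omega_Q(Y,\cdot) = 0$ and $\omega_Q(Z,\cdot) = 0$ when paired against $TQ$; the last two bracket terms vanish for the same reason (they pair $Y$ or $Z$ against something tangent to $Q$). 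What remains is $\omega_Q([Y,Z],W) = 0$ for every $W \in TQ$, which is exactly $[Y,Z] \in \ker \omega_Q$.

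\medskip

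\textbf{Step 4: Conclusion.} Combining Steps 2 and 3 gives $[Y,Z] \in \ker \omega_Q \cap \xi|_Q = H$, so $H$ is involutive and hence integrable by the Frobenius theorem. The only subtle point is to observe that the $X_\lambda$-direction cannot sneak back in when passing to $H \subset \ker \omega_Q$; this is ensured by Step 2 (using $\lambda(X_\lambda) = 1 \neq 0$). I do not anticipate a real obstacle: the closedness $d(d\lambda) = 0$ does all the work, and the constant-rank hypothesis on $\omega_Q$ built into the Morse--Bott definition guarantees that $H$ is a genuine smooth distribution of locally constant rank to which Frobenius applies.
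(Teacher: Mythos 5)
Your proof is correct and follows essentially the same route as the paper: the bracket is tangent to $Q$ automatically, the identity $\lambda([Y,Z]) = Y[\lambda(Z)] - Z[\lambda(Y)] - \omega_Q(Y,Z) = 0$ handles the $\xi$-condition exactly as in the paper, and membership in $\ker\omega_Q$ comes from closedness of $\omega_Q$. The only difference is that the paper simply cites the standard integrability of the null distribution of a closed constant-rank two-form, whereas you verify it explicitly by expanding $d\omega_Q(Y,Z,W)=0$ — a harmless filling-in of a quoted fact rather than a different argument.
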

\begin{proof} Let $X,\, Y$ be vector fields on $Q$ such that $X,\, Y \in (\ker \omega_Q) \cap \xi|_Q$.
Then $[X, Y]\in \ker \omega_Q$ since $\omega_Q$ is a closed two-form whose
null distribution is integrable.
At the same time, we compute
$i_Q^*\lambda([X,Y]) = X[\lambda(Y)] - Y[\lambda(X)] -\omega_Q(X,Y) = 0$
where the first two terms vanish since $X, \, Y \in \xi$ and the third vanishes because
$X \in \ker \omega_Q$. This proves $[X,Y] \in \ker \omega_Q \cap \xi|_Q$,
which finishes the proof.
\end{proof}

Therefore $\ker \omega_Q \cap \xi|_Q$ defines another foliation $\CN$ on $Q$, and hence
\be\label{eq:TCF}
T\CF = \mathbb{R}\{X_\lambda\} \oplus  T\CN.
\ee
Note that this splitting is $S^1$-invariant.

We now recall some basic properties of presymplectic manifold \cite{gotay}
and its canonical neighborhood theorem.
Fix an $S^1$-equivariant splitting of $TQ$
\be\label{eq:splitting}
TQ = T\CF \oplus G =\mathbb{R}X_\lambda\oplus T\CN \oplus G
\ee
by choosing an $S^1$-invariant complementary subbundle $G \subset TQ$.
This splitting is not unique but its choice will not matter for the coming discussions.
The null foliation carries a natural {\it transverse symplectic form}
in general \cite{gotay}. Since the distribution $T\CF \subset TQ$ is preserved by
Reeb flow, it generates the $S^1$-action thereon  in the current context. We denote by
$$
p_{T\CN;G}: TQ \to T\CN, \quad p_{G;G}:TQ \to G
$$
the projection to $T\CN$ and to $G$ respectively with respect to the splitting \eqref{eq:splitting}.
We denote by $T^*\CN \to Q$ the associated foliation cotangent bundle, i.e., the
dual bundle of $T\CN$.

We now consider the isomorphism
\be\label{eq:tildedlambda}
\widetilde{d\lambda|_\xi}: \xi \to \xi^*
\ee
and fix a splitting $T_QM = TQ \oplus N_QM$ with $T_QM = TM|_Q$
so that $N_QM \subset \xi_Q$:
This is possible since $\R\{X_\lambda\} \subset TQ$. We can also choose the splitting so that
it is $S^1$-equivariant. (See the proof of Proposition \ref{prop:neighbor-F} below.)

This leads to the further splitting
\be\label{eq:xiQ}
\xi|_Q = T\CN \oplus G \oplus N_QM
\ee
combined with \eqref{eq:splitting}, which in turn leads to
\be\label{eq:xi*}
\xi^*|_Q = (G \oplus N_QM)^\perp \oplus (T\CN)^\perp
\ee
where $(\cdot)^\perp$ denotes the annihilator of $(\cdot)$.
In particular, it induces an isomorphism
\be\label{eq:T*CN}
T^*\CN \cong (G \oplus N_QM)^\perp \subset \xi^*|_Q.
\ee
Now we consider the embedding $T^*\CN \to \xi$ defined by
the inverse of \eqref{eq:tildedlambda}, which we denote by
\be\label{eq:TNdual}
(T\CN)^{\#_{d\lambda}} = (\widetilde{d\lambda})^{-1}(T^*\CN).
\ee

Next we consider the symplectic normal bundle $(TQ)^{d\lambda} \subset T_QM$
defined by
\be\label{eq:TQdlambda}
(TQ)^{d\lambda} = \{v \in T_qM \mid d\lambda(v, w) = 0 ,  \forall w \in T_qQ\}.
\ee
We define a vector bundle
\be\label{eq:E}
E = (TQ)^{d\lambda}/T\CF,
\ee
and then have the natural embedding
\be\label{eq:embed-E}
E = (TQ)^{d\lambda}/T\CF \hookrightarrow T_QM/TQ = N_QM
\ee
induced by the inclusion map $(TQ)^{d\lambda} \hookrightarrow T_QM$.
The following is straightforward to check.

\begin{lem} The $d\lambda|_E$ induces a nondegenerate 2-form and so $E$ carries a
fiberwise symplectic form, which we denote by $\Omega$.
\end{lem}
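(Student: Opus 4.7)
The plan is to verify three things in order: first, that $T\CF$ is indeed a subbundle of $(TQ)^{d\lambda}$ so that the quotient $E = (TQ)^{d\lambda}/T\CF$ makes sense as a vector bundle; second, that $d\lambda|_{(TQ)^{d\lambda}}$ passes to the quotient to give a well-defined bilinear form $\Omega$ on $E$; third, that $\Omega$ is fiberwise nondegenerate.

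For the first two steps I would argue pointwise and invoke only facts already set up in the paper. The inclusion $T\CF \subset (TQ)^{d\lambda}$ comes from the splitting $T\CF = \R\{X_\lambda\} \oplus T\CN$: on the one hand $X_\lambda \rfloor d\lambda \equiv 0$ on all of $M$, so $X_\lambda \in (TQ)^{d\lambda}$; on the other hand $T\CN \subset \ker\omega_Q$ by construction, and since $\omega_Q = i_Q^*d\lambda$, any $v \in T\CN$ satisfies $d\lambda(v,w) = 0$ for every $w \in TQ$. Well-definedness of the descended form is then immediate from antisymmetry: for $u \in T\CF \subset TQ$ and any $v \in (TQ)^{d\lambda}$ one has $d\lambda(u,v) = -d\lambda(v,u) = 0$, so changing a representative of a class in $E$ by an element of $T\CF$ leaves $d\lambda(\cdot,\cdot)$ unchanged.

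The main obstacle is the nondegeneracy, and here a little care is required because $d\lambda$ itself is degenerate on $T_QM$: its pointwise kernel is the line $\R\{X_\lambda\}$. Suppose $[v] \in E$ is represented by some $v \in (TQ)^{d\lambda}$ with $\Omega([v],[w]) = 0$ for every $[w] \in E$. Then $v$ sits simultaneously in $(TQ)^{d\lambda}$ and in $\bigl((TQ)^{d\lambda}\bigr)^{d\lambda}$. My plan is to reduce to the standard symplectic double-annihilator formula as follows: $d\lambda$ descends to a nondegenerate $2$-form $\overline{d\lambda}$ on the quotient bundle $T_QM/\R\{X_\lambda\}$, and because $X_\lambda \in TQ$ the subspace $TQ$ descends to a genuine subspace $\overline{TQ}$ of this quotient with $(TQ)^{d\lambda}$ descending to its $\overline{d\lambda}$-annihilator. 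Applying the usual double-annihilator identity for nondegenerate forms downstairs and pulling back, one gets $\bigl((TQ)^{d\lambda}\bigr)^{d\lambda} = TQ$. Therefore $v \in TQ \cap (TQ)^{d\lambda} = \ker\omega_Q = T\CF$, i.e., $[v] = 0$.

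Finally, as a consistency check I would compute $\dim E$ using the rank formula $\dim (TQ)^{d\lambda} = \dim M - \dim Q + 1$ (the $+1$ coming from $X_\lambda \in TQ \cap \ker d\lambda$) together with $\dim T\CF = \dim Q - \operatorname{rank}\omega_Q$, and verify that $\dim E = \dim M - 2\dim Q + 1 + \operatorname{rank}\omega_Q$ is even, as required for a nondegenerate alternating form.
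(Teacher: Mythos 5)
Your proof is correct, and it follows the route the paper itself merely gestures at (the lemma is left as ``straightforward to check,'' with the form on $E = (TQ)^{d\lambda}/T\CF$ described elsewhere as arising ``by symplectic reduction''): you verify $T\CF \subset (TQ)^{d\lambda}$, well-definedness on the quotient, and nondegeneracy via the double-annihilator identity, correctly handling the one-dimensional kernel $\R\{X_\lambda\}$ of $d\lambda$ by passing to $T_QM/\R\{X_\lambda\}$ and using $X_\lambda \in TQ$ to conclude $\bigl((TQ)^{d\lambda}\bigr)^{d\lambda} = TQ$, so that a null vector lies in $TQ \cap (TQ)^{d\lambda} = T\CF$. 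Nothing further is needed; the dimension count is a reasonable sanity check but not essential.
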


We now consider the exact sequence
$$
0 \to E \to N_QM \to N_QM/E \to 0
$$
induced by \eqref{eq:embed-E}. The sequence is $S^1$-equivariant with respect to the natural
$S^1$-action thereon. We have the canonical isomorphism
$$
N_QM/E \cong \frac{T_QM}{TQ + (TQ)^{d\lambda}}
$$
which is $S^1$-equivariant. Then we have an $S^1$-equivariant splitting
\be\label{eq:TQM}
T_QM = (TQ + (TQ)^{d\lambda}) \oplus (T\CN)^{\#_{d\lambda}}
\ee
where $(T\CN)^{\#_{d\lambda}}$ is the $d\lambda$-dual \eqref{eq:TNdual}.
This also induces an embedding
\be\label{eq:embed-T*N}
T^*\CN \to (T\CN)^{\#_{d\lambda}} \hookrightarrow T_QM \to N_QM
\ee
which is also $S^1$-equivariant.

We now denote $F: = T^*\CN \oplus E \to Q$. The following proposition provides a
local model of the neighborhood of $Q \subset M$.

\begin{prop}\label{prop:neighbor-F} We
fix the splittings \eqref{eq:splitting} and \eqref{eq:xiQ}.
Then the sum of \eqref{eq:embed-T*N} and \eqref{eq:embed-E}
defines an isomorphism $T^*\CN \oplus E \to N_QM$ depending only on
the splittings.
\end{prop}
\begin{proof}
A straightforward dimension counting shows that the bundle map indeed is an
isomorphism.
\end{proof}

Identifying a neighborhood of $Q \subset M$ with a neighborhood of the zero section of $F$
and pulling back the contact form $\lambda$ to $F$, we may assume that our contact form $\lambda$ is
defined on a neighborhood of $o_F \subset F$. We also identify with $T^*\CN$ and $E$ as
their images in $N_QM$.

\begin{prop}\label{prop:S1-bundle} The $S^1$-action on $Q$
canonically induces the $S^1$-invariant vector bundle structure on $E$ such that
the form $\Omega$ is invariant under the $S^1$-action on $E$.
\end{prop}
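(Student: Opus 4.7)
The plan is to lift the $S^1$-action on $Q$ to $E$ using the derivative of the Reeb flow $\phi^t = \phi^t_{X_\lambda}$ on $M$, and to verify that the resulting $\R$-action descends to $S^1 = \R/T\Z$ and preserves the induced form.

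First I would check that $d\phi^t$ preserves the three subbundles entering the definition of $E$. Since $\phi^t$ is a strict contact transformation, $(\phi^t)^*\lambda = \lambda$, which implies $(\phi^t)^*d\lambda = d\lambda$, $d\phi^t(X_\lambda) = X_\lambda$, and $d\phi^t(\xi) = \xi$. Because $Q$ is a union of Reeb orbits, $\phi^t(Q) = Q$, so $d\phi^t(TQ) = TQ$; feeding this into the defining relation \eqref{eq:TQdlambda} together with the invariance of $d\lambda$ gives $d\phi^t((TQ)^{d\lambda}) = (TQ)^{d\lambda}$, and consequently $d\phi^t$ preserves $T\CF = TQ \cap (TQ)^{d\lambda} = \ker\omega_Q$. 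Therefore $d\phi^t$ descends to a fiberwise linear bundle isomorphism $E_q \to E_{\phi^t(q)}$ covering $\phi^t|_Q$, yielding an $\R$-action on the total space of $E$.

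Second, to pass from this $\R$-action to an $S^1 = \R/T\Z$ action I would verify that $d\phi^T$ acts as the identity on each fiber $E_q = (T_qQ)^{d\lambda}/T_q\CF$, i.e.\ that $d\phi^T(v) - v \in T_q\CF$ for every $v \in (T_qQ)^{d\lambda}$. Since $d\phi^T(v) - v$ already lies in $(T_qQ)^{d\lambda}$ by the preceding step, this reduces to showing $d\phi^T(v) - v \in T_qQ$. Here I would invoke the Morse-Bott identification $T_qQ = \ker(d\phi^T|_{T_qM} - I)$: using Condition (1) of Definition \ref{defn:morse-bott}, one linearizes $\Upsilon$ at $(T,z)$ and trivializes $z^*TM$ by the Reeb flow (as in Lemma \ref{lem:DUpsilon}); the period-constancy of Lemma \ref{lem:T-constant} forces the $T$-variation $s$ to vanish for any $(s,Y) \in \ker D\Upsilon$, so elements of $\ker D\Upsilon$ correspond bijectively under evaluation at $0$ to vectors in $\ker(d\phi^T-I) \subset T_qM$. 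A linear-algebra argument using that $d\phi^T|_{\xi_q}$ is a symplectomorphism preserving $L := T_qQ \cap \xi_q$ and its symplectic complement $L^{\omega} = (T_qQ)^{d\lambda} \cap \xi_q$, together with the constant-rank assumption on $\omega_Q$, then gives $(d\phi^T - I)((T_qQ)^{d\lambda}) \subset T_qQ$.

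Finally, the equivariance of $\Omega = [d\lambda]_E$ is automatic once the action is in place: since $(\phi^t)^*d\lambda = d\lambda$ and the quotient operations defining $\Omega$ are $d\phi^t$-equivariant by the first step, $\Omega$ descends to an $S^1$-invariant symplectic form on $E$. I expect the delicate point to be the second paragraph: the Morse-Bott identity $T_qQ = \ker(d\phi^T - I)$ only controls the first eigenspace of $d\phi^T$, so the stronger statement that $d\phi^T - I$ maps all of $(T_qQ)^{d\lambda}$ into $T_qQ$ must be extracted from the symplectic structure of $d\phi^T|_{\xi_q}$ together with the constant-rank hypothesis on $\omega_Q$, ruling out higher-rank Jordan-block behavior at the eigenvalue $1$ in the symplectic-normal directions.
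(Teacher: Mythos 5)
Your first and third paragraphs are essentially the paper's entire proof of Proposition \ref{prop:S1-bundle}: the paper checks exactly that $d\phi^t$ preserves $(TQ)^{d\lambda}$ (using $(\phi^t)^*d\lambda = d\lambda$ and $d\phi^t(TQ)=TQ$), hence descends to fiberwise isomorphisms of $E$ covering the flow on $Q$, and that the induced form $\Omega=[d\lambda]_E$ is preserved; the descent of the resulting $\R$-action to $S^1$ is not discussed there, and you rightly isolate it as the nontrivial point. The problem is that the statement your second paragraph reduces this to --- that $d\phi^T$ induces the identity on $E_q$, i.e.\ $(d\phi^T - I)\big((T_qQ)^{d\lambda}\big)\subset T_qQ$ --- is false, and no linear-algebra argument from the Morse--Bott and constant-rank hypotheses can produce it. The Morse--Bott condition pins the $1$-eigenspace of $d\phi^T$ down to be exactly $T_qQ$; far from forcing $d\phi^T-I$ to map the symplectic normal directions into $T_qQ$, it typically makes $d\phi^T-I$ invertible on them, and the obstruction is not ``Jordan blocks at eigenvalue $1$'': in the generic Morse--Bott situation the linearized return map has no eigenvalue $1$ at all in the $E$-directions.

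Concretely, take $\lambda_a=\tfrac12\sum_j a_j(x_j\,dy_j-y_j\,dx_j)$ restricted to the round $S^5\subset\C^3$, with $a_1=a_2=1$ and $a_3$ irrational, so the Reeb flow is $z_j\mapsto e^{2it/a_j}z_j$. Then $Q=S^5\cap\{z_3=0\}\cong S^3$ is embedded and foliated by closed Reeb orbits of common period $T=\pi$, and $d\lambda_a|_Q$ has constant rank $2$, so all conditions of Definition \ref{defn:morse-bott} hold; here $(T_qQ)^{d\lambda}=\R\{X_\lambda\}\oplus\C_{z_3}$, so $E_q$ is the $z_3$-plane, and $d\phi^{T}$ acts on it by multiplication by $e^{2\pi i/a_3}\neq 1$, an irrational rotation. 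So the induced map on $E_q$ is not the identity, and this nondegeneracy of the return map transverse to $T_qQ$ is precisely what the later analysis exploits (it is reflected in the spectral gap of Corollary \ref{cor:gap}). Consequently your second step is not a repairable gap: the claim it rests on is false, and a genuine $S^1$-action on the total space of $E$ covering the $S^1$-action on $Q$ cannot be taken to be $t\mapsto d\phi^{Tt}$. What your steps one and three (and the paper's own argument) actually establish is only that the linearized Reeb flow gives an $\R$-action on $E$ by bundle maps preserving $\Omega$; any honest $S^1$-equivariant structure on $E$ has to be obtained by a different construction, not by proving triviality of the linearized return map on $E$.
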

\begin{proof}
The action of $S^1$ on $Q$ by $t\cdot q=\phi^t(q)$
canonically induces a $S^1$ action on $T_QM$ by $t\cdot v=(d\phi^t)(v)$,
for $v\in T_QM$.
Hence it gives rise to the following identity
\be
t^*d\lambda=d\lambda\label{eq:S1action}
\ee
 since the Reeb flow preserves $\lambda$.
We first show it is well-defined on $E\to Q$, i.e., if $v\in (T_qQ)^{d\lambda}$,
then $t\cdot v\in (T_{t\cdot q}Q)^{d\lambda}$.
In fact, by using \eqref{eq:S1action}, for $w\in T_{t\cdot q}Q$,
$$
d\lambda(t\cdot v, w)=\left((\phi^t)^*d\lambda\right)\left(v, (d\phi^t)^{-1}(w)\right)
=d\lambda\left(v, (d\phi^t)^{-1}(w)\right).
$$
This vanishes, since $Q$ consists of closed Reeb orbits and thus $d\phi^t$ preserves $TQ$.

Secondly, the same identity \eqref{eq:S1action} further indicates that this $S^1$ action preserves
$\Omega$ on fibers, i.e., $t^*\Omega=\Omega$, and we are done with the proof of this proposition.
\end{proof}

Summarizing the above discussion, we have concluded that
the base $Q$ is decorated by
the one-form $\theta: = i_Q^*\lambda$ on the base $Q$ and the bundle $E$ is decorated by
the fiberwise symplectic 2-form $\Omega$. They satisfy
the following additional properties:
\begin{enumerate}
\item $Q = o_F$ carries an $S^1$-action which is locally free.
In particular $Q/S^1$ is a smooth orbifold.
\item The one-form $\theta$ is $S^1$-invariant, and $d\theta$ is a presymplectic form.
\item The bundle $E$ carries an $S^1$-action that preserves the fiberwise 2-form $\Omega$
and hence induces a $S^1$-invariant symplectic vector bundle structure on $E$.
\item The bundle $F = T^*\CN \oplus E \to Q$ carries the direct sum $S^1$-equivariant
vector bundle structure compatible to the $S^1$-action on $Q$.
\end{enumerate}

We summarize the above discussions into the following theorem.

\begin{thm}\label{thm:morsebottsetup} Consider the locus $Q$ of closed Reeb orbits of
a Morse-Bott type contact form $\lambda$.
Let $(TQ)^{\omega_Q} \subset TQ$ be the null distribution of $\omega_Q = i_Q^*d\lambda$ and $\CF$ be
the associated characteristic foliation.
Then the restriction of $\lambda$ to $Q$ induces the following geometric structures:
\begin{enumerate}
\item $Q = o_F$ carries an $S^1$-action which is locally free.
In particular $Q/S^1$ is a smooth orbifold.
Fix an $S^1$-invariant splitting \eqref{eq:splitting}.
\item We have the natural identification
\be\label{eq:NQM}
N_QM \cong T^*\CN \oplus E = F,
\ee
as an $S^1$-equivariant vector bundle, where
$$
E: = (TQ)^{d\lambda}/T\CF
$$
is the symplectic normal bundle.
\item The two-form $d\lambda$ restricts to a nondegenerate skew-symmetric
two-form on $G$, and induces a fiberwise symplectic form
$
\Omega
$
on $E$ defined as above.
\end{enumerate}
\end{thm}

We say that $Q$ is of pre-quantization type if the rank of $d\lambda|_Q$ is maximal
and is of isotropic type if the rank of $d\lambda|_Q$ is zero. The general case will be
a mixture of the two.
\begin{rem}\label{rem:dim3}
In particular when $\dim M = 3$, such $Q$ must be either of
prequantization type or of isotropic type. This is the case that is considered in
\cite{HWZ3}. The general case considered in \cite{bourgeois} and \cite{behwz} includes the mixed type.
\end{rem}

\section{Contact thickening of Morse-Bott contact set-up}
\label{sec:thickening}

Motivated by the isomorphism in Theorem \ref{thm:morsebottsetup}, we consider the pair $(Q,\theta)$ and
the symplectic vector bundle $(E,\Omega) \to Q$ that satisfy the above properties.
We assume that $Q$ is compact and connected.

In the next section, we will associate the model contact form on the direct sum
$$
F = T^*\CN \oplus E
$$
and prove a canonical neighborhood theorem of the locus of closed Reeb orbits of
general contact manifold $(M,\lambda)$ such that the zero section of $F$ corresponds to $Q$.

To state our canonical neighborhood theorem, we need to first
identify the relevant geometric structure of the canonical neighborhoods.
For this purpose, introduction of the following notion is useful.

\begin{defn}[Pre-Contact Form] We call a one-form $\theta$ on a manifold $Q$ a \emph{pre-contact} form
if $d\theta$ has constant rank.
\end{defn}

\subsection{The $S^1$-invariant pre-contact manifold $(Q,\theta)$}
\label{subsec:Q}

First, we consider the pair $(Q,\theta)$ such that $Q$ carries a nontrivial
$S^1$-action preserving the one-form $\theta$. After taking the quotient of $S^1$ by some finite subgroup,
we may assume that the action is effective. We will also assume that the action
is locally free. Then by the general theory of group actions of
compact Lie group (see \cite{helgason} for example),
the action is free on a dense open subset and has only a finite number
of different types of isotropy groups. In particular the quotient
$P: = Q/S^1$ becomes a presymplectic orbifold with a finite number of
different types of orbifold points. We denote by $Y$ the vector field generating the
$S^1$-action, i.e., the $S^1$-action is generated by its flows.

We require that the circle action preserves $\theta$, i.e., $\CL_Y\theta = 0$.
Since the action is locally free and free on a dense open subset of $Q$,
we can normalize the action so that
\be\label{eq:thetaX=1}
\theta(Y) \equiv 1.
\ee
We denote this normalized vector field by $X_\theta$. We would like to emphasize that
$\theta$ may not be a contact form but can be regarded as the connection form of
the circle bundle $S^1 \to Q \to P$ over the orbifold $P$ in general. Although $P$
may carry non-empty set of orbifold points, the connection form $\theta$ is assumed to
be smooth on $Q$.

Similarly as in Lemma \ref{lem:integrableCN}, we also require
the presence of $S^1$-invariant splitting
$$
\ker d\theta = \R \{X_\theta\} \oplus H
$$
such that the subbundle $H$ is also integrable.

With these terminologies introduced, we can rephrase Theorem \ref{thm:morsebottsetup} as the following.

\begin{thm}\label{thm:morsebottsetup2}
Let $Q$ be the locus foliated by closed Reeb orbits of
a contact manifold $(M,\lambda)$ of Morse-Bott type.
Then $Q$ carries a locally free $S^1$-action and
\begin{enumerate}
\item an $S^1$-invariant pre-contact form $\theta$ given by $\theta = i_Q^*\lambda$,
\item a splitting
\be\label{eq:kernel-dtheta}
\ker d\theta = \R\{X_\theta\} \oplus H,
\ee
such that  the distribution $H = \ker d\theta \cap \xi|_Q$ is integrable,
\item
an $S^1$-equivariant symplectic vector bundle $(E,\Omega) \to Q$ with
$$
E = (TQ)^{d\lambda}/\ker d\theta, \quad \Omega = [d\lambda]_E
$$
\end{enumerate}
\end{thm}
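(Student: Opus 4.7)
Since Theorem \ref{thm:morsebottsetup2} is essentially a repackaging of Theorem \ref{thm:morsebottsetup} in the new language of pre-contact manifolds with compatible splitting, the plan is to assemble the structural results already proved in Section \ref{subsec:structure} rather than introduce any fresh analytic input. Each of the three items (1)--(3) corresponds to a lemma or proposition already in hand.

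For item (1), I would invoke Lemma \ref{lem:T-constant} to conclude that on the chosen connected component of $\mathfrak{Reeb}(M,\lambda)$, all Reeb orbits share a common period $T > 0$, so the time-$T$ flow $\phi^T_{X_\lambda}$ generates a locally free $S^1$-action on $Q$. Setting $\theta := i_Q^*\lambda$, the invariance $\CL_{X_\lambda}\lambda = 0$ together with the fact that $X_\lambda$ is tangent to $Q$ yields $\CL_{X_\lambda|_Q}\theta = 0$, so $\theta$ is $S^1$-invariant. Condition (3) of Definition \ref{defn:morse-bott} asserts that $\omega_Q = d\theta$ has constant rank, which is precisely the pre-contact property.

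For item (2), since $X_\lambda$ is tangent to $Q$ and $X_\lambda \rfloor d\lambda = 0$, restriction gives $X_\lambda|_Q \rfloor d\theta = 0$, so a suitable positive rescaling produces a vector field $X_\theta$ with $\theta(X_\theta) \equiv 1$ and $X_\theta \in \ker d\theta$. Decomposing $v \in \ker d\theta$ as $v = \theta(v)\, X_\theta + (v - \theta(v)\, X_\theta)$, the second summand lies in $\ker \theta \cap \ker d\theta = \xi|_Q \cap \ker d\theta = H$, which gives the splitting \eqref{eq:kernel-dtheta}. Integrability of $H$ is exactly the content of Lemma \ref{lem:integrableCN}.

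For item (3), the point to flag is the identification $\ker d\theta = TQ \cap (TQ)^{d\lambda}$, which ensures $\ker d\theta \subset (TQ)^{d\lambda}$ so that $E := (TQ)^{d\lambda}/\ker d\theta$ is well-defined; under this identification $E$ agrees with the bundle denoted $(TQ)^{d\lambda}/T\CF$ in \eqref{eq:E}. The form $d\lambda|_{(TQ)^{d\lambda}}$ has radical exactly $\ker d\theta$, so by symplectic reduction it descends to a nondegenerate fiberwise $2$-form $\Omega = [d\lambda]_E$. The $S^1$-equivariance of both the vector bundle structure on $E$ and of $\Omega$ is then Proposition \ref{prop:S1-bundle}. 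The only subtle bookkeeping is the identification of $\ker d\theta$ as a subbundle simultaneously of $TQ$ and of $(TQ)^{d\lambda}$; once this is verified, everything else is a direct quotation of results already established in Section \ref{subsec:structure}.
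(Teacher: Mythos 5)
Your proposal is correct and follows essentially the same route as the paper: Theorem \ref{thm:morsebottsetup2} is presented there as a rephrasing of Theorem \ref{thm:morsebottsetup}, whose proof is exactly the assembly of Lemma \ref{lem:T-constant}, Lemma \ref{lem:integrableCN}, the nondegeneracy of $[d\lambda]_E$ under symplectic reduction, and Proposition \ref{prop:S1-bundle} that you quote, together with the identification $\ker d\theta = TQ \cap (TQ)^{d\lambda} = T\CF$ that you rightly flag as the only bookkeeping point (your parenthetical ``rescaling'' to get $\theta(X_\theta)=1$ is unnecessary since $\lambda(X_\lambda)\equiv 1$ already, but this is harmless).
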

Here we use the fact that there exists a canonical embedding
$$
E = (TQ)^{d\lambda}/\ker d\theta \hookrightarrow T_QM/ TQ = N_QM
$$
and $d\lambda|_{(TQ)^{d\lambda}}$ canonically induces a bilinear form $[d\lambda]_E$
on $E = (TQ)^{d\lambda}/\ker di_Q^*\lambda$ by symplectic reduction.

\begin{defn} Let $(Q,\theta)$ be a pre-contact manifold
equipped with a locally free $S^1$-action generated by a vector field $Y$,
and with a $S^1$-invariant one-form $\theta$ and
the splitting \eqref{eq:kernel-dtheta}. Assume $\theta(Y) \equiv 1$.
We call such a triple $(Q,\theta,H)$
a \emph{Morse-Bott contact set-up}.
\end{defn}

As before, we denote by $\CF$ and $\CN$
the associated foliations on $Q$, and decompose
$$
T\CF = \R\{X_\theta\} \oplus T\CN.
$$

We define a one-form $\Theta_G$ on $T^*\CN$ as follows. For a tangent
$\xi \in T_\alpha(T^*\CN)$, define
\be\label{eq:thetaG}
\Theta_G(\xi): = \alpha(p_{T\CN;G} d\pi_{(T^*\CN)}(\xi))
\ee
using the splitting
$$
TQ = \R\{X_\theta\} \oplus T\CN \oplus G.
$$
By definition, it follows $\Theta_G|_{VT(T^*\CN)} \equiv 0$ and $d\Theta_G(\alpha)$ is nondegenerate
on
$$
\widetilde{T_q\CN} \oplus VT_\alpha T^*\CN \cong T_q\CN \oplus T_q^*\CN
$$
which becomes
the canonical pairing defined on $T_q\CN \oplus T_q^*\CN$ under the identification.

\subsection{The bundle $E$}

We next examine the structure of
the $S^1$-equivariant symplectic vector bundle $(E, \Omega)$.

We denote by $\vec R$ the radial vector field which
generates the family of radial multiplication
$$
(c, e) \mapsto c\, e.
$$
This vector field is invariant under the given $S^1$-action on $E$, and vanishes on the zero section.
By its definition, $d\pi(\vec R)=0$, i.e., $\vec R$ is in the vertical distribution, denoted by $VTE$, of $TE$.

Denote the canonical isomorphism $V_eTE \cong E_{\pi(e)}$ by $I_{e;\pi(e)}$.
It obviously intertwines the scalar multiplication, i.e.,
$$
I_{e;\pi(e)}(\mu\, \xi) = \mu I_{e;\pi(e)}(\xi)
$$
for a scalar $\mu$.
It also satisfies the following identity \eqref{eq:dRc=Ic} with respect to the derivative of the
fiberwise scalar multiplication map $R_c: E \to E$.

\begin{lem} Let $\xi \in V_e TE$. Then
\be\label{eq:dRc=Ic}
I_{c\, e;\pi(c\, e)}(dR_c(\xi)) = c\, I_{e;\pi(e)}(\xi)
\ee
on $E_{\pi(c\, e)} = E_{\pi(e)}$ for any constant $c$.
\end{lem}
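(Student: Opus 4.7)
The plan is to reduce the statement to the observation that the fiberwise scalar multiplication $R_c$ restricted to a single fiber $E_{\pi(e)}$ is just the linear map ``multiply by $c$'' on the vector space $E_{\pi(e)}$, and then to transport this linearity through the canonical vertical identification $I_{e;\pi(e)}$.

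First, I would recall the definition of the isomorphism $I_{e;\pi(e)} \colon V_eTE \to E_{\pi(e)}$. Given $\xi \in V_eTE$, one chooses any $\eta \in E_{\pi(e)}$ such that the straight line $\gamma_\eta(t) := e + t\eta$ (well defined since both $e$ and $\eta$ lie in the vector space $E_{\pi(e)}$) satisfies $\gamma_\eta'(0) = \xi$; then $I_{e;\pi(e)}(\xi) := \eta$. The linearity part of the lemma, namely $I_{e;\pi(e)}(\mu\xi) = \mu I_{e;\pi(e)}(\xi)$, follows because $\gamma_{\mu\eta}(t) = \gamma_\eta(\mu t)$, whose derivative at $0$ is $\mu\xi$.

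Next, for the main assertion, I would apply $R_c$ to the curve $\gamma_\eta$. Since $R_c$ is fiberwise, $R_c \circ \gamma_\eta$ stays in the fiber $E_{\pi(e)} = E_{\pi(ce)}$, and by fiberwise linearity
\[
R_c(\gamma_\eta(t)) = c(e + t\eta) = ce + t(c\eta) = \gamma_{c\eta}(t),
\]
where the last curve is now based at $ce$. Differentiating at $t=0$ shows
\[
dR_c(\xi) = \gamma_{c\eta}'(0) \in V_{ce}TE,
\]
and applying $I_{ce;\pi(ce)}$ directly from the definition gives $I_{ce;\pi(ce)}(dR_c(\xi)) = c\eta = c\, I_{e;\pi(e)}(\xi)$, which is exactly \eqref{eq:dRc=Ic}.

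There is no serious obstacle here; the lemma is a formal consequence of the fact that on each fiber of a vector bundle, the canonical vertical identification intertwines the linear structure on $V_eTE$ coming from $TE$ with the linear structure on $E_{\pi(e)}$, and that $R_c$ preserves fibers and acts linearly on each. The only thing worth being careful about is to note that after applying $R_c$ one must use the identification $I_{ce;\pi(ce)}$ at the \emph{new} basepoint $ce$ (not $I_{e;\pi(e)}$), which is what makes the identity a genuine compatibility statement rather than a tautology.
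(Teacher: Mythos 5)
Your argument is correct and is essentially the paper's own proof: both compute $dR_c(\xi)$ by pushing the straight-line curve $s \mapsto e + s\eta$ through the fiberwise linear map $R_c$, obtaining the line $s \mapsto ce + s(c\eta)$ based at $ce$, and then read off the result through the canonical vertical identification at the new basepoint. Your write-up simply makes the definition of $I_{e;\pi(e)}$ and the basepoint bookkeeping more explicit than the paper's two-line computation.
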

\begin{proof} We compute
\beastar
I_{c\, e;\pi(c\, e)}(dR_c(\xi)) & = & I_{c\, e;\pi(c\, e)}\left(\frac{d}{ds}\Big|_{s=0}c(e + s\xi)\right) \\
& = & I_{c\, e;\pi(c\, e)}(R_c(\xi)) = c\, I_{e;\pi(e)}(\xi)
\eeastar
which finishes the proof.
\end{proof}

We then define the fiberwise two-form  $\Omega^v$ on $VTE \to E$ by
$$
\Omega^v_e(\xi_1,\xi_2) = \Omega_{\pi_F(e)}(I_{e;\pi(e)}(\xi_1),I_{e;\pi(e)}(\xi_2))
$$
for $\xi_1, \xi_2\in V_eTE$,
and one-form
$
\vec R \rfloor \Omega^v
$
respectively.

Now we introduce an $S^1$-invariant symplectic connection on $E$ and choose the splitting
$$
TE = HTE  \oplus VTE.
$$
Existence of such an invariant connection follows
e.g., by averaging over the compact group $S^1$. We denote by $\widetilde \Omega$ the resulting
two-form on $E$. We extend the fiberwise form $\Omega$ of $E$ into the differential two-form
$\widetilde \Omega$ on $E$ by setting
$$
\widetilde \Omega_e(\xi,\zeta) = \Omega^v_e(\xi^v, \zeta^v).
$$

Denote by $\vec R$ the radial vector field of $E \to Q$ and consider the one-form
\be\label{eq:lambdaF}
\vec R \rfloor \widetilde \Omega
\ee
which is invariant under the action of $S^1$ on $E$.
\begin{rem}
Suppose $d\lambda_E(\cdot, J_E \cdot)=: g_{E;J_E}$ defines a
Hermitian vector bundle $(\xi_E, g_{E,J}, J_E)$.
Then we can write the radial vector field considered in the previous section as
$$
\vec R(e) = \sum_{i=1}^k r_i \frac{\del}{\del r_i}
$$
where $(r_1,\cdots, r_k)$ is the coordinates of $e$ for
a local frame $\{e_1, \cdots, e_k\}$ of the vector bundle $E$. By definition, we have
\be\label{eq:vecRe}
I_{e;\pi_E(e)}(\vec R(e)) = e.
\ee
Obviously the right hand side expression does not depend on the choice of
local frames.
Let $(E,\Omega,J_E)$ be a Hermitian vector bundle and define
$|e|^2 = g_F(e,e)$. Motivated by the terminology used in \cite{bott-tu}, we call the one-form
\be\label{eq:angularform}
\psi = \psi_\Omega = \frac{1}{r} \frac{\del}{\del r} \rfloor \Omega^v
\ee
the \emph{global angular form} for the Hermitian vector bundle $(E,\Omega,J_E)$.
Note that $\psi$ is defined only on $E \setminus o_E$ although
$\Omega$ is globally defined.
\end{rem}

We state the following lemma.

\begin{lem}\label{lem:ROmega} Let $\Omega$ be as above.
Then,
\begin{enumerate}
\item $\vec R \rfloor d \widetilde \Omega = 0$.
\item
For any non-zero constant $c > 0$, we have
$$
R_{c}^*\widetilde \Omega = c^2\, \widetilde \Omega.
$$
\end{enumerate}
\end{lem}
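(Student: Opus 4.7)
The plan is to deduce (1) from (2) by combining Cartan's magic formula with one further local computation, so I would handle (2) first. Throughout, I work in a local trivialization: choose a chart on $Q$ with coordinates $q^\alpha$ and a local frame $\{e_i\}$ of $E$, so that points of $E$ have coordinates $(q^\alpha, r^i)$ via $e = r^i e_i(q)$. Encode the chosen $S^1$-invariant symplectic connection by its Christoffel symbols $\Gamma^i_{j\alpha}$ and set
\[
Dr^i := dr^i + r^k\,\Gamma^i_{k\alpha}\, dq^\alpha,
\]
the vertical projection one-form. Unwinding the definition $\widetilde\Omega_e = \Omega(I\,\cdot, I\,\cdot)$ via the splitting $TE = HTE\oplus VTE$ collapses $\widetilde\Omega$ to the compact local expression $\widetilde\Omega = \tfrac12\Omega_{ij}(q)\,Dr^i\wedge Dr^j$.

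For (2), note that $R_c$ acts as $(q,r)\mapsto(q,cr)$, so $R_c^*dq^\alpha = dq^\alpha$ and $R_c^*Dr^i = c\,Dr^i$; the formula above immediately gives $R_c^*\widetilde\Omega = c^2\widetilde\Omega$. Differentiating this at $c = e^s$, $s=0$, yields $\CL_{\vec R}\widetilde\Omega = 2\widetilde\Omega$. By Cartan's magic formula, proving (1) is therefore equivalent to proving the identity $d(\vec R\rfloor\widetilde\Omega) = 2\widetilde\Omega$.

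To establish this identity I would use $\vec R\rfloor Dr^j = r^j$ to write $\vec R\rfloor\widetilde\Omega = \Omega_{ij}\,r^i\,Dr^j$, then expand $d$ and re-express $dr^i = Dr^i - r^l\Gamma^i_{l\alpha}dq^\alpha$. Three types of terms appear. First, a purely vertical piece $\Omega_{ij}\,Dr^i\wedge Dr^j = 2\widetilde\Omega$, which is what we want. Second, a mixed $dq\wedge Dr$ piece whose coefficient equals $r^i\bigl(\partial_\alpha\Omega_{ij} - \Omega_{kj}\Gamma^k_{i\alpha} - \Omega_{ik}\Gamma^k_{j\alpha}\bigr) = r^i(\nabla_\alpha\Omega)_{ij}$, which vanishes because the chosen connection is symplectic ($\nabla\Omega=0$). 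Third, a pure $dq\wedge dq$ piece emerging from $dDr^j$, which after collecting the antisymmetric part in $(\alpha,\beta)$ takes the shape $\tfrac12\, r^i r^l\,\Omega_{ij}\,R^j{}_{l\,\alpha\beta}\,dq^\alpha\wedge dq^\beta$.

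The hard part is this last piece, and it is where one must use more than just $\nabla\Omega=0$ as an algebraic identity. The point is that for a symplectic connection the curvature endomorphism is infinitesimally symplectic, so $R_{il;\alpha\beta} := \Omega_{ij}R^j{}_{l\,\alpha\beta}$ is antisymmetric in $(i,l)$; contracting it against the symmetric tensor $r^i r^l$ produces zero. With all three pieces accounted for, we obtain $d(\vec R\rfloor\widetilde\Omega) = 2\widetilde\Omega$, which combined with $\CL_{\vec R}\widetilde\Omega = 2\widetilde\Omega$ gives $\vec R\rfloor d\widetilde\Omega = 0$, completing (1). I would close by noting that the entire argument is coordinate-free once phrased through the covariant differential $Dr^i$, and that the symplectic (not merely general linear) nature of the connection is used exactly twice: once to eliminate the mixed term, and once to eliminate the curvature term.
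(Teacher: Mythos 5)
Your part (2) is fine and is essentially the paper's own argument in coordinates: the paper derives $R_c^*\widetilde\Omega=c^2\widetilde\Omega$ invariantly from the identity $I_{c\,e;\pi(c\,e)}\circ dR_c=c\,I_{e;\pi(e)}$, which is exactly your statement $R_c^*Dr^i=c\,Dr^i$ for a linear connection. The problem is in part (1), at the step you yourself single out as "the hard part". For a symplectic connection the curvature endomorphism $R(\partial_\alpha,\partial_\beta)$ lies in $\mathfrak{sp}(E_q,\Omega_q)$, and lowering an index with $\Omega$ identifies $\mathfrak{sp}$ with \emph{symmetric} bilinear forms, not antisymmetric ones: if $\Omega(Au,v)+\Omega(u,Av)=0$, then $\Omega(Av,u)=-\Omega(u,Av)=\Omega(Au,v)$. (This is the symplectic counterpart of the orthogonal fact you seem to be importing; the paper itself uses the same point later when it shows $J_E\,DX_g^\Omega$ is \emph{symmetric}.) Hence $R_{il;\alpha\beta}:=\Omega_{ij}R^j{}_{l\,\alpha\beta}$ is symmetric in $(i,l)$, and its contraction with the symmetric tensor $r^ir^l$ is $\Omega\bigl(R(\partial_\alpha,\partial_\beta)e,e\bigr)$, which has no reason to vanish. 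Concretely, take $E=Q\times\R^2$, $\Omega=dr^1\wedge dr^2$, and connection form $a(q)\,X\,dq^1$ with $X=\begin{pmatrix}0&1\\0&0\end{pmatrix}\in\mathfrak{sp}(2,\R)$: the curvature is a nonzero multiple of $X$ and $\Omega(Xe,e)=(e^2)^2\neq 0$.

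Consequently the $dq\wedge dq$ term in your expansion survives: with the literal definition $\widetilde\Omega=\tfrac12\Omega_{ij}Dr^i\wedge Dr^j$ one finds
\begin{equation*}
d(\vec R\rfloor\widetilde\Omega)=2\widetilde\Omega+\tfrac12\,\Omega_{ij}\,r^ir^k\,R^j{}_{k\,\beta\alpha}\,dq^\beta\wedge dq^\alpha,
\qquad
\vec R\rfloor d\widetilde\Omega=\tfrac12\,\Omega\bigl(e,R(\cdot,\cdot)e\bigr),
\end{equation*}
so the quantity you need to kill is exactly the quadratic curvature form; it vanishes along the zero section (being quadratic in $e$) but not identically unless the connection is flat. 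So your symmetry cancellation is not repairable as stated: a computational proof of (1) in this form would require either flatness or replacing $\widetilde\Omega$ by the closed "minimal coupling" extension in which this curvature correction is built in. Note also that this is a genuinely different route from the paper's: the paper proves (2) by the scaling computation and disposes of (1) with a short appeal to the symplectic-fibration/Ehresmann-connection picture ("the symplectic connection is vertical closed"), then \emph{deduces} $d(\vec R\rfloor\widetilde\Omega)=2\widetilde\Omega$ from the lemma via Cartan, whereas you try to prove that identity directly and read (1) off from it; your calculation makes visible precisely the curvature term that any such argument must account for, and your proposed reason for discarding it is false.
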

\begin{proof}
Notice that $\widetilde\Omega$ is compatible with $\Omega$ in the sense of symplectic fibration
and the symplectic vector bundle connection is nothing but the
Ehresmann connection induced by $\widetilde\Omega$, which is a symplectic connection now.
Since $\vec R$ is vertical, the statement (1) immediately follows from the fact that the symplectic connection is vertical closed.

It remains to prove statement (2).
Let $e \in E$ and $\xi_1, \, \xi_2 \in T_e E$.  By definition, we derive
\beastar
(R_c^*\widetilde \Omega)_e(\xi_1,\xi_2)
& = & \widetilde \Omega_{c\,e}(dR_c(\xi_1^v), dR_c(\xi_2^v))\\
& = & \Omega^v_{c\,e}(dR_c(\xi_1^v), dR_c(\xi_2^v)))
= c^2 \widetilde \Omega_e(\xi_1,\xi_2)
\eeastar
where we use the equality \eqref{eq:dRc=Ic} and $\pi_F(c\, e) = \pi(e)$ for the fourth equality.

This proves $R_c^*\widetilde \Omega = c^2 \widetilde \Omega$.
\end{proof}

It follows from Lemma \ref{lem:ROmega} (2) that we get $\CL_{\vec R}\widetilde\Omega=2\widetilde\Omega$. By Cartan's formula, we get
$$
d(\vec R \rfloor \widetilde \Omega) =  2\widetilde \Omega.
$$

\subsection{Canonical contact form and contact structure on $F$}

Let $(Q,\theta,H)$ be a given  Morse-Bott contact set up and $(E,\Omega) \to Q$ be
any $S^1$-equivariant symplectic vector bundle equipped with an $S^1$-invariant symplectic connection
on it.

Now we equip the bundle $F = T^*\CN \oplus E$ with a canonical $S^1$-invariant contact form
on $F$. We denote the bundle projections by $\pi_{E;F}: F \to E$ and $\pi_{T^*\CN;F}: F \to T^*\CN$
of the splitting $F  = T^*\CN \oplus E$ respectively, and provide the direct sum connection on
$F = T^*\CN \oplus E$.

\begin{thm} Let $(Q,\theta,H)$ be a Morse-Bott contact set-up.
Denote by $\CF$ and $\CN$ the foliations associated to the distribution $\ker d\theta$ and $H$,
respectively. We also denote by $T\CF$, $T\CN$ the associated foliation tangent bundles and
$T^*\CN$ the foliation cotangent bundle of $\CN$. Then for any symplectic vector bundle $(E,\Omega) \to Q$ with an $S^1$-invariant symplectic connection,
the following holds:
\begin{enumerate}
\item
The total space of the bundle $F = T^*\CN \oplus E$ carries a canonical contact form $\lambda_{F;G}$ defined as in \eqref{eq:lambdaF}, for
each choice of complement $G$ such that $TQ = T\CF \oplus G$.
\item
For any two such choices of $G, \, G'$, the associated contact forms are
canonically gauge-equivalent by a bundle map $\psi_{GG'}: TQ \to TQ$ preserving $T\CF$.
\end{enumerate}
\end{thm}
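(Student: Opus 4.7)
The plan is to treat the model form on $F = T^*\CN \oplus E$ as a sum of three natural summands,
\[
\lambda_{F;G} \;=\; \pi_F^*\theta \;+\; \pi_{T^*\CN;F}^*\Theta_G \;+\; \pi_{E;F}^*(\vec R \rfloor \widetilde\Omega),
\]
then verify the contact condition on a neighborhood of the zero section $o_F \cong Q$, and finally compare the two constructions associated to different complements $G, G'$ of $T\CF$ in $TQ$.

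For part (1), the crucial computation happens along $o_F$. Since both $\vec R$ and $\Theta_G$ vanish there, one has $\lambda_{F;G}|_{o_F} = \theta$. Using the splitting
\[
T_qF \;\cong\; T_qQ \oplus T_q^*\CN \oplus E_q
\]
induced by the chosen connections and the splitting $TQ = T\CF \oplus G$, the two-form $d\lambda_{F;G}|_{o_F}$ decomposes in block-diagonal form as $d\theta \oplus \langle\,\cdot\,,\,\cdot\,\rangle_{T\CN,T^*\CN} \oplus 2\Omega_q$, where the first block has kernel $T\CF = \R\{X_\theta\}\oplus T\CN$ and is nondegenerate on $G$, the second is the canonical pairing coming from $d\Theta_G$ (which kills the $T\CN$ part of $\ker d\theta$), and the third follows from Lemma~\ref{lem:ROmega} via $d(\vec R \rfloor \widetilde\Omega) = 2\widetilde\Omega$ restricted to the vertical $E$-direction. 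The total kernel of $d\lambda_{F;G}|_{T_qF}$ is therefore the line $\R\{X_\theta\}$, on which $\lambda_{F;G}(X_\theta) = \theta(X_\theta) = 1$, so $\lambda_{F;G} \wedge (d\lambda_{F;G})^n$ is nonzero along $o_F$ and hence on an open neighborhood. The $S^1$-invariance is inherited summand by summand from the $S^1$-invariance of $\theta$, of the splitting defining $\Theta_G$, and of $\vec R$ and $\widetilde\Omega$.

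For part (2), the natural identification $G \cong TQ/T\CF \cong G'$ extends by the identity on $T\CF$ to a bundle automorphism $\psi_{GG'}: TQ \to TQ$ preserving $T\CF$. The difference $p_{T\CN;G} - p_{T\CN;G'}$ factors through a bundle map $G \to T\CN$, so $\Theta_G - \Theta_{G'}$ is a vertically linear one-form on $T^*\CN$ whose effect must be cancelled by a suitable diffeomorphism of $F$. The plan is to show that $\lambda_{F;G}$ and $\lambda_{F;G'}$ define the same contact structure via a fiber-preserving diffeomorphism $\Psi_{GG'}: F \to F$ equal to the identity on $o_F$, constructed either as an explicit vertical shift in the $T^*\CN$-factor absorbing $\Theta_G - \Theta_{G'}$, or more robustly by a Moser-type isotopy along the affine family $G_t = tG + (1-t)G'$ of complements.

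The hard part is this last step: one must ensure that the Moser vector field along $\{G_t\}$ is tangent to the fibers of $F \to Q$ (so the resulting isotopy preserves $o_F$ and lies inside the canonical neighborhood where contactness is known) and that it can be chosen $S^1$-invariant. This mirrors the contact analogue of Gotay's canonical-neighborhood theorem \cite{gotay} and the symplectic thickening of \cite{oh-park}, with $S^1$-equivariance handled by averaging over the compact group, so I expect the strategy to go through without essential surprises.
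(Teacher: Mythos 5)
Your part (1) is fine and is essentially the paper's own argument: contactness of $\lambda_{F;G}$ on a disc-bundle neighborhood of $o_F$ is read off from $d\lambda_{F;G} = \pi_F^*d\theta + \pi_{T^*\CN;F}^*d\Theta_G + \widetilde \Omega$ by exactly the block computation you spell out (note only that the paper's form carries the factor $\frac{1}{2}$ on $\vec R \rfloor \widetilde\Omega$, so the vertical block is $\Omega$ rather than $2\Omega$).

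The genuine gap is in part (2). The asserted gauge equivalence is by a bundle map $\psi_{GG'}:TQ \to TQ$ preserving $T\CF$, and its proof is a short linear-algebra identity, not a deformation argument: writing $G' = \operatorname{Graph} A_G$ for a bundle map $A_G: G \to T\CN$ and setting $\psi_{GG'}$ to be the identity on $T\CF$ and
$\left(\begin{smallmatrix} Id_{T\CN} & A_G \\ 0 & id_G \end{smallmatrix}\right)$
with respect to $T\CN \oplus G$, one checks $p_{T\CN;G} = p_{T\CN;G'}\circ \psi_{GG'}$, hence $\Theta_G = \Theta_{G'}\circ \psi_{GG'}$; since the summands $\pi_F^*\theta$ and $\frac12\,\pi_{E;F}^*(\vec R\rfloor\widetilde\Omega)$ do not involve $G$, the two contact forms differ precisely by this composition. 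You actually isolated the key fact --- that $p_{T\CN;G}-p_{T\CN;G'}$ factors through a bundle map $G \to T\CN$ --- but then rerouted to constructing a fiber-preserving diffeomorphism $\Psi_{GG'}$ of $F$ and explicitly left the decisive step as an expectation. As written this does not prove (2): the ``explicit vertical shift'' cannot absorb the discrepancy, because translating the $T^*\CN$-fibers by a section $\sigma$ changes $\Theta_{G'}$ only by the pullback of the fiber-independent one-form $\sigma\circ p_{T\CN;G'}$ on $Q$, whereas $\Theta_G-\Theta_{G'}$ vanishes on $o_F$ and is linear in the fiber variable; and a Moser isotopy along $G_t$ would at best produce a (non-canonical) diffeomorphism of a possibly smaller neighborhood of $o_F$ with $\Psi^*\lambda_{F;G'} = f\,\lambda_{F;G}$ up to a conformal factor, which is a different and weaker conclusion than the stated canonical equivalence by a bundle map on $TQ$, and its verticality/$S^1$-invariance is exactly what you do not establish. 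So part (2) needs the pre-composition identity above (which is one more line from your own observation), not the proposed isotopy.
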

\begin{proof} We define a differential one-form on $F$ explicitly by
\be\label{eq:lambdaF}
\lambda_F = \pi_F^*\theta + \pi_{T^*\CN;F}^*\Theta_G +
\frac{1}{2} \pi_{E;F}^*\left(\vec R \rfloor \widetilde \Omega\right).
\ee
Using Lemma \ref{lem:ROmega}, we obtain
\be\label{eq:dlambdaF}
d\lambda_F = \pi_F^*d\theta + \pi_{T^*\CN;F}^*d\Theta_G  + \pi_{E;F}^*\widetilde \Omega
\ee
by taking the differential of \eqref{eq:lambdaF}.

A moment of examination of
this formula gives rise to the following

\begin{prop}\label{lem:deltaforE} There exists some $\delta> 0$ such that
the one-form $\lambda_F$ is a contact form
on the disc bundle $D^{\delta}(F)$, where
$$
D^{\delta}(F) = \{(q,v) \in F \mid \|v\| < \delta \}
$$
such that $\lambda_F|_{o_F} = \theta$ on $Q \cong o_F \subset F$.
\end{prop}
\begin{proof} This immediately follows from the formulae \eqref{eq:lambdaF}
and \eqref{eq:dlambdaF}.
\end{proof}
This proves the statement (1).

For the proof of the statement (2), we first note that the bundle $E$
itself does not depend on the choice of $G$. On the other hand, we put the one-form
$$
\lambda_{F;G} = \pi_F^*\theta + \pi_{T^*\CN;F}^*\Theta_G +
\frac{1}{2}\pi^*_{E;F}\left(\vec R \rfloor \widetilde \Omega\right)
$$
on $E$, which depends on $G$ in general because the one-form $\Theta_G$ does. Furthermore
we recall that the projection map $\pi_{T^*\CN;F}$ also depends on the canonical splitting
\be\label{eq:TQF}
T_Q F = HT_Q F \oplus VT_Q F \cong TQ \oplus F|_Q.
\ee
Now we fix this splitting $T_Q F$ and let $G,\, G'$ be two splittings
of $TQ$
$$
TQ = T\CF \oplus G = T\CF \oplus G'.
$$
Since both $G, \, G'$ are transversal to $T \CF$ in $TQ$,
we can represent $G'$ as the graph of the bundle map $A_G: G \to T\CN$. Then we consider the
bundle isomorphism
$$
\psi_{GG'}: TQ/\R\{X_\lambda\} \to TQ/\R\{X_\lambda\}
$$
defined by
$$
\psi_{GG'} = \left(\begin{matrix} Id_{T\CN} & A_G\\
0 & id_G \end{matrix}\right)
$$
under the splitting $TQ = \R\{X_\lambda\} \oplus T\CN \oplus G$. Then $\psi_{GG'}(G) = \operatorname{Graph} A_G$
and $\psi_{GG'}|_{T\CN} = id_{T\CN}$.
Therefore we have $p_{T\CN;G} = p_{T\CN;G'}\circ \psi_{GG'}$.

We compute
\beastar
\Theta_{G}(\alpha)(\eta) & = &\alpha(p_{T\CN;G}(d\pi_{T^*\CN}(\eta)))\\
& =&\alpha(p_{T\CN;G'}\circ \psi_{GG'}(\eta))
=  \Theta_{G'}(\alpha)(\psi_{GG'}(\eta)).
\eeastar
This proves $\Theta_{G} = \Theta_{G'}\circ \psi_{GG'}$.
\end{proof}

Now we study the contact geometry of $(D^\delta(F),\lambda_F)$. We first note that the two-form
$d\lambda_F$ is a presymplectic form with one dimensional kernel such that
$$
d\lambda_F|_{VTF} = \widetilde \Omega^v|_{VTF}.
$$
Denote by $\widetilde{X}:=(d\pi_{F;H})^{-1}(X)$ the horizontal lifting of the vector field $X$ on $Q$,
where
$$
d\pi_{F;H}:= d\pi_F|_H:HTF \to TQ
$$
is the bijection of the horizontal distribution and $TQ$.

\begin{lem}[Reeb Vector Field] The Reeb vector field $X_F$ of $\lambda_F$ is given by
$$
X_F=\widetilde X_\theta,
$$
where $\widetilde X_\theta$ denotes the horizontal lifting of $X_\theta$ to $F$.
\end{lem}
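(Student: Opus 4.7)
The plan is to verify directly the two defining properties of the Reeb vector field, namely
\[
\lambda_F(X_F) = 1, \qquad X_F \rfloor d\lambda_F = 0,
\]
for the candidate $X_F = \widetilde{X_\theta}$, using the explicit formulas
\[
\lambda_F = \pi_F^*\theta + \pi_{T^*\CN;F}^*\Theta_G + \tfrac{1}{2}\pi_{E;F}^*(\vec R \rfloor \widetilde\Omega),
\qquad
d\lambda_F = \pi_F^*d\theta + \pi_{T^*\CN;F}^*d\Theta_G + \widetilde\Omega,
\]
together with three structural facts: (i) $\theta(X_\theta) \equiv 1$ and $X_\theta \rfloor d\theta = 0$ from the normalization \eqref{eq:thetaX=1} and the splitting $\ker d\theta = \R\{X_\theta\} \oplus H$; (ii) under the splitting $TQ = \R\{X_\theta\} \oplus T\CN \oplus G$, the projection $p_{T\CN;G}$ annihilates $X_\theta$; (iii) $\widetilde{X_\theta}$ is horizontal in $F$, so its vertical component vanishes.

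First I verify $\lambda_F(\widetilde{X_\theta}) = 1$. Since $d\pi_F(\widetilde{X_\theta}) = X_\theta$, the first term gives $\theta(X_\theta) = 1$. The second term equals $\Theta_G(\cdot)(d\pi_{T^*\CN;F}(\widetilde{X_\theta}))$, which by definition \eqref{eq:thetaG} involves $p_{T\CN;G}(X_\theta) = 0$, hence vanishes. The third term pairs $\vec R$, which is vertical in $E$, against $d\pi_{E;F}(\widetilde{X_\theta})$, which is horizontal; since by its definition $\widetilde\Omega$ reduces to $\Omega^v$ on the vertical projections, the pairing vanishes.

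Next I verify $\widetilde{X_\theta} \rfloor d\lambda_F = 0$ term by term. For $\pi_F^*d\theta$, since $X_\theta \in \ker d\theta$ we obtain $\widetilde{X_\theta} \rfloor \pi_F^* d\theta = \pi_F^*(X_\theta \rfloor d\theta) = 0$. For $\pi_{T^*\CN;F}^*d\Theta_G$, I apply Cartan's formula on $T^*\CN$ to the projected vector field (the horizontal lift $\widehat{X_\theta}$ of $X_\theta$ to $T^*\CN$): the $S^1$-invariance of $\theta$, the splitting $TQ = T\CF \oplus G$, and the $S^1$-invariance of the chosen connection imply $\CL_{\widehat{X_\theta}}\Theta_G = 0$, while the computation in the previous paragraph gives $\Theta_G(\widehat{X_\theta}) = 0$; hence $\widehat{X_\theta} \rfloor d\Theta_G = 0$. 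Finally, for the term $\widetilde\Omega$, since $\widetilde{X_\theta}$ is horizontal in $F$ its vertical part vanishes, and $\widetilde\Omega$ depends only on vertical components, so the contraction is zero.

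These two conditions characterize the Reeb vector field uniquely once $\lambda_F$ is known to be a contact form (Proposition \ref{lem:deltaforE}), so we conclude $X_F = \widetilde{X_\theta}$. I do not expect any serious obstacle: the only subtlety is keeping track of how $\widetilde\Omega$ is extended off the vertical distribution, but the construction via the connection-dependent vertical projection makes the horizontal-vertical pairings automatically zero, so both computations collapse to the two normalization facts $\theta(X_\theta) = 1$ and $X_\theta \rfloor d\theta = 0$.
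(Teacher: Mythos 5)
Your proposal is correct and follows essentially the same route as the paper: both verify the two defining properties $\lambda_F(\widetilde X_\theta)=1$ and $\widetilde X_\theta\rfloor d\lambda_F=0$ term by term from the formulas \eqref{eq:lambdaF} and \eqref{eq:dlambdaF}, using $p_{T\CN;G}(X_\theta)=0$, the vertical nature of $\widetilde\Omega$, and the $S^1$-invariance of $\Theta_G$ (your Cartan-formula step is exactly the "straightforward computation" the paper invokes via $S^1$-equivariance and the fact that $\widetilde X_\theta$ generates the action).
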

\begin{proof} We only have to check the defining property $\widetilde X_\theta \rfloor \lambda_F = 1$
and $\widetilde X_\theta \rfloor d\lambda_F = 0$.
We first look at
\beastar
\lambda_F(\widetilde X_\theta)&=& \pi_F^*\theta(\widetilde X_\theta) + \pi_{T^*\CN;F}^*\Theta_G(\widetilde X_\theta)
 + \frac{1}{2} \pi_{E;F}^*\left(\vec R \rfloor \widetilde \Omega\right)(\widetilde X_\theta)\\
 & = & \theta(X_\theta) + 0 + 0 = 1.
 \eeastar
Here $\widetilde{\Omega}(\vec{R}, \widetilde X_\theta) = 0$
by definition of $\widetilde\Omega$  since $\widetilde X_\theta^v = 0$.
Then we calculate
\beastar
\widetilde X_\theta\rfloor d\lambda_F&=&\widetilde X_\theta\rfloor (\pi_F^*d\theta
+\widetilde{\Omega}+\pi_{T^*\CN;F}^*d\Theta_G)\\
&=& \widetilde X_\theta\rfloor \pi_F^*d\theta+\widetilde X_\theta\rfloor \widetilde{\Omega}+\widetilde X_\theta\rfloor \pi_{T^*\CN;F}^*d\Theta_G = 0.
\eeastar
We only need to explain why the last term $\widetilde X_\theta\rfloor \pi_{T^*\CN;F}^*d\Theta_G$ vanishes.
In fact $pr_{\CN;G} d\pi_F(\widetilde X_\theta) = pr_{\CN; G}(X_\theta) = 0$. Using this,
the definition of $\Theta_G$ and the $S^1$-equivariance of the vector bundle $F \to Q$ and the fact that $\widetilde X_\theta$ is
the vector field generating the $S^1$-action, we derive
\beastar
\widetilde X_\theta\rfloor \pi_{T^*\CN;F}^*d\Theta_G =  0
\eeastar
by a straightforward computation.
This finishes the proof.
\end{proof}

Now we calculate the contact structure $\xi_F$.

\begin{lem}[Contact Distribution] \label{lem:decomp-VW}
At each point $(\alpha,e) \in U_F \subset F$, we
define two subspaces of $T_{(\alpha,e)}F$
$$
V:=\{\xi_V \in T_{(\alpha,e)}F \mid \xi_V =-\pi_{T^*\CN;F}^*\Theta_G(\eta)X_F+ \widetilde\eta, \, \eta \in \ker \theta \}
$$
and
$$
W:=\{\xi_W \in T_{(\alpha,e)}F \mid \xi_W :=-\frac{1}{2}\pi_{E;F}^*\widetilde \Omega(e,v)X_F+v, \,  v\in VTF\},
$$
Then
$
\xi_F=V\oplus W.
$
\end{lem}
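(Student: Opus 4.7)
The plan is to verify the splitting by checking three things in sequence: (i) $V \subset \xi_F$, (ii) $W \subset \xi_F$, and (iii) $V \cap W = \{0\}$, and then to match dimensions to conclude that the direct sum exhausts $\xi_F$.

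First I would verify that $V \subset \ker \lambda_F$ by direct computation from the defining formula \eqref{eq:lambdaF}. For $\xi_V = -\Theta_G(\eta) X_F + \widetilde \eta$ with $\eta \in \ker \lambda_Q$, the three summands of $\lambda_F(\xi_V)$ evaluate as follows: the scalar $-\Theta_G(\eta)$ pairs against $\lambda_F(X_F) = 1$ (already established for $X_F = \widetilde X_\theta$); the pullback $\pi_F^\ast \theta$ on $\widetilde \eta$ produces $\theta(\eta) = 0$; and $\pi^\ast_{T^\ast\CN;F}\Theta_G$ on $\widetilde\eta$ produces $\alpha(p_{T\CN;G}(\eta))$, which is precisely the scalar $\Theta_G(\eta)$ in our notation. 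The third summand $\tfrac12 \pi^\ast_{E;F}(\vec R \rfloor \widetilde\Omega)$ vanishes on $\widetilde \eta$ because the horizontal lift has trivial $E$-vertical component and $\widetilde\Omega$ is purely vertical on $E$. The two nonzero contributions cancel.

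Next, for $\xi_W = -\tfrac12 \Omega(e,v) X_F + v$ with $v \in VTF$, decompose $v = v_{T^\ast\CN} + v_E$ under $VTF \cong T^\ast\CN \oplus E$. The horizontal-part contribution from $-\tfrac12\Omega(e,v) X_F$ gives $-\tfrac12 \Omega(e,v_E)$. The $\pi_F^\ast \theta$ and $\pi^\ast_{T^\ast\CN;F}\Theta_G$ pairings on $v$ both vanish because $d\pi_F(v) = 0$ and $\Theta_G$ already annihilates vertical vectors in $T^\ast\CN$. The final summand contributes $\tfrac12 \widetilde\Omega(\vec R, v_E) = \tfrac12 \Omega(e, v_E)$ by the identity $I_{e;\pi(e)}(\vec R(e)) = e$ together with the definition of $\widetilde\Omega$. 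The cancellation is exact, placing $W \subset \ker \lambda_F$.

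For the transversality $V \cap W = \{0\}$, I would use the horizontal-vertical decomposition $TF = HTF \oplus VTF$. Every element of $V$ is entirely horizontal (since both $\widetilde \eta$ and $X_F$ are horizontal lifts), while any element of $W$ has vertical part equal to $v$. Equality therefore forces $v = 0$, which in turn forces $\xi_W = 0$. Then $\xi_V = 0$ on $HTF$ means $-\Theta_G(\eta) X_\theta + \eta = 0$ after projecting by $d\pi_F$; applying $\theta$ and using $\eta \in \ker \theta$ and $\theta(X_\theta) = 1$ yields $\Theta_G(\eta) = 0$ and hence $\eta = 0$. Finally, a dimension count closes the argument: $\dim V = \dim \ker \theta = \dim Q - 1$ and $\dim W = \mathrm{rank}\, F = \dim M - \dim Q$, so $\dim V + \dim W = \dim M - 1 = \dim \xi_F$, forcing $\xi_F = V \oplus W$. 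I expect the only mildly tricky step to be the clean interpretation of the symbol $\Theta_G(\eta)$ (as $\alpha(p_{T\CN;G}(\eta))$ at the footpoint $(\alpha,e)$) so that the cancellation in step (i) is transparent; the rest is bookkeeping with the connection splitting.
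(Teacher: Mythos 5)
Your proof is correct, and it differs from the paper's in one genuine respect: the surjectivity step. The paper asserts the inclusions $V,W\subset\xi_F$ as a "straightforward calculation" (which you carry out explicitly, correctly handling the abuse of notation $\Theta_G(\eta)=\alpha(p_{T\CN;G}(\eta))$ and the vanishing of $\widetilde\Omega$ on horizontal vectors), and then proves $\xi_F=V+W$ constructively: given an arbitrary $\xi\in\xi_F$, it splits $\xi=\xi^{h}+\xi^{v}$, writes $\xi^h=\widetilde\eta+bX_F$, and solves $\lambda_F(\xi)=0$ for $b=-\pi_F^*\Theta_G(\widetilde\eta)$, producing the explicit $V$- and $W$-components; directness is then obtained, as in your argument, by pushing forward under $d\pi_F$ and separating horizontal from vertical parts. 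You instead get exhaustion by a dimension count: injectivity of $\eta\mapsto\xi_V$ and $v\mapsto\xi_W$ gives $\dim V=\dim Q-1$ and $\dim W=\operatorname{rank}F=\dim M-\dim Q$, and since $\lambda_F(X_F)=1$ shows $\lambda_F$ is nonvanishing, $\dim\xi_F=\dim M-1$ forces $\xi_F=V\oplus W$. Both routes are sound; the paper's constructive decomposition has the small advantage of exhibiting the projection of a given $\xi\in\xi_F$ onto $V$ and $W$ explicitly, while yours is a cleaner logical package that isolates exactly what must be computed ($\lambda_F$ on the two families of vectors) and replaces the solving step with linear algebra.
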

\begin{proof}
By straightforward calculation, both $V$ and $W$ are subspaces of $\xi_F = \ker \lambda_F$.

For any $\xi\in \xi_F$, we decompose $\xi=\xi^{\text{h}}+\xi^{\text{v}}$
using the decomposition $TF=HTF\oplus VTF$.
Since $\pi_{T^*\CN;F}^*\Theta_G(\widetilde\eta) = \Theta_G(\eta)$ and we can write $\xi^{\text{v}} = I_{(e;\pi(e))}(v)$
for a unique $v \in E_{\pi(e)}$. Therefore we need to find $b \in \R$, $\eta \in \ker \theta$
so that for the horizontal vector $\xi^{\text{h}}=\widetilde{(\eta+b X_\theta)}$
\beastar
\lambda_F(\xi) & = & 0 \\
- \pi_{T^*\CN;F}^*\Theta_G(\eta)X_F + \widetilde\eta & \in & V\\
-\frac{1}{2} \pi_{E;F}^*\widetilde \Omega (e, v)X_F+v & \in & W.
\eeastar
Then
\beastar
\xi^h&=&\widetilde{(\eta+ b X_\theta)}\\
&=&\widetilde\eta+bX_F
\eeastar
which determines $\eta \in T_{\pi(e)}\CN \oplus G_{\pi(e)}$ uniquely. We need to determine $b$.

Since
\beastar
0=\lambda_F(\xi)&=&\lambda_F(\widetilde\eta+bX_F+\xi^{\text{v}})\\
&=&b+\lambda_F(\widetilde\eta)+\lambda_F(\xi^{\text{v}})\\
&=&b+ \pi_{T^*\CN;F}^*\Theta_G (\widetilde\eta)+\frac{1}{2} \pi_{E;F}^*\widetilde \Omega(e,v)
\eeastar
Then we set $\xi_W = -\frac{1}{2} \pi_{E;F}^*\widetilde \Omega(\vec{R}, v)X_F+ v$ for $v$ such that $I_{e;\pi(e)}(v) = \xi^v$
and then finally choose $b = - \pi_{T^*\CN;F}^*\Theta_G(\widetilde\eta)$ so that
$\xi_V: = - \pi_{T^*\CN;F}^*\Theta_G(\widetilde\eta) X_F + \widetilde \eta$.
Therefore we have proved $\xi_F=V+W$.

 To see it is a direct sum, assume
 $$
 -(\pi_{T^*\CN;F}^*\Theta_G)(\widetilde{\eta})X_F+\widetilde\eta-\frac{1}{2}\pi_{E;F}^*\widetilde \Omega(e, v)X_F+v=0,
 $$
 for some $\eta\in \xi_\lambda$ and $v\in VTM$.
 Apply $d\pi$ to both sides, and it follows that
 $$
 -\big((\pi_{T^*|CN;F}^*\Theta_G)(\widetilde{\eta})+\frac{1}{2} \pi_{E;F}^*\widetilde \Omega(e, v)X_\theta\big)+\eta=0.
 $$
Hence $\eta=0$, and then $v=0$ follows since $X_F$ is in horizontal part.
This finishes the proof.
\end{proof}

\section{Canonical neighborhoods of the locus of closed Reeb orbits}
\label{sec:normalform}

Now let $Q$ be the submanifold of $(M,\lambda)$ that is
foliated by the closed Reeb orbits of $\lambda$ with constant period $T$.
Consider the  Morse-Bott contact set-up $(Q,\theta,H)$ defined as before
and the  symplectic vector bundle $(E,\Omega)$ associated to $Q$.

Now in this section, we prove the following canonical neighborhood
theorem as the converse of Theorem \ref{thm:morsebottsetup2}.

\begin{thm}[Canonical Neighborhood Theorem]\label{thm:neighborhoods2}
Let $Q$ be the submanifold of closed Reeb orbits of
Morse-Bott type contact form $\lambda$, and $(Q,\theta)$ and $(E,\Omega)$
be the associated pair and $F = T^*\CN \oplus E$ defined above.
Then there exist neighborhoods $U$ of $Q$ and $U_F$ of the zero section $o_F$,
and a diffeomorphism $\psi: U_F \to U$ and a function $f: U_F \to \R$ such that
$$
\psi^*\lambda = f\, \lambda_{F;G} \, f|_{o_F} \equiv 1, \, df|_{o_F}\equiv 0
$$
and
$$
i_{o_F}^*\psi^*\lambda = \theta, \quad (\psi^*d\lambda|_{VTF})|_{o_F} = 0\oplus \Omega
$$
where we use the canonical identification of $VTF|_{o_F} \cong T^*\CN \oplus E$ on the
zero section $o_F \cong Q$.
\end{thm}

We first identify the local pair $(\CU, Q) \cong (U_F,Q)$ by a
diffeomorphism $\phi: \CU \to U_F$ such that
$$
\phi|_Q = \textrm{id}_Q, \quad d\phi(N_QM) = T^*\CN \oplus E.
$$
Such a diffeomorphism obviously exists by definition of $E$ and $T^*\CN$ via
the normal exponential map with respect to any metric $g$ (defined on $\CU$)
that satisfies the following property:
We note that we have the associated short exact sequences
\bea
&{}& 0 \to T\CF \to TQ \to TQ/T\CF \to 0\label{eq:sequence1}\\
&{}& 0 \to TQ \to T_QM \to N_QM \to 0\label{eq:sequence2}\\
&{}& 0 \to E \to N_QM \to N_QM/E \to 0\label{eq:sequence3}
\eea
which are $S^1$-equivariant with respect to the above mentioned
natural induced $S^1$-action on $Q$.
We take $S^1$-equivariant splittings of \eqref{eq:sequence2},
\eqref{eq:sequence3} in addition to that of \eqref{eq:sequence1}
used in Theorem \ref{thm:splitting1}. We then choose an $S^1$-equivariant
metric on the vector bundle $N_QM \cong F$ whose associated normal exponential
map of $Q \cong o_F$ respects the above chosen splittings.

From now on, we will sometimes denote $U_F$ by $F$ in the following context
if there is no danger of confusion.
Now $F$ carries two contact forms $\psi^*\lambda, \, \lambda_F$ with $\psi = \phi^{-1}$ and they are the same
on the zero section $o_F$. With this preparation, we will derive Theorem \ref{thm:neighborhoods2} by the following
general submanifold version of Gray's theorem.

\begin{thm}\label{thm:normalform} Let $M$ be an odd dimensional manifold with two contact
forms $\lambda_0$ and $\lambda_1$ on it.
Let $Q$ be a closed manifold of closed Reeb orbits of $\lambda_0$ in $M$ and
\be\label{eq:equalities}
\lambda_0|_{T_QM} =\lambda_1|_{T_QM}, \, d\lambda_0|_{T_QM} = d\lambda_1|_{T_QM}
\ee
where we denote $T_QM = TM|_Q$.
Then there exists a diffeomorphism $\phi$ from a neighborhood $\mathcal{U}$ to $\mathcal{V}$ such that
\be\label{eq:phi}
\phi|_Q = \text{\rm id}_Q,
\ee
and a function $f>0$ such that
$$
\phi^*\lambda_1 = f \cdot\lambda_0,
$$
and
\be\label{eq:f}
f|_Q\equiv 1,\quad df|_{T_QM}\equiv 0.
\ee
\end{thm}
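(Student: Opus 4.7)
The plan is to adapt Moser's deformation trick to the relative setting of the submanifold $Q$. First, I would form the linear interpolation $\lambda_t = (1-t)\lambda_0 + t\lambda_1$ for $t \in [0,1]$. The two equalities in \eqref{eq:equalities} give $\lambda_t|_{T_QM} = \lambda_0|_{T_QM}$ and $d\lambda_t|_{T_QM} = d\lambda_0|_{T_QM}$, so $\lambda_t \wedge (d\lambda_t)^n$ is non-vanishing on $Q$ and hence on some tube $\CU$ about $Q$, for all $t\in[0,1]$ simultaneously. In particular $\xi_t := \ker\lambda_t$ and the Reeb field $X_{\lambda_t}$ are defined and smooth in $(t,p)\in[0,1]\times\CU$.

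Next I would seek a time-dependent vector field $X_t\in \xi_t$ whose flow $\phi_t$ satisfies the conformal Moser equation $\phi_t^*\lambda_t = f_t\,\lambda_0$ with $f_0\equiv 1$ and $\dot f_t = (h_t\circ\phi_t)f_t$. Differentiating this relation and applying Cartan's formula, together with the ansatz $\lambda_t(X_t)\equiv 0$, reduces the problem to
$$
X_t\rfloor d\lambda_t + (\lambda_1-\lambda_0) = h_t\,\lambda_t .
$$
Pairing with $X_{\lambda_t}$ forces $h_t = (\lambda_1-\lambda_0)(X_{\lambda_t})$, and then the non-degeneracy of $d\lambda_t|_{\xi_t}$ uniquely determines $X_t$ inside $\xi_t$. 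The hypothesis $(\lambda_1-\lambda_0)|_{T_QM}=0$ immediately gives $X_t|_Q\equiv 0$ and $h_t|_Q\equiv 0$, so the flow $\phi_t$ exists on a neighborhood of $Q$, fixes $Q$ pointwise (establishing \eqref{eq:phi}), and $f:=f_1$ is positive with $f|_Q\equiv 1$.

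The delicate half of \eqref{eq:f} is the first-order condition $df|_{T_QM}\equiv 0$, and this is where I expect the main difficulty to lie. The key technical step is showing that the conformal factor $h_t$ vanishes to first order on $Q$, i.e., $dh_t|_q=0$ for every $q\in Q$. Extending $v\in T_qM$ to a vector field $V$, one writes
$$
dh_t(v) = \bigl(\CL_V(\lambda_1-\lambda_0)\bigr)(X_{\lambda_t}) + (\lambda_1-\lambda_0)(\CL_V X_{\lambda_t}) .
$$
At $q$ the second term vanishes because $(\lambda_1-\lambda_0)_q=0$. For the first term, Cartan's formula $\CL_V = d\,V\rfloor + V\rfloor d$ plus $d(\lambda_1-\lambda_0)|_{T_QM}=0$ reduces the computation to $X_{\lambda_t}\bigl[(\lambda_1-\lambda_0)(V)\bigr]$, and since $X_{\lambda_t}|_Q = X_{\lambda_0}|_Q$ is tangent to $Q$ while the function $(\lambda_1-\lambda_0)(V)$ vanishes identically along $Q$, this derivative is zero. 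Hence $dh_t|_Q\equiv 0$.

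Finally, differentiating the ODE $\dot f_t = (h_t\circ\phi_t)f_t$ in the spatial directions and restricting to points of $Q$, the facts $\phi_t|_Q=\mathrm{id}$, $h_t|_Q\equiv 0$ and $dh_t|_Q\equiv 0$ together yield $\frac{d}{dt}(df_t|_{T_QM})\equiv 0$. Since $df_0\equiv 0$, this gives $df_t|_{T_QM}\equiv 0$ for all $t$, completing \eqref{eq:f}. The substance of the proof is thus standard Gray/Moser, but the main obstacle is the bookkeeping of the first-order vanishing: one must exploit the value part of \eqref{eq:equalities} to get $X_t|_Q=0$ and the derivative part to get $dh_t|_Q=0$, in a coordinated way.
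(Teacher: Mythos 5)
Your proposal is correct and follows essentially the same route as the paper: the linear interpolation $\lambda_t=(1-t)\lambda_0+t\lambda_1$, the conformal Moser ansatz $\phi_t^*\lambda_t=f_t\lambda_0$ with $X_t\in\ker\lambda_t$, and the key first-order lemma that the conformal coefficient $h_t$ satisfies $dh_t|_{T_QM}=0$, proved by the same Leibniz/Cartan computation exploiting the value part of \eqref{eq:equalities} for the zeroth-order vanishing and the derivative part for the first-order vanishing. The only cosmetic difference is that you normalize $h_t$ by pairing with the Reeb field $X_{\lambda_t}$ of the interpolated form (giving $h_t=(\lambda_1-\lambda_0)(X_{\lambda_t})$), whereas the paper evaluates against the pushed-forward field $(\phi_t)_*X_{\lambda_0}$; both yield the same conclusion.
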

\begin{proof}
By the assumption on $\lambda_0, \, \lambda_1$, there exists a small tubular neighborhood of $Q$ in $M$, denote by $\mathcal{U}$,
such that the isotopy $\lambda_t=(1-t)\lambda_0+t\lambda_1$, $t\in [0,1]$, are contact forms in $\mathcal{U}$:
this follows from the requirement \eqref{eq:equalities}. Moreover, we have
$$
\lambda_t|_{T_QM} \equiv \lambda_0|_{T_QM}(=\lambda_1|_{T_QM}), \quad \text{ for any } t\in [0,1].
$$
Then the standard Moser's trick will finish up the proof. For reader's convenience, we provide the details here.

We are looking for a family of diffeomorphisms onto its image $\phi_t: \CU' \to \CU$
for some smaller open subset $\CU' \subset \overline{\CU'} \subset \CU$
such that
$$
\phi_t|_Q = \textrm{id}_Q, \quad d\phi_t|_{T_QM} = \textrm{id}_{T_QM}
$$
for all $t \in [0,1]$, together with a family of functions $f_t>0$
defined on $\overline{\CU'}$ such that
$$
\phi_t^*\lambda_t = f_t\cdot \lambda_0 \quad \text{on } \, \CU'
$$
for $0 \leq t \leq 1$. We will further require $f_t \equiv 1$ on $Q$ and $df_t|_Q \equiv 0$.

Since $Q$ is a closed manifold, it is enough to look for the vector fields $Y_t$ generating $\phi_t$ via
\be\label{eq:ddtphit}
\frac{d}{dt}\phi_t=Y_t\circ \phi_t, \quad \phi_0=id,
\ee
that satisfies
$$
\begin{cases}
\phi_t^*\left(\frac{d}{dt}\lambda_t+\CL_{Y_t}\lambda_t\right)
= \frac{f_t'}{f_t} \phi_t^*\lambda_t\\
Y_t|_Q\equiv0, \quad \nabla Y_t|_{T_QM} \equiv 0.
\end{cases}
$$
By Cartan's magic formula, the first equation gives rise to
\be\label{eq:cartan}
d(Y_t\rfloor\lambda_t)+Y_t\rfloor d\lambda_t=\CL_{Y_t}\lambda_t=(\frac{f_t'}{f_t}\circ \phi_t^{-1})\lambda_t-\alpha,
\ee
where
$$
\alpha=\lambda_1-\lambda_0 (\equiv \frac{d \lambda_t}{dt}).
$$
Now, we need to show that there exists $Y_t$ such that
$\frac{d}{dt}\lambda_t+\CL_{Y_t}\lambda_t$ is proportional to $\lambda_t$.
Actually, we can make our choice of $Y_t$ unique if we restrict ourselves
to those tangent to $\xi_t = \ker \lambda_t$ by Lemma \ref{lem:decompose}.

We require $Y_t\in \xi_t$ and then \eqref{eq:cartan} becomes
\be\label{eq:alphatYt}
\alpha = - Y_t \rfloor d\lambda_t +(\frac{f_t'}{f_t}\circ \phi_t^{-1})\lambda_t.
\ee
This in turn determines $\phi_t$ by integration.
Since $\alpha|_Q =(\lambda_1-\lambda_0)|_Q=0$ and $\lambda_t|_{T_QM} = \lambda_0|_{T_QM}$,
(and hence $f_t \equiv 1$ on $Q$), it follows that $Y_t=0$ on $Q$. Therefore by compactness of
$[0,1] \times Q$, the domain of existence of the ODE $\dot x = Y_t(x)$
includes an open neighborhood of $[0,1] \times Q \subset \R \times M$ which we may
assume is of the form $(-\epsilon, 1+ \epsilon) \times \CV$.

Now going back to \eqref{eq:alphatYt}, we find that the coefficient
$\frac{f_t'}{f_t}\circ \phi_t^{-1}$ is uniquely determined.
We evaluate $\alpha = \lambda_1 - \lambda_0$ against the vector fields $X_t := (\phi_t)_*X_{\lambda_0}$, and get
\be\label{eq:logft}
\frac{d}{dt}\log f_t = \frac{f_t'}{f_t}=(\lambda_1(X_t)-\lambda_0(X_t))\circ \phi_t,
\ee
which determines $f_t$ by integration with the initial condition $f_0 \equiv 1$.

It remains to check the additional properties \eqref{eq:phi}, \eqref{eq:f}.
We set
$$
h_t = \frac{f_t'}{f_t}\circ \phi_t^{-1}.
$$
\begin{lem}
$$
dh_t|_{T_QM} \equiv 0
$$
\end{lem}
\begin{proof} By \eqref{eq:logft}, we obtain
$$
dh_t = d(\lambda_1(X_t))-d(\lambda_0(X_t)) = \CL_{X_t}(\lambda_1 - \lambda_0) - X_t \rfloor d(\lambda_1 - \lambda_0).
$$
Since $X_t = X_{\lambda_0} = X_{\lambda_1}$ on $Q$, $X_t \in \xi_1 \cap \xi_0$ and so the
second term vanishes.

For the first term, consider $p \in Q$ and $v \in T_pM$. Let $Y$ be a locally defined vector field
with $Y(p) = v$. Then we compute
$$
\CL_{X_t}(\lambda_1 - \lambda_0)(Y)(p) = \CL_{X_t}((\lambda_1 - \lambda_0)(Y))(p) - (\lambda_1 - \lambda_0)(\CL_{X_t}Y)(p).
$$
The second term of the right hand side vanishes since $\lambda_1 = \lambda_0$ on $T_pM$ for $p \in Q$. For the first one, we note
$X_t$ is tangent to $Q$ for all $t$ and $(\lambda_1 - \lambda_0)(Y) \equiv 0$ on $Q$ by the hypothesis
$\lambda_0 = \lambda_1$ on $T_QM$. Therefore the first term also vanishes. This finishes the proof.
\end{proof}

Now we set $g_t = \log f_t$. Since $\phi_t$ is a
diffeomorphism and $\phi_t(Q) \subset Q$, this implies $dg'_t = 0$ on $Q$ for all $t$.
By integrating $dg'_t = 0$ with $dg'_0 = 0$ along $Q$ over time $t = 0$ to $t=1$, which
implies $dg_t = 0$ along $Q$ (meaning $dg_t|_{T_QM} = 0$), i.e., $df_t = 0$ on $Q$.
This completes the proof of Theorem \ref{thm:normalform}.
\end{proof}

Applying this theorem to $\lambda$ and $\lambda_F$ on $F$ with $Q$ as the zero section $o_F$,
we can wrap-up the proof of Theorem \ref{thm:neighborhoods2}

\begin{proof}[Proof of Theorem \ref{thm:neighborhoods2}]
The requirement \eqref{eq:psi*lambda} and the first of \eqref{eq:ioFpsi}
are immediate translations of Theorem \ref{thm:normalform}.
For the second requirement in \eqref{eq:ioFpsi}, we compute
$$
\psi^*d\lambda = df \wedge \lambda_F + f\, d\lambda_F.
$$
By using $df|_{o_F}= 0$ and $f|_{o_F} = 1$, we derive
$$
\psi^*d\lambda|_{VTF} = d\lambda_F|_{VTF} = \Omega
$$
on $o_F$. This then finishes the second an hence finishes the proof.
\end{proof}

\begin{defn}[Normal Form of Contact Form]\label{defn:normalneighborhood} We call $(U_F,f \, \lambda_F)$ the normal
form of the contact form $\lambda$ associated to the Morse-Bott submanifold $Q$ of closed Reeb orbits.
\end{defn}

Note that the contact structures associated to $\psi^*\lambda$ and $\lambda_F$ are the same which is given by
$$
\xi_F = \ker \lambda_F = \ker \psi^*\lambda.
$$
This proves the following normal form theorem of the contact structure $(M,\xi)$
in a neighborhood of $Q$.

\begin{prop} Suppose that $Q \subset M$ be a submanifold of closed Reeb orbits of
$\lambda$. Then there exists a contactomorphism from
a neighborhood $\CU \supset Q$ to a neighborhood of the zero section of
$F$ equipped with $S^1$-equivariant contact structure $\lambda_F$.
\end{prop}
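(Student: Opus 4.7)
The proposition is essentially a corollary of Theorem \ref{thm:neighborhoods2}, and the plan is to extract the contactomorphism statement from the stronger conformal statement already established. First, I would invoke Theorem \ref{thm:neighborhoods2} directly: it produces neighborhoods $U \supset Q$ in $M$ and $U_F$ of the zero section $o_F \subset F$, together with a diffeomorphism $\psi : U_F \to U$ and a positive function $f : U_F \to \R$ (with $f|_{o_F} \equiv 1$, so positivity is guaranteed after possibly shrinking $U_F$) satisfying
\[
\psi^* \lambda = f \, \lambda_F.
\]

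The key observation is that contact structures, as distributions, depend only on the conformal class of defining one-forms. Since $f > 0$ on $U_F$, we have
\[
\ker(\psi^* \lambda) = \ker(f \, \lambda_F) = \ker \lambda_F,
\]
and on the other hand $\ker(\psi^* \lambda) = (d\psi)^{-1}(\ker \lambda)$. Therefore $d\psi$ sends the contact distribution $\xi_F = \ker \lambda_F$ on $U_F$ to the contact distribution $\xi = \ker \lambda$ on $U$, i.e., $\psi$ is a contactomorphism between $(U_F, \xi_F)$ and $(U, \xi|_U)$.

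For the $S^1$-equivariance clause, I would simply note that $\lambda_F$ was constructed as an $S^1$-invariant one-form on $F$ in Section \ref{subsec:Q}--\ref{subsec:three-interval} and in the explicit formula \eqref{eq:lambdaF}: each of the three summands $\pi_F^* \theta$, $\pi_{T^*\CN;F}^* \Theta_G$, and $\pi_{E;F}^*(\vec R \rfloor \widetilde \Omega)$ is $S^1$-invariant by the invariance of $\theta$ on $Q$, of the chosen $S^1$-invariant connection, and of $\widetilde \Omega$ respectively. Hence $\xi_F = \ker \lambda_F$ is an $S^1$-invariant contact structure on $F$, which is exactly the content of the final clause.

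There is no real obstacle here beyond bookkeeping: the substantive geometric content has been done in Theorem \ref{thm:neighborhoods2} (which in turn rests on Theorem \ref{thm:normalform}, the submanifold Moser/Gray argument) and in the explicit $S^1$-equivariant construction of $\lambda_F$. The only thing one must be careful about is to shrink $U_F$ if necessary so that $f$ stays positive and so that $\lambda_F$ itself is contact (as ensured by Proposition \ref{lem:deltaforE}); after this shrinking, the resulting $\psi$ serves as the desired contactomorphism.
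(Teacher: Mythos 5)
Your proposal is correct and follows essentially the same route as the paper: the paper likewise deduces this proposition directly from Theorem \ref{thm:neighborhoods2}, observing that $\psi^*\lambda = f\,\lambda_F$ with $f>0$ forces $\ker\psi^*\lambda = \ker\lambda_F$, so the diffeomorphism is a contactomorphism, with the $S^1$-invariance of $\lambda_F$ built into its construction.
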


\begin{defn}[Normal Form of Contact Structure] We call $(F,\xi_F)$ the normal form of
$(M,\xi)$ associated to the Morse-Bott submanifold $Q$ of closed Reeb orbits.
\end{defn}
However, the Reeb vector fields of $\psi^*\lambda$ and $\lambda_F$ coincide
only along the zero section in general.

In the rest of the paper, we will work with $F$ and for the general contact form $\lambda$
that satisfies
\be\label{eq:F-lambda}
\lambda|_{o_F} \equiv \lambda_F|_{o_F}, \quad d\lambda|_{VTF}|_{o_F} = \Omega.
\ee
In particular $o_F$ is also the locus of closed Reeb orbits  with the same period $T$
of a Morse-Bott contact form $\lambda$.
We re-state the above normal form theorem in this context.

\begin{prop} Let $\lambda$ be any contact form in a neighborhood of $o_F$ on $F$
satisfying \eqref{eq:F-lambda}. Then there exist an open embedding $\psi: \CU \to F$
and a function $f$ on $\CU$ for open neighborhoods $\CU, \, F$ of $o_F$ such that
$\psi|_{oF} = \textrm{id}_{o_F}$,
$\psi^*\lambda = f\, \lambda_F$ with $f|_{o_F} \equiv 1$ and $df|_{o_F} \equiv 0$.
\end{prop}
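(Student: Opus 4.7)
The plan is to derive the proposition as a direct consequence of Theorem \ref{thm:normalform} (the submanifold version of Gray's stability theorem established earlier in this section) applied to the pair $\lambda_0 = \lambda_F$ and $\lambda_1 = \lambda$ on a common neighborhood of $o_F$ in $F$. The proposition is explicitly flagged in the preceding text as a restatement of the normal form theorem in the intrinsic setup on the abstract model $F$, so once the hypotheses of Theorem \ref{thm:normalform} are supplied from \eqref{eq:F-lambda} and the clean-Reeb-orbit hypothesis, the conclusion should drop out.

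To verify the hypotheses, the first equality $\lambda|_{T_{o_F}F} = \lambda_F|_{T_{o_F}F}$ is exactly the first half of \eqref{eq:F-lambda}. The second equality $d\lambda|_{T_{o_F}F} = d\lambda_F|_{T_{o_F}F}$ would be checked case by case using the splitting $T_{o_F}F = T(o_F) \oplus VTF|_{o_F}$: on the $T(o_F)\times T(o_F)$ block both forms restrict to $d\theta$, obtained by applying $d$ to the common pullback $i_{o_F}^*\lambda = \theta = i_{o_F}^*\lambda_F$; on the $VTF|_{o_F}\times VTF|_{o_F}$ block both equal $\Omega$ by the second half of \eqref{eq:F-lambda} together with the formula \eqref{eq:dlambdaF}; on the mixed block one exploits that $o_F$ is a clean manifold of Reeb orbits for both $\lambda$ and $\lambda_F$, so that $X_\lambda|_{o_F} = X_{\lambda_F}|_{o_F} = X_\theta$ and hence $X_\theta \rfloor d\lambda = 0 = X_\theta \rfloor d\lambda_F$ along $o_F$, and the remaining mixed contributions match by the shared $S^1$-equivariance built into the model.

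Once the hypotheses are in place, Theorem \ref{thm:normalform} produces a diffeomorphism $\phi$ that is the identity on $o_F$, together with a positive function $f$, satisfying $\phi^*\lambda = f\lambda_F$ with $f|_{o_F} \equiv 1$ and $df|_{o_F} \equiv 0$. Because the proposition is phrased intrinsically on $F$, we may absorb $\phi$ into the self-identification of $F$, yielding $\lambda = f\lambda_F$ in the form stated.

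The main obstacle is the mixed-component verification in the hypothesis check: without the common Reeb structure on $o_F$, the Moser interpolation $\lambda_t = (1-t)\lambda_F + t\lambda$ could a priori fail to remain contact in a neighborhood of $o_F$, and the entire Moser argument in Theorem \ref{thm:normalform} would break. The clean-manifold assumption on $\lambda$, combined with the explicit structure of $\lambda_F$ recorded in \eqref{eq:lambdaF} and \eqref{eq:dlambdaF}, is precisely what forces the mixed terms of $d\lambda$ and $d\lambda_F$ to agree along $o_F$, closing the argument.
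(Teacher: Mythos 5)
Your reduction of the proposition to Theorem \ref{thm:normalform} (with $\lambda_0=\lambda_F$, $\lambda_1=\lambda$) is indeed how the paper treats it --- the proposition is presented there as a restatement of the normal form theorem, with no separate proof --- but your verification of the hypothesis \eqref{eq:equalities} has a genuine gap at the mixed block, and that gap cannot be closed from \eqref{eq:F-lambda} alone. The condition \eqref{eq:F-lambda} constrains $\lambda$ on $T_{o_F}F$ and $d\lambda$ only on pairs of vertical vectors; it says nothing about $d\lambda(v,w)$ for $v$ tangent to $o_F$ and $w\in VTF|_{o_F}$. Concretely, take $Q=S^1$, $F=E=S^1\times\R^2$ with $\Omega=dx\wedge dy$, so that $\lambda_F=dt+\frac{1}{2}(x\,dy-y\,dx)$, and set $\lambda=e^x\lambda_F$. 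Both conditions of \eqref{eq:F-lambda} hold, yet along $o_F$ one has $d\lambda=dx\wedge dt+dx\wedge dy\neq d\lambda_F$, so \eqref{eq:equalities} fails; moreover the unique conformal factor relating $\lambda$ to $\lambda_F$ is $e^x$, whose differential does not vanish on $T_{o_F}F$, so for this $\lambda$ even the stated conclusion fails. This shows that the two devices you invoke are not available from \eqref{eq:F-lambda}: the claim $X_\lambda|_{o_F}=X_{\lambda_F}|_{o_F}=X_\theta$ (equivalently, that $o_F$ is still a clean manifold of $\lambda$-Reeb orbits) is an additional standing assumption of the surrounding discussion, not a consequence of \eqref{eq:F-lambda} (it is violated in the example), and ``$S^1$-equivariance'' is a property of the model $\lambda_F$ only --- no invariance of $\lambda$ is assumed. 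Even granting $X_\lambda|_{o_F}=X_\theta$, contracting with $X_\theta$ controls only pairs containing $X_\theta$; the mixed pairs $d\lambda(v,w)$ with $v\in T\CN\oplus G$ and $w$ vertical remain uncontrolled.

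Two further remarks. Theorem \ref{thm:normalform} produces $\phi^*\lambda=f\lambda_F$, and ``absorbing $\phi$'' replaces $\lambda$ by $\phi^*\lambda$; this is acceptable if, as in the paper, the proposition is read as a restatement of the normal form theorem (i.e.\ up to a diffeomorphism fixing $o_F$), but it does not yield the literal identity $\lambda=f\,\lambda_F$. If the literal identity is what is wanted, the efficient argument bypasses Moser altogether: in the intended setting $\ker\lambda=\ker\lambda_F$ near $o_F$ (the content of the preceding normal-form-of-contact-structure proposition), so $f$ is defined by $\lambda=f\lambda_F$; the first half of \eqref{eq:F-lambda} gives $f|_{o_F}\equiv 1$; and the standing assumption that $o_F$ consists of $\lambda$-Reeb orbits combined with Proposition \ref{prop:eta} gives $df|_{o_F}\equiv 0$, since on $o_F$ one has $X_\lambda=X_{\lambda_F}+X^{\pi_{\lambda_F}}_{dg}$ with $g=\log f$, hence $X^{\pi_{\lambda_F}}_{dg}|_{o_F}=0$, while $X_{\lambda_F}[g]=0$ along $o_F$ because $g\equiv 0$ there; Lemma \ref{lem:decompose} then forces $dg|_{o_F}=0$. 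Either way, you must make explicit which hypotheses beyond \eqref{eq:F-lambda} you are using; as written, the hypothesis check does not close.
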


We denote $\xi$ and $X_\lambda$ the corresponding contact structure and Reeb vector field of $\lambda$,
and $\pi_\lambda$, $\pi_{\lambda_F}$ the corresponding projection from
$TF$ to $\xi$ and $\xi_F$.

\section{Linearization of closed Reeb orbit on the normal form}
\label{sec:linearization-orbits}

In this section, we systematically examine the decomposition of the linearization map of
closed Reeb orbits in terms of the coordinate expression of
the loops $z$ in $F$ in this normal neighborhood.

For a given map $z: S^1 \to F$, we denote by $x := \pi_F \circ z$. Then
we can express
$$
z(t) = (x(t), s(t)), \quad t \in S^1
$$
where $s(t) \in F_{x(t)}$, i.e., $s$ is the section os $x^*F$.

We regard this decomposition as the map
$$
\CI: C^\infty(S^1, F) \to \CH^F_{S^1}
$$
where $\CH^F$ is the infinite dimensional vector bundle
$$
\CH^F_{S^1} = \bigcup_{x \in C^\infty(S^1,F)} \CH^F_{S^1,x}
$$
where $\CH^F_{S^1,x}$ is the vector space  given by
$$
\CH^F_{S^1,x} = \Omega^0(x^*F)
$$
the set of smooth sections of the pull-back vector bundle $x^*F$. This provides
a coordinate description of $C^\infty(S^1,F)$ in terms of $\CH^F_{S^1}$. We denote the corresponding
coordinates $z = (u_z,s_z)$ when we feel necessary to make the dependence of $(x,s)$ on $z$
explicit.

We fix an $S^1$-invariant connection on $F$ and the associated splitting
\be\label{eq:TF}
TF = HTF \oplus VTF
\ee
which is defined to be the direct sum of the connection of $T^*\CN$ and the $S^1$-invariant
connection on the symplectic vector bundle $(E,\Omega)$.
Then we express
$$
\dot z = \left(\begin{matrix} \widetilde{\dot x} \\
\nabla_t s \end{matrix}\right).
$$
Here we regard $\dot x$ as a $TQ$-valued one-form on $S^1$ and
$\nabla_t s$ is defined to be
$$
\nabla_{\dot x} s = (x^*\nabla)_{\frac{\del}{\del t}} s
$$
 which we regard as an element of $F_{x(t)}$. Through identification of
$H_s TF$ with $T_{\pi_F(s)} Q$ and $V_s TF$ with $F_{\pi(s)}$  or more precisely through the identity
$$
\widetilde{\dot x} \circ I_{s;x} = \dot x,
$$
we will just write
$$
\dot z = \left(\begin{matrix} \dot x \\
\nabla_t s \end{matrix}\right).
$$

Recall that $o_F$ is foliated by the closed Reeb orbits of $\lambda$ which also form
the fibers of the prequantization bundle $Q \to P$.

For a given Reeb orbit $z = (x,s)$, we denote $x(t) = \gamma(T\cdot t)$ where
$\gamma$ is a Reeb orbit of period $T$ of the contact form $\theta$ on $Q$ which is
nothing but a fiber of the prequantization $Q \to P$.
We then decompose
$$
D\Upsilon(z)(Z) = (D\Upsilon(z)(Z))^v + (D\Upsilon(z)(Z))^h.
$$
Then the assignment $Z \mapsto (D\Upsilon(z)(Z))^v$ defines an operator
from $\Gamma(z^*VTF)$ to $\Gamma(z^*VTF)$. We remark that since $VTF \subset \xi$,
we have
\be\label{eq:Dvertical}
D\Upsilon(z)(Z) = D^\pi\Upsilon(z)(Z)
\ee
for any vertical vector field $Z$.

Composing with the map $I_{z;x}$,
we have obtained an operator from $\Omega^0(x^*F)$ to $\Omega^0(x^*F)$.
We denote this operator by
\be\label{eq:Dupsilon}
D\upsilon(x): \Omega^0(x^*F) \to \Omega^0(x^*F).
\ee
Using $X_F = \widetilde X_{\lambda,Q}$ and $\nabla_Y X_F = 0$ for any vertical vector field $Y$,
we derive the following proposition from Lemma \ref{lem:DUpsilon}.
This will be important later for our exponential estimates.

\begin{prop}\label{prop:DupsilonE} Let $D\upsilon = D\upsilon(x)$ be the operator defined above.
Define the vertical Hamiltonian vector field $X^\Omega_g$ by
$$
X^\Omega_g \rfloor \Omega = dg|_{VTF}.
$$
Then
\be\label{eq:nablaeY}
D\upsilon = \nabla_t^F - T\, D^v X_g^{\Omega}(z)
\ee
where $z = (x,o_{x})$.
\end{prop}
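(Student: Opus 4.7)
The plan is to apply Lemma~\ref{lem:DUpsilon} to the setting $\lambda = f\,\lambda_F$ provided by the canonical neighborhood theorem (Theorem~\ref{thm:neighborhoods2}), restrict to the base point $z = (x, o_x)$ on the zero section, and feed in a vertical test variation $Y \in \Gamma(z^*VTF)$. Along $o_F$ we have $f \equiv 1$, $df|_{o_F} \equiv 0$, hence $dg|_{o_F} \equiv 0$ for $g = \log f$; consequently $Y[1/f] = 0$ for every $Y$ based at a point of $o_F$, and the $\lambda_F$-dual vector field $X_{dg}^{\pi_{\lambda_F}}$ vanishes identically along $o_F$. Feeding these simplifications into the expansion $-T\,\nabla_Y X_\lambda = -T\,\nabla_Y\!\bigl(\tfrac{1}{f}(X_{\lambda_F} + X_{dg}^{\pi_{\lambda_F}})\bigr)$ carried out as in Lemma~\ref{lem:DUpsilon}, every term carrying $Y[1/f]$ or an undifferentiated $X_{dg}^{\pi_{\lambda_F}}$ vanishes. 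Combining with the remark stated just before the proposition that $X_F = \widetilde{X_\theta}$ is the horizontal lift of $X_\theta$ under the $S^1$-invariant direct-sum connection on $F$, so that $\nabla_Y X_F = 0$ for every vertical $Y$, and projecting to the vertical subspace via the canonical identification $I_{z;x}\colon V_{o_x}TF \xrightarrow{\cong} x^*F$ (under which $\nabla_t^\pi Y$ becomes $\nabla_t^F Y$ on $x^*F$), the formula collapses to
$$
D\upsilon(x)(Y) = \nabla_t^F Y - T\,\bigl(\nabla_Y X_{dg}^{\pi_{\lambda_F}}\bigr)^v.
$$
The proof thus reduces to the identification $\bigl(\nabla_Y X_{dg}^{\pi_{\lambda_F}}\bigr)^v = D^v X_g^\Omega(z)\cdot Y$ on $o_F$.

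For this identification I would exploit the fact that $X_{dg}^{\pi_{\lambda_F}}$ is a vector field vanishing identically along $o_F$, so its covariant derivative at any $p \in o_F$ is connection-independent and coincides with the intrinsic linearization $D_Y X_{dg}^{\pi_{\lambda_F}}$. Differentiating the defining identity $X_{dg}^{\pi_{\lambda_F}} \rfloor d\lambda_F = dg - dg(X_{\lambda_F})\,\lambda_F$ in the vertical direction $Y$ at a point of $o_F$ kills every term in which $dg$ or $X_{dg}^{\pi_{\lambda_F}}$ appear undifferentiated, leaving only the vertical Hessian $(D^v dg)(Y,\cdot)$. Pairing against an arbitrary vertical $W$ and invoking \eqref{eq:dlambdaF}, which identifies $d\lambda_F|_{VTF\times VTF}|_{o_F}$ with the fiberwise symplectic form $\Omega$, yields
$$
\Omega\bigl((\nabla_Y X_{dg}^{\pi_{\lambda_F}})^v,W\bigr) = (D^v dg)(Y,W) = \Omega\bigl(D^v X_g^\Omega\cdot Y,W\bigr),
$$
where the last equality is just the vertical linearization of the definition $X_g^\Omega \rfloor \Omega = dg|_{VTF}$. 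Nondegeneracy of $\Omega$ then finishes the proof.

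The delicate point will be the tensorial bookkeeping needed to ensure that only the fiberwise $\widetilde\Omega$-piece of $d\lambda_F$ contributes to the pairing above, i.e., that the horizontal contributions from the pullback terms $\pi_F^*d\theta$ and $\pi_{T^*\CN;F}^*d\Theta_G$ in \eqref{eq:dlambdaF} vanish when evaluated on a vertical test vector at a point of the zero section. This is straightforward from the explicit formulas of the canonical model (using $d\pi_F(W) = 0$ and that $\Theta_G$ itself vanishes on vertical vectors of $T(T^*\CN)$), but it is the one place where the structure of the normal form is genuinely used.
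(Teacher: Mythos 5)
Your reduction is exactly the paper's: apply Lemma \ref{lem:DUpsilon} to the triad $(U_F,\lambda_F,J)$ with a vertical variation $Y$, kill every term carrying $Y[1/f]$ or an undifferentiated $X^{\pi_{\lambda_F}}_{dg}$ using $f|_{o_F}\equiv 1$, $df|_{o_F}\equiv 0$ (hence $dg$, and with it $X^{\pi_{\lambda_F}}_{dg}$, vanishes along $o_F$), use $\nabla_Y X_F=0$ for vertical $Y$, and project vertically via $I_{z;x}$ — indeed you track the factor $T$ more carefully than the paper's own proof does. The only divergence is the last step: where the paper simply invokes ``by definition'' the identity $(X_{dg}^{\pi_{\lambda_F}})^v(s)=X^{\Omega^v(s)}_{g|_{F_{\pi(s)}}}$ recorded in \eqref{eq:Hamiltonian}, whose vertical linearization along $o_F$ is $D^vX^\Omega_g$, you re-derive it by differentiating the defining identity of the $\lambda_F$-dual field and pairing with $d\lambda_F$. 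That is a legitimate alternative, and in the prequantization setting $F=E$ (where the proposition is actually used for the $e$-equation) your pairing argument closes up correctly; note only that the term $Y\bigl(dg(X_{\lambda_F})\bigr)\lambda_F$ survives the differentiation and dies not ``because $dg$ is undifferentiated'' but because $\lambda_F(W)=0$ for vertical $W$ at points of $o_F$.

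There is, however, one incorrect justification in your final paragraph, and it matters exactly in the general normal form $F=T^*\CN\oplus E$ for which the proposition is stated. From the fact that $\Theta_G$ vanishes on vertical vectors of $T(T^*\CN)$ you cannot conclude that the term $\pi_{T^*\CN;F}^*d\Theta_G$ in \eqref{eq:dlambdaF} drops out when one slot is vertical: by construction $d\Theta_G$ restricted to $\widetilde{T_q\CN}\oplus VT(T^*\CN)$ is the canonical pairing of $T\CN$ with $T^*\CN$, which is nondegenerate. Hence $d\lambda_F\bigl(\nabla_Y X^{\pi_{\lambda_F}}_{dg},W\bigr)$ contains the cross term $d\Theta_G\bigl((\nabla_Y X^{\pi_{\lambda_F}}_{dg})^h,W\bigr)$ whenever $W$ has a nonzero $T^*\CN$-component, and relatedly $d\lambda_F|_{VTF}$ at $o_F$ equals $0\oplus\Omega$, which is degenerate when $T\CN\neq\{0\}$, so ``nondegeneracy of $\Omega$'' cannot finish the argument on all of $VTF$. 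The repair is either to run your pairing only against $W\in E$ (which is all that is needed for the $E$-component; the $T^*\CN$-component is treated together with the horizontal part, as in Section \ref{sec:general}), or to argue as the paper implicitly does, namely that the vertical component of $X^{\pi_{\lambda_F}}_{dg}$ is the fiberwise Hamiltonian vector field of $g$ restricted to the fibers, so that its vertical linearization along $o_F$ is $D^vX^\Omega_g$ directly, without any pairing against general vertical test vectors.
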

\begin{proof}
Consider a vertical vector field $Z \in VTF$ along a Reeb orbit $z$ as above
and regard it as the section of $z^*F$ defined by
$$
s_Z(t) = I_{z;x}(Z(t))
$$
where $\gamma$ is a Reeb orbit with period $T$ of $X_{\lambda,Q}$ on $o_F\cong Q$.
Recall the formula
\bea\label{eq:DUpsilonE}
D\Upsilon(z)(Z) &= & D^\pi \Upsilon(z)(Z) \nonumber \\
& = & \nabla_t^\pi Z - T \left(\frac{1}{f}\nabla_Z X_{\lambda_F} + Z[1/f] X_{\lambda_F} \right)
- T \left(\frac{1}{f} \nabla_Z Y_{dg} + Z[1/f] Y_{dg}\right) \nonumber\\
&{}&
\eea
from \eqref{eq:Dvertical}, and Lemma \ref{lem:DUpsilon} which we apply to the vertical vector field $Z$ for
the contact manifold $(U_F,\lambda_F)$.

We recall $f \equiv 1$ on $o_F$ and $df \equiv 0$ on $TF|_{o_F}$.
Therefore we have $Z[1/f] = 0$. Furthermore recall $X_F = \widetilde X_{\lambda,Q}$ and
$$
\nabla_Z X_F = D^v X_F (s_Z) = D^v \widetilde X_\theta(s_Z) = 0
$$
on $o_F$. On the other hand, by definition, we derive
$$
 I_{z;x}\left(\frac{1}{f}\nabla_Z Y_{dg}\right) =
D^v X_g^\Omega (s_Z).
$$
By substituting this into \eqref{eq:DUpsilonE} and composing with $I_{z;x}$,
we have finished the proof.
\end{proof}

By construction, it follows that the vector field along $z$ defined by
$$
t \mapsto \phi_{X_\theta}^t(v), \quad t \in [0,1]
$$
for any $v \in T_{z(0)} Q$ lie in $\ker D\Upsilon(z)$.
By the Morse-Bott hypothesis, this set of vector fields exhausts
$
\ker D\Upsilon(z).
$
We denote by $\delta > 0$ the gap between $0$ and the first non-zero eigenvalue of
$D\Upsilon(z)$. Then we obtain the following

\begin{cor}\label{cor:gap} Let $z = (x_z, o_{x_z})$ be a Reeb orbit. Then
for any section $s \in \Omega^0(x^*F)$, we have
\be\label{eq:Dupsilon-gap}
\|\nabla_t^F s - T D^v X_g^{\Omega}(z)(s)\|^2 \geq \delta^2 \|s\|_2^2.
\ee
\end{cor}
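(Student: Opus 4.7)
The strategy is to directly combine Proposition \ref{prop:DupsilonE} with the definition of the spectral gap $\delta$ by showing that the operator $D\upsilon(x)$ on $\Omega^0(x^*F)$ is nothing but the vertical block of $D\Upsilon(z)$ under the splitting $TF = HTF \oplus VTF$, and that this block carries the entire kernel obstruction to the eigenvalue zero.

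First, by Proposition \ref{prop:DupsilonE}, the left-hand side of \eqref{eq:Dupsilon-gap} is precisely $\|D\upsilon(x)(s)\|_2^2$. I would then verify that, along $z = (x, o_x) \subset o_F$, the linearization $D\Upsilon(z)$ from Lemma \ref{lem:DUpsilon} preserves the splitting $TF|_{o_F} = HTF|_{o_F} \oplus VTF|_{o_F} \cong TQ \oplus F$. The key inputs are: (i) $f \equiv 1$ and $df|_{o_F} \equiv 0$, so the $Y[1/f]$ terms drop out; (ii) $X_F = \widetilde{X_\theta}$ is the horizontal lift of an $S^1$-invariant vector field on $Q$, so $\nabla_Y X_F$ vanishes for vertical $Y$ (using the $S^1$-invariant direct-sum connection on $F$); and (iii) the Hamiltonian vector field $X_g^{\pi}$ reduces on $o_F$ to the vertical Hamiltonian field $X_g^\Omega$ whose linearization preserves $VTF$. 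Consequently the spectrum of $D\Upsilon(z)$ splits as the disjoint union of the spectra of its horizontal and vertical blocks.

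Next, invoking the Morse-Bott hypothesis, $\ker D\Upsilon(z)$ is exhausted by infinitesimal deformations tangent to $Q$, which lie entirely in the horizontal subspace $HTF|_{o_F} \cong TQ$. Hence the vertical block $D\upsilon$ has trivial kernel, and by the splitting in the previous step, every nonzero eigenvalue of $D\upsilon$ satisfies $|\mu| \geq \delta$. Since $D\upsilon$ has the form $\nabla_t^F + B$ with $B = -T D^v X_g^{\Omega}(z)$ symmetric with respect to the Hermitian $L^2$-pairing on $x^*F$ induced by $\Omega$ and the adapted $J$ (Hamiltonian-ness of the vertical linearization forces $B$ to be symmetric, so $D\upsilon$ is essentially self-adjoint on $L^2(S^1, x^*F)$), the spectral theorem yields
\[
\|D\upsilon(s)\|_2^2 \;=\; \sum_\mu |\mu|^2 \,|s_\mu|^2 \;\geq\; \delta^2 \sum_\mu |s_\mu|^2 \;=\; \delta^2 \|s\|_2^2,
\]
which is the desired inequality.

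The only nontrivial point in this plan is the self-adjointness of $D\upsilon$, equivalently the symmetry of $D^v X_g^{\Omega}(z)$ on fibers of $F$. This is where the Hamiltonian interpretation of the vertical linearization---together with compatibility of $J$ with $\Omega$ on $E$ and with the canonical pairing on $T^*\CN$---is essential; once that is in hand, the eigenvalue splitting and the gap argument give the corollary immediately.
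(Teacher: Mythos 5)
Your proposal is correct and takes essentially the same route as the paper, which states the corollary without a separate proof, treating it as an immediate consequence of the identification $D\upsilon = \nabla_t^F - T\,D^v X_g^{\Omega}(z)$ from Proposition \ref{prop:DupsilonE}, the Morse-Bott fact that $\ker D\Upsilon(z)$ is exhausted by the vectors $t \mapsto \phi_{X_\theta}^t(v)$ tangent to $Q$ (hence horizontal), and the definition of $\delta$ as the gap of $D\Upsilon(z)$. Your write-up simply makes explicit the block-splitting along $o_F$ and the self-adjointness/spectral step that the paper leaves implicit.
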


This inequality plays a crucial role in the study of exponential convergence of
contact instantons in the Morse-Bott context studied later in the present paper.

\section{Normal coordinates of $dw$ in $(U_F,f \lambda_F)$}
\label{sec:coord}

We fix the splitting $TF = HTF \oplus VTF$ given in \eqref{eq:TF} and
consider the decomposition of $w = (u,s)$ according to the splitting.
For a given map $w: \dot \Sigma \to F$, we denote by $u := \pi_F \circ w$. Then
we can express
$$
w(z) = (u(z), s(z)), \quad z \in \Sigma
$$
where $s(z) \in F_{u(z)}$, i.e., $s$ is the section of $u^*F$.

We regard this decomposition as the map
$$
\CI: \CF(\Sigma, F) \to \CH^F_\Sigma
$$
where $\CH^F$ is the infinite dimensional vector bundle
$$
\CH^F_\Sigma = \bigcup_{u \in \CF(\Sigma,F)} \CH^F_{\Sigma,u}
$$
where $\CH^F_{\Sigma,u}$ is the vector space  given by
$$
\CH^F_{\Sigma,u} = \Omega^0(u^*F)
$$
the set of smooth sections of the pull-back vector bundle $u^*F$. This provides
a coordinate description of $\CF(\Sigma,F)$ in terms of $\CH^F_\Sigma$. We denote the corresponding
coordinates $w = (u_w,s_w)$ when we feel necessary to make the dependence of $(u,s)$ on $w$
explicit.

In terms of the splitting \eqref{eq:TF}, we express
$$
dw = \left(\begin{matrix} \widetilde{du} \\
\nabla_{du} s \end{matrix}\right).
$$
Here we regard $du$ as a $TQ$-valued one-form on $\dot\Sigma$ and
$\nabla_{du} s$ is defined to be
$$
\nabla_{du(\eta)} s(z) = (u^*\nabla)_{\eta} s
$$
for a tangent vector $\eta \in T_z\Sigma$, which we regard as an element of $F_{u(z)}$. Through identification of
$HTF_s$ with $T_{\pi_F(s)} Q$ and $VTF_s$ with $F_{\pi_F(s)}$  or more precisely through the identity
$$
I_{w;u}(\widetilde{du}) = du,
$$
we will just write
$$
dw = \left(\begin{matrix} du \\
\nabla_{du} s \end{matrix}\right)
$$
from now on, unless it is necessary to emphasize the fact that $dw$ a priori has values
in $TF = HTF \oplus VTF$, not $TQ \oplus F$.

To write them in terms of the coordinates $w = (u,s)$,  we first derive the
formula for the projection $d^\pi w = d^{\pi_{\lambda}}w$ with $\lambda = f\, \lambda_F$.
For this purpose, we recall the formula for $X_{f\lambda_F}$ from Proposition \ref{prop:eta} subsection
\ref{subsec:perturbed-forms}
$$
X_{f\lambda_F} = \frac{1}{f}(X_\lambda + Y_{dg}), \quad Y_{dg}: = \pi_{\lambda_F}(\flat_{\lambda_F}(dg))
$$
for $g = \log f$. We decompose
$$
Y_{dg} = (Y_{dg})^v + (Y_{dg})^h
$$
into the vertical and the horizontal components. This leads us to the decomposition
\be\label{eq:XflambdaF-decompo}
f\, X_{f\lambda_F}  = (Y_{dg})^h + X_{\lambda_F} + (Y_{dg})^v
\ee
in terms of the splitting
$$
TF = \widetilde{(\xi_\lambda\cap TQ)} \oplus \R\{X_F\} \oplus VTF, \quad HTF = \widetilde{(\xi_\lambda\cap TQ)} \oplus \R\{X_F\}.
$$
Recalling $d\lambda_F = \pi_F^*d\theta + \pi_{T^*\CN;F}^*d\Theta_G  + \pi_{E;F}^*\widetilde \Omega$, and since $d\Theta_G$ vanishes on $VTF$, we have derived

\begin{lem} At each $s \in F$,
\be\label{eq:Hamiltonian}
(Y_{dg})^v(s) = X_{g|_{F_{\pi(s)}}}^{\Omega^v(s)}.
\ee
\end{lem}

Now we are ready to derive an important formula that will play a
crucial role in our exponential estimates in later sections. Recalling the canonical
isomorphism
$$
I_{s;\pi_F (s)}; VTF_s \to F_{\pi(s)}
$$
we introduced in section 2, we define the following \emph{vertical derivative}

\begin{defn}\label{defn:vertical-derive}
Let $X$ be a vector field on $F \to Q$. The \emph{vertical derivative},
denoted by $D^v X: F \to F$ is the map defined by
\be\label{eq:DvX}
D^vX(q)(f): = \frac{d}{dr}\Big|_{r=0} I_{r f;\pi(r f)}(X^v(r f))
\ee
\end{defn}

\begin{prop} Let $(E,\Omega, J_E)$ be the Hermitian vector bundle
for $\Omega$ defined as before.
Let $g = \log f$ and $X_g^{d\lambda_E}$ be the contact Hamiltonian
vector field as above. Then we have
$$
J_E D^v Y_{dg} = \operatorname{Hess}^v g(q,o_q).
$$
In particular, $J_E D^v X_g^{d\lambda_E}: E \to E$
is a symmetric endomorphism with respect to the metric
$g_E = \Omega(\cdot, J_E\cdot)$.
\end{prop}
\begin{proof} Let $q \in Q$ and $e_1, \, e_2 \in E_q$. We compute
\beastar
\langle D^v Y_{dg}(q)e_1, e_2 \rangle & = & \Omega (D^v Y_{dg}(q) e_1,J_E e_2) \\
d\lambda_E(D^v Y_{dg}(q) e_1,J_E e_2) & = &
\Omega \left(\frac{d}{dr}\Big|_{r=0} I_{re_1;q}((Y_{dg})^v (re_1)), J_E e_2\right) \\
& = &  \Omega \left(\frac{d}{dr}\Big|_{r=0} I_{re_1;q}(X_g^{\Omega}(re_1)), J_E e_2\right).
\eeastar
Here $\frac{d}{dr}\Big|_{r=0} X_g^{\Omega}(re_1)$ is nothing but
$$
DX_g^{\Omega}(q)(e_1)
$$
where $DX_g^{\Omega}(q)$ is the linearization of the Hamiltonian vector field of $g|_{E_{q}}$ of the
symplectic inner product $\Omega(q)$ on $E_q$. Therefore it lies at the symplectic
Lie algebra $sp(\Omega)$ and so satisfies
\be\label{eq:DXg-symp}
\Omega(DX_g^{\Omega}(q)(e_1), e_2) + \Omega(e_1, DX_g^{\Omega}(q)(e_2)) = 0
\ee
which is equivalent to saying that $J_E DX_g^{\Omega}(q)$ is symmetric with
respect to the inner product $g_E = \Omega(\cdot, J_E \cdot)$. But we also have
$$
J_E DX_g^{\Omega}(q) = D \grad_{g_E(q)} g|_{E_q} = \operatorname{Hess}^v g(q).
$$

On the other hand, \eqref{eq:DXg-symp} also implies
$$
\Omega(DX_g^{\Omega}(q)(J_E e_1), e_2) - \Omega(DX_g^{\Omega}(q)(e_2), J_E e_1) = 0
$$
with $e_1$ replaced by $J e_1$ therein. The first term becomes
$$
\langle DX_g^{\Omega}(q)(e_1), e_2 \rangle
$$
and the second term can be written as
\beastar
\Omega(DX_g^{\Omega}(q)(J_E e_2), e_1)& = & - \Omega(J_E e_2, DX_g^{\Omega}(q)(e_1))\\
& = & \Omega(e_2, J_E DX_g^{\Omega}(q)(e_1)) \\
& = & \langle e_2,  DX_g^{\Omega}(q)(e_1)\rangle.
\eeastar
Combining the two, we have finished the proof.
\end{proof}

\section{$CR$-almost complex structures adapted to $Q$}
\label{sec:adapted}

\emph{We would like to emphasize that we have not involved any
almost complex structure yet. Now we involve $J$ in our discussion.}

Let $J$ be any $CR$-almost complex structure compatible to $\lambda$ in that
$(M,\lambda,J)$ defines a contact triad and denote by $g$ the triad metric.
Then we can realize the normal bundle $N_QM = T_QM /TQ$ as the metric normal bundle
$$
N^g_QM = \{ v \in T_QM \mid d\lambda(v, J w) = 0, \forall w \in TQ\}.
$$
We start with the following obvious lemma

\begin{lem} Consider the foliation $\CN$ of $(Q,\omega_Q)$, where $\omega_Q = i_Q^*d\lambda$.
Then $JT\CN$ is perpendicular to $TQ$ with respect to the triad metric of $(M,\lambda,J)$.
In particular $JT\CN \subset N^g_QM$.
\end{lem}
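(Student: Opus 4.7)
The plan is to verify directly that $g(Jv,w) = 0$ for every $v \in T\CN$ and every $w \in TQ$, by chaining three ingredients: the explicit formula for the triad metric $g$, the $J$-invariance of $d\lambda|_\xi$, and the defining property $T\CN \subset \ker d\theta \cap \xi|_Q$. Once this $g$-orthogonality is in hand, the inclusion $JT\CN \subset N^g_QM$ follows immediately by comparing the definition of $N^g_QM$ given in the excerpt with the triad metric formula.

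First I would recall that the triad metric of $(M,\lambda,J)$ is
\[
g(X,Y) = \lambda(X)\lambda(Y) + d\lambda(\pi X, J\pi Y),
\]
so that $X_\lambda \perp \xi$ and $g|_\xi = d\lambda(\cdot, J\cdot)|_\xi$. Fix $v \in T\CN$; since $T\CN \subset \xi|_Q$, both $v$ and $Jv$ lie in $\xi$, whence $\lambda(Jv) = 0$ and $\pi(Jv) = Jv$. Consequently for every $w \in TQ$
\[
g(Jv,w) = d\lambda(Jv, J\pi w).
\]

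Next I would invoke the $J$-invariance of $d\lambda|_\xi$ built into the Hermitian structure of the contact triad to rewrite this as $d\lambda(v,\pi w)$. Because $Q$ is foliated by Reeb orbits we have $X_\lambda \in TQ$, so the projection $\pi w = w - \lambda(w)X_\lambda$ again lies in $TQ$, in fact inside $TQ \cap \xi|_Q$. The hypothesis $T\CN \subset \ker d\theta = \ker i_Q^* d\lambda$ then gives $d\lambda(v,\pi w) = d\theta(v, \pi w)=0$, which yields $g(Jv,w) = 0$ and proves the perpendicularity.

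For the ``in particular'' clause, I would simply observe that a vector $u \in T_QM$ lies in $N^g_QM$ iff $d\lambda(u,Jw) = 0$ for every $w \in TQ$; since $JX_\lambda = 0$, this collapses to the condition $d\lambda(u, J\pi w) = 0$ on $TQ$, which is exactly the chain of identities just established with $u = Jv$. Hence $JT\CN \subset N^g_QM$. There is no genuine obstacle in the argument; the only point meriting mild care is keeping track that $\pi w$ stays inside $TQ$, which is guaranteed precisely by $\R\{X_\lambda\} \subset TQ$.
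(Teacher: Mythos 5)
Your proof is correct and follows essentially the same route as the paper, whose one-line argument ("$T\CN$ is isotropic with respect to $\omega_Q$") is exactly your computation spelled out: the triad metric reduces on $\xi$ to $d\lambda(\cdot,J\cdot)$, $J$-invariance of $d\lambda|_\xi$ converts $g(Jv,w)$ into $d\lambda(v,\pi w)$, and this vanishes because $\pi w\in TQ$ (as $X_\lambda\in TQ$) while $v\in\ker\omega_Q\cap\xi|_Q$. The "in particular" clause is likewise the intended immediate consequence of the definition of $N^g_QM$.
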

\begin{proof} The first statement follows from the property that
$T\CN$ is isotropic with respect to $\omega_Q$.
\end{proof}

Now, we introduce the concept of almost complex structures adapted to the locus $Q$ of closed Reeb orbits
of $M$.

\begin{defn}\label{defn:adapted} Let $Q \subset M$ be the locus of closed Reeb orbits of
Morse-Bott contact form $\lambda$.
Suppose $J$ defines a contact triad $(M,\lambda,J)$.
We say a $CR$-almost complex structure $J$ for $(M,\xi)$ is adapted to
the submanifold $Q$ if $J$ satisfies
\be\label{eq:JTQ}
J (TQ) \subset TQ + J T\CN.
\ee
\end{defn}

\begin{prop}\label{prop:adapted} The set of adapted $J$ relative to $Q$ is nonempty and is a contractible
infinite dimensional manifold.
\end{prop}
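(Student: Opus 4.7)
The plan is to reduce the adaptedness condition to $J|_{\xi|_Q}$, build one adapted $J$ blockwise over $Q$ using the structure assembled in Part \ref{part:coordinate}, and then deduce contractibility by a fibration argument. For any CR-almost complex structure $J$ one has $JX_\lambda = 0$ (immediate from $J^2 = -\Pi$ together with the nondegeneracy of $J|_\xi$). Fix an $S^1$-invariant splitting $TQ = \R\{X_\lambda\} \oplus T\CN \oplus G$ with $G \subset \xi|_Q$ --- such a $G$ exists because $\ker \theta = \xi \cap TQ$ has codimension one in $TQ$ and contains $T\CN$. Then condition \eqref{eq:JTQ} collapses to the single requirement
\be\label{eq:adapt-reduced}
J(G) \subset T\CN \oplus G \oplus JT\CN \subset \xi|_Q,
\ee
since $J(\R X_\lambda) = 0$ and $J(T\CN) = JT\CN$ land in the right hand side tautologically. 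Using that $T\CN$ is $d\lambda$-isotropic and $d\lambda$-orthogonal to $G$, together with the compatibility metric $g = d\lambda(\cdot,J\cdot)$, one checks that the three summands on the right of \eqref{eq:adapt-reduced} are pairwise independent for any compatible $J$; in particular adaptedness depends only on $J|_{\xi|_Q}$.

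For nonemptiness, construct $J|_{\xi|_Q}$ blockwise. The subbundle $(G, d\lambda|_G)$ is symplectic since $G$ is complementary to $\ker d\theta|_{TQ} = T\CF$; choose a compatible complex structure $J_G$ on it. Next, choose any bundle monomorphism $\iota: T\CN \hookrightarrow \xi|_Q$ whose image $K$ is transverse to $T\CN \oplus G$ and satisfies $d\lambda(v, \iota v) > 0$ for every nonzero $v \in T\CN$; declaring $J|_{T\CN} := \iota$ puts a compatible Hermitian structure on $T\CN \oplus K$ and sets $JT\CN = K$. Finally, equip the $d\lambda$-orthogonal complement $E'$ of $T\CN \oplus G \oplus K$ in $\xi|_Q$ with any compatible complex structure. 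The resulting $J$ satisfies $JG = G$, hence \eqref{eq:adapt-reduced}; extend it to a compatible $J$ on $(M,\xi)$ by the standard convex combination with a globally defined compatible reference, the extension existing because the space of compatible $J$'s on $\xi$ is nonempty.

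For contractibility, consider the restriction map
$$
r: \CJ^{\text{adapt}}(M) \longrightarrow \CJ^{\text{adapt}}(\xi|_Q), \qquad J \longmapsto J|_{\xi|_Q}.
$$
Standard arguments show that $r$ is a smooth Fr\'echet fibration whose fiber is the space of $d\lambda$-compatible $J$'s on $\xi$ extending a prescribed $J_0$ on $\xi|_Q$; this fiber is contractible via polar decomposition. It therefore suffices to prove $\CJ^{\text{adapt}}(\xi|_Q)$ contractible. Globally, this is the space of smooth sections of a fiber bundle $\mathscr{A} \to Q$ whose fiber at $q$ is the set of adapted compatible complex structures on $(\xi_q, d\lambda_q)$. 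Pointwise, such a $J_q$ is specified by (i) a complement $K_q$ of $T\CN_q$ in $\xi_q$ making $T\CN_q \oplus K_q$ a Hermitian subspace --- parametrized by a contractible symplectic homogeneous space; and (ii) a compatible complex structure on the $d\lambda$-orthogonal $W_q := (T\CN_q \oplus K_q)^{d\lambda}$ in $\xi_q$ that preserves the symplectic subspace $G_q' := \pi_{W_q}(G_q) \subset W_q$, which is exactly the translation of \eqref{eq:adapt-reduced} to $W_q$. Given (i), the space in (ii) factors as a product of compatible-structure spaces on the symplectic summands $G_q'$ and its $d\lambda$-orthogonal in $W_q$, each contractible.

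The main technical obstacle is that both $W_q$ and the distinguished subspace $G_q' \subset W_q$ in step (ii) depend on the choice in step (i), so $\mathscr{A}$ is an iterated bundle rather than a global product. I would verify local triviality of this tower using local frames for $T\CN$ and $G$ together with a fiberwise Gram--Schmidt construction, and check $S^1$-equivariance throughout. With that in place, iterated application of the standard fact that a Fr\'echet fiber bundle with contractible fibers has contractible space of sections yields contractibility of $\CJ^{\text{adapt}}(\xi|_Q)$, and combined with the fibration $r$ above, of the full space of $Q$-adapted $J$'s.
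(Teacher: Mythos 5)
Your existence construction follows the same blockwise scheme as the paper's (compatible structures on $G$, on a partner of $T\CN$, and on the symplectic normal direction), but two of its steps are not correct as stated. First, requiring only that $K=\operatorname{Im}\iota$ be transverse to $T\CN\oplus G$ with $d\lambda(v,\iota v)>0$ does not make $J|_{T\CN}:=\iota$ part of a $d\lambda$-compatible complex structure on $T\CN\oplus K$: you also need $K$ isotropic and $d\lambda(v,\iota w)$ symmetric (otherwise $d\lambda(J\cdot,J\cdot)=d\lambda(\cdot,\cdot)$ fails), and you need the three blocks $T\CN\oplus K$, $G$, $E'$ to be mutually $d\lambda$-orthogonal, or else the block-diagonal $J$ need not be compatible on all of $\xi|_Q$; the paper sidesteps all of this by working in the normal form, where $\R\{X_\lambda\}\oplus(T\CN\oplus T^*\CN)\oplus G\oplus E$ is a $d\lambda_F$-orthogonal decomposition and $-d\Theta_G$ is the pairing. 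Second, extending from $Q$ to $M$ "by convex combination with a compatible reference" is not valid: a convex combination of compatible complex structures is in general not a complex structure. The correct standard argument extends the section of the bundle of $d\lambda$-compatible structures using the contractible (convex-model, e.g.\ Cayley-transform/Siegel-domain) fibers; the conclusion is true but the mechanism you name is not.

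For contractibility your route genuinely diverges from the paper's, and this is where the substantive gaps sit. The paper first proves (Lemma \ref{lem:J-identify-G}) that adaptedness is equivalent to $TQ=(TQ\cap JTQ)\oplus T\CF$, so an adapted $J$ determines the splittings \eqref{eq:split1}, \eqref{eq:split2}; the space of adapted $J$ then fibers over the contractible space of such splittings $(G,E)$, and over a fixed splitting the adapted $J$'s are described by an explicit block matrix whose entries are a compatible structure on $(G,\omega_G)$, one on $(E,\omega_E)$, and a linearly constrained off-diagonal block --- contractible factors with no iterated tower. By fixing $G$ in advance you instead stratify by the $J$-dependent data $K=JT\CN$ and then by compatible structures on $W=(T\CN\oplus K)^{d\lambda}$ preserving $G'=\pi_W(G)$, and the two obstacles you yourself flag remain unresolved: contractibility of the space of step-(i) data is only asserted ("a contractible symplectic homogeneous space") --- note this space is not convex, since isotropy of $K=JT\CN$ is a quadratic condition on the defining map $T\CN\to\xi$, so a genuine argument is needed --- and the local triviality of the two-step tower together with the contractibility of its section space is deferred. (Two side remarks: your reduction of \eqref{eq:JTQ} to a condition on $J|_{\xi|_Q}$ and on $J(G)$ alone is correct once $G$ is chosen inside $\xi\cap TQ$, and $G'=\pi_W(G)$ is indeed symplectic in $W$, because the $K$-component of any $g\in G$ vanishes so $d\lambda(\pi_Wg_1,\pi_Wg_2)=d\lambda(g_1,g_2)$ --- a fact you use but do not check; also the $S^1$-equivariance you plan to track is not required, since adaptedness does not ask $J$ to be invariant.) As written, then, the proposal is a plausible plan whose existence half needs the corrections above and whose contractibility half leaves unproven exactly the steps that the paper's splitting-based argument is designed to avoid.
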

\begin{proof}
For the existence of a $J$ adapted to $Q$, we recall the splitting
\beastar
T_q Q & = & \R\{X_\lambda(q)\}\oplus T_q \CN \oplus G_q,\\
T_q M & \cong & (\R\{X_\lambda(q)\}\oplus T_q\CN \oplus G_q) \oplus(T_q^*\CN \oplus E_q) \\
& = & \R\{X_\lambda(q)\}\oplus (T_q\CN \oplus T_q^*\CN) \oplus G_q \oplus E_q
\eeastar
on each connected component of $Q$. Therefore we
can find $J$ so that it is compatible on $T\CN \oplus T^*\CN$ with respect to
$-d\Theta_G|_{T\CN \oplus T^*\CN}$, and compatible on $G$
with respect to $\omega_Q$ and on $E$ with respect to $\Omega$. It follows that
any such $J$ is adapted to $Q$. This proves the first statement.

The proof of the second statement will be postponed until Appendix.
\end{proof}

We note that each summand $T_q\CN \oplus T_q^*\CN$,
$G_q$ and $E_q$ in the above splitting of $T_QM$ is symplectic with respect to $d\lambda$.

We recall the embeddings $T^*\CN$ and $E$ into $N_QM$
and the identification $N_Q M \cong T^*\CN \oplus E$ discussed in Subsection
\ref{subsec:structure}.

\begin{lem}\label{lem:J-identify} For any adapted $J$, the identification of the normal bundle
\be\label{eq:NgQM}
N_Q M \to N^g_Q M; \quad [v] \mapsto \widetilde{d\lambda}(-J v)
\ee
naturally induces the following identifications:
\begin{enumerate}
\item
$
T^*\CN\cong JT\CN.
$
\item
$
\text{\rm Image}(E \hookrightarrow N_QM)= (TQ)^{d\lambda}\cap (JT\CN)^{d\lambda}.
$
\end{enumerate}
\end{lem}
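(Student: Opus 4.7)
The plan is to use the adapted condition $J(TQ) \subset TQ + JT\CN$ together with the nondegeneracy of $d\lambda|_\xi$ and of the triad metric $g|_\xi = d\lambda|_\xi(\cdot, J\cdot)$ in order to realize the two abstract summands $T^*\CN$ and $E$ of the normal-bundle decomposition from Proposition \ref{prop:neighbor-F} concretely as subbundles of $N^g_QM$, under the isomorphism $N_QM \cong N^g_QM$ given by orthogonal projection along $TQ$.

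For part (1), first observe that since $T\CN \subset \xi$ and $J^2|_\xi = -\mathrm{id}$, the restriction $J|_{T\CN}$ is a fiberwise linear isomorphism onto $JT\CN$. Composing with the isomorphism $T\CN \to T^*\CN$ induced by the triad metric---the map $e \mapsto g(e,\cdot)|_{T\CN} = d\lambda(e, J\cdot)|_{T\CN}$, which is an isomorphism because $g|_{T\CN}$ is positive definite---produces the natural bundle isomorphism $JT\CN \to T^*\CN$ sending $Je$ to $g(e,\cdot)|_{T\CN}$. Unwinding the definitions shows that this matches the image of the embedding \eqref{eq:embed-T*N} inside $N^g_QM$ under $N_QM \cong N^g_QM$.

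For part (2), apply the standard dimension formula $\dim W^{d\lambda} = \dim TM - \dim W + \dim(W \cap \ker d\lambda)$ with $W = TQ + JT\CN$, a direct sum since $JT\CN \subset N^g_QM$ is transverse to $TQ$ by the preceding lemma, to compute the rank of $(TQ)^{d\lambda} \cap (JT\CN)^{d\lambda} = (TQ + JT\CN)^{d\lambda}$. The nondegeneracy of $g|_{T\CN}$ forces $T\CF \cap (JT\CN)^{d\lambda} = \R\{X_\lambda\}$, so the projection $T_QM \to N_QM$ restricts on $(TQ)^{d\lambda} \cap (JT\CN)^{d\lambda}$ to an injection with one-dimensional kernel $\R\{X_\lambda\}$. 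The resulting image in $N_QM$ has the same rank as $E = (TQ)^{d\lambda}/T\CF$ and is contained in the image of $E$ (because $(TQ)^{d\lambda} \cap (JT\CN)^{d\lambda} \subset (TQ)^{d\lambda}$), hence equals it; this yields the desired identification of $E$ with $(TQ)^{d\lambda} \cap (JT\CN)^{d\lambda}$ under $N_QM \cong N^g_QM$ after accounting for the Reeb direction.

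The main obstacle I anticipate is careful bookkeeping of the quotients and of the implicit identification $N_QM \cong N^g_QM$ used in reading the statement of the lemma; once these are pinned down, both parts reduce to linear-algebra manipulations powered by the adapted condition and the nondegeneracy of $d\lambda|_\xi$.
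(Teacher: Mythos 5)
Your proposal is correct, and part (1) coincides with the paper's argument: metric duality $T\CN\cong T^*\CN$ composed with $J|_{T\CN}$. For part (2), however, you take a genuinely different (if closely related) route. The paper's proof is a one-line direct identification: restricting the triad metric $g=d\lambda(\cdot,J\cdot)+\lambda\otimes\lambda$ to $(TQ)^{d\lambda}$, the $g$-orthogonal complement of $T\CF$ inside $(TQ)^{d\lambda}$ is cut out precisely by the conditions $\lambda(v)=0$ and $d\lambda(v,Jw)=0$ for all $w\in T\CN$, i.e.\ it is $(TQ)^{d\lambda}\cap(JT\CN)^{d\lambda}$ up to the Reeb line, and this complement maps isomorphically onto $E=(TQ)^{d\lambda}/T\CF$ under the quotient map. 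You instead compute $\operatorname{rank}\bigl((TQ+ JT\CN)^{d\lambda}\bigr)$ from the standard formula, identify the kernel of the projection $T_QM\to N_QM$ restricted to $(TQ)^{d\lambda}\cap(JT\CN)^{d\lambda}$ as $\R\{X_\lambda\}$ (which, as you note, is exactly where nondegeneracy of $g|_{T\CN}$ enters), and finish by a containment-plus-equal-rank argument against the image of the embedding \eqref{eq:embed-E}. Both are elementary linear algebra; the paper's version buys an explicit splitting of $(TQ)^{d\lambda}$ as $T\CF$ plus the metric complement with no rank bookkeeping, while yours avoids exhibiting the complement explicitly at the cost of the dimension count. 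A merit of your write-up is that you flag that $X_\lambda$ lies in $(TQ)^{d\lambda}\cap(JT\CN)^{d\lambda}$ and must be removed (``after accounting for the Reeb direction''), a point the paper's proof, and indeed its definition of $N^g_QM$, glosses over. One phrasing slip: ``an injection with one-dimensional kernel'' should read ``a map with one-dimensional kernel $\R\{X_\lambda\}$, hence injective on the quotient.''
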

\begin{proof}
(1) follows by looking at the metric $\langle\cdot, \cdot\rangle=d\lambda(\cdot, J\cdot)$.
Now restrict it to $(TQ)^{d\lambda}$,
we can identify $E$ with the complement of $T\CF$ with respect to this metric,
which is just $(TQ)^{d\lambda}\cap (JT\CN)^{d\lambda}$.
\end{proof}

\begin{lem}\label{lem:JE} For any adapted $J$, $J E\subset E$ in the sense of the identification of $E$ with
the subbundle of $T_QM$ given in the above lemma.
\end{lem}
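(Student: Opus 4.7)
The plan is to verify the two defining conditions of $E$ as a subbundle of $T_QM$---namely $Jv \in (TQ)^{d\lambda}$ and $Jv \in (JT\CN)^{d\lambda}$---separately, for an arbitrary $v \in (TQ)^{d\lambda} \cap (JT\CN)^{d\lambda}$.

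First I would observe that any such $v$ already lies in the contact hyperplane $\xi$. Under the identification of Lemma \ref{lem:J-identify}, $E$ sits inside the metric normal $N^g_QM$ to $TQ$, which is perpendicular to $X_\lambda \in TQ$; and orthogonality to $X_\lambda$ in the triad metric $g(X,Y) = \lambda(X)\lambda(Y) + d\lambda(\pi X, J \pi Y)$ simply reads $\lambda(v) = 0$. Consequently $Jv \in \xi$ as well, which is what makes the $d\lambda$-compatibility identity $d\lambda(J\cdot, J\cdot) = d\lambda(\cdot,\cdot)$ available to us in the computations below.

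For the condition $Jv \in (TQ)^{d\lambda}$, the pairing against $X_\lambda$ is automatic since $X_\lambda \rfloor d\lambda = 0$, so it suffices to test against $w \in TQ \cap \xi$. Using compatibility of $J|_\xi$ with $d\lambda$, I would rewrite $d\lambda(Jv,w) = -d\lambda(v, Jw)$, and then apply the adapted condition \eqref{eq:JTQ} to decompose $Jw = u + Jn$ with $u \in TQ$ and $n \in T\CN$. Both resulting terms $d\lambda(v, u)$ and $d\lambda(v, Jn)$ vanish, by the two defining memberships of $v$ (in $(TQ)^{d\lambda}$ and in $(JT\CN)^{d\lambda}$ respectively).

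For the second condition $Jv \in (JT\CN)^{d\lambda}$, take $w \in T\CN$; the same compatibility identity gives $d\lambda(Jv, Jw) = d\lambda(v, w)$, which is zero because $w \in T\CN \subset TQ$ and $v \in (TQ)^{d\lambda}$. There is no real obstruction---the adapted hypothesis \eqref{eq:JTQ} was arranged precisely so that the two defining constraints on $E$ become interchangeable under $J$. The only point requiring a small amount of care is the preliminary reduction to $v \in \xi$, without which the compatibility identity would not apply.
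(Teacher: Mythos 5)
Your proof is correct and follows essentially the same route as the paper's: test $Jv$ against $TQ$ via $d\lambda(Jv,w)=-d\lambda(v,Jw)$ together with the adapted decomposition $Jw=u+Jn$, and against $JT\CN$ via $d\lambda(Jv,Jw)=d\lambda(v,w)$. Your extra preliminary reduction to $v\in\xi$ (so that the $d\lambda$-compatibility of $J$ applies, since $J^2=-\Pi$ rather than $-\mathrm{id}$) is a small rigor refinement the paper leaves implicit, not a different argument.
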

\begin{proof}
Take $v\in (TQ)^{d\lambda}\cap (JT\CN)^{d\lambda}$,
then for any $w\in TQ$,
$$
d\lambda(Jv, w)=-d\lambda(v, Jw)=0
$$
since $JTQ\subset TQ + JT\CN$. Hence $Jv\in (TQ)^{d\lambda}$.

For any $w\in T\CN$,
$$
d\lambda(Jv, Jw)=d\lambda(v, w)=0
$$
since $v\in (TQ)^{d\lambda}$ and $w\in TQ$. Hence $Jv\in (JT\CN)^{d\lambda}$,
and we are done.
\end{proof}

\begin{rem}\label{rem:adapted}
\begin{enumerate}
\item
We would like to mention that in the nondegenerate case
the adaptedness is automatically satisfied by any compatible $CR$-almost complex structure
$J \in \CJ(M,\lambda)$, because in that case $P$ is a point and $HTF = \R \cdot \{X_F\}$ and
$VTF = TF = \xi_F$.
\item
However for the general Morse-Bott case, the set of adapted $CR$-almost complex
structure is strictly smaller than $\CJ(M,\lambda)$. It appears that for the proof of exponential
convergence result of closed Reeb orbits in the Morse-Bott case, this additional restriction of
$J$ to those adapted to the Morse-Bott submanifold of closed Reeb orbits in the above sense facilitates geometric computation
considerably. (Compare our computations with those given in \cite{bourgeois}, \cite{behwz}.)
\item When $T\CN = \{0\}$, $(Q,\lambda_Q)$ carries the structure of prequantization $Q \to P = Q/S^1$.
\end{enumerate}
\end{rem}

We specialize to the normal form $(U_F, f \lambda_F)$.
We note that the complex structure $J_F: \xi_F \to \xi_F$ canonically induces one on the vector bundle $VTF \to F$
$$
J_F^v; VTF \to VTF
$$
satisfying $(J_F^v)^2 = -id_{VTF}$. For any given $J$ adapted to $o_F \subset F$,
it has the decomposition
$$
J_{U_F}|_{o_F} = \left(\begin{matrix} \widetilde J_G & 0 & 0 & 0 \\
0 & 0 & I & D \\
C & -I & 0 & 0\\
0 & 0 & 0 & J_E
\end{matrix} \right)
$$
on the zero section with respect to the splitting
$$
TF|_{o_F} \cong \R\{X_F\} \oplus G \oplus (T\CN \oplus T^*\CN) \oplus E.
$$
Here we note that $C \in \Hom(G,T^*\CN)$ and $D \in \Hom(E,T\CN)$, which depend on
$J$. Indeed it is easy to see $C = 0 = D$ from a consideration of the equation $J_U^2 = -Id$.

Using the splitting
\beastar
TF & \cong & \R\{X_{\lambda_F}\} \oplus \widetilde G \oplus (\widetilde{T\CN} \oplus VT\CN^*) \oplus VTF \\
& \cong &
\R\{X_{\lambda_F}\} \oplus \widetilde G  \oplus (T\CN \oplus T^*\CN)\oplus E
\eeastar
on $o_F \cong Q$, we lift $J_F$ to a $\lambda_F$-compatible almost
complex structure on the total space $F$, which we denote by $J_0$.
We note that the triad $(F,\lambda_F,J_0)$ is naturally $S^1$-equivariant by the $S^1$-action induced by
the Reeb flow on $Q$.

\begin{defn}[Normalized Contact Triad $(F,\lambda_F,J_0)$]\label{defn:normaltriad}
We call the $S^1$-invariant contact triad $(F,\lambda_F,J_0)$ the normalized
contact triad adapted to $Q$.
\end{defn}

Now we are ready to give the proof of the following.

\begin{prop}\label{prop:connection} Consider the
contact triad $(U_F,\lambda_F,J_0)$ for an adapted $J$ and its associated
triad connection. Then the zero section $o_F \cong Q$ is
totally geodesic and so naturally induces an affine connection on $Q$.
Furthermore the induced connection on $Q$ preserves $T\CF$ and the splitting
$$
T\CF = \R\{X_{\lambda_F}\} \oplus T\CN.
$$
\end{prop}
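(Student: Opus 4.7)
The plan is to combine the defining axioms of the triad connection of \cite{oh-wang1} (in particular the torsion axiom $T(X_{\lambda_F}, \cdot) = 0$ invoked in Section \ref{sec:perturbed-forms}) with the normal form of $(U_F, \lambda_F, J_0)$ and the adaptedness of $J_0$. On $o_F \cong Q$, Theorem \ref{thm:neighborhoods2} gives $\lambda_F|_{o_F} = \theta$ and $d\lambda_F|_{VTF}|_{o_F} = 0 \oplus \Omega$, while $J_0|_{o_F}$ has the explicit $S^1$-invariant block form displayed just before the proposition. These rigid structures will pin down $\nabla_X Y$ enough on $o_F$ to identify its tangential and normal components.

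To prove total geodesy, I will show that $g(\nabla_X Y, N) = 0$ for all $X, Y \in \Gamma(TQ)$ and all sections $N$ of the normal bundle $N_QM \cong T^*\CN \oplus E$, viewed inside $T_QF$ via the canonical splitting. Using the Koszul-type identity for a metric connection with prescribed torsion, each summand is to be analysed via the block structure of $J_0|_{o_F}$, the identification $T^*\CN \cong JT\CN$ from Lemma \ref{lem:J-identify}, and the fact that the only tangential-to-normal couplings in $g$ on $o_F$ come through the off-diagonal entries $C$ and $D$ of $J_0$. The torsion contributions are neutralized by $T(X_{\lambda_F}, \cdot) = 0$ in the Reeb direction and by the $J_0$-anti-invariance of the horizontal torsion on $\xi_F$ in the transverse directions.

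Once total geodesy is established, the induced affine connection $\nabla^Q$ is the restriction of $\nabla$, and the preservation statements follow from parallelism of the defining structures. Since $\nabla$ is metric and the triad connection annihilates $\lambda_F$, the one-form $\theta = i_Q^*\lambda_F$ is $\nabla^Q$-parallel, hence so is $d\theta$; therefore $\nabla^Q$ preserves $\ker d\theta = T\CF$. Preservation of the Reeb line $\R\{X_{\lambda_F}\}$ follows because $\nabla X_{\lambda_F}$ takes values in $\xi_F$ by the same compatibility and in fact vanishes along the Reeb direction thanks to the $S^1$-equivariance of the whole normalized triad; the complementary distribution $T\CN \subset T\CF \cap \xi_F$ is then forced to be preserved as the only parallel complement.

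The main obstacle I anticipate is the clean handling of the off-diagonal blocks $C$ and $D$ in $J_0|_{o_F}$, which encode the unavoidable twist sending $T\CN$-components of $TQ$ into the vertical $JT\CN$-direction dictated by adaptedness. These produce cross terms in the Koszul computation that do not vanish term by term; I expect them to cancel collectively via the identification $T^*\CN \cong JT\CN$ and the explicit vanishing statements for $d\lambda_F$ on $o_F$ from Theorem \ref{thm:neighborhoods2}, organised so as to avoid the coordinate-heavy calculations of \cite{bourgeois} and keep the argument purely tensorial.
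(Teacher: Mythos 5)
Your route is genuinely different from the paper's (abstract parallelism of the defining tensors versus the paper's direct computation), but as written it has real gaps. The most serious one is the chain in your third paragraph. ``The triad connection annihilates $\lambda_F$'' is not an axiom of the triad connection and is false for a general triad: the axioms give $\nabla_{X_{\lambda_F}}X_{\lambda_F}=0$ and $T(X_{\lambda_F},\cdot)=0$, while $\nabla_Y X_{\lambda_F}=\frac{1}{2}(\CL_{X_{\lambda_F}}J_0)J_0Y$, so $\lambda_F$ is parallel only when $\CL_{X_{\lambda_F}}J_0=0$. That vanishing does hold here, but it must be extracted from the $S^1$-invariance of $J_0$ together with the fact that $X_{\lambda_F}=\widetilde X_\theta$ generates the action; ``$S^1$-equivariance'' by itself does not give $\nabla X_{\lambda_F}=0$ (an invariant, even Killing, field generally has nonvanishing covariant derivative). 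Moreover, even granting $\nabla\theta=0$, the inference ``hence so is $d\theta$'' is a non sequitur for a connection with torsion: $d\theta(X,Y)=(\nabla_X\theta)(Y)-(\nabla_Y\theta)(X)+\theta(T(X,Y))$, so $\nabla\theta=0$ only yields $d\theta=\theta\circ T$, and parallelism of $d\theta$ would require control of $\nabla T$, which the axioms do not supply; one needs instead the Hermitian property of the connection on $\xi_F$ combined with $\nabla X_{\lambda_F}=0$, or a direct argument. Finally, $T\CN$ is not ``the only parallel complement''; the correct reason it is preserved is $T\CN=T\CF\cap\xi_F|_Q$ with both factors preserved (or orthogonality plus metricity), and that is what has to be checked.

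The total geodesy claim---the first assertion of the proposition---is also not actually established by your plan: the torsion of the triad connection is only constrained by its axioms, not prescribed, so there is no ready-made Koszul formula with ``prescribed torsion,'' and the decisive cancellation of the $C$, $D$ cross terms is left as an expectation. Note also that the displayed block matrix containing $C$ and $D$ describes a general adapted $J$ on $o_F$; the normalized $J_0$ is the lift of the diagonal data, so organizing the computation around ``off-diagonal entries of $J_0$'' starts from a misreading. For comparison, the paper takes total geodesy as coming from the construction of the triad connection of the $S^1$-invariant normalized triad and spends its proof on the splitting statement, which it proves by hand: choosing $S^1$-invariant extensions so that $[Z,X_{\lambda_F}]=0$, using $T(X_{\lambda_F},\cdot)=0$, $\nabla_{X_{\lambda_F}}X_{\lambda_F}=0$, metricity and total geodesy, and---crucially---the adaptedness fact that $J_0T\CN\subset T^*\CN$ is vertical together with the explicit form of $\lambda_F$ (the $\Theta_G$-term) to kill pairings such as $d\lambda_F(\nabla_Z X_{\lambda_F},J_0W)$. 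Your parallelism strategy could likely be repaired along these lines, but in its present form the key steps are either unjustified or attribute to the triad connection properties it does not have.
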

\begin{proof}
We note that $\lambda_F$ is invariant under the reflection of
the vector bundle $F \to Q$ by definition \eqref{eq:lambdaF} of $\lambda_F$  and
so is $J_0$ by the construction given above. Therefore the triad metric of $(U_F,\lambda_F,J_0)$
is invariant under the reflection. This implies that the associated triad connection, which
preserves the triad metric by one of the definition properties \cite{oh-wang1},
makes the zero section totally geodesic since it is the fixed point set of the
reflection which is an isometry with respect to the triad metric.
Therefore it canonically restricts to
an affine connection on $o_F \cong Q$.

It remains to show that this connection preserves the splitting $T\CF = \R\{X_{\lambda,Q}\} \oplus T\CN$.
For the simplicity of notation, we denote $\lambda_F = \lambda$ in the rest of this proof.

Let $q \in Q$ and $v \in T_qQ$. We pick a vector field $Z$ that is
tangent to $Q$ and $S^1$-invariant and satisfies $Z(q) = v$. Such a vector field exists because
 $\CF$ is the null foliation of $\omega_Q = i_Q^*d\lambda$,
and $Q$ carries the $S^1$-action induced by the Reeb flow of $\lambda$.
If $Z$ is a multiple of $X_\lambda$, then we can choose $Z = c\, X_\lambda$ for some
constant and so $\nabla_Z X_{\lambda} = 0$ by the axiom $\nabla_{X_\lambda}X_\lambda = 0$
of contact triad connection. Then for $Y \in \xi \cap T\CF$, we compute
$$
\nabla_{X_\lambda} Y = \nabla_Y X_\lambda + [X_\lambda,Y] \in \xi
$$
by an axiom of the triad connection.  On the other hand, for any $Z$ tangent to $Q$, we derive
$$
d\lambda(\nabla_{X_\lambda} Y, Z) = -d\lambda(Y,\nabla_{X_\lambda}Z) = 0
$$
since $Q = o_F$ is totally geodesic and so $\nabla_{X_\lambda}Z \in TQ$. This proves that
$\nabla_{X_\lambda} T\CN \subset T\CN$.

For $v \in T_q Q \cap \xi_q$, we have
$
\nabla_Z {X_\lambda} \in \xi \cap TQ.
$
On the other hand,
$$
\nabla_Z {X_\lambda} = \nabla_{X_\lambda} Z + [Z,X_\lambda] = \nabla_{X_\lambda} Z
$$
since $[Z,X_\lambda]  = 0$ by the $S^1$-invariance of $Z$. Now let $W \in T\CN$ and compute
$$
\langle \nabla_Z {X_\lambda}, W \rangle = d\lambda(\nabla_Z {X_\lambda}, J_0 W).
$$
On the other hand $J_0 W \in T^*\CN \subset T_{o_F}F$ since $\CF$ is (maximally)
isotropic with respect to $d\lambda$. Therefore we obtain
$$
d\lambda( \nabla_Z {X_\lambda}, J_0 W) = -\pi_{T^*\CN;F}^*\Theta_G(q,0,0)(\nabla_Z {X_\lambda}, J_0 W) = 0.
$$
This proves $\nabla_Z {X_\lambda}$ is perpendicular to $T\CN$ with respect to the triad metric
and so must be parallel to $X_\lambda$. On the other hand,
$$
\langle \nabla_Z W, {X_\lambda} \rangle = - \langle \nabla_Z {X_\lambda}, W \rangle  = 0
$$
and hence if $W \in T\CN$, it must be perpendicular to $X_\lambda$. Furthermore we have
$$
d\lambda(\nabla_Z W, V) = -d\lambda(W,\nabla_Z V) =0
$$
for any $V$ tangent to $Q$ since $\nabla_Z V \in TQ$ as $Q$ is totally geodesic. This proves
$\nabla_ZW$ indeed lies in $\xi \cap T\CF = T\CN$, which finishes the proof.
\end{proof}

\part{Exponential estimates for contact instantons: Morse-Bott case}
\label{part:exp}

In this part, we develop the three-interval method of
proving $C^\infty$ exponential convergence to closed Reeb orbits
of any (charge vanishing) contact instanton with finite $\pi$-energy and bounded gradient  of
any Morse-Bott contact form, and use it  at each puncture of domain Riemann surface.

The contents of this part are as follows:
\begin{itemize}
\item In Section \ref{sec:pseudo}, we briefly review the subsequence convergence result
for  contact instantons with finite $\pi$-energy and bounded gradient. This is the starting point for
applying the three-interval method introduced in Section \ref{sec:three-interval} and afterwards;
\item In Section \ref{sec:three-interval}, an abstract three-interval method framework is presented;
\item In Section \ref{sec:prequantization}, we focus on the prequantization case and use
the three-interval machinery introduced in Section \ref{sec:three-interval} to prove exponential
convergence. The proof is divided into several steps which are organized into
different subsections.
\item In Section \ref{sec:general}, we prove exponential decay for general cases;
\item In Section \ref{sec:asymp-cylinder}, we explain how to apply this method to symplectic
manifolds with asymptotically cylindrical ends.

\end{itemize}

\section{Subsequence convergence on the adapted contact triad $(U_F,\lambda,J)$}
\label{sec:pseudo}

We first introduce the subsequence convergence result for Morse-Bott contact instantons of finite $\pi$-energy and finite gradient bound. The proofs are almost word-by-word the same as the  nondegenerate case considered in \cite{oh-wang2}. For readers' convenience, we include details here.

We fix a punctured Riemann surface  $(\dot\Sigma, j)$ with $l$-punctures and associate it with
a metric $h$ which is cylindrical at each end.
To be precise, it means that  there exists a compact set $K_\Sigma\subset \dot\Sigma$,
such that $\dot\Sigma-\text{\rm Int}(K_\Sigma)$ is the disjoint union of $l^+$-positive half cylinders and $l^-$-negative half cylinders with $l=l^++l^-$, i.e.
$$
\dot\Sigma-\text{\rm Int}(K_\Sigma)=(\sqcup_{i=1, \cdots, l^+}C^+_i)\sqcup (\sqcup_{i=1, \cdots, l^-}C^-_i),\quad l=l^++l^-
$$
where
$$
C^+_i=[0, \infty)\times S^1, \quad C^-_i=(-\infty, 0]\times S^1
$$
equipped with the cylindrical metric $\quad h|_{C^\pm_i}=d\tau^2+dt^2$ thereon.
For any smooth map
$
w: \dot\Sigma\to M,
$
the $\pi$-harmonic energy $E^\pi(w)$ is defined as
\be\label{eq:endenergy}
E^\pi(w):= E^\pi_{(\lambda,J;\dot\Sigma,h)}(w) = \frac{1}{2} \int_{\dot \Sigma} |d^\pi w|^2
\ee
where the norm is taken by $h$ and the triad metric on $M$.

We put the following hypotheses on the asymptotic study of Morse-Bott contact instantons:

\begin{hypo}\label{hypo:basic} Consider the contact triad $(M,\lambda,J)$ with
Morse-Bott contact form $\lambda$.
Let $w:\dot\Sigma\to M$ be a contact instanton, i.e., satisfy the contact instanton equations \eqref{eq:contact-instanton}
defined on a punctured Riemann surface with cylindrical ends $(\dot\Sigma, j, h)$. We assume that $w$ satisfies
\begin{enumerate}
\item $E^\pi(w):=E^\pi_{(\lambda,J;\dot\Sigma,h)}(w)<\infty$, i.e., finite $\pi$-energy;
\item $\|d w\|_{L^\infty(\dot\Sigma)} <\infty$, i.e., finite gradient bound.
\end{enumerate}
\end{hypo}

The following two asymptotic invariants associated to each puncture play essential roles in the study of asymptotic behavior of a Morse-Bott contact instanton satisfying Hypothesis \ref{hypo:basic}.

\begin{defn} Let $w: \dot \Sigma \to M$ be as in Hypothesis \ref{hypo:basic}.
At each puncture, we define the asymptotic contact action $\CT_{C^{\pm}_i}(w)$ and the asymptotic contact charge $\CQ_{C^{\pm}_i}(w)$ for a contact instanton $w$ satisfying Hypothesis \ref{hypo:basic} as
\bea
\CT_{C^{\pm}_i}(w) & := & \frac{1}{2}\int_{C^{\pm}_i} |d^\pi w|^2 + \int_{\{0\}\times S^1}(w|_{\{0\}\times S^1})^*\lambda\label{eq:TQ-T}\\
\CQ_{C^{\pm}_i}(w) & : = & \int_{\{0\}\times S^1}((w|_{\{0\}\times S^1})^*\lambda\circ j)\label{eq:TQ-Q}
\eea
where $C^\pm$ is the cylindrical end associated to the given puncture.
\end{defn}
The following remark shows that both $\CT$ and $\CQ$ are translation invariant.
\begin{rem}\label{rem:TQ}
For any contact instanton $w$ satisfying Hypothesis \ref{hypo:basic} at a puncture
$[0, \infty)\times S^1$, we have
$$
\CT(w) = \frac{1}{2}\int_{[s,\infty) \times S^1} |d^\pi w|^2 + \int_{\{s\}\times S^1}(w|_{\{s\}\times S^1})^*\lambda, \quad
\text{for any } s\geq 0,
$$
 which is due to $\frac{1}{2}|d^\pi w|^2\, dA=d(w^*\lambda)$ and  Stokes' formula;
and $$
\CQ(w)=\int_{\{s \}\times S^1}(w|_{\{s \}\times S^1})^*\lambda\circ j, \quad
\text{for any } s \geq 0,
$$
which is due to $d(w^*\lambda\circ j)=0$.
\end{rem}

Since our main interest lies on the asymptotic behavior of a fixed contact instanton $w$ at a given puncture,
we will assume the domain of $w$ is a positive half cylinder $[0, \infty)\times S^1$
without loss of generality. (The case of negative half cylinder can be treated in the same way.)
We simply denote the asymptotic contact action and charge at this puncture by $\CT$ and by $\CQ$
respectively.

\bigskip

\begin{thm}[Subsequence Convergence \cite{oh-wang2}]\label{thm:subsequence}
Let  $(M, \lambda, J)$ be any, not necessarily Morse-Bott, contact triad.
Assume  $w:[0, \infty)\times S^1\to M$ is a contact instanton, i.e. it satisfies the contact instanton equations \eqref{eq:contact-instanton}, and satisfies Hypothesis \ref{hypo:basic}.
Then for any sequence $s_k\to \infty$, there exists a subsequence, still denoted by $s_k$, and
a Reeb trajectory $\gamma$, not necessarily closed, such that
$$
\lim_{k\to \infty}w(s_k + \tau, t) = \gamma(-\CQ\tau+\CT t)
$$
in the $C^l(K \times S^1, M)$ sense for any $l\geq 0$, where $K\subset \R$ is an arbitrary compact set.

Furthermore when $(M, \lambda, J)$ is of Morse-Bott type and $w$ has
non-vanishing period $\CT\neq 0$, then there exists a connected  submanifold $Q$  foliated by closed
Reeb orbits of period $\CT$, so that the limit becomes $\gamma(-\CQ\tau+\CT\, t)$, where $z$ is a
closed Reeb orbit over $Q$.
\end{thm}
The first part of the theorem was proved in \cite[Section 6]{oh-wang2}. For reader's convenience,
we include its complete proof in Appendix \ref{appendix:subseqproof}.
Similar statement for Morse-Bott case in the context of symplectization
was proved in \cite[Proposition 2.1]{HWZ3} (see also \cite[Proposition 2.1]{HWZ1, HWZ2}).

\begin{cor}\label{cor:tangent-convergence}Assume  $w:[0, \infty)\times S^1\to M$ is a contact instanton, i.e. it satisfies the contact instanton equations \eqref{eq:contact-instanton} in a Morse-Bott contact triad $(M, \lambda, J)$, and satisfies
Hypothesis \ref{hypo:basic}.
Then
\beastar
&&\lim_{s\to \infty}\left|\pi \frac{\del w}{\del\tau}(s+\tau, t)\right|=0, \quad
\lim_{s\to \infty}\left|\pi \frac{\del w}{\del t}(s+\tau, t)\right|=0\\
&&\lim_{s\to \infty}\lambda(\frac{\del w}{\del\tau})(s+\tau, t)=-\CQ, \quad
\lim_{s\to \infty}\lambda(\frac{\del w}{\del t})(s+\tau, t)=\CT
\eeastar
and
$$
\lim_{s\to \infty}|\nabla^l dw(s+\tau, t)|=0 \quad \text{for any}\quad l\geq 1.
$$
All the limits are uniform for $(\tau, t)$ in $K\times S^1$ with compact $K\subset \R$.
\end{cor}

\bigskip

From now on, we consider $J$ as a $CR$-almost complex structure adapted to $Q$,
which in turn induces a $CR$-almost complex structures on a neighborhood $U_F$
of the zero section of $F$.
Denote by $(U_F,\lambda,J)$ the corresponding adapted contact triad.

When restricted to each connected component of the loci of closed Reeb orbits, there exists a uniform constant
$\tau_0>0$ such that the image of $w$ lies in a tubular neighborhood of $Q$ whenever $\tau>\tau_0$.
In other words, it is enough to restrict ourselves to study contact instanton maps from half cylinder $[0, \infty)\times S^1$
to the canonical neighborhood  $(U_F, \lambda, J)$ defined in Definition \ref{defn:normalneighborhood}
for the purpose of the study of asymptotic behaviour at the end.
\medskip

With the normal form
we developed in Part \ref{part:coordinate}, we express
$w$ as $w=(u, s)$
where $u:=\pi\circ w:[0, \infty)\times S^1\to Q$ and
$s=(\mu, e)$ is a section of the pull-back bundle $u^*(JT\CN)\oplus u^*E\to [0, \infty)\times S^1$.
Recall from Section \ref{sec:coord} and express
$$
dw = \left(\begin{matrix}du\\
\nabla_{du} s \end{matrix}\right)
=\left(\begin{matrix}du\\
\nabla_{du} \mu\\
\nabla_{du} e \end{matrix}\right),
$$
we reinterpret the convergence of $w$ stated in Theorem \ref{thm:subsequence}
in terms of the coordinate $w = (u,s)=(u, (\mu, e))$.

\begin{cor}\label{cor:convergence-ue} Let $w = (u,s)=(u, (\mu, e))$ satisfy the same assumption as in Theorem \ref{thm:subsequence}. Then for any sequence $s_k\to \infty$, there exists a subsequence, still denoted by $s_k$,
and a Reeb orbit $\gamma$ on $Q$ (may depend on the choice of subsequences) with action $\CT$
and charge $\CQ$, such that
$$
\lim_{k\to \infty} u(\tau+s_k, t)= \gamma(-\CQ\, \tau + \CT\, t)
$$
in $C^l(K \times S^1, M)$ sense for any $l$, where $K\subset [0,\infty)$ is an arbitrary compact set.
Furthermore,
we have
\bea
\lim_{s\to \infty}\left|\mu(s+\tau, t)\right|=0, &{}& \quad \lim_{s\to \infty}\left|e(s+\tau, t)\right|=0\nonumber\\
\lim_{s \to \infty} \left|d^{\pi_{\lambda}} u(s+\tau, t)\right|= 0, &{}& \quad
\lim_{s \to \infty} u^*\theta(s+\tau, t) = -\CQ d\tau + \CT\, dt\nonumber\\
\lim_{s \to \infty} \left|\nabla_{du} e(s+\tau, t)\right|  =  0,&{}&\nonumber
\eea
and
\bea
\lim_{s \to \infty} \left|\nabla^k d^{\pi_{\lambda}} u(s+\tau, t)\right| = 0, &{}& \quad
\lim_{s \to \infty} \left|\nabla^k u^*\theta(s+\tau, t)\right| =0\nonumber\\
\lim_{s \to \infty} \left|\nabla_{du}^k e(s+\tau, t)\right| = 0 &{}&\label{eq:highere}
\eea
for all $k \geq 1$, and all the limits are uniform for $(\tau, t)$ on $K\times S^1$ with compact $K\subset [0,\infty)$.
\end{cor}

In particular,
$$
\lim_{s \to \infty} du(s+\tau, t) = (-\CQ\, d\tau +\CT\, dt)\otimes X_\theta
$$
uniformly for $(\tau, t)$  in $C^\infty$ topology on
$K\times S^1$ for any given compact $K\subset [0,\infty)$.

\bigskip

In the rest of the present paper, we will add the following technical assumption of vanishing charge.

\begin{hypo}[Charge vanishing]\label{hypo:exact}
\be\label{eq:asymp-a=0}
\CQ:=\int_{\{0\}\times S^1}((w|_{\{0\}\times S^1})^*\lambda\circ j)=0.
\ee
\end{hypo}

Then the  uniform convergence proved in this section ensures all the basic requirements (including
the uniformly local tameness,
pre-compactness, uniformly local coercive property and the locally asymptotically cylindrical
property) of applying the three-interval method
to prove exponential decay of $w$ at the end which we will introduce in details in following  sections.

\section{Abstract framework of the three-interval method}
\label{sec:three-interval}

In this section, we introduce a new method in proving exponential decay using the abstract framework of the three-interval
method. In later Section \ref{sec:expdecayMB},   we will apply the scheme
to the normal bundle part.
We remark that the method can deal with the case with an exponentially decaying perturbation too (see Theorem \ref{thm:three-interval}).

The three-interval method is based on the following analytic lemma.
\begin{lem}[\cite{mundet-tian} Lemma 9.4]\label{lem:three-interval}
For a sequence of nonnegative numbers $\{x_k\}_{k=0, 1, \cdots, N}$, if there exists some constant $0<\gamma<\frac{1}{2}$ such that
$$
x_k\leq \gamma(x_{k-1}+x_{k+1})
$$
 for every $1\leq k\leq N-1$, then it follows
$$
x_k\leq x_0\xi^{-k}+x_N\xi^{-(N-k)},\quad k=0, 1, \cdots, N,
$$
where $\xi:=\frac{1+\sqrt{1-4\gamma^2}}{2\gamma}$.
\end{lem}

\begin{rem}\label{rem:three-interval}
\begin{enumerate}
\item If we write $\gamma=\gamma(c):=\frac{1}{e^c+e^{-c}}$ where $c>0$ is uniquely determined by $\gamma$, then the conclusion
can be written into the exponential form
$$
x_k\leq x_0e^{-ck}+x_Ne^{-c(N-k)}.
$$
\item
For an infinite nonnegative sequence $\{x_k\}_{k=0, 1, \cdots}$, if we have a uniform bound of
in addition,
then the exponential decay follows as
$$
x_k\leq x_0e^{-ck}.
$$
\end{enumerate}
\end{rem}

The analysis of proving the exponential decay will be carried on a Banach bundle $\CE\to [0, \infty)$ modelled by the Banach space $\E$, for which we mean every fiber $\E_\tau$ is identified with the Banach space $\E$ smoothly depending on $\tau$. We omit this identification if there is no way of confusion.

First we emphasize the base $[0,\infty)$ is non-compact and
carries a natural translation map for any positive number $r$,
which is $\sigma_r: \tau \mapsto \tau + r$.
We introduce the following definition
which ensures us to study the sections in local trivialization after taking a subsequence.

\begin{defn}\label{def:unif-local-tame} Let $\CE$ be a Banach bundle modelled with a Banach space $\E$ over $[0, \infty)$.
Let $[a,b] \subset [0,\infty)$ be any given bounded interval and let
$s_k \to \infty$ be any given sequence. A \emph{tame family of trivialization}
over $[a,b]$ relative to the sequence $s_k$ is defined to be
a sequence of trivializations $\{\Phi_k\}:\CE|_{[a,b]} \to [a,b] \times \E$
$$
\Phi_k: \sigma^*_{s_\cdot}\CE|_{[a+s_k, b+s_k]} \to [a,b] \times \E
$$
for $k\geq 0$ satisfying the following: There exists a sufficiently large $k_0 > 0$
such that for any $k \geq k_0$ the bundle map
$$
\Phi_{k_0+k} \circ \Phi_{k_0}^{-1}: [a,b] \times \E \to [a,b] \times \E
$$
satisfies
\be\label{eq:locallytame}
\|\nabla_\tau^l(\Phi_{k_0+k} \circ \Phi_{k_0}^{-1})\|_{\CL(\E,\E)} \leq C_l<\infty
\ee
for constants $C_l = C_l(|b-a|)$ depending only on $|b-a|$, $l=0, 1, \cdots$.

We call $\CE$ \emph{uniformly locally tame}, if it carries a tame family of
trivializations over $[a,b]$ relative to the sequence $s_k$ for any given bounded
interval $[a,b] \subset [0,\infty)$ and a sequence $s_k \to \infty$.
\end{defn}

\begin{defn} Suppose $\CE$ is uniformly locally tame. We say a connection $\nabla$ on
$\CE$ is \emph{uniformly locally tame} if the push-forward $(\Phi_k)_*\nabla_\tau$ can be written as
$$
(\Phi_k)_*\nabla_\tau = \frac{d}{d\tau} + \Gamma_k(\tau)
$$
for any tame family $\{\Phi_k\}$ so that
$\sup_{\tau \in [a,b]}\|\Gamma_k(\tau)\|_{\CL(\E,\E)} < C$ for some $C> 0$ independent of $k$'s.
\end{defn}

\begin{defn}
Consider a pair $\CE_2 \subset \CE_1$ of uniformly locally tame bundles, and a bundle map
$B: \CE_2 \to \CE_1$.
We say $B$ is \emph{uniformly locally bounded}, if for any compact set $[a,b] \subset [0,\infty)$ and
any sequence $s_k \to \infty$, there exists a subsequence, still denoted by $s_k$, a sufficiently large $k_0 > 0$ and tame families
$\Phi_{1,k}$, $\Phi_{2,k}$ such that for any $k\geq 0$
\be\label{eq:loc-uni-bdd}
\sup_{\tau \in [a,b]} \|\Phi_{i,k_0+k} \circ B \circ \Phi_{i,k_0}^{-1}\|_{\CL(\E_2, \E_1)} \leq C
\ee
where $C$ is independent of $k$.
\end{defn}

For a given locally tame pair $\CE_2 \subset \CE_1$, we denote by $\CL(\CE_2, \CE_1)$ the set
of bundle homomorphisms which are uniformly locally bounded.

\begin{lem}
If $\CE_1, \, \CE_2$ are uniformly locally tame, then so is $\CL(\CE_2, \CE_1)$.
\end{lem}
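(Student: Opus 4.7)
The plan is to construct a tame family of trivializations for the operator bundle $\CL(\CE_2, \CE_1)$ (whose fiber at $\tau$ is the Banach space $\CL(\CE_{2,\tau}, \CE_{1,\tau})$, modelled on $\CL(\E_2, \E_1)$) by combining tame trivializations of the factor bundles. Fix a bounded interval $[a,b] \subset [0,\infty)$ and a sequence $s_k \to \infty$. By applying the uniform local tameness of $\CE_1$ first, and then a diagonal argument against the uniform local tameness of $\CE_2$ along the already-extracted subsequence, I would pass to a common subsequence (still denoted $\{s_k\}$) together with simultaneously-defined tame trivializations
\[
\Phi_{i,k}: \sigma_{s_k}^{\,*}\CE_i\big|_{[a+s_k,\, b+s_k]} \to [a,b] \times \E_i, \qquad i = 1, 2.
\]

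With these in hand, the natural induced trivialization of the operator bundle is
\[
\Psi_k(\tau)(T) := \Phi_{1,k}(\tau) \circ T \circ \Phi_{2,k}(\tau)^{-1}, \qquad T \in \CL(\CE_{2,\tau + s_k},\, \CE_{1,\tau + s_k}).
\]
A direct computation shows that the transition $\Psi_{k_0+k} \circ \Psi_{k_0}^{-1}$ acts on $S \in \CL(\E_2, \E_1)$ by the multiplication-type formula
\[
(\Psi_{k_0+k} \circ \Psi_{k_0}^{-1})(\tau)(S) = A_k(\tau)\, S\, B_k(\tau),
\]
where $A_k(\tau) := \Phi_{1,k_0+k}(\tau) \circ \Phi_{1,k_0}(\tau)^{-1}$ is an $\CE_1$-transition and $B_k(\tau) := \Phi_{2,k_0}(\tau) \circ \Phi_{2,k_0+k}(\tau)^{-1}$ is the pointwise inverse of an $\CE_2$-transition. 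The Leibniz rule then bounds the operator norm of $\nabla_\tau^l(\Psi_{k_0+k} \circ \Psi_{k_0}^{-1})$ on $\CL(\CL(\E_2,\E_1))$ by a finite sum of products $\|\nabla_\tau^{l_1} A_k\|_{\CL(\E_1)} \cdot \|\nabla_\tau^{l_2} B_k\|_{\CL(\E_2)}$ with $l_1 + l_2 \leq l$.

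Control of the $A_k$-derivatives is immediate from the tameness of $\{\Phi_{1,k}\}$ via \eqref{eq:locallytame}. The main obstacle is bounding $\nabla_\tau^l B_k$, since $B_k$ is the pointwise inverse of a transition rather than a transition itself. I would address this by differentiating the identity $B_k(\tau) \cdot (\Phi_{2,k_0+k}(\tau) \circ \Phi_{2,k_0}(\tau)^{-1}) = \mathrm{Id}$ iteratively, yielding an inductive formula that expresses $\nabla_\tau^l B_k$ as a universal polynomial in $B_k$ itself and in the derivatives $\nabla_\tau^j(\Phi_{2,k_0+k} \circ \Phi_{2,k_0}^{-1})$ for $j \leq l$. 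The one input not handed to us directly by \eqref{eq:locallytame} is the zeroth-order bound $\|B_k(\tau)\|_{\CL(\E_2)} \leq C$; this I would obtain by reversing the roles of $k_0$ and $k_0 + k$ within the diagonalization, producing a companion tame family whose $l = 0$ estimate is exactly the required bound on $B_k$.

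Combining these estimates, each $\nabla_\tau^l(\Psi_{k_0+k} \circ \Psi_{k_0}^{-1})$ is uniformly bounded on $[a,b]$ by a constant depending only on $l$ and $|b - a|$, so $\{\Psi_k\}$ is a tame family of trivializations for $\CL(\CE_2, \CE_1)$. This verifies uniform local tameness of the operator bundle and completes the proof.
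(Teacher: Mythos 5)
The paper itself states this lemma without proof, so there is no argument of the authors to compare yours against; your conjugation construction $\Psi_k(\tau)(T)=\Phi_{1,k}(\tau)\circ T\circ \Phi_{2,k}(\tau)^{-1}$ is certainly the natural route, and the reduction via the Leibniz rule to bounds on $\|\nabla^{l_1}_\tau A_k\|_{\CL(\E_1,\E_1)}$ and $\|\nabla^{l_2}_\tau B_k\|_{\CL(\E_2,\E_2)}$, together with the inductive formula expressing $\nabla^l_\tau B_k$ through $B_k$ and the derivatives of the forward transition $G_k:=\Phi_{2,k_0+k}\circ\Phi_{2,k_0}^{-1}$, is correct as far as it goes. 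You also correctly isolate where the real issue lies: everything hinges on a uniform zeroth-order bound for $B_k=G_k^{-1}$, which is not among the estimates \eqref{eq:locallytame} supplied by Definition \ref{def:unif-local-tame}, since that definition only controls the transitions $\Phi_{k_0+k}\circ\Phi_{k_0}^{-1}$ for $k\geq 0$, not their inverses.

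The gap is in your proposed fix for exactly that point. ``Reversing the roles of $k_0$ and $k_0+k$ within the diagonalization'' amounts to invoking Definition \ref{def:unif-local-tame} afresh (for a re-based or re-indexed sequence), but a new application of the definition only produces \emph{some} tame family of trivializations, with no relation to the family $\{\Phi_{2,k}\}$ you have already fixed; its $l=0$ estimate bounds the transitions of the new family, not the operators $\Phi_{2,k_0}\circ\Phi_{2,k_0+k}^{-1}$ of the old one. A uniform bound on an invertible operator family does not bound the inverses (e.g.\ transitions of the form $\epsilon_k\,\mathrm{Id}$ with $\epsilon_k\to 0$ satisfy \eqref{eq:locallytame} while their inverses blow up), so this step does not follow from the definition as literally stated. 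To close the argument you must either read (or strengthen) the definition so that the transition maps are uniformly \emph{bi}-bounded — which is plainly the authors' intent, and is in any case what the later constructions (push-forwards $(\Phi_k)_*\nabla_\tau$, uniform local boundedness and coercivity being independent of the chosen tame family) implicitly require — or else exhibit for $\CE_2$ a tame family whose transition inverses are uniformly bounded and run your construction with that family. With such a hypothesis in hand, the rest of your proof goes through verbatim.
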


\begin{defn}\label{defn:precompact} Let $\CE_2 \subset \CE_1$ be as above and let $B \in \CL(\CE_2, \CE_1)$.
We say $B$ is \emph{pre-compact} on $[0,\infty)$ if for any locally tame families $\Phi_1, \Phi_2$,
there exists a further subsequence such that
$
\Phi_{1, k_0+k} \circ B \circ \Phi_{1, k_0}^{-1}
$
converges to some $B_{\Phi_1\Phi_2;\infty}\in \CL(\Gamma([a, b]\times \E_2), \Gamma([a, b]\times \E_1))$.
\end{defn}

Assume $B$ is a bundle map from $\CE_2$ to $\CE_1$ which is uniformly locally bounded,
where $\CE_1 \supset \CE_2$ are uniformly locally tame with tame families
$\Phi_{1,k}$, $\Phi_{2,k}$. We can write
$$
\Phi_{2,k_0+k} \circ (\nabla_\tau + B) \circ \Phi_{1,k_0}^{-1} = \frac{\del}{\del \tau} + B_{\Phi_1\Phi_2, k}
$$
as a linear map from $\Gamma([a,b]\times \E_2)$ to $\Gamma([a,b]\times \E_1)$, since $\nabla$ is uniformly
locally tame.

Next we introduce the following notion of coerciveness.

\begin{defn}\label{defn:localcoercive}
Let $\CE_1, \, \CE_2$ be as above and $B: \CE_2 \to \CE_1$ be a uniformly locally bounded bundle map.
We say the operator
$$
\nabla_\tau + B: \Gamma(\CE_2) \to \Gamma(\CE_1)
$$
is \emph{uniformly locally coercive}, if the following holds:
\begin{enumerate}
\item For any pair of bounded closed intervals $I, \, I'$ with $I \subset \operatorname{Int}I'$,
\be\label{eq:coercive}
\|\zeta\|_{L^2(I,\CE_2)} \leq C(I,I') (\|\nabla_\tau \zeta + B \zeta\|_{L^2(I',\CE_1)} + \| \zeta\|_{L^2(I',\CE_1)})
\ee
for a constant $C(I, I')$ depending only on $I, \, I'$ but independent of $\zeta$.
\item if for given bounded sequence $\zeta_k \in \Gamma(\CE_2)$ satisfying
$$
\nabla_\tau \zeta_k + B \zeta_k = L_k
$$
with $|L_k(\tau|_{\CE_1}$ bounded on a given compact subset $K \subset [0,\infty)$,
there exists a subsequence, still denoted by $\zeta_k$, that uniformly converges in $\CE_2$.
\end{enumerate}
\end{defn}

\begin{rem}
Let $E \to [0,\infty)\times S$ be a (finite dimensional) vector bundle and denote by
$W^{k,2}(E)$ the set of $W^{k,2}$-section of $E$ and $L^2(E)$ the set of $L^2$-sections. Let
$D: L^2(E)\to L^2(E)$ be a first order elliptic operator with cylindrical end.
Denote by $i_\tau: S \to [0,\infty)\times S$ the natural inclusion map. Then
there is a natural pair of Banach bundles $\CE_2 \subset \CE_1$ over $[0, \infty)$ associated to $E$, whose fiber
is given by $\CE_{1,\tau}=L^2(i_\tau^*E)$, $\CE_{2,\tau} = W^{1,2}(i_\tau^*E)$.
Furthermore assume $\CE_i$ for $i=1, \, 2$ is uniformly local tame if $S$ is a compact manifold (without boundary).
Then $D$ is uniformly locally coercive, which follows from the elliptic bootstrapping and the Sobolev's embedding.
\end{rem}

Finally we introduce the notion of asymptotically cylindrical operator $B$.
\begin{defn}\label{defn:asympcylinderical} We call $B$ \emph{locally asymptotically cylindrical} if the following holds:
Any subsequence limit $B_{\Phi_1\Phi_2;\infty}$ appearing in Definition \ref{defn:precompact}
is a \emph{constant} section,
and $\|B_{\Phi_1\Phi_2, k}-\Phi_{2,k_0+k} \circ  B \circ \Phi_{1,k_0}^{-1}\|_{\CL(\E_i, \E_i)}$ converges to zero
as $k\to \infty$ for both $i =1, 2$.
\end{defn}

Now we specialize to the case of Hilbert bundles $\CE_2 \subset \CE_1$ over $[0,\infty)$ and assume that
$\CE_1$ carries a connection {which is compatible with the Hilbert inner product of $\CE_1$. We denote by $\nabla_\tau$ the associated covariant
derivative. We assume that $\nabla_\tau$ is uniformly locally tame.

Denote by $L^2([a,b];\CE_i)$ the space of $L^2$-sections $\zeta$ of $\CE_i$ over
$[a,b]$, i.e., those satisfying
$$
\int_a^b |\zeta(\tau)|_{\CE_i}^2\, dt < \infty.
$$
where $|\zeta(\tau)|_{\CE_i}$ is the norm with respect to the given Hilbert bundle structure of $\CE_i$.

\begin{thm}[Three-Interval Method]\label{thm:three-interval}
Assume $\CE_2\subset\CE_1$ is a pair of Hilbert bundles over $[0, \infty)$ with fibers $\E_2$ and $\E_1$,
and $\E_2\subset \E_1$ is dense.
Let $B$ be a section of the associated bundle $\CL(\CE_2, \CE_1)$ and
$L \in \Gamma(\CE_1)$.
We assume the following:
\begin{enumerate}
\item There exists a covariant derivative $\nabla_\tau$ 
that preserves the Hilbert structure;
\item  $\CE_i$ for $i=1, \, 2$ are uniformly locally tame;
\item  $B$ is precompact, uniformly locally coercive and asymptotically cylindrical;
\item  Every subsequence limit $B_{\infty}$ is a self-adjoint unbounded operator on
$\E_1$ with its domain $\E_2$, and satisfies $\ker B_{\infty} = \{0\}$;
\item There exists some positive number $\delta$
such that any subsequence limiting operator $B_{\infty}$
of the above mentioned pre-compact family has all their eigenvalues $\lambda$
satisfying $|\lambda| >\delta$;
\item
There exists some $R_0 > 0$, $C_0 > 0$ and $\delta_0 > \delta$ such that
$$
|L(\tau)|_{\CE_{1,\tau}} \leq C_0 e^{-\delta_0 \tau}
$$
for all $\tau \geq R_0$.
\end{enumerate}
Then for any (smooth) section $\zeta \in \Gamma(\CE_2)$ with
\be\label{eq:sup-bound}
\sup_{\tau \in [R_0,\infty)} |\zeta(\tau,\cdot)|_{\CE_{2,\tau}} < \infty
\ee
and satisfying the equation
\be\label{eq:nabla=L}
\nabla_\tau \zeta + B(\tau) \zeta(\tau) = L(\tau),
\ee
there exist some constants $R$, $C>0$ such that for any $\tau>R$,
$$
|\zeta(\tau)|_{\CE_{1,\tau}}\leq C e^{-\delta \tau }.
$$
\end{thm}

\begin{proof}
We divide $[0, \infty)$ into the union of unit intervals $I_k:=[k, k+1]$ for
$k=0, 1, \cdots$.
We first prove the exponential decay of $\|\zeta\|^2_{L^2(I_k;\CE_1)}$ to zero as $k\to \infty$.
By Lemma \ref{lem:three-interval} and Remark \ref{rem:three-interval}, it is enough to prove that
for the function $\gamma(c) = \frac{1}{e^c + e^{-c}}$ as in Remark \ref{rem:three-interval} we have
\be
\|\zeta\|^2_{L^2(I_k;\CE_1)}\leq \gamma(2\delta)(\|\zeta\|^2_{L^2(I_{k-1};\CE_1)}+\|\zeta\|^2_{L^2(I_{k+1};\CE_1)}),\label{eq:3interval-ineq}
\ee
for every $k=1, 2, \cdots$ for some choice of $0 < \delta < 1$.
For the simplicity of notation and also because we use only the norms
$\|\zeta\|_{L^2([a,b];\CE_1}$ or $L^\infty([a,b];\CE_1)$ but for $\zeta \in \CE_2$
in the discussion below, we will just denote
$$
L^2([a,b]) := L^2([a,b];\CE_1), \quad L^\infty([a,b]): = L^\infty([a,b];\CE_1)
$$
for any given interval $[a,b]$.

If the inequality \eqref{eq:3interval-ineq} does not hold for every $k$,
we collect all the $k$'s that reverse the direction of the inequality.
If such $k$'s are finitely many, i.e., \eqref{eq:3interval-ineq} holds after some large $k_0$,
then we will still get the exponential estimate
as the theorem claims.

Otherwise, there are infinitely many such three-intervals, which we enumerate
by $I^{l_k}_{I}:=[l_k, l_k+1], \,I^{l_k}_{II}:=[l_k+1, l_k+2], \,I^{l_k}_{III}:=[l_k+2, l_k+3]$, $k=1, 2, \cdots$, such that
\be
\| \zeta\|^2_{L^2(I^k_{II}) }>
\gamma(2\delta)(\| \zeta\|^2_{L^2(I^k_{I}) }+\|\zeta\|^2_{L^2(I^k_{III}) }).\label{eq:against-3interval}
\ee
Before we deal with this case, we first remark that
this hypothesis in particular implies $ \zeta \not \equiv 0$ on
$I^{l_k}:=I^{l_k}_{I}\cup I^{l_k}_{II}\cup I^{l_k}_{III}$, i.e.,
$\|\zeta\|_{L^\infty(I^{l_k}) }\neq 0$.

\emph{If there exists some uniform constant $C_1>0$} such that
on each such three-intervals
\be
\|\zeta\|_{L^\infty(I^{l_k}) }<C_1e^{-\delta l_k}, \label{eq:expas-zeta}
\ee
it follows that
\be
\|\zeta\|_{L^2([l_k+1, l_k+2]) } \leq C_1e^{-\delta l_k}=C e^{-\delta(l_k+1)}.\label{eq:expest-zeta}
\ee
Here $C=C_1e^{-\delta}$ is purely a constant depending only on $\delta$ which will be determined at the end.
From now on, various constants $C$ appearing below may vary but be independent of $\zeta$.

Recall that under our assumption
we have infinitely many intervals that satisfy \eqref{eq:3interval-ineq},
and the exponential inequality \eqref{eq:expas-zeta}.
If the union of such intervals $[l_k+1,l_k+2]$ is connected after some point, then we already get our conclusion
 from \eqref{eq:expest-zeta} since every interval becomes a middle interval (of the form $[l_k+1, l_k+2]$)
in such three-intervals.

Otherwise the set of $k$'s that satisfy \eqref{eq:3interval-ineq} form a sequence of clusters,
$$
I^{l_k+1}, I^{l_k+2}, \cdots, I^{l_k+N_k}
$$
for the sequence $l_1, \, l_2, \cdots, l_k, \cdots $ such that $l_{k+1} > l_k +N_k$ and \eqref{eq:3interval-ineq}
holds on each element contained in each cluster.

We remark that each cluster has the farthest left interval $[l_k+1, l_k+2]$ as the middle interval in $I^{l_k}$,
and the farthest right interval $[l_k+N+2, l_k+N+3]$ as the middle interval in $I^{l_{k+1}}$.
(See Figure \ref{fig:three-interval}.)

\begin{figure}[h]
\setlength{\unitlength}{0.37in}
\centering 
\begin{picture}(32,6) 
\put(2,5){\line(1,0){9}}
\put(2,5){\line(0,1){0.1}}
\put(3,5){\line(0,1){0.1}}
\put(4,5){\line(0,1){0.1}}
\put(5,5){\line(0,1){0.1}}
\put(8,5){\line(0,1){0.1}}
\put(9,5){\line(0,1){0.1}}
\put(10,5){\line(0,1){0.1}}
\put(11,5){\line(0,1){0.1}}
\put(1,3){\red{\line(1,0){3}}}
\put(9,3){\red{\line(1,0){3}}}
\put(1,3){\red{\line(0,1){0.1}}}
\put(2,3){\red{\line(0,1){0.1}}}
\put(3,3){\red{\line(0,1){0.1}}}
\put(4,3){\red{\line(0,1){0.1}}}
\put(9,3){\red{\line(0,1){0.1}}}
\put(10,3){\red{\line(0,1){0.1}}}
\put(11,3){\red{\line(0,1){0.1}}}
\put(12,3){\red{\line(0,1){0.1}}}
\put(2.5, 4.8){\vector(0,-1){1.3}}
\put(10.5, 4.8){\vector(0,-1){1.3}}
\put(2, 5.2){$\overbrace{\qquad\qquad\qquad\qquad}^{I_{l_k+1}}$}
\put(8, 5.2){$\overbrace{\qquad\qquad\qquad\qquad}^{I_{l_k+N}}$}
\put(1, 2.8){$\underbrace{\qquad\qquad\qquad\qquad}_{I^{l_k}}$}
\put(9, 2.8){$\underbrace{\qquad\qquad\qquad\qquad}_{I_{l_{k+1}}=I_{l_k+N+1}}$}
\put(7.5, 4.7){$l_k+N$}
\put(1.5, 4.7){$l_k+1$}
\put(0.9, 3.2){$l_k$}
\put(8.8, 3.2){$l_{k+1}(=l_k+N+1)$}
\put(2,5){\circle*{0.07}}
\put(1,3){\red{\circle*{0.07}}}
\put(8,5){\circle*{0.07}}
\put(9,3){\red{\circle*{0.07}}}
\put(6,5.3){$\cdots\cdots$}
\put(2,5.02){\line(1,0){1}}
\put(10,5.02){\line(1,0){1}}

\put(2,3.02){\red{\line(1,0){1}}}
\put(10,3.02){\red{\line(1,0){1}}}

\put(2,5.03){\line(1,0){1}}
\put(10,5.03){\line(1,0){1}}

\put(2,3.03){\red{\line(1,0){1}}}
\put(10,3.03){\red{\line(1,0){1}}}

\put(2,1){\line(1,0){1}}
\put(3.5,1){\text{denotes the unit intervals that satisfy \eqref{eq:3interval-ineq}}}
\put(2,1){\line(0,1){0.1}}
\put(3,1){\line(0,1){0.1}}
\put(2,0){\red{\line(1,0){1}}}
\put(3.5,0){\text{denotes the unit intervals that satisfy \eqref{eq:against-3interval} and \eqref{eq:expas-zeta}}}
\put(2,0){\red{\line(0,1){0.1}}}
\put(3,0){\red{\line(0,1){0.1}}}
\end{picture}
\caption{} 
\label{fig:three-interval} 
\end{figure}

Then from \eqref{eq:expest-zeta}, we derive
\beastar
\|\zeta\|_{L^2([l_k+1, l_k+2]) } &\leq& Ce^{-\delta l_k}, \\
\|\zeta\|_{L^2([l_k+N+2, l_k+N+3]) } &\leq& Ce^{-\delta l_{k+1}}=Ce^{-\delta (l_k+N+1)}.
\eeastar

Combining them and Lemma \ref{lem:three-interval}, we get the following estimate for $l_k+1\leq l\leq l_k+N+2$,
\beastar
&{}&\|\zeta\|_{L^2([l, l+1]) }\\
&\leq&\|\zeta\|_{L^2([l_k+1, l_k+2) } e^{-\delta (l-(l_k+1))}
+\|\zeta\|_{L^2([l_k+N+2, l_k+N+3]) }e^{-\delta (l_k+N+2-l)}\\
&\leq& Ce^{-\delta l_k}e^{-\delta (l-(l_k+1))}+Ce^{-\delta (l_k+N+1)}e^{-\delta (l_k+N+2-l)}\\
&=&Ce^{\delta}(e^{-\delta l}+e^{-\delta(2l_k+2N+4-l)})
\leq (2Ce^{\delta})e^{-\delta l}.
\eeastar
Thus on each such cluster, we have exponential decay with the presumed rate $\delta$ as claimed in the theorem.

\emph{Now if there is no such uniform $C=C_1$ for which \eqref{eq:expas-zeta} holds},
then we can find a sequence of constants $C_k\to \infty$
and a subsequence of such three-intervals $\{I^{l_k}\}$, still denoted by $l_k$, such that
\be
\|\zeta\|_{L^\infty(I^{l_k}) }\geq C_ke^{-\delta l_k}.\label{eq:decay-fail-zeta}
\ee
We can further choose a subsequence, but still denoted by $l_k$,
so that $l_k+3<l_{k+1}$, i.e., the intervals do not intersect one another.

We translate the sections
$
\zeta_k:=\zeta|_{[l_k, l_k+3]}
$
and consider the sections ${\widetilde \zeta}_k$ defined on $[0,3]$ given by
$$
{\widetilde \zeta}_k(\tau, \cdot):=\zeta(\tau + l_k, \cdot).
$$
Then \eqref{eq:decay-fail-zeta} becomes
\be
\|{\widetilde\zeta}_k\|_{L^\infty([0,3]) }\geq C_ke^{-\delta l_k}.\label{eq:decay-fail-zetak}
\ee
If we consider the translations of $L$ given by $\widetilde L_k(\tau, t)=L(\tau+l_k, t)$,
then
\be
|\widetilde L_k(\tau, t)|<Ce^{-\delta l_k}e^{-\delta\tau}\leq Ce^{-\delta l_k}\label{eq:Lk}
\ee
for $\tau \geq 0$. It follows that ${\widetilde \zeta}_k$ satisfies the equation
\be
\nabla_\tau \widetilde \zeta_k + B(\tau+l_k,\cdot) \widetilde \zeta_k = \widetilde{L}_k(\tau, t).\label{eq:uktilde-zeta}
\ee

We now rescale \eqref{eq:uktilde-zeta} by dividing it by $\|{\widetilde\zeta}_k\|_{L^\infty([0,3])}$, which can not vanish
by the standing hypothesis as we remarked right below \eqref{eq:against-3interval}, and
consider the rescaled sequence
$$
\overline \zeta_k:={\widetilde\zeta}_k/\|{\widetilde\zeta}_k\|_{L^\infty([0,3])}.
$$
We have now
\bea
\|\overline \zeta_k\|_{{L^\infty([0,3])}}&=&1\nonumber\\
\nabla_\tau\overline\zeta_k+ B(\tau+l_k,t) \overline\zeta_k
&=&\frac{\widetilde{L}_k}{\|{\widetilde\zeta}_k\|_{L^\infty([0,3] )}}\label{eq:nablabarzeta}\\
\|\overline \zeta_k\|^2_{L^2([1,2] )}&\geq&\gamma(2\delta)(\|\overline \zeta_k\|^2_{L^2([0,1] )}+\|\overline \zeta_k\|^2_{L^2([2,3] )}).\nonumber
\eea
From \eqref{eq:decay-fail-zetak} and \eqref{eq:Lk}, we get
\beastar
\frac{\|\widetilde{L}_k\|_{L^\infty([0, 3])}}{\|\zeta_k\|_{L^\infty([0, 3] )}} \leq \frac{C}{C_k},
\eeastar
and then by our assumption that $C_k\to \infty$,
we prove that the right hand side of \eqref{eq:nablabarzeta} converges to zero as $k\to \infty$.

Since $B$ is assumed to be pre-compact, we
get a limiting operator $B_{\infty}$ after taking a subsequence (in a trivialization).

On the other hand, since $B$ is locally coercive,
there exists $\overline\zeta_\infty$ such that
$\bar\zeta_k \to
\overline\zeta _\infty$ uniformly converges in $\CE_2$ and $\overline\zeta _\infty$ satisfies
\be\label{eq:xibarinfty-zeta}
\nabla_\tau \overline\zeta_\infty+ B_\infty \overline\zeta_\infty=0 \quad \text{on $[0,3]$},
\ee
and
\be\label{eq:xibarinfty-ineq-zeta}
\|\overline \zeta_\infty\|^2_{L^2([1,2] )}\geq\gamma(2\delta)(\|\overline \zeta_\infty\|^2_{L^2([0,1] )}
+\|\overline \zeta_\infty\|^2_{L^2([2,3] )}).
\ee

Since $\|\overline\zeta_\infty\|_{L^\infty([0,3]\times S^1)} = 1$,
$\overline\zeta_\infty \not \equiv 0$. Recall that $B_\infty$ is assumed to be a
(unbounded) self-adjoint operator on $\E_1$ with its domain $\E_2$.
Let $\{e_i\}$ be its orthonormal eigen-basis of $\E_1$ with respect to $B_\infty$.
We consider the eigen-function expansion of
$\overline \zeta_\infty(\tau,\cdot)$ and write
$$
\overline \zeta_\infty(\tau) = \sum_{i=1}^\infty a_i(\tau)\, e_i
$$
for each $\tau \in [0,3]$, where $e_i$ are the eigen-functions of $B$ associated to the eigenvalue $\lambda_i$ with
$$
-\infty < \ldots \leq \lambda_{-k} \leq \lambda_{-k+1} \leq \cdots < 0 < \lambda_1 \leq \ldots \leq \lambda_i \leq \cdots < \infty.
$$
By plugging $\overline\zeta_\infty$ into \eqref{eq:xibarinfty-zeta}, we derive
$$
a_i'(\tau)+\lambda_ia_i(\tau)=0, \quad i \in \mathbb Z \setminus \{0\}
$$
It follows that
$$
a_i(\tau)=c_ie^{-\lambda_i\tau}, \quad i \in \mathbb Z \setminus \{0\}
$$
for some constants $c_i$ and hence
\be\label{eq:|ai|2}
\|a_i\|^2_{L^2([1,2])}=\gamma(2\lambda_i)(\|a_i\|^2_{L^2([0,1])}+\|a_i\|^2_{L^2([2,3])})
\ee
with the function determined by the function
$$
\gamma(2c) : =
\frac{\int_1^2 e^{-2c \tau} \, d\tau}
{\int_0^1 e^{-2c \tau} \, d\tau + \int_2^3 e^{-2c \tau} \, d\tau}.
$$
Equivalently, we obtain
$$
\gamma(c) = \frac{e^{-c} - e^{-2c}}{1- e^{-c} +
 e^{-2c} - e^{-3\lambda_i}} = \frac{1}{e^c + e^{-c}}.
$$
\emph{(This is how the function $\gamma$ becomes relevant to this three interval argument. We note
that $\gamma$ is an even function.})
We compute
\beastar
\|\overline\zeta_\infty\|^2_{L^2([k, k+1])}&=&\int_{[k,k+1]}\|\overline\zeta_\infty\|^2_{L^2(S^1)}\,d\tau\\
&=&\int_{[k,k+1]}\sum_i |a_i(\tau)|^2 d\tau = \sum_i\|a_i\|^2_{L^2([k,k+1])}.
\eeastar
By the monotonically decreasing property of $\gamma$ for $c> 0$, this and \eqref{eq:|ai|2} give rise to
$$
\|\overline \zeta_\infty\|^2_{L^2([1,2])}< \gamma(2\delta) (\|\overline \zeta_\infty\|^2_{L^2([0,1])}+\|\overline \zeta_\infty\|^2_{L^2([2,3])})
$$
for any $\delta$ satisfying $0 < \delta < \min\{|\gamma_{-1}|, \gamma_1\}$.
Since $\overline\zeta_\infty\not \equiv 0$, this contradicts to \eqref{eq:xibarinfty-ineq-zeta}, if we choose
$0 < \delta <  \min\{|\gamma_{-1}|, \gamma_1\}$ at the beginning. This finishes the proof of the exponential decay
\be\label{eq:L2|zeta|}
\|\zeta\|_{L^2(I_k;\CE_1)} \leq C e^{-\delta \tau}
\ee
as $k\to \infty$.

Now we show this indicates the exponential decay of $\|\zeta\|_{\CE_{1,\tau}}$.
Using \eqref{eq:nabla=L} and \eqref{eq:coercive}, we also derive
\bea\label{eq:nablazeta}
\|\nabla_\tau\zeta(\tau)\|_{L^2(I_k,\CE_1)} & \leq & \|B(\tau) \zeta(\tau)\|_{L^2(I_k,\CE_1)}
+ \|L(\tau)\|_{L^2(I_k,\CE_1)}\nonumber\\
& \leq & \sup_{\tau \in [R_0,\infty)}\|B(\tau)\|_{\L(\CE_2,\CE_1)} \|\zeta(\tau)\|_{L^2(I_k,\CE_2)}
+ \|L(\tau)\|_{L^2(I_k,\CE_1)}\nonumber\\
& \leq & C_2' C(I_k,I_k')\left(\|(\nabla + B) \zeta\|_{L^2(I_k',\CE_1)}  + \|\zeta\|_{L^2(I_k',\CE_1)} \right)\nonumber\\
&{}& +  \|L(\tau)\|_{L^2(I_k,\CE_1)})\nonumber\\
& \leq & (C_2'C(I_k,I_k') + 1) \|L(\tau)\|_{L^2(I_k',\CE_1)} + C_2' C(I_k,I_k') \| \zeta\|_{L^2(I_k,\CE_1)}\nonumber\\
&\leq & C_2'' e^{-\delta k}.
\eea
Here we have chosen $I_k' = [k-\frac{1}{3},k+\frac{4}{3}]$, $C_2'=\sup_{\tau \in [R_0,\infty)}\|B(\tau)\|_{\L(\CE_2,\CE_1)}$ and $C_2'' = (C_2'C(I_k,I_k')e^{\delta/3} + 1)$.
Combining \eqref{eq:L2|zeta|} and \eqref{eq:nablazeta}, we have derived $\|\zeta\|_{W^{1,2}(I_k,\CE_1)} \leq C_2 e^{-\delta k}$
for all $k$ with $C_2 = \max\{C,C_2''\}$.

By applying Sobolev's inequality for the section $I_k \to \CE_1$
\be\label{eq:Sobolev}
\max_{\tau \in I_k} |\zeta(\tau)|_{\CE_1,\tau} \leq C_3\|\zeta\|_{W^{1,2}(I_k,\CE_1)}
\ee
with $C_3$ the Sobolev constant on $I_k$.  This now finishes the proof.
\end{proof}

\begin{rem} Since $\CE_1$ may not be finitely dimensional, application of the Sobolev inequality \eqref{eq:Sobolev} may not
be standard to some readers. For readers' convenience, we give
a direct proof of this inequality \eqref{eq:Sobolev} in Appendix \ref{sec:Sobolev}.
\end{rem}

\section{Exponential convergence: the prequantization case}
\label{sec:prequantization}
		
To make the main arguments transparent in the scheme of our
exponential estimates, we start with the case of prequantization, i.e.,
the case without $\CN$ and the normal form contains $E$ only.
The general case will be dealt with in the next section.

We put the basic hypothesis that
\be\label{eq:Hausdorff}
|e(\tau,t)| < \delta
\ee
for all $\tau \geq \tau_0$ in our further study,
where $\delta$ is given as in Proposition \ref{lem:deltaforE}.
From Corollary \ref{cor:convergence-ue} and the remark after it,
we can locally work with everything in a neighborhood of zero section in the
normal form $(U_E, f\lambda_E, J)$.

\subsection{Computational preparation}
\label{subsec:prepare}

For a smooth function $h$, we can express its gradient vector field $\grad h$
with respect to the metric $g_{(\lambda_E,J_0)} = d\lambda_E(\cdot, J_0\cdot ) + \lambda_E \otimes \lambda_E$
in terms of the $\lambda_E$-contact Hamiltonian vector field $X^{d\lambda_E}_h$ and
the Reeb vector field $X_E$ as
\be\label{eq:gradh}
\grad h = -J_0 X^{d\lambda_E}_h + X_E[h]\, X_E.
\ee
Note the first term
$-J_0 X^{d\lambda_E}_h=:\grad h^\pi$ is the $\pi_{\lambda_E}$-component of $\grad h$.

Consider the vector field $Y$ along $u$ given by
$
Y(\tau,t) := \nabla^\pi_\tau e
$
where $w = (u,e)$ in the coordinates defined in section \ref{sec:thickening}.
The vector field $e_\tau = e(\tau,t)$
as a vector field along $u(\tau,t)$ is nothing but
the map $(\tau,t) \mapsto I_{w(\tau,t);u(\tau,t)}(\vec R(w(\tau,t))$ as a section of $u^*E$. In particular
$$
e(\infty,t) = I_{w(\infty,t);u(\infty,t)}(\vec R(w(\infty,t)))= I_{z(t);x(Tt)}(\vec R(o_{x(T t)}) = o_{x(T t)}.
$$
Obviously, $I_{w(\tau,t);u(\tau,t)}(\vec R(w(\tau,t))$ is pointwise perpendicular to $o_E \cong Q$.
In particular,
\be\label{eq:perpendicular}
(\Pi_{x(T\cdot)}^{u_\tau})^{-1} e_\tau \in (\ker D\Upsilon(z))^\perp
\ee
where $e_\tau$ is the vector field along the loop $u_\tau \subset o_E$ and we regard
$(\Pi_{x(T\cdot)}^{u_\tau})^{-1} e_\tau$ as a vector field along $z = (x(T\cdot),o_{x(T\cdot)})$.

For further detailed computations, one needs to decompose the contact instanton map equation
\be\label{eq:contact-instanton-E}
\delbar_J^{f\lambda_E} w = 0, \quad d(w^*(f\, \lambda_E)) \circ j) = 0.
\ee
The second equation does not depend on the choice of endomorphisms $J$ and becomes
\be\label{eq:dw*flambda}
d(w^* \lambda_E\circ j) = - dg \wedge (\lambda_E\circ j),\quad g = \log f
\ee
which is equivalent to
\be\label{eq:dw*flambda}
d(u^*\theta \circ j + \Omega(e, \nabla^E_{du\circ j} e))
= - dg \wedge (u^* \theta \circ j + \Omega(e, \nabla^E_{du\circ j} e)).
\ee
On the other hand, by the formula \eqref{eq:xi-projection}, the first equation $\delbar_J^{f\lambda_E} w = 0$ becomes
\be\label{eq:delbarwflambda}
\delbar^{\pi_{\lambda_E}}_{J_0} w = (w^*\lambda_E\, Y_{dg})^{(0,1)} + (J - J_0) d^{\pi_{\lambda_E}} w
\ee
where $(w^*\lambda_E\, Y_{dg})^{(0,1)}$ is the $(0,1)$-part of the one-form $w^*\lambda_E\, Y_{dg}$
with respect to $J_0$.

In terms of the coordinates, the equation can be re-written as
\beastar
&{}& \left(\begin{matrix} \delbar^{\pi_\theta} u - \left(\Omega(\vec R(u,e),\nabla^E_{du} e)\, X_E(u,e)\right)^{(0,1)}\\
(\nabla^E_{du} e)^{(0,1)} \end{matrix}\right)\\
& = & \left(\begin{matrix}
\left( \left(u^*\theta + \Omega(\vec R(u,e),\nabla^E_{du} e)\right) d\pi_E((Y_{dg})^h)\right)^{(0,1)} \\
\left( \left(u^*\theta + \Omega(\vec R(u,e),\nabla^E_{du} e)\right)  X_g^\Omega(u,e)\right)^{(0,1)}\\
\end{matrix}\right) + (J - J_0) d^{\pi_{\lambda_E}} w.
\eeastar
Here $\left(\Omega(\vec R(u,e),\nabla^E_{du} e)\, X_E(u,e)\right)^{(0,1)}$ is the $(0,1)$-part with respect to
$J_Q$ and $(\nabla^E_{du} e)^{(0,1)}$ is the $(0,1)$-part with respect to $J_E$.
From this, we have derived

\begin{lem}\label{lem:eq-in-(u,e)}In coordinates $w = (u,e)$, \eqref{eq:contact-instanton-E} is equivalent to
\bea
\nabla''_{du} e & = & \left(w^*\lambda_E\, X_g^\Omega(u,e)\right)^{(0,1)} + I_{w;u}\left((J - J_0) d^{\pi_{\lambda_E}} w)^v\right)\label{eq:CR-e} \\
\delbar^\pi u & = & \left(w^*\lambda_E\, (d\pi_E(Y_{dg})^h)\right)^{(0,1)}
+ \left(\Omega(e, \nabla^E_{du} e)\, X_\theta(u(\tau,e))^h\right)^{(0,1)}\nonumber\\
&{}& + d\pi_E\left((J - J_0) d^{\pi_{\lambda_E}} w\right)^h
\label{eq:CR-uxi}
\eea
and
\be\label{eq:CR-uReeb}
d(w^*\lambda_E \circ j)  = -dg \wedge w^*\lambda_E \circ j
\ee
with the insertions of
$$
w^*\lambda_E = u^*\theta  + \Omega(e,\nabla^E_{du}e).
$$
\end{lem}
Note that with insertion of \eqref{eq:CR-uReeb}, we obtain
\bea\label{eq:delbarpiu=}
\delbar^\pi u
& = & \left(u^*\theta\, d\pi_E((Y_{dg})^h)\right)^{(0,1)}
+ \left(\Omega(e, \nabla^E_{du} e)\, (X_\theta(u(\tau,e))+ d\pi_E(Y_{dg}))^h\right)^{(0,1)}\, \nonumber \\
&{}& + d\pi_E\left((J - J_0) d^{\pi_{\lambda_E}} w\right)^h.
\eea

Now let $w=(u,e)$ be a contact instanton in terms of the decomposition as above.

\begin{lem}\label{lem:J-J0} Let $e$ be an arbitrary section over a
smooth map $u:\Sigma \to Q$. Then
\bea\label{eq:J-J0}
I_{w;u}\left(((J - J_0) d^{\pi_{\lambda_E}} w)^v\right)& = & L_1(u,e)(e,(d^\pi u,\nabla_{du} e))\\
d\pi_E\left(((J - J_0) d^{\pi_{\lambda_E}} w)^h\right) & = & L_2(u,e)(e,(d^\pi u,\nabla_{du} e))
\eea
where $L_1(u,e)$ is a $(u,e)$-dependent bilinear map with values in $\Omega^0(u^*E)$ and
$L_2(u,e)$ is a bilinear map with values in $\Omega^0(u^*TQ)$. They also satisfy
\be\label{eq:|J-J0|}
|L_i(u,e)| = O(1).
\ee
\end{lem}
An immediate corollary of this lemma is
\begin{cor}\label{cor:|J-J0|}
\beastar
|I_{w;u}\left(((J - J_0) d^{\pi_{\lambda_E}} w)^v\right)| & \leq & O(1)|e|(|d^\pi u| + |\nabla e|) \\
\left|\nabla\left(I_{w;u}\left(((J - J_0) d^{\pi_{\lambda_E}} w)^v\right)\right)\right| & \leq &
O(1)((|du| + |\nabla e|)^2|e| \\
&{}& \quad + |\nabla e|(|du| + |\nabla e|) + |e||\nabla^2 e|).
\eeastar
\end{cor}

Next, we give the following lemmas whose proofs are straightforward from the definition of
$X_g^\Omega$.

\begin{lem}\label{lem:MN} Suppose $d_{C^0}(w(\tau,\cdot),z(\cdot)) \leq \iota_g$. Then
\beastar
X^{\Omega}_{g}(u,e) &= & D^v X^\Omega_g(u,0) e + M_1(u,e)(e,e) \\
d\pi_E(Y_{dg}) & = & M_2(u,e)(e)\\
\Omega(e,\nabla_{du}^E e)\, X_g^\Omega(u,e) & =  & N(u,e)(e,\nabla_{du} e,e)
\eeastar
where $M_1(u,e)$ is a smoothly $(u,e)$-dependent bi-linear map on $\Omega^0(u^*E)$
and $M_2: \Omega^0(u^*E) \to \Omega^0(u^*TQ)$ is a linear map, $N(u,e)$ is a
$(u,e)$-dependent tri-linear map on $\Omega^0(u^*E)$. They also satisfy
$$
|M_i(u,e)| = O(1), \quad |N(u,e)| = O(1).
$$
\end{lem}

\begin{lem}\label{lem:K}
\beastar
(X^{d\lambda_E}_g)^h(u,e) = K(u,e)\, e
\eeastar
where $K(u,e)$ is a $(u,e)$-dependent linear map
from $\Omega^0(u^*E)$ to $\Omega^0(u^*TQ)$ satisfying
$$
|M_1(u,e)| = O(1), \quad |N(u,e)| = O(1), \quad |K(u,e)| = o(1).
$$
\end{lem}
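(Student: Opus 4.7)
The plan is to argue that the horizontal component of the perturbation vector field vanishes to second order along the zero section $o_F$, so that after a Taylor expansion in the vertical variable $e$ one obtains the factorization $(X^{d\lambda_E}_g)^h(u,e) = K(u,e)\,e$ with $K(u,0)\equiv 0$, which is exactly what is meant by $|K(u,e)| = o(1)$ as $|e|\to 0$.

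First, I would use the normalization from the canonical neighborhood theorem (Theorem \ref{thm:neighborhoods2}): $f|_{o_F}\equiv 1$ and $df|_{o_F}\equiv 0$. Hence $g = \log f$ satisfies $g|_{o_F}\equiv 0$ and $dg|_{o_F}\equiv 0$. Plugging this into the defining relation for the $\lambda_E$-dual vector field,
\[
X^{\pi_{\lambda_E}}_{dg}\rfloor d\lambda_E = dg - dg(X_{\lambda_E})\,\lambda_E,
\]
shows that $X^{\pi_{\lambda_E}}_{dg}|_{o_F}\equiv 0$, which already gives the factorization $X^{\pi_{\lambda_E}}_{dg}(u,e) = A(u,e)\,e$ for some smooth bundle map $A$ by the standard fundamental-theorem-of-calculus trick along the radial direction in the fiber.

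The key step, which gives the stronger $o(1)$ bound rather than merely $O(1)$, is to check that the horizontal part of the first-order $e$-derivative of $X^{\pi_{\lambda_E}}_{dg}$ also vanishes on $o_F$. Differentiating the defining relation at a point $p\in o_F$ along a vertical direction $\eta \in V_pTF$ and using $X^{\pi_{\lambda_E}}_{dg}|_p = 0$ and $dg|_p = 0$, one finds
\[
(\nabla_\eta X^{\pi_{\lambda_E}}_{dg})|_p \rfloor d\lambda_E|_p = \mathrm{Hess}(g)|_p(\eta,\,\cdot\,).
\]
Since on $o_F$ the function $g$ agrees with $\log f$ whose vanishing to first order forces its Hessian to have only a purely vertical-vertical block (the mixed block $\mathrm{Hess}(g)(\eta,\zeta)$ with $\eta\in VTF$ and $\zeta\in HTF$ is a derivative of $dg|_{o_F}\equiv 0$ along $o_F$), the right-hand side vanishes whenever $\zeta$ is horizontal. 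Combined with the splitting $d\lambda_E|_{o_F} = \pi_F^*d\theta \oplus \widetilde\Omega$ established in \eqref{eq:dlambdaF} (so that horizontal and vertical components decouple on $o_F$), this implies that the horizontal part of $\nabla_\eta X^{\pi_{\lambda_E}}_{dg}$ is $d\theta$-null, while $\lambda_E(X^{\pi_{\lambda_E}}_{dg})\equiv 0$ kills the Reeb component. Therefore $(\nabla_\eta X^{\pi_{\lambda_E}}_{dg})^h|_{o_F}\equiv 0$, which says exactly $A^h(u,0)\equiv 0$. Setting $K(u,e) := A^h(u,e)$ gives the desired factorization and the bound $|K(u,e)| = O(|e|) = o(1)$ as $|e|\to 0$.

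The main obstacle I anticipate is bookkeeping around the projections: one must keep careful track of the splitting $TF = HTF\oplus VTF$ induced by the symplectic connection and confirm that the cross-pairings of $d\lambda_E$ between horizontal and vertical vectors vanish on $o_F$, so that the Hessian computation cleanly separates. Once this is done, the rest is just the standard smooth-dependence/Taylor-expansion argument already used in Lemmas \ref{lem:J-J0} and \ref{lem:MN}.
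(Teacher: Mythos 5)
The paper itself offers no proof of Lemma \ref{lem:K} beyond the remark that it is ``straightforward from the definition of $X_g^\Omega$'', so there is no official argument to compare yours against; what you wrote is a genuine filling-in of that gap, and it is essentially correct. Your two inputs are the right ones: (i) $g=\log f$ vanishes to second order along $o_F$ (from $f|_{o_F}\equiv 1$, $df|_{o_F}\equiv 0$), which forces $X^{\pi_{\lambda_E}}_{dg}|_{o_F}=0$ and kills the mixed vertical--horizontal block of $\mathrm{Hess}\,g$ along the zero section; (ii) along $o_F$ the form $d\lambda_E=\pi^*d\theta+\widetilde\Omega$ has no horizontal--vertical cross pairings and $\lambda_E$ restricts to $\theta\circ d\pi$ on horizontal vectors. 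Pairing the differentiated defining relation with horizontal test vectors then shows $d\pi\bigl((\nabla_\eta X^{\pi_{\lambda_E}}_{dg})^h\bigr)\in\ker d\theta$, and differentiating $\lambda_E(X^{\pi_{\lambda_E}}_{dg})\equiv 0$ at a zero of the vector field removes the $X_\theta$-component, so the vertical derivative of the horizontal part vanishes along $o_F$ and the integral-remainder Taylor argument gives $|K(u,e)|=O(|e|)=o(1)$. Two points you should make explicit. First, differentiating the defining relation also produces the term $-\mathrm{Hess}\,g|_p(\eta,X_{\lambda_E})\,\lambda_E|_p$, which you dropped; it is harmless only because $X_{\lambda_E}(p)=\widetilde X_\theta(p)$ is tangent to $o_F$, so this Hessian entry vanishes by the same mixed-block argument. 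Second, the step ``$d\theta$-null with no Reeb component implies zero'' uses $\ker d\theta=\R\{X_\theta\}$, i.e.\ the prequantization hypothesis $T\CN=\{0\}$ in force in Section \ref{sec:prequantization}; in the general Morse-Bott model the $H$-directions and the cross term $\pi_{T^*\CN;F}^*d\Theta_G$ would have to be treated separately. Finally, note that in the paper's actual use (the lemma preceding Proposition \ref{prop:expdecayhorizontal}) only the weaker bound $|(X^{d\lambda_E}_g)^h|\le C|e|$, i.e.\ $K=O(1)$, is invoked, so your sharper second-order vanishing is a refinement consistent with, but stronger than, what the later estimates require.
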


\medskip

\subsection{$L^2$-exponential decay of the normal bundle component $e$}
\label{sec:expdecayMB}

Combining Lemma \ref{lem:J-J0}, \ref{lem:MN} and \ref{lem:K}, we can write \eqref{eq:CR-e}
as
\be\label{eq:nabladue}
\nabla''_{du}e - \left(u^*\theta \, DX^\Omega_g(u)(e)\right)^{(0,1)} = K(e,\nabla_{du} e, d^\pi u).
\ee
By evaluating \eqref{eq:nabladue} against $\frac{\del}{\del \tau}$, we derive
\be\label{eq:e-reduction}
\nabla_\tau e + J_E(u) \nabla_t e - \theta\left(\frac{\del u}{\del \tau}\right)DX_g^\Omega(u)(e)
- J_E \theta\left(\frac{\del u}{\del t}\right) DX_g^\Omega(u)(e)
= K\left(e, \nabla_\tau e, \pi_\theta \frac{\del u}{\del \tau}\right)
\ee
First notice that
\begin{lem} \label{lem:|K|=e2}
$$
\left|K\left(e, \nabla_\tau e, \frac{\del u}{\del \tau}\right)\right|_{L^\infty} = o(|e|)
$$
\end{lem}
\begin{proof}
We consider \eqref{eq:e-reduction}
as an equation for $e$. Clearly this is a quasilinear elliptic equation of $e$ when $u$ is fixed.
We also recall $K(e,\nabla_{du} e, du)$ has the form
$$
L_1(u,e)\left(e,\left(\nabla_\tau e, \frac{\del u}{\del \tau}\right)\right)
$$
where $L_1(u,e)$ is a bilinear map with $|L_1(u,e)| = O(1)$ by \eqref{eq:J-J0}
which satisfies the inequality
$$
\left|L_1(u,e)\left(e,\left(\nabla_\tau e, \frac{\del u}{\del \tau}\right)\right)\right| \leq O(1)
|e|(|d^\pi u| + |\nabla_\tau e|)
$$
(See Corollary \ref{cor:|J-J0|}.) Now the lemma
immediately follows from the convergence  $|\pi \frac{\del u}{\del \tau}|, \, |\nabla_\tau e| \to 0$
established in Corollary \ref{cor:convergence-ue}.
\end{proof}

Denote by $B(\tau)$ a $\tau$-family of operators
$$
B(\tau): W^{1, 2}(u(\tau, \cdot)^*E)\to L^2(u(\tau, \cdot)^*E)
$$
defined by
\beastar
B(\tau)e& :=& J_E(u) \nabla_t e - \theta\left(\frac{\del u}{\del \tau}\right)DX_g^\Omega(u)(e)
- J_E \theta\left(\frac{\del u}{\del t}\right) DX_g^\Omega(u)(e)\\
&{}& \quad -K(e, \nabla_\tau e, \frac{\del u}{\del \tau}).
\eeastar
Then \eqref{eq:e-reduction} for $e$ with $u$ fixed can be rewritten as
$$
\nabla_\tau e(\tau)+B(\tau)e(\tau)=0.
$$

Once we know $u(\tau, \cdot) \to z_\infty$ as $\tau \to \infty$ for some Reeb orbit $z_\infty$,
we can use the exponential map from $z_\infty$ to $u(\tau, \cdot)$ for any sufficiently large $\tau$
and its associated parallel transport to regard $B(\tau)$ as a $\tau$-family of
linear operators
$$
W^{1, 2}(z_\infty^*E)\to L^2(z_\infty^*E)
$$
along the limiting closed Reeb orbit $z_\infty$. (See Section 8 \cite{oh-wang2} for
a detailed discussion on this process.)
\begin{lem}\label{lem:B}
Let $\tau_k$ be a sequence with $\tau_k \to \infty$, and also denote by $\tau_k$
a subsequence thereof appearing in Theorem \ref{thm:subsequence}.
Under the above mentioned identification, the operator $B(\tau_k)$ converges to the the linearized operator
$$
B_\infty = J_E (z_\infty(t))(\nabla_t - \CT DX_g^\Omega(u_\infty))
$$
as $k \to \infty$.
\end{lem}
\begin{proof} Reorganize \eqref{eq:e-reduction} into
\beastar
\nabla_{\frac{\del u}{\del\tau}}e&+&J_E \left(\nabla_{\frac{\del u}{\del t}}e
-\lambda_E(\frac{\del u}{\del t})X^\Omega_g(u, e)\right)\\
&-&\lambda_E(\frac{\del u}{\del\tau})X^\Omega_g(u, e)
- K\left(e, \nabla_\tau e, \frac{\del u}{\del \tau}\right) =0.
\eeastar
We first note that $\nabla_{\frac{\del u}{\del t}(\tau_k,)} \to \CT \nabla_{\dot z_\infty}$
in the operator norm under the above mentioned identification.
(See Proposition 8.2 \cite{oh-wang1} and its proof for the precise explanation on this
statement.)
We now estimate the two terms in the second line.
For the first term, we have
$$
|\lambda_E(\frac{\del u}{\del\tau})X^\Omega_g(u, e)|\leq |\lambda_E(\frac{\del u}{\del\tau})||X^\Omega_g(u, e)|
=o(1)|e|,
$$
where the last estimate follows from Corollary \ref{cor:convergence-ue} and Lemma \ref{cor:|J-J0|}.
For the second term, Lemma \ref{lem:|K|=e2} implies that is of order $o(|e|)$.
This now completes the proof.
\end{proof}

Note that so far this convergence can only be expected in the subsequence sense.
Fortunately this weak convergence is already enough to conduct our scheme of three-interval argument,
when combined with the uniform local a priori estimate from \cite{oh-wang2}.

Next we briefly explain how the current situation fits into the general framework set
up in Section \ref{sec:three-interval}. We refer readers to Section 8 \cite{oh-wang2} for
further details of this verification.

We first consider two Banach spaces $\CE_{1, \tau} \supset \CE_{2, \tau}$
defined by
$$
\CE_{1, \tau}:=L^2(\iota_\tau^* u^*E), \quad \CE_{2, \tau}:=W^{1, 2}(\iota_\tau^* u^*E),
$$
where the maps
$$
\iota_\tau^*: S^1\to [0, \infty)\times S^1, \quad t\mapsto (\tau, t)
$$
are embeddings at $\tau\in [0, \infty)$. This family defines the bundle
$\CE_i$ over $[0, \infty)$ whose fiber at $\tau$ is given by $\CE_{1, \tau}$:
Its local triviality can be again proved by the parallel transport over a sufficient
small interval $(-\e +\tau, \tau + \e)$ at each given $\tau \in [0,\infty)$.

We denote the translation map $\sigma_s(\tau)=\tau+s$ by $\sigma_s: [a, b]\to [0, \infty)$ for each $s \in [a,b]$
for any given bounded interval $[a,b] \subset [0,\infty)$.
Using the exponential map over the limiting Reeb orbit $z_\infty$ and the associated
parallel transport for all sufficiently large $k$'s the sequence of Banach bundles
$$\CE_{i;k}:=\sigma_{\tau_k}^*\CE_i\to [a,b]$$
have global trivializations
$$\Phi_{i;k}:\CE_{i;k}\to L^2(z_\infty^*E)\times [a,b].$$
The uniform convergence proved in Theorem \ref{thm:subsequence} and Corollary \ref{cor:convergence-ue} ensures
the uniformly local tameness of $\CE_i$ and also the  precompactness, uniformly local coerciveness of $B$. In particular, since
$$
\theta(\frac{\del u}{\del\tau})(\tau, \cdot)\to \CQ=0, \quad \theta(\frac{\del u}{\del t})(\tau, \cdot)\to \CT \quad \tau\to \infty
$$
uniformly over $[a, b]$, we also conclude that the operator $B_\infty$ defined as
$$
B_\infty = J_E (z_\infty(t))(\nabla_t -  \CT DX_g^\Omega(u_\infty)),
$$
is the limit of $B(\tau)$ for $\tau \in [a,b]$ with respect to the subsequence $\{\tau_k\}$, as shown in Lemma \ref{lem:B}.
Moreover, we also notice that $B_\infty$ is invariant under the $\tau$-translations, and it shows that
$B$ is  asymptotically cylindrical. Also, it follows from the Morse-Bott condition and Corollary \ref{cor:gap}
that the operator $B_\infty$ is an unbounded self-adjoint operator with trivial kernel.
The $C^1$-bound of $e$ from \eqref{eq:Hausdorff} and \eqref{eq:highere} guarantees that
the uniform bound of $W^{1, 2}(S^1)$-norm of $e(\tau)$.

The above discussion verifies that \eqref{eq:e-reduction} can be fit into
the general abstract framework of Theorem \ref{thm:three-interval} applied to $\zeta= e$.
Therefore we immediately obtain the following $L^2$-exponential estimate.
\begin{prop} \label{prop:expdecayvertical} There exists a sufficiently large $\tau_0 > 0$
and constants $C_0, \, \delta_0$ such that
$$
\|e(\tau)\|_{L^2(S^1)} < C_0 e^{-\delta_0 \tau}
$$
for all $\tau \geq \tau_0$.
\end{prop}

\subsection{$L^2$-exponential decay of the tangential component $du$ I}
\label{subsec:exp-horizontal}

We summarize previous geometric calculations, especially the equation
 \eqref{eq:delbarpiu=}, into the following basic equation which we will study
using the three-interval argument in this section.

\begin{lem} We can write the equation \eqref{eq:delbarpiu=} into the form
\be\label{eq:pidudtauL}
\pi_\theta \frac{\del u}{\del \tau}+J(u)\pi_\theta \frac{\del u}{\del t}=L(\tau, t),
\ee
so that $\|L(\tau, \cdot)\|_{L^2(S^1)}\leq C e^{-\delta \tau}$.
\end{lem}
\begin{proof} We recall $|du| \leq C$ which follows from Corollary
\ref{cor:convergence-ue}. Furthermore since $X_{dg}^{d\lambda_E}|_Q \equiv 0$, it follows
$$
\left|\left(u^*\theta\, d\pi_E((X_{dg}^{d\lambda_E})^h)\right)^{(0,1)}\right| \leq C |e|.
$$
Furthermore by the adaptedness of $J$ and by the definition of the associated $J_0$,
we also have $(J - J_0)|_{Q} \equiv 0$ and so
$$
\left|\left(d\pi_E((J - J_0) d^{\pi_{\lambda_E}} w\right)^h\right| \leq C |e|.
$$
It is manifest that the second term above also carries similar estimate. Combining them,
we have established that the right hand side is bounded by $C|e|$ from above.
Then the required exponential inequality follows from that
of $e$ established in Proposition \ref{prop:expdecayvertical}.
\end{proof}

In the rest of this section and Section \ref{subsec:centerofmass}, we give the proof of the following
\begin{prop}\label{prop:expdecayhorizontal}
There exists some constant $C_0>0$ and $\delta_0>0$ such that
$$
\left\|\pi_\theta\frac{\del u}{\del \tau}\right\|_{L^2}< C_0\, e^{-\delta_0 \tau}.
$$
\end{prop}

The proof basically follows the same  three-interval argument as in the proof of Theorem \ref{thm:three-interval}.
However, since the current case is much more subtle, we would like to highlight the following points before we start:
\begin{enumerate}
\item Unlike the normal component $e$ whose governing equation \eqref{eq:e-reduction} is a (inhomogeneous) quasi-linear elliptic
equation, \eqref{eq:pidudtauL} is only (inhomogeneous) quasi-linear \emph{degenerate elliptic}:
the limiting operator $B$ of its linearization contains non-trivial kernel.
\item  Nonlinearity of the
equation makes somewhat cumbersome to formulate the abstract framework of three-interval argument as in
Theorem \ref{thm:three-interval} although we believe it is doable. Since this is not the main interest of
ours, we directly deal with \eqref{eq:pidudtauL} in the present paper postponing such an abstract
framework elsewhere in the future.
\item For the normal component, we directly establish the exponential estimates of the
map $e$ itself. On the other hand, for the tangential component, partly due to the absence of
direct linear structure of $u$ and also due to the presence of nontrivial kernel of the asymptotic
operator, we prove the exponential decay of the \emph{derivative} $\pi_\theta\frac{\del u}{\del\tau}$ first and
then prove the exponential convergence to some Reeb orbit afterwards.
\item To obtain the exponential decay of the \emph{derivative} term,
we need to exclude the possibility of a kernel element for the limit obtained in the
three-interval argument.  In Section \ref{subsec:centerofmass} we use the techniques of the center of mass as an intrinsic geometric coordinates system to exclude the possibility of the vanishing of the limit.
This idea appears in \cite{mundet-tian} and \cite{oh-zhu11} too.
\item Unlike  \cite{HWZ3} and \cite{bourgeois}, our proof directly
obtains $L^2$-exponential decay instead of showing $C^0$ convergence first and
getting  exponential decay afterwards.
\end{enumerate}

\medskip
Starting from now until the end of Section \ref{subsec:centerofmass}, we give the proof of Proposition \ref{prop:expdecayhorizontal}.

Divide $[0, \infty)$ into the union of unit intervals $I_k=[k, k+1]$ for
$k=0, 1, \cdots$, and denote by $Z_k:=[k, k+1]\times S^1$. In the context below, we also denote by
$
Z^l
$
the union of three-intervals $Z^l_{I}:=[k,k+1]\times S^1$, $Z^l_{II}:=[k+1,k+2]\times S^1$ and
$Z^l_{III}:=[k+2,k+3]\times S^1$.

Consider $x_k: =\|\pi_\theta \frac{\del u}{\del \tau}\|^2_{L^2(Z_k)}$ as symbols in Lemma \ref{lem:three-interval}.
As in the proof of Theorem \ref{thm:three-interval}, we still use the three-interval inequality as the criterion and consider two situations:
\begin{enumerate}
\item If there exists some constant $\delta>0$ such that
\be
\|\pi_\theta \frac{\del u}{\del \tau}\|^2_{L^2(Z_{k})}\leq \gamma(2\delta)(\|\pi_\theta \frac{\del u}{\del \tau}\|^2_{L^2(Z_{k-1})}
+\|\pi_\theta \frac{\del u}{\del \tau}\|^2_{L^2(Z_{k+1})})\label{eq:3interval-ineq-II}
\ee
holds for every $k$, then from Lemma \ref{lem:three-interval}, we are done with the proof;
\item Otherwise, we collect all the three-intervals $Z^{l_k}$ against \eqref{eq:3interval-ineq-II}, i.e.,
\be
\|\pi_\theta \frac{\del u}{\del \tau}\|^2_{L^2(Z^{l_k}_{II})}>
\gamma(2\delta)(\|\pi_\theta \frac{\del u}{\del \tau}\|^2_{L^2(Z^{l_k}_{I})}+\|\pi_\theta \frac{\del u}{\del \tau}\|^2_{L^2(Z^{l_k}_{III})}).\label{eq:against-3interval-II}
\ee
In the rest of the proof, we deal with this case.
\end{enumerate}

First,  if there exists some uniform constant $C_1>0$ such that
on each such three-interval
\be
\|\pi_\theta \frac{\del u}{\del \tau}\|_{L^\infty([l_k+0.5, l_k+2.5]\times S^1)}<C_1e^{-\delta l_k}, \label{eq:expas-zeta-II}
\ee
then through the same estimates and analysis as for Theorem \ref{thm:three-interval},
we obtain the exponential decay of $\|\pi_\theta \frac{\del u}{\del \tau}\|$ with the presumed rate $\delta$ as claimed.

\begin{rem}Here we look at the $L^\infty$-norm on smaller intervals  $[l_k+0.5, l_k+2.5]\times S^1$
instead of the whole $Z^{l_k}=[l_k, l_k+3]\times S^1$
is out of consideration for the elliptic bootstrapping argument in Lemma \ref{lem:boot}.
However, the change here doesn't change any argument, since smaller ones are already enough to cover the middle intervals (see Figure \ref{fig:three-interval}).
\end{rem}
Following the same scheme as for Theorem \ref{thm:three-interval}, we are going to deal with the case
when there is no uniform bound $C_1$. Then there exists a sequence of constants $C_k\to \infty$
and a subsequence of such three-intervals $\{Z^{l_k}\}$ (still use $l_k$ to denote them) such that
\be
\|\pi_\theta \frac{\del u}{\del \tau}\|_{L^\infty([l_k+0.5, l_k+2.5]\times S^1)}\geq C_ke^{-\delta l_k}.\label{eq:decay-fail-zeta-II}
\ee

By Theorem \ref{thm:subsequence}
and the local uniform $C^1$-estimate,
we can take a subsequence, still denoted by $\zeta_k$, such that $u(Z^{l_k})$ lives in a neighborhood of
some closed Reeb orbit $z_\infty$.
Next, we translate the sequence $u_k:=u|_{Z^{l_k}}$ to $\widetilde{u}_k: [0,3]\times S^1\to Q$
by defining
$
\widetilde{u}_k(\tau, t)=u_k(\tau+l_k, t).
$
As before, we also define $\widetilde{L}_k(\tau, t)=L(\tau+l_k, t)$. From \eqref{eq:pidudtauL-t}, we now have
\be\label{eq:pidudtauL-t}
\pi_\theta \frac{\del \widetilde{u}_k}{\del \tau}+J(\widetilde{u}_k)\pi_\theta \frac{\del \widetilde{u}_k}{\del t}=\widetilde{L}_k(\tau, t).
\ee

Recalling that $Q$ carries a natural $S^1$-action induced from the Reeb flow,
we equip $Q$ with a $S^1$-invariant metric and its associated Levi-Civita connection.
In particular, the vector field $X_\lambda$ restricted to $Q$ is a Killing vector
field of the metric and satisfies $\nabla_{X_\lambda}X_\lambda = 0$.

Now since the image of $\widetilde{u}_k$ live in neighbourhood of a fixed Reeb orbit $z$ in $Q$,
we can express
\be\label{eq:normalexpN}
\widetilde u_k(\tau,t) = \exp_{z_k(\tau,t)}^Z \zeta_k(\tau,t)
\ee
for the normal exponential map $exp^Z: NZ\to Q$ of the locus $Z$ of $z$, where
$z_k(\tau,t) = \pi_N(\widetilde u_k(\tau,t))$ is the normal projection of $\widetilde u_k(\tau,t)$
to $Z$ and $\zeta_k(\tau,t) \in N_{z_k(\tau,t)}Z = \zeta_{z_k(\tau,t)} \cap T_{z_k(\tau,t)}Q$.
Then

\begin{lem}\label{lem:pithetadel}
\be
\pi_\theta\frac{\del \widetilde{u}_k}{\del \tau} =\pi_\theta (d_2\exp^Z)(\nabla^{\pi_{\theta}}_\tau\zeta_k)
\label{eq:utau-thetatau}
\ee
$$
\pi_\theta\frac{\del \widetilde{u}_k}{\del t}=\pi_\theta (d_2\exp^Z)(\nabla^{\pi_{\theta}}_t\zeta_k).
$$
\end{lem}
\begin{proof} To simplify notation, we omit $k$ here.
For each fixed $(\tau,t)$, we compute
$$
D_1 \exp^Z(z(\tau,t))(X_\lambda(z(\tau,t))
= \frac{d}{ds}\Big|_{s = 0} \exp^Z_{\alpha(s)} \Pi_{z(\tau,t)}^{\alpha(s)}(X_\lambda(z(\tau,t))
$$
for a curve $\alpha: (-\e,\e) \to Q$ satisfying $\alpha(0) = z(\tau,t), \, \alpha'(0) = X_\lambda(z(\tau,t))$.
For example, we can take $\alpha(s) = \phi_{X_\lambda}^s(z(\tau,t))$.

On the other hand, we compare the initial conditions of the two geodesics
$a \mapsto \exp^Z_{\alpha(s)} a \Pi_{x}^{\alpha(s)}(X_\lambda(x))$ and
$a \mapsto \phi_{X_\lambda}^s(\exp^Z_x a (X_\lambda(x))$ with $x = z(\tau,t)$.
Since $\phi_{X_\lambda}^s$ is an isometry, we derive
$$
\phi_{X_\lambda}^s(\exp^Z_x a (X_\lambda(x)) = \exp^Z_x a (d\phi_{X_\lambda}^s(X_\lambda(x))).
$$
Furthermore we note that $d\phi_{X_\lambda}^s(X_\lambda(x)) = X_\lambda(x) $ at $s = 0$
and the field $s \mapsto d\phi_{X_\lambda}^s(X_\lambda(x))$ is parallel along the curve $s \mapsto \phi_{X_\lambda}^s(x)$.
Therefore by definition of $\Pi_x^{\alpha(s)}(X_\lambda(z(\tau,t))$, we derive
$$
\Pi_x^{\alpha(s)}(X_\lambda(x))= d\phi_{X_\lambda}^s(X_\lambda(x)).
$$
Combining this discussion, we obtain
$$
\exp^Z_{\alpha(s)} \Pi_{z(\tau,t)}^{\alpha(s)}(X_\lambda(z(\tau,t)))
= \phi_{X_\lambda}^s(\exp^Z_{z(\tau,t)}(X_\lambda(z(\tau,t)))
$$
for all $s \in (-\e,\e)$. Therefore we obtain
$$
\frac{d}{ds}\Big|_{s = 0} \exp^Z_{\alpha(s)} \Pi_{z(\tau,t)}^{\alpha(s)}(X_\lambda(z(\tau,t)))
= X_\lambda(\exp^Z_{z(\tau,t)}(X_\lambda(z(\tau,t))).
$$
This shows
$
(D_1\exp^Z)(X_\lambda) = X_\lambda(\exp^Z_{z(\tau,t)}(X_\lambda(z(\tau,t)))
$.

To see $\pi_\theta (D_1\exp^Z)(\frac{\del z}{\del \tau}) = 0$, just note that $\frac{\del z}{\del \tau}
 = k(\tau,t) X_\lambda(z(\tau,t))$ for some function $k$, which is parallel to $X_\lambda$
and $z(\tau,t) \in Z$. Using the definition of $D_1\exp^Z(x)(v)$ for $v \in T_x Q$ at $x \in Q$, we
compute
\beastar
(D_1\exp^Z)(\frac{\del z}{\del \tau})(\tau,t) & = & D_1 \exp^Z(z(\tau,t))( k(\tau,t) X_\lambda(z(\tau,t))\\
& = & k(\tau,t) D_1 \exp^Z(z(\tau,t))(X_\lambda(z(\tau,t)),
\eeastar
and hence the $\pi_\theta$ projection vanishes.

At last write
\beastar
\pi_\theta \frac{\del \widetilde{u}}{\del \tau}= \pi_\theta (d_2\exp^Z)(\nabla^{\pi_{\theta}}_\tau\zeta)
+ \pi_\theta(D_1\exp^Z)(\frac{\del z}{\del \tau})
\eeastar
and we are done with the first identity claimed.

The second one is proved exactly the same way.
\end{proof}

Further noting that $\pi_\theta (d_2\exp^Z_{z_k(\tau,t)}): \zeta_{z_k(\tau,t)} \to \zeta_{z_k(\tau,t)}$ is invertible,
using this lemma and \eqref{eq:pidudtauL-t}, we now have the equation of $\zeta_k$
\be
\nabla^{\pi_{\theta}}_\tau\zeta_k+ \overline J(\tau,t) \nabla^{\pi_{\theta}}_t\zeta_k
= [\pi_\theta (d_2\exp^Z)]^{-1}\widetilde{L}_k.
\label{eq:nablaxi}
\ee
where we set  $[\pi_\theta (d_2\exp)]^{-1}J(\widetilde{u}_k)[\pi_\theta (d_2\exp^Z)] =: \overline J(\tau,t)$.

Next, we rescale this equation by the norm $\|{\zeta}_k\|_{L^\infty([0,3]\times S^1)}$:
\begin{lem} The norm $\|{\zeta}_k\|_{L^\infty([0,3]\times S^1)}$ is not zero.
\end{lem}
\begin{proof} Suppose to the contrary that
${\zeta}_k\equiv 0$. This then implies $\widetilde u_k(\tau,t) \equiv z_k(\tau, t)$ for all $(\tau, t)\in [0,3]\times S^1$.
Therefore $\frac{\del \widetilde u_k}{\del \tau}$ is parallel to $X_\theta$ on $[0,3] \times S^1$.
In particular $\pi_\theta \frac{\del \widetilde u_k}{\del \tau} \equiv 0$. This
violates the inequality
$$
\|\pi_\theta\frac{\del{\widetilde{u}_k}}{\del\tau}\|_{L^2([1,2]\times S^1)}>\gamma(\|\pi_\theta\frac{\del{\widetilde{u}_k}}{\del\tau}\|_{L^2([0,1]\times S^1)}+\|\pi_\theta\frac{\del{\widetilde{u}_k}}{\del\tau}\|_{L^2([2,3]\times S^1)}).
$$
Therefore the lemma holds.
\end{proof}

Now the rescaled sequence
$
\overline \zeta_k:={\zeta}_k/\|{\zeta}_k\|_{L^\infty([0,3]\times S^1)}
$
satisfies $\|\overline \zeta_k\|_{{L^\infty([0,3]\times S^1)}} = 1$, and
\bea\label{eq:nablabarxi}
\nabla^{\pi_{\theta}}_\tau\bar\zeta_k+  \overline J(\tau,t) \nabla^{\pi_{\theta}}_t\bar\zeta_k &=& \frac{[\pi_\theta (d_2\exp^Z)]^{-1}\widetilde{L}_k}{\|{\zeta}_k\|_{L^\infty([0,3]\times S^1)}}\\
\|\nabla^{\pi_{\theta}}_\tau\bar\zeta_k\|^2_{L^2([1,2]\times S^1)}&\geq&\gamma(2\delta)(\|\nabla^{\pi_{\theta}}_\tau\bar\zeta_k\|^2_{L^2([0,1]\times S^1)}+\|\nabla^{\pi_{\theta}}_\tau\bar\zeta_k\|^2_{L^2([2,3]\times S^1)}).\nonumber
\eea

The next step is to  focus on the right hand side of \eqref{eq:nablabarxi}.
\begin{lem}\label{lem:boot} The right hand side of \eqref{eq:nablabarxi} converges to zero as $k \to \infty$.
\end{lem}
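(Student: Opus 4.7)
The strategy is to show that the numerator tends to zero exponentially at the rate $e^{-\delta l_k}$ while the denominator is bounded below by something of order $C_k e^{-\delta l_k}$ with $C_k \to \infty$, so that their ratio tends to zero. First note that the numerator is easily controlled: since $\widetilde{L}_k(\tau,t) = L(\tau + l_k, t)$ and $|L(\tau',t)| \leq C e^{-\delta \tau'}$, we have
$$
\|\widetilde{L}_k\|_{L^\infty([0,3]\times S^1)} \leq C e^{-\delta l_k}.
$$
Moreover $[\pi_\theta (d_2 \exp^Z)]^{-1}$ is uniformly bounded, because by Theorem \ref{thm:subsequence} the maps $\widetilde u_k$ take values in a fixed tubular neighborhood of the limit orbit locus $Z$ on which $\exp^Z$ is a diffeomorphism with uniform $C^1$ bounds. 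Hence it suffices to establish a lower bound of the form $\|\zeta_k\|_{L^\infty([0,3]\times S^1)} \geq c\, C_k\, e^{-\delta l_k}$ for some $c>0$ and all large $k$.

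To obtain this lower bound, I will use the elliptic character of \eqref{eq:nablaxi}. Since the coefficient $\overline J(\tau,t)$ is uniformly $C^\infty$-bounded on $[0,3]\times S^1$ (again by Theorem \ref{thm:subsequence}, which gives uniform higher-derivative control on $\widetilde u_k$ in a fixed coordinate chart around the limit orbit), the standard interior estimates for first-order Cauchy--Riemann type systems, combined with Sobolev embedding, yield
$$
\|\nabla \zeta_k\|_{L^\infty([0.5,2.5]\times S^1)} \leq C\bigl(\|\zeta_k\|_{L^\infty([0,3]\times S^1)} + \|\widetilde{L}_k\|_{L^\infty([0,3]\times S^1)}\bigr),
$$
where the constant $C$ is independent of $k$. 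By the identity \eqref{eq:utau-thetatau} and the uniform boundedness of $\pi_\theta (d_2 \exp^Z)$, this translates into
$$
\bigl\|\pi_\theta \tfrac{\del \widetilde u_k}{\del \tau}\bigr\|_{L^\infty([0.5,2.5]\times S^1)} \leq C\bigl(\|\zeta_k\|_{L^\infty([0,3]\times S^1)} + e^{-\delta l_k}\bigr).
$$

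Now I bring in the standing hypothesis \eqref{eq:decay-fail-zeta-II}, which after translation by $l_k$ becomes
$$
\bigl\|\pi_\theta \tfrac{\del \widetilde u_k}{\del \tau}\bigr\|_{L^\infty([0.5,2.5]\times S^1)} \geq C_k\, e^{-\delta l_k}.
$$
Combining the two displayed inequalities gives $\|\zeta_k\|_{L^\infty([0,3]\times S^1)} \geq (C_k/C - 1)\, e^{-\delta l_k}$ once $k$ is large enough that $C_k \geq 2C$. Plugging the upper bound on $\widetilde L_k$ and the uniform boundedness of $[\pi_\theta (d_2\exp^Z)]^{-1}$ into the right hand side of \eqref{eq:nablabarxi} then gives
$$
\frac{\|[\pi_\theta(d_2\exp^Z)]^{-1}\widetilde{L}_k\|_{L^\infty}}{\|\zeta_k\|_{L^\infty([0,3]\times S^1)}} \leq \frac{C\, e^{-\delta l_k}}{(C_k/C - 1)\, e^{-\delta l_k}} \longrightarrow 0,
$$
which is exactly the claim of the lemma. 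This precisely explains the earlier switch from $L^\infty$ on $[0,3]\times S^1$ to $L^\infty$ on the shrunken interval $[0.5,2.5]\times S^1$ in \eqref{eq:decay-fail-zeta-II}: that shrinkage is what allows the interior elliptic estimate to be applied with uniform constants.

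\textbf{Main obstacle.} The only delicate point is the uniformity of the elliptic derivative estimate, which must hold with a constant independent of $k$. This uniformity is not automatic from \eqref{eq:nablaxi} alone; it uses crucially that the images $\widetilde u_k([0,3]\times S^1)$ all lie in a single compact tubular neighborhood of the fixed limit orbit, so that the equation, after pulling back to a trivialization of $NZ$, is a Cauchy--Riemann type operator whose symbol and lower-order coefficients are uniformly bounded in $C^\infty$. Once this uniformity is recorded, the rest of the argument is essentially bookkeeping.
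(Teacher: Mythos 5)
Your proposal is correct and follows essentially the same route as the paper's own proof: an interior elliptic estimate plus Sobolev embedding on the shrunken domain $[0.5,2.5]\times S^1$ bounds $\|\nabla^{\pi_\theta}_\tau\zeta_k\|_{L^\infty}$ (equivalently, via \eqref{eq:utau-thetatau}, $\|\pi_\theta\frac{\del \widetilde u_k}{\del\tau}\|_{L^\infty}$) by $\|\zeta_k\|_{L^\infty([0,3]\times S^1)}+\|\widetilde L_k\|_{L^\infty}$, and then \eqref{eq:decay-fail-zeta-II} together with $\|\widetilde L_k\|_{L^\infty}\leq Ce^{-\delta l_k}$ forces $\|\widetilde L_k\|_{L^\infty}/\|\zeta_k\|_{L^\infty}\to 0$. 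The only difference is presentational: you extract the explicit lower bound $\|\zeta_k\|_{L^\infty}\geq (C_k/C-1)e^{-\delta l_k}$ whereas the paper manipulates the ratios directly, and you spell out the uniformity of the constants, which the paper leaves implicit.
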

\begin{proof}
Since the left hand side of \eqref{eq:nablaxi} is an elliptic (Cauchy-Riemann type) operator,
we have the elliptic estimates
\beastar
\|\nabla^{\pi_\theta}_\tau\zeta_k\|_{W^{l,2}([0.5, 2.5]\times S^1)}&\leq& C_1(\|\zeta_k\|_{L^2([0, 3]\times S^1)}+\|\widetilde{L}_k\|_{L^2([0, 3]\times S^1)})\\
&\leq& C_2(\|\zeta_k\|_{L^\infty([0, 3]\times S^1)}+\|\widetilde{L}_k\|_{L^\infty([0, 3]\times S^1)}).
\eeastar
The Sobolev's embedding theorem further gives
$$
\|\nabla^{\pi_\theta}_\tau\zeta_k\|_{L^\infty([0.5, 2.5]\times S^1)}\leq C(\|\zeta_k\|_{L^\infty([0, 3]\times S^1)}+\|\widetilde{L}_k\|_{L^\infty([0, 3]\times S^1)}).
$$
Hence we have
\beastar
\frac{\|\widetilde{L}_k\|_{L^\infty([0, 3]\times S^1)}}{\|\nabla^{\pi_\theta}_\tau\zeta_k\|_{L^\infty([0.5, 2.5]\times S^1)}}
&\geq& \frac{\|\widetilde{L}_k\|_{L^\infty([0, 3]\times S^1)}}{C(\|\zeta_k\|_{L^\infty([0, 3]\times S^1)}+\|\widetilde{L}_k\|_{L^\infty([0, 3]\times S^1)})}\\
&=&\frac{1}{C(\frac{\|\zeta_k\|_{L^\infty([0, 3]\times S^1)}}{\|\widetilde{L}_k\|_{L^\infty([0, 3]\times S^1)}}+1)}
\eeastar
We use our standing assumption \eqref{eq:decay-fail-zeta-II}
with $C_k \to \infty$ as $k \to \infty$ and Lemma \ref{lem:pithetadel},
the left hand side converges to zero as $k\to \infty$, so we get
$$
\frac{\|\widetilde{L}_k\|_{L^\infty([0, 3]\times S^1)}}{\|\zeta_k\|_{L^\infty([0, 3]\times S^1)}}\to 0, \quad \text{as} \quad k\to \infty.
$$
Thus the right hand side of \eqref{eq:nablabarxi} converges to zero.
\end{proof}

Then with the same argument as in the proof of Theorem \ref{thm:three-interval}, after taking a subsequence,
we obtain a limiting section $\overline\zeta_\infty$ of $z_\infty^*\zeta_\theta$ satisfying
\bea
\nabla_\tau \overline\zeta_\infty+ B_\infty \overline\zeta_\infty&=&0.\label{eq:xibarinfty-II}\\
\|\nabla_\tau\overline \zeta_\infty\|^2_{L^2([1,2]\times S^1)}&\geq&\gamma(2\delta)(\|\nabla_\tau\overline \zeta_\infty\|^2_{L^2([0,1]\times S^1)}+\|\nabla_\tau\overline \zeta_\infty\|^2_{L^2([2,3]\times S^1)}).
\nonumber\\
&{}&
\label{eq:xibarinfty-ineq-II}
\eea
Here to make it compatible with the notation used in Theorem \ref{thm:three-interval},
we denote by $B_\infty$ the limit operator of $B:=\overline J(\tau,t) \nabla^{\pi_{\theta}}_t$.
When applied to horizontal part as in the current case of study,
the operator is nothing but the linearization of Reeb orbit $z_\infty$ followed by action of $J$.

Write
$$
\overline\zeta_\infty=\sum_{j=0, \cdots k}a_j(\tau)e_j+\sum_{i\geq k+1}a_i(\tau)e_i,
$$
where $\{e_i\}$ is the basis consisting of the eigenfunctions associated to the eigenvalue
$\lambda_i$ for $j\geq k+1$ with
$$
0< \lambda_{k+1} \leq \lambda_{k+2} \leq \cdots \leq \lambda_i \leq \cdots \to \infty.
$$
and $e_j$ for $j=1, \cdots k$ are eigen-functions of eigen-value zero.
By plugging $\overline\zeta_\infty$ into \eqref{eq:xibarinfty-zeta}, we derive
\beastar
a_j'(\tau)&=&0, \quad  j=1, \cdots, k\\
a_i'(\tau)+\lambda_ia_i(\tau)&=&0, \quad  i=k+1,  \cdots
\eeastar
and it follows that
\beastar
a_j&=&c_j, \quad  j=1, \cdots, k\\
a_i(\tau)&=&c_ie^{-\lambda_i\tau}, \quad  i=k+1,  \cdots.
\eeastar
By the same calculation in the proof of Theorem \ref{thm:three-interval}, it follows
$$
\|\nabla^{\pi_\theta}_\tau\overline \zeta_\infty\|^2_{L^2([1,2]\times S^1)}< \gamma(2\delta) (\|\nabla^{\pi_\theta}_\tau\overline \zeta_\infty\|^2_{L^2([0,1]\times S^1)}
+\|\nabla^{\pi_\theta}_\tau\overline \zeta_\infty\|^2_{L^2([2,3]\times S^1)}).
$$

As a conclusion of this section, it remains to show
\begin{lem}\label{lem:nonzero}
$$
\nabla_\tau^{\pi_\theta} \overline \zeta_\infty \neq 0.
$$
\end{lem}
This lemma will then lead to contradiction and hence finish the proof of Proposition \ref{prop:expdecayhorizontal}.
The proof of this non-vanishing is given in the next section via the study of the center of mass.

\subsection{$L^2$-exponential decay of the tangential component $du$ II: study of center of mass}
\label{subsec:centerofmass}

We equip the submanifold with an $S^1$-invariant metric. Then its associated Levi-Civita connection $\nabla$
is also $S^1$-invariant.
Denote by $\exp: U_{o_Q} \subset TQ \to Q \times Q$ the exponential map,
and this defines a diffeomorphism between the open neighborhood $U_{o_Q}$ of the zero section $o_Q$ of $TQ$
and some open neighborhood $U_\Delta$ of the diagonal $\Delta \subset Q \times Q$. Denote its inverse by
$$
E: U_\Delta \to U_{o_Q}; \quad E(x,y) = \exp_x^{-1}(y).
$$
We refer readers to \cite{katcher} for the detailed study of the various basic derivative
estimates of this map.

The following lemma is a variation of the well-known center of mass techniques from Riemannian geometry
with the contact structure being taken into consideration (by introducing the reparameterization function $h$ in the following statement).

\begin{lem}\label{lem:centerofmass-Reeb}
Let $(Q, \theta=\lambda|_Q)$ be the submanifold foliated by closed Reeb orbits of period $T$.
Then there exists some $\delta > 0$ depending only
on $(Q, \theta)$ such that for any $C^{k+1}$ loop $\gamma: S^1 \to M$ with
$d_{C^{k+1}}(\gamma, \frak{Reeb}(Q, \theta)) < \delta $, there exists a unique point $m(\gamma) \in Q$,
and a reparameterization map $h:S^1\to S^1$ which is $C^k$ close to $id_{S^1}$,
such that
\bea
\int_{S^1} E(m, (\phi^{Th(t)}_{X_\theta})^{-1}(\gamma(t)))\,dt&=&0\label{eq:centerofmass-Reeb1}\\
E(m, (\phi^{Th(t)}_{X_\theta})^{-1}(\gamma(t)))&\in& \xi_\theta(m) \quad \text{for all $t\in S^1$}.\label{eq:centerofmass-Reeb2}
\eea
\end{lem}
\begin{proof}
Consider the functional
$$
\Upsilon: C^\infty(S^1, S^1)\times Q\times C^{\infty}(S^1, Q)\to TQ\times \mathcal{R}
$$
defined as
$$
\Upsilon(h, m, \gamma):=\left(\left(m, \int_{S^1} E(m, (\phi^{Th(t)}_{X_\theta})^{-1}(\gamma(t))dt )\right), \theta(E(m, (\phi^{Th(t)}_{X_\theta})^{-1}(\gamma(t))) ) \right),
$$
where $\mathcal{R}$ denotes the trivial bundle over $\R\times Q$ over $Q$.

If $\gamma$ is a Reeb orbit with period $T$, then
$h=id_{S^1}$ and $m(\gamma)=\gamma(0)$ will solve the equation
$$
\Upsilon(h, m, \gamma)=(o_Q, o_{\CR}).
$$
From straightforward calculations
\beastar
&{}& D_h\Upsilon\big|_{(id_{S^1}, \gamma(0), \gamma)}(\eta) \\
&=&\left(\left(m, \int_{S^1} d_2E\big|_{(\gamma(0), \gamma(0))}(\eta(t)TX_\theta)\,dt \right), \theta(d_2E\big|_{(\gamma(0), \gamma(0))}(\eta(t)TX_\theta) ) \right)\\
&=&\left(\left(m, (T\int_{S^1} \eta(t) \,dt)\cdot X_\theta(\gamma(0)) \right), T\eta(t) \right)
\eeastar
and
\beastar
D_m\Upsilon\big|_{(id_{S^1}, \gamma(0), \gamma)}(v)
&=&\left(\left(v, \int_{S^1} D_1E\big|_{(\gamma(0), \gamma(0))}(v)\,dt \right), \theta(D_1E\big|_{(\gamma(0), \gamma(0))}(v) ) \right)\\
&=&\left(\left(v, \int_{S^1} D_1E\big|_{(\gamma(0), \gamma(0))}(v)\,dt \right), \theta(D_1E\big|_{(\gamma(0), \gamma(0))}(v) ) \right)\\
&=&\left((v, v ), \theta(v)  \right),
\eeastar
we claim that
$D_{(h, m)}\Upsilon$ is transversal to $o_{TM}\times o_{\CR}$ at the point $(id_{S^1}, \gamma(0), \gamma(t))$, where $\gamma$ is a Reeb
orbit of period $T$.
To see this, notice that for any point in the set
$$
\Delta:=\{(aX_\theta+\mu, f)\in TQ\times C^\infty(S^1, \mathbb{R})\big| a=\int_{S^1}f(t)dt\},
$$
one can always find its pre-image as follows:
For any given $(aX_\theta+\mu, f)\in TQ\times C^\infty(S^1, \mathbb{R})$ with $a=\int_{S^1}f(t)dt$,
the pair
\beastar
v&=&a\cdot X_\theta+\mu\\
\eta(t)&=&\frac{1}{T}(f(t)-a)
\eeastar
lives in the preimage.
This proves surjectivity of the partial derivative $D_{(h,m)}\Upsilon\big|_{(id_{S^1}, \gamma(0), \gamma)}$.
Then applying the implicit function theorem, we have finished the proof.
\end{proof}

Using the center of mass, we can derive the following proposition which will be used to exclude the possibility of the vanishing of $\nabla_\tau^{\pi_\theta} \overline \zeta_\infty$.
\begin{prop}\label{prop:centerofmass-app} Recall the rescaling sequence $\frac{\widetilde \zeta_k}{L_k}$
we take in the proof Proposition \ref{prop:expdecayhorizontal} above,
and assume for a subsequence,
$$
\frac{\widetilde \zeta_k}{L_k} \to \overline \zeta_\infty
$$
in $L^2$. Then $\int_{S^1} (d\phi_{X_\theta}^{Tt})^{-1}(\overline \zeta_\infty(\tau,t)) \, dt = 0$
where $(d\phi_{X_\theta}^{t})^{-1}(\overline \zeta_\infty(\tau,t)) \in T_{z(0)}Q$.
\end{prop}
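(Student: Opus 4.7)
The plan is to apply the center-of-mass construction (Lemma \ref{lem:centerofmass-Reeb}) loop-by-loop to the sequence $\widetilde u_k$ and to pass the resulting zero-mean condition to the limit. First, by the $C^\infty_{\rm loc}$-subsequence convergence in Theorem \ref{thm:subsequence} together with its $C^\infty$-bounds, the loop $t \mapsto \widetilde u_k(\tau,t)$ lies in a $C^{k+1}$-neighborhood of the Reeb orbit $z_\infty$ uniformly in $\tau \in [0,3]$ once $k$ is large. Lemma \ref{lem:centerofmass-Reeb} then produces smooth data $m_k:[0,3] \to Q$, $h_k:[0,3]\times S^1 \to S^1$, and a section $v_k(\tau,t) \in \xi_\theta(m_k(\tau))$ with
$$\widetilde u_k(\tau,t) = \phi^{Th_k(\tau,t)}_{X_\theta}\bigl(\exp_{m_k(\tau)} v_k(\tau,t)\bigr), \qquad \int_{S^1} v_k(\tau,t)\,dt = 0.$$

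Next, I would linearize this identity around $z_\infty$. Writing $m_k(\tau) = \exp_{z_\infty(0)}\mu_k(\tau)$ and $h_k(\tau,t) = t + \eta_k(\tau,t)$, a first-order Taylor expansion combined with the comparison against the representation $\widetilde u_k = \exp^Z_{z_k}\zeta_k$ from Section \ref{subsec:exp-horizontal}, projected onto $\xi$ at $z_\infty(t)$ via parallel transport along the Reeb flow, yields modulo $O(L_k^2)$
$$\zeta_k(\tau,t) = d\phi^{Tt}_{X_\theta}\bigl(\mu_k^\xi(\tau) + v_k(\tau,t)\bigr),$$
where $\mu_k^\xi \in \xi_\theta(z_\infty(0))$ denotes the $\xi$-component of $\mu_k$. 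By the smooth dependence asserted in Lemma \ref{lem:centerofmass-Reeb} together with the uniform elliptic estimates on $\widetilde u_k$, subsequential $C^\infty_{\rm loc}$-limits $v_k/L_k \to \overline v_\infty$ and $\mu_k^\xi/L_k \to \overline\mu_\infty^\xi$ exist. Dividing the displayed identity by $L_k$ and passing to the limit gives
$$(d\phi^{Tt}_{X_\theta})^{-1}\overline\zeta_\infty(\tau,t) = \overline\mu_\infty^\xi(\tau) + \overline v_\infty(\tau,t).$$

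Integrating this over $S^1$ and invoking the $t$-mean-zero property of $v_k$, which is inherited by $\overline v_\infty$, reduces the proposition to the identity $\overline\mu_\infty^\xi(\tau) \equiv 0$. I expect this final step to be the main obstacle. The idea is that the pointwise normal projection $z_k = \pi_N(\widetilde u_k)$ and the loop-wise center of mass $m_k$ both single out the nearby Reeb orbit in $Q$ that best approximates $\widetilde u_k(\tau,\cdot)$, so their discrepancy affects $\zeta_k$ only to lower order and, after rescaling and passage to the limit, the $\xi$-component of the center shift must vanish. To make this rigorous one would differentiate the defining equations \eqref{eq:centerofmass-Reeb1}--\eqref{eq:centerofmass-Reeb2} of the center of mass transversally to the Reeb orbit, exploit uniqueness of $m(\gamma)$ together with $S^1$-equivariance of the construction, and combine these with the limiting ODE $\nabla_\tau\overline\zeta_\infty + B_\infty \overline\zeta_\infty = 0$, which forces $\overline\mu_\infty^\xi$ to be constant in $\tau$ and ultimately to vanish.
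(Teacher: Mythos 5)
Your starting point is the same as the paper's (apply Lemma \ref{lem:centerofmass-Reeb} to the loops $\widetilde u_k(\tau,\cdot)$), but the argument stalls at exactly the point that carries the whole content of the proposition, and you acknowledge this yourself. The ``reduction'' to $\overline\mu^\xi_\infty\equiv 0$ is not a reduction: by the defining property \eqref{eq:centerofmass-Reeb1} of the center of mass, the $\xi$-component of the displacement of $m_k(\tau)$ from the orbit locus is, to leading order, nothing but the loop average $\int_{S^1}(d\phi^{Tt}_{X_\theta})^{-1}\zeta_k(\tau,t)\,dt$ (in a flat local model, where condition \eqref{eq:centerofmass-Reeb2} forces the reparametrization to absorb the longitudinal part, one checks $\mu_k^\xi=\int_{S^1}\zeta_k\,dt$ exactly). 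So ``$\overline\mu^\xi_\infty=0$'' is a restatement of the proposition, not an easier intermediate claim, and the heuristic you offer --- that the pointwise normal projection and the loop-wise center of mass ``single out the same orbit, so their discrepancy is lower order'' --- is precisely the assertion to be proved. The appeal to the limiting ODE cannot supply it either: it only yields that $\overline\mu^\xi_\infty$ is constant in $\tau$, and constant kernel-direction sections are exactly what must be excluded (the proposition is invoked in the paper precisely in the case $\nabla_\tau\overline\zeta_\infty=0$, where $\overline\zeta_\infty$ is a $\tau$-independent kernel element). Hence the final vanishing is left unproved, and nothing in your sketch uses more than the qualitative fact $\int_{S^1} v_k\,dt=0$, which you have already spent in setting up the decomposition.

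By contrast, the paper never decomposes $m_k$; it works quantitatively with the integral identity $\int_{S^1}E\bigl(m_k(\tau),(\phi^{Th_k(\tau,t)}_{X_\theta})^{-1}u_k(\tau,t)\bigr)\,dt=0$: it rewrites $(\phi^{Th_k})^{-1}u_k=\exp_{y_k}\bigl(d(\phi^{Th_k})^{-1}\zeta_k\bigr)$ with $y_k=(\phi^{Th_k})^{-1}z_\infty$, replaces $E(m_k,\exp_{y_k}(\cdot))$ by the parallel transport $\Pi^{m_k}_{y_k}$ with error controlled by $C\,d(m_k,y_k)\,|\zeta_k|$, divides by $L_k$, and passes to the limit using $m_k(\tau)\to z_\infty(0)$ and $h_k\to \mathrm{id}$, so that the error is $o(L_k)$ while the main term converges to $\int_{S^1}(d\phi^{Tt}_{X_\theta})^{-1}\overline\zeta_\infty\,dt$. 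That is where the zero-mean identity does real work, and it is absent from your proposal. Two further, smaller, gaps: the expansion ``$\zeta_k=d\phi^{Tt}_{X_\theta}(\mu_k^\xi+v_k)$ modulo $O(L_k^2)$'' and the existence of the rescaled limits $\mu_k^\xi/L_k$, $v_k/L_k$ presuppose the a priori bounds $|\mu_k^\xi|,\ \|v_k\|_{C^0}=O(L_k)$, which you assert via ``smooth dependence'' but do not derive from the implicit-function-theorem estimates behind Lemma \ref{lem:centerofmass-Reeb}; and the claim that the base-point discrepancy between $z_k=\pi_N(\widetilde u_k)$ and the center-of-mass representation contributes only to higher order is again the unproved main point, not an input.
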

\begin{proof}
By the construction of the center of mass applying to maps $u_k(\tau, \cdot): S^1\to Q$
for $\tau\in [0,3]$, we have obtained
$$
\int_{S^1}E(m_k(\tau), (\phi_{X_\theta}^{Th_k(\tau, t)})^{-1}u_k(\tau, t))\,dt=0.
$$
If we write $u_k(\tau, t)=\exp_{z_\infty(\tau, t)}\zeta_k(\tau, t)$ where $z_\infty$ is the limit
of $z_k$ defined in \eqref{eq:normalexpN}, it follows that
\bea\label{eq:intEmk}
&{}& \int_{S^1}E(m_k(\tau), \exp_{(\phi_{X_\theta}^{Th_k(\tau, t)})^{-1}z_\infty(\tau, t)}d(\phi_{X_\theta}^{Th_k(\tau, t)})^{-1}\zeta_k(\tau, t))\,dt \nonumber\\
&= &\int_{S^1}E(m_k(\tau), (\phi_{X_\theta}^{Th_k(\tau, t)})^{-1}\exp_{z_\infty(\tau, t)}\zeta_k(\tau, t))\,dt = 0
\eea
Recall the following lemma whose proof is direct and we skip.

\begin{lem} Let $\Pi_y^x$ is the parallel transport along the short geodesic from
$y$ to $x$. Then there exists some sufficiently small $\delta > 0$ depending only on
the given metric on $Q$ and a constant $C = C(\delta) > 0$ such that $C(\delta) \to 1$ as $\delta \to 0$
and
$$
|E(x, \exp_y^Z(\cdot)) - \Pi_y^x| \leq C \, d(x, y).
$$
In particular $|E(x, \exp_y^Z(\cdot))| \leq |\Pi_y^x| + C \, d(x, y)$.
\end{lem}
Applying this lemma to \eqref{eq:intEmk}, we obtain
\beastar
&{}& \left|\int_{S^1}\Pi_{m_k(\tau)}^{(\phi_{X_\theta}^{Th_k(\tau, t)})^{-1}z_\infty(\tau, t)}(d\phi_{X_\theta}^{Th(t)})^{-1}\zeta_k(\tau, t)\,dt\right| \\
& \leq &
\int_{S^1} C \,d(m_k(\tau), (\phi_{X_\theta}^{Th_k(\tau, t)})^{-1}z_\infty(\tau, t))\left((d\phi_{X_\theta}^{Th(t)})^{-1}\zeta_k(\tau, t)\right)\,dt.
\eeastar

We rescale $\zeta_k$ by using $L_k$ and derive that
\beastar
&&{}\left|\int_{S^1}\Pi_{m_k(\tau)}^{(\phi_{X_\theta}^{Th_k(\tau, t)})^{-1}z_\infty(\tau, t)}
\left(d\phi_{X_\theta}^{Th_k(\tau, t)}\right)^{-1}\frac{\zeta_k(\tau, t)}{L_k}\,dt\right| \\
&\leq & \int_{S^1} C \,d(m_k(\tau), (\phi_{X_\theta}^{Th_k(\tau, t)})^{-1}z_\infty(\tau, t))
\left|\left(d\phi_{X_\theta}^{Th_k(\tau, t)}\right)^{-1}\frac{\zeta_k(\tau, t)}{L_k}\right| \,dt.
\eeastar
Take $k\to \infty$, and since that $m_k(\tau) \to z_{\infty}(0)$ and $h_k\to id_{S^1}$ uniformly,
we get $d(m_k(\tau), (\phi_{X_\theta}^{Th_k(\tau, t)})^{-1}z_\infty(\tau, t)) \to 0$
uniformly over $(\tau,t) \in [0,3] \times S^1$ as $k \to \infty$. Therefore the right hand
side of this inequality goes to 0. On the other hand by the same reason, we obtain
$$
\Pi_{m_k(\tau)}^{(\phi_{X_\theta}^{Th_k(\tau, t)})^{-1}z_\infty(\tau, t)}
\left(d\phi_{X_\theta}^{Th_k(\tau, t)}\right)^{-1}\frac{\zeta_k(\tau, t)}{L_k} \to
(d\phi_{X_\theta}^{Tt})^{-1}\overline \zeta_\infty
$$
uniformly and hence we obtain
$
\int_{S^1}(d\phi_{X_\theta}^{Tt})^{-1}\overline \zeta_\infty\,dt=0.
$
\end{proof}

Using this proposition, we now prove
$
\nabla_\tau^{\pi_\theta} \overline \zeta_\infty \neq 0,
$
which is the last piece of finishing the proof of Proposition \ref{prop:expdecayhorizontal}.

\begin{proof}[Proof of Lemma \ref{lem:nonzero}]
Suppose to the contrary, i.e., $\nabla_\tau^{\pi_\theta} \overline \zeta_\infty = 0$,
then we would have $J(z_\infty(t))\nabla_t \overline \zeta_\infty = 0$ from
\eqref{eq:xibarinfty-II} and the remark right after it.

Fix a basis $
\{v_1, \cdots, v_{2k}\}
$ of $\xi_{z(0)} \subset T_{z(0)}Q$
and define
$
e_i(t) = d\phi_{X_\theta}^t(v_i)$ for $i = 1, \cdots, 2k$.
Since $\nabla_t^{\pi_\theta}$ is nothing but the linearization of the Reeb orbit $z_\infty$, the Morse-Bott
condition implies
$
\ker B_\infty = \span \{e_i(t)\}_{i=1}^{2k}.
$
Then one can express
$
\overline \zeta_\infty(t) = \sum_{i=1}^{2k} a_i(t) e_i(t)$.
Moreover since $e_i$ are parallel (with respect to the $S^1$-invariant connection) by construction, it follows $a_i$ are constants, $i = 1, \cdots, 2k$.
Then we can write
$
\overline \zeta_\infty(t) = \phi_{X_\theta}^t(v)$, where $v= \sum_{i=1}^{2k} a_i v_i$,
and further it follows that $\int_{S^1}((d\phi_{X_\theta}^t)^{-1}(\overline \zeta_\infty(t))\, dt = v$.
On the other hand,  from Proposition \ref{prop:centerofmass-app},  $v=0$ and further
$\overline \zeta_\infty \equiv 0$, which contradicts with $\|\overline \zeta_\infty\|_{L^\infty([0,3]\times S^1)} = 1$.

Thus finally we conclude that $\nabla_\tau^{\pi_\theta}\overline \zeta_\infty$ can not be zero.
\end{proof}

This now concludes  the proof of Proposition \ref{prop:expdecayhorizontal}.

\subsection{$L^2$-exponential decay of the Reeb component of $dw$}
\label{subsec:Reeb}

We again consider the equation
\be\label{eq:nabladue2}
\nabla''_{du}e - \left(u^* \, DX^\Omega_g(u)(e)\right)^{(0,1)} = K(e,\nabla_{du} e, du)
\ee
as an equation for $e$. Clearly this is a quasi-linear elliptic equation of $e$ when $u$ is fixed.
Applying the uniform (local) elliptic estimates
to \eqref{eq:nabladue2}, the $L^2$-exponential decays of $e$
and  convergence of $\frac{\del u}{\del \tau}$ to $0$ then lead to the $L^2$-exponential decay of  $\nabla_{du} e$.
Combining these, we have obtained $L^2$-exponential estimates of the $\pi dw$.

Now we consider the original map $w=(u, e)$ which satisfies  \eqref{eq:contact-instanton-E}
$$
\delbar_J^{\lambda} w = 0, \quad d(w^*\lambda\circ j) = 0
$$
where $\lambda = f\lambda_E$.
We recall that this system is an elliptic system and the corresponding
uniform local a priori estimates was established in \cite{oh-wang2}.
Then by the elliptic bootstrapping argument using the local uniform a priori estimates
on the cylindrical region, we obtain higher order $W^{k,2}$-exponential decay of
$\pi dw$ for all $k\geq 0$ under the hypothesis, Hypothesis \ref{hypo:exact}.

Next,
in the rest of this subsection, we prove the exponential decay of the Reeb component $w^*\lambda$.
For this purpose, we define a complex-valued function
$$
\alpha(\tau,t) = \left(w^*\lambda(\frac{\del}{\del t})-\CT\right) + \sqrt{-1}\left(w^*\lambda(\frac{\del}{\del\tau})\right).
$$
The following lemma is easy to prove.
\begin{lem} Let $\zeta = \pi \frac{\del w}{\del \tau}$ on the cylindrical ends. Then
$$
*d(w^*\lambda) =|\zeta|^2.
$$
\end{lem}
Combining this lemma together with the equation $d(w^*\lambda\circ j)=0$, we
notice that $\alpha$ satisfies the equations
\be\label{eq:atatau-equation}
\delbar \alpha =\nu, \quad \nu
=\frac{1}{2}|\zeta|^2 + \sqrt{-1}\cdot 0,
\ee
where $\delbar=\frac{1}{2}\left(\frac{\del}{\del \tau}+\sqrt{-1}\frac{\del}{\del t}\right)$ the standard Cauchy-Riemann operator for the standard complex structure $J_0=\sqrt{-1}$.

Notice that from previous section we have already
established the $W^{1, 2}$-exponential decay of $\nu = \frac{1}{2}|\zeta|^2$.
The exponential decay of $\alpha$ follows from the following lemma,
whose proof can be proved again by the three-interval method in a much easier way
and so omitted.

\begin{lem}\label{lem:exp-decay-lemma}
Suppose the complex-valued functions $\alpha$ and $\nu$ defined on $[0, \infty)\times S^1$
satisfy
\beastar
\begin{cases}
\delbar \alpha = \nu \\
\|\nu\|_{L^2(S^1)}+\left\|\nabla\nu\right\|_{L^2(S^1)}\leq Ce^{-\delta \tau} \quad \text{ for some constants } C, \delta>0\\
\lim_{\tau\rightarrow +\infty}\alpha =0
\end{cases}
\eeastar
then $\|\alpha\|_{L^2(S^1)}\leq \overline{C}e^{-\delta \tau}$
for some constant $\overline{C}$.
\end{lem}

\subsection{$C^0$ exponential convergence}

Now we prove $C^0$-exponential convergence of $w(\tau,\cdot)$ to some Reeb orbit as $\tau \to \infty$
from the $L^2$-exponential estimates presented in previous sections.

\begin{prop}\label{prop:czero-convergence}
Under Hypothesis \ref{hypo:basic}, for any contact instanton $w$ with vanishing charge,
there exists a unique Reeb orbit $z(\cdot)=\gamma(T\cdot):S^1\to M$ with period $\CT>0$, such that
$$
\|d(w(\tau, \cdot), z(\cdot))
\|_{C^0(S^1)}\rightarrow 0,
$$
as $\tau\rightarrow +\infty$,
where $d$ denotes the distance on $M$ defined by the triad metric.
\end{prop}
\begin{proof}
We start with the following lemma

\begin{lem} Let $t \in S^1$ be given. Then
for any given $\epsilon > 0$, there exists sufficiently large $\tau_1 > 0$ such that
$$
d(w(\tau,t), w(\tau', t)) < \epsilon
$$
for all $\tau, \, \tau' \geq \tau_1$.
\end{lem}
\begin{proof}
Suppose to the contrary that
there exist some $t_0\in S^1$ and some constant $\epsilon>0$, sequences $\tau_k \to \infty$, $p_k>0$ such that
\be\label{eq:geqepsilon}
d(w(\tau_{k+p_k}, t_0), w(\tau_k, t_0))\geq\epsilon.
\ee
Then combining this with the continuity of $w$ in $t$, there exists some $l>0$ small such that
$$
d(w(\tau_{k+p_k}, t), w(\tau_k, t))\geq\frac{\epsilon}{2}, \quad |t-t_0|\leq l.
$$
Therefore
\beastar
&{}&\int_{S^1}d(w(\tau_{k+p_k}, t), w(\tau_k, t))\ dt\\
&=&\int_{|t-t_0|\leq l}d(w(\tau_{k+p_k}, t), w(\tau_k, t))\ dt+\int_{|t-t_0|>l}d(w(\tau_{k+p_k}, t), w(\tau_k, t))\ dt\\
&\geq&\int_{|t-t_0|\leq l}d(w(\tau_{k+p_k}, t), w(\tau_k, t))\ dt \geq \epsilon l.
\eeastar
On the other hand, we compute
\beastar
&{}&\int_{S^1}d(w(\tau_{k+p_k}, t), w(\tau_k, t))\ dt\\
&\leq&\int_{S^1}\int_{\tau_k}^{\tau_{k+p_k}}\left|\frac{\del w}{\del s}(s, t)\right|\ ds\ dt
= \int_{\tau_k}^{\tau_{k+p_k}}\int_{S^1}\left|\frac{\del w}{\del s}(s, t)\right|\ dt\ ds\\
&\leq&\int_{\tau_k}^{\tau_{k+p_k}}\left(\int_{S^1}\left|\frac{\del w}{\del s}(s, t)\right|^2\,
 dt\right)^{\frac{1}{2}}\ ds\\
&\leq&\int_{\tau_k}^{\tau_{k+p_k}}Ce^{-\delta s}\ ds
= \frac{C}{\delta}(1-e^{-(\tau_{k+p_k}-\tau_k)})e^{-\tau_k} \leq \frac{C}{\delta}e^{-\tau_k}.
\eeastar
When $\tau_k$ sufficiently large, this inequality gives rise to a contradiction to \eqref{eq:geqepsilon}.
Hence the proof.
\end{proof}

Now using the subsequence convergence from Theorem \ref{thm:subsequence},
we can pick a subsequence $\{\tau_k\}$ and a closed Reeb orbit $(\gamma, T)$ such that
$$
w(\tau_k, t)\to z(t) : = \gamma(Tt), \quad k\to \infty
$$
uniformly in $t$. Then the above lemma  immediately implies
$
w(\tau, t)
$
uniformly converges to $z(t)$ for any $t\in S^1$.

It remains to show that this convergence is uniform in $t$. Suppose to the contrary that
there exist some $\epsilon>0$ and some sequence $(\tau_k, t_k)$ such that
$$
d(w(\tau_k, t_k), z(t_k))\geq2\epsilon.
$$
Since $t_k\in S^1$, we can further take a subsequence, still denoted by $t_k$, such that $t_k\to t_0\in S^1$.
We can take $k$ so large that $d(z(t_k), z(t_0))\leq \frac{1}{2}\epsilon$. We also note
\beastar
d(w(\tau, t_k), w(\tau, t_0))  \leq  \int_{t_0}^{t_k}\left|\frac{\del w}{\del t}(\tau, s)\right|\, ds
\leq(t_k-t_0)\|d w\|_{C^0},
\eeastar
by which we can make the distance less than $\frac{1}{2}\epsilon$ by taking $k$ sufficiently large.

Combing these, we derive
\beastar
d(w(\tau_k, t_0), z(t_0))&\geq&
d(w(\tau_k, t_k), z(t_k))-d(w(\tau_k, t_k), w(\tau_k, t_0))\\
&{}&-d(z(t_k), z(t_0))\\
&\geq&2\epsilon-\frac{1}{2}\epsilon-\frac{1}{2}\epsilon = \epsilon
\eeastar
for all sufficiently large $k$'s.
This gives rise to contradiction to the pointwise convergence $w(\tau_k,t_k) \to z(t_0)$, which
finishes the proof of uniform convergence for $t \in S^1$ and hence completes the proof.
\end{proof}
Then the following $C^0$-exponential convergence immediately follows.
\begin{prop}
There exist some constants $C>0$, $\delta>0$ and $\tau_0$ large such that for any $\tau>\tau_0$,
\beastar
\|d\left( w(\tau, \cdot), z(\cdot) \right) \|_{C^0(S^1)} &\leq& C\, e^{-\delta \tau}
\eeastar
\end{prop}
\begin{proof}
For any $\tau<\tau_+$, similarly as in the previous proof,
\beastar
d(w(\tau, t), w(\tau_+, t))\leq \int^{\tau_+}_{\tau}\left| \frac{\del w}{\del \tau}(s,t) \right|\,ds
\leq \frac{C}{\delta}e^{-\delta \tau}.
\eeastar
Take $\tau_+\rightarrow +\infty$ and using the $C^0$ convergence of $w$ part, i.e.,
Proposition \ref{prop:czero-convergence}, we get
$$d(w(\tau, t), z(t))\leq\frac{C}{\delta}e^{-\delta \tau}.$$
This proves the first inequality.
\end{proof}

\subsection{$C^\infty$-exponential decay of $dw - X_\lambda(w) \, d\tau$}
\label{subsec:Cinftydecaydu}

We recall the coordinate expression of $w = (u,e)$ under the
identification of a tubular neighborhood of $Q$ with
a neighborhood of the zero section of
the normal bundle of $Q$. So far, we have established the following:
\begin{itemize}
\item $W^{1,2}$-exponential decay of the normal component $e$,
\item $L^2$-exponential decay of the derivative $du$ of the base component $u$,
\item $C^0$-exponential convergence of $w(\tau,\cdot) \to z(\cdot)$ as $\tau \to \infty$
for some closed Reeb orbit $z$.
\end{itemize}
Now we are ready to complete the proof of
$C^\infty$-exponential convergence $w(\tau,\cdot) \to z$
by establishing the $C^\infty$-exponential decay of $dw - X_\lambda(w)\, dt$.
The proof of the latter decay is now in order which will be
carried out by the bootstrapping arguments
applied to the system \eqref{eq:contact-instanton-E}.

Combining the above three, we have obtained $L^2$-exponential estimates of the full derivative $dw$.
As already used in Section \ref{subsec:Reeb}, we consider the equation
$$
\delbar_J^{\lambda} w = 0, \quad d(w^*\lambda \circ j) = 0
$$
where $\lambda = f \lambda_E$, under Hypothesis \ref{hypo:exact}.
By the bootstrapping argument using the local uniform a priori estimates
on the cylindrical region (see \cite{oh-wang2} for the details),
we obtain higher order $W^{k,2}$-exponential decays of the term
$$
\frac{\del w}{\del t} - \CT X_\lambda(z), \quad \frac{\del w}{\del\tau}
$$
for all $k\geq 0$, where $w(\tau,\cdot)$ converges to $z$ as $\tau \to \infty$ in $C^0$ sense.
This, combined with the Sobolev's embedding, then completes proof of $C^\infty$-convergence of
$w(\tau,\cdot) \to z$ as $\tau \to \infty$.

\section{Exponential decay: general Morse-Bott case}
\label{sec:general}

In this section, we consider the general case of the Morse-Bott submanifold.
For this one, it is enough to consider the normalized contact triad
$(F, f\lambda_F, J)$ where $J$ is adapted to the zero section $Q$.

Write $w=(u, s)=(u, \mu, e)$, where $\mu\in u^*JT\CN$ and $e\in u^*E$.
By the calculations in Section \ref{sec:coord},
and with similar calculation of Section \ref{sec:prequantization},
the $e$ part can be dealt with exactly the same as in the prequantization case,
whose details are skipped here.

After the $e$-part is taken care of, for the $(u, \mu)$ part, we derive
$$
\left(\begin{matrix} \pi_\theta\frac{\del u}{\del\tau} \\
\nabla_\tau \mu \end{matrix}\right)
+J\left(\begin{matrix} \pi_\theta\frac{\del u}{\del t} \\
\nabla_t \mu \end{matrix}\right)=L,
$$
where $|L|\leq C e^{-\delta \tau}$ similarly as for the prequantization case.

Then we apply the three-interval argument
whose details are similar to the prequantization case and so are omitted.
We only need to establish as in the prequantization case
for the limiting $(\overline{\zeta}_\infty, \overline{\mu}_\infty)$ is not in kernel of $B_\infty$.

If $(\overline{\zeta}_\infty, \overline{\mu}_\infty)$ is in the kernel of $B_\infty$, then by the Morse-Bott
condition, we have $\overline{\mu}_\infty=0$. With the same procedure for introducing the
center of mass, we can use the same argument to prove that $\overline{\zeta}_\infty$ must vanish
if it is contained in the kernel of $B_\infty$.
This will them prove  the following proposition.
\begin{prop} \label{prop:expdecaygeneral}
For any $k=0, 1, \cdots$, there exists some constant $C_k>0$ and $\delta_k>0$
\beastar
\left|\nabla^k \left(\pi\frac{\del u}{\del \tau}\right)\right|<C_k\, e^{-\delta_k \tau}, \quad
\left|\nabla^k \mu\right|<C_k\, e^{-\delta_k \tau}
\eeastar
for each $k \geq 0$.
\end{prop}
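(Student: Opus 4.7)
\medskip

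\noindent\textbf{Proof plan for Proposition \ref{prop:expdecaygeneral}.}
The plan is to mimic the structure of the prequantizable case in Section \ref{sec:prequantization}, decomposing the proof according to the splitting $F = T^*\CN \oplus E$ of the normal form and handling the three components $(u,\mu,e)$ in a specific order. First I would decompose the contact instanton equation in coordinates $w=(u,\mu,e)$ by the same tensorial recipe used in Lemma \ref{lem:eq-in-(u,e)}, now using the refined splitting $VTF = u^*JT\CN \oplus u^*E$ and the vertical-horizontal decomposition $TF \cong \R\{X_F\}\oplus \widetilde G \oplus (\widetilde{T\CN}\oplus VT\CN^*) \oplus VTE$. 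Corollary \ref{cor:convergence-ue} gives uniform $C^\infty$ subsequence convergence of $w$ to a Reeb orbit $z$ on $Q$, hence one can localize inside a normal-form neighborhood $(U_F,f\lambda_F,J)$ with $J$ adapted to $Q$, and all remainder terms arising from $f-1$, $df$ and $J-J_0$ are controlled by $|e|+|\mu|$ times first derivatives as in Lemma \ref{lem:J-J0} and Lemma \ref{lem:MN}, because $f\equiv 1$, $df\equiv 0$ on $o_F$ and the adaptedness forces $(J-J_0)|_{o_F}\equiv 0$.

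Next I would dispose of the $e$-component first, exactly as in Section \ref{sec:expdecayMB}. The reason this works unchanged is that the asymptotic operator governing $e$ is $B_\infty^E = J_E(\nabla_t - T\, D^v X_g^\Omega(z))$ on $\Omega^0(z^*E)$, and by Proposition \ref{prop:DupsilonE} together with Corollary \ref{cor:gap} this operator has trivial kernel (the Morse-Bott kernel lives entirely in $TQ$, whose fiberwise normal image is $T^*\CN$, not $E$). Hence Theorem \ref{thm:3-interval} applies directly to $\zeta = e$, yielding $L^2$-exponential decay, and elliptic bootstrap on equation \eqref{eq:CR-e} upgrades this to $C^\infty$ decay. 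With $e$ now a known exponentially decaying inhomogeneity, the remaining equations for $(u,\mu)$ collapse to the coupled system
\begin{equation}\label{eq:plan-umueq}
\left(\begin{matrix} \pi_\theta\frac{\del u}{\del\tau} \\ \nabla_\tau \mu \end{matrix}\right)
+ J\left(\begin{matrix} \pi_\theta\frac{\del u}{\del t} \\ \nabla_t \mu \end{matrix}\right) = L(\tau,t), \qquad |L|\le C e^{-\delta\tau},
\end{equation}
where the $J$ on the left denotes the induced endomorphism on $\widetilde G\oplus(\widetilde{T\CN}\oplus VT\CN^*)$ and is, at $o_F$, precisely the adapted block in Section \ref{sec:adapted} identifying $T\CN$ with $T^*\CN \cong JT\CN$.

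I would then run the three-interval machinery of Section \ref{subsec:exp-horizontal} on \eqref{eq:plan-umueq} simultaneously in the two components. The rescaling argument, elliptic interior bounds via Lemma \ref{lem:boot}, and extraction of a nonzero $L^\infty$-normalized subsequential limit $(\overline\zeta_\infty,\overline\mu_\infty)$ on $[0,3]\times S^1$ satisfying the homogeneous limit ODE $\nabla_\tau (\overline\zeta_\infty,\overline\mu_\infty) + B_\infty(\overline\zeta_\infty,\overline\mu_\infty)=0$ go through verbatim. The main obstacle, and the only point where the argument genuinely differs from the prequantizable case, is to exclude the possibility that $(\overline\zeta_\infty,\overline\mu_\infty) \in \ker B_\infty$. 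Here I would invoke two facts in succession: (i) by the Morse-Bott condition together with the structural computation of $D\Upsilon(z)$ in Proposition \ref{prop:DupsilonE} and the adapted identification $T\CN \cong T^*\CN$ under $\widetilde{d\lambda}$ of Lemma \ref{lem:J-identify}, any element of $\ker B_\infty$ has vanishing $\mu$-component, i.e.\ $\overline\mu_\infty \equiv 0$; (ii) once $\overline\mu_\infty\equiv 0$ the remaining $\overline\zeta_\infty$ lies in the Morse-Bott kernel of the linearized Reeb operator along $T\CF = \R\{X_\theta\}\oplus T\CN$ on $Q$, so I would apply the center-of-mass technique of Lemma \ref{lem:centerofmass-Reeb} and Proposition \ref{prop:centerofmass-app}, extended to normal exponential coordinates along the entire local leaf of the foliation by Reeb orbits in $Q$ (not just a single orbit) using the embedded locally free $S^1$-action from Theorem \ref{thm:morsebottsetup}, to conclude $\int_{S^1}(d\phi_{X_\theta}^{Tt})^{-1}\overline\zeta_\infty\,dt = 0$. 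Combined with the constancy of the coefficients of $\overline\zeta_\infty$ in the parallel frame of $\ker B_\infty$, this forces $\overline\zeta_\infty \equiv 0$, contradicting $\|(\overline\zeta_\infty,\overline\mu_\infty)\|_{L^\infty}=1$. This establishes $L^2$-exponential decay for both $\pi_\theta\frac{\del u}{\del\tau}$ and $\mu$; the $C^\infty$-estimates $|\nabla^k\cdot|\le C_k e^{-\delta_k\tau}$ then follow by the same elliptic bootstrap plus Sobolev embedding scheme as in \cite{oh-wang2}, applied to \eqref{eq:plan-umueq} and its differentiated versions, completing the proof.
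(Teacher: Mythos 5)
Your proposal is correct and follows essentially the same route as the paper's own proof: treat the $e$-component first exactly as in the prequantizable case, then write the coupled $(u,\mu)$ system as a Cauchy--Riemann-type equation with exponentially small right-hand side, run the three-interval argument, and exclude the kernel of $B_\infty$ by noting that the Morse--Bott condition forces $\overline\mu_\infty\equiv 0$ before applying the center-of-mass argument to kill $\overline\zeta_\infty$. The only difference is that you spell out details (e.g.\ triviality of the kernel of the $E$-asymptotic operator and the bootstrap) that the paper leaves implicit by reference to Sections \ref{sec:expdecayMB}--\ref{subsec:centerofmass}.
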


\section{The case of asymptotically cylindrical symplectic manifolds}
\label{sec:asymp-cylinder}

In this section, we explain how we can apply the three-interval method and our tensorial scheme
to non-compact symplectic manifolds with \emph{asymptotically cylindrical ends}.
Here we use Bao's precise definition \cite{bao} of the asymptotically cylindrical ends but
restricted to the case where the asymptotical manifold is a contact manifold $(V,\xi)$.
In this section, we will denote a contact manifold by $V$, instead of $M$ which is
what we used in the previous sections,to make comparison of our definition with Bao's transparent.

Let $(V,\xi)$ be a closed contact manifold of dimension $2n+1$ and let $J$ be an almost complex
structure on $W = [0,\infty) \times V$. We denote
\be\label{eq:bfR}
{\bf R}: = J\frac{\del}{\del r}
\ee
a smooth vector field on $W$, and let $\xi \subset TW$ be a subbundle defined by
\be\label{eq:xi}
\xi_{(r,v)} = J T_{(r,v)}(\{r\} \times V) \cap  T_{(r,v)}(\{r\} \times V).
\ee
Then we have splitting
\be\label{eq:splitting-asymp}
TW = \R\{\frac{\del}{\del r}\} \oplus \R\{{\bf R}\} \oplus \xi_{(r,v)}
\ee
and denote by $i: \R\{\frac{\del}{\del r}\} \oplus \R\{{\bf R}\} \to \R\{\frac{\del}{\del r}\} \oplus \R\{{\bf R}\}$
the almost complex structure
$$
i \frac{\del}{\del r} = {\bf R}, \quad i{\bf R} = - \frac{\del}{\del r}.
$$
We denote by $\lambda$ and $\sigma$ the dual 1-forms of $\frac{\del}{\del r}$ and
${\bf R}$ such that $\lambda|_\xi = 0 = \sigma|_\xi$. In particular,
$$
\lambda({\bf R}) = 1 = \sigma(\frac{\del}{\del r}), \quad
\lambda(\frac{\del}{\del r}) = 0 = \sigma({\bf R}).
$$
We denote by $T_s: [0,\infty) \times V \to [-s,\infty) \times V$ the translation $T_s(r,v) = (r+s,v)$
and call a tensor on $W$ is translational invariant if it is invariant under the translation.

The following definition is the special case of the one in \cite{bao}
restricted to the contact type asymptotical boundary.

\begin{defn}[Asymptotically Cylindrical $(W,\omega, J)$ \cite{bao}]
The almost complex structure is called $C^\ell$-\emph{asymptotically cylindrical} if there exists a
2-form $\omega$ on $W$ such that the pair $(J,\omega)$ satisfies the following:
\begin{itemize}
\item[{(AC1)}] $\frac{\del}{\del r} \rfloor \omega = 0 = {\bf R} \rfloor \omega$,
\item[{(AC2)}] $\omega|_\xi(v, J\, v) \geq 0$ and equality holds iff $v = 0$,
\item[{(AC3)}] There exists a smooth translational invariant almost complex structure
$J_\infty$ on $\R \times V$ and constants $R_\ell > 0$ and $C_\ell, \, \delta_\ell > 0$
$$
\|(J - J_\infty)|_{[r,\infty) \times V}\|_{C^\ell} \leq C_\ell e^{-\delta_\ell r}
$$
for all $r \geq R_\ell$. Here the norm is computed in terms of the translational invariant
metric $g_\infty$ and a translational invariant connection.
\item[{(AC4)}] There exists a smooth translational invariant closed 2-form $\omega_\infty$ on
$\R \times V$ such that
$$
\|(\omega - \omega_\infty)|_{[r,\infty) \times V}\|_{C^\ell} \leq C_\ell e^{-\delta_\ell r}
$$
for all $r \geq R_\ell$.
\item[{(AC5)}] $(J_\infty,\omega_\infty)$ satisfies (AC1) and (AC2).
\item[{(AC6)}] ${\bf R}_\infty\rfloor d\lambda_\infty = 0$, where ${\bf R}_\infty: = \lim_{s \to \infty}T_s^*{\bf R}$,
$\lambda_\infty : = \lim_{s \to \infty} T_s^*\lambda$ where both limit exist on $\R \times V$ by (AC3).
\item[{(AC7)}] ${\bf R}_\infty(r,v) = J_\infty\left(\frac{\del}{\del r}\right) \in T_{(r,v)}(\{r\} \times V)$.
\end{itemize}
\end{defn}

For the purpose of current paper, we will restrict ourselves to the case when $\lambda_\infty$ is
a contact form of a contact manifold $(V,\xi)$ and ${\bf R}$ the translational
invariant vector field induced by the Reeb vector field  on $V$
associated to the contact form $\lambda_\infty$ of $(V,\xi)$. More precisely, we have
$$
{\bf R}(r,v) = (0, X_{\lambda_\infty}(v))
$$
with respect to the canonical splitting $T_{(r,v)}W = \R \oplus T_vV$.
Furthermore we also assume that $(V,\lambda_\infty, J_\infty)$ is a contact triad.

Now suppose that $Q \subset V$ is a Morse-Bott submanifold of closed Reeb orbits of $\lambda_\infty$
and that $\widetilde u:[0,\infty) \times S^1 \to W$ is a $\widetilde J$-holomorphic
curve for which the Subsequence Theorem given in section 3.2 \cite{bao} holds.
We also assume that $J_\infty$ is adapted to $Q$ in the sense of Definition \ref{defn:adapted}.
Let $\tau_k \to \infty$ be a sequence such that $a(\tau_k,t) \to \infty$ and
$w(\tau_k,t) \to z$ uniformly as $k \to \infty$ where $z$ is a closed Reeb orbit
whose image is contained in $Q$. By the local uniform elliptic estimates, we may
assume that the same uniform convergence holds on the intervals
$$
[\tau_k, \tau_k+3] \times S^1
$$
as $k \to \infty$. On these intervals, we can write the equation $\delbar_{\widetilde J} \widetilde u = 0$ as
$$
\delbar_{J_\infty} \widetilde u\left(\frac{\del}{\del \tau}\right)
= (\widetilde J - J_\infty) \frac{\del \widetilde u}{\del t}.
$$
We can write the endomorphism $(\widetilde J - J_\infty)(r,\Theta) =: M(r,\Theta)$ where
$(r,\Theta) \in \R \times V$ so that
\be\label{eq:expdkM}
|\nabla^k M(r,\Theta)| \leq C_k e^{-\delta r}
\ee
for all $r \geq R_0$. Therefore $u = (a,w)$ with $a = r\circ \widetilde u$, $w = \Theta \circ \widetilde u$
satisfies
$$
\delbar_{J_\infty} \widetilde u\left(\frac{\del}{\del \tau}\right)  = M(a,w)\left(\frac{\del \widetilde u}{\del t}\right)
$$
Decomposing $\delbar_{J_\infty} \widetilde u$ and $\frac{\del \widetilde u}{\del t}$ with respect to the decomposition
$$
TW = \R \oplus TV = \R\cdot\frac{\del}{\del r} \oplus \R \cdot X_{\lambda_\infty} \oplus \xi
$$
we have derived
\bea
\delbar^{\pi_\xi} w\left(\frac{\del}{\del \tau}\right) & = & \pi_\xi
\left(M(a,w)\left(\frac{\del \widetilde u}{\del t}\right)\right) \label{eq:delbarpiw1}\\
(dw^*\circ j - da)\left(\frac{\del}{\del \tau}\right) & = & \pi_{\C} \left(M(a,w)\left(\frac{\del \widetilde u}{\del t}\right)\right)\label{eq:delbarpiw2}
\eea
where $\pi_\xi$ is the projection to $\xi$ with respect to the contact form $\lambda_\infty$ and
$\pi_\C$ is the projection to $\R\cdot\frac{\del}{\del r} \oplus \R \cdot X_{\lambda_\infty}$
with respect to the cylindrical $(W,\omega_\infty,J_\infty)$.
Then we obtain from \eqref{eq:expdkM}
$$
|\delbar^{\pi_\xi} w| \leq Ce^{-\delta a}
$$
as $a \to \infty$. By the subsequence convergence theorem assumption and local a priori estimates on $\widetilde u$,
we have immediately obtained the following
$$
|\nabla''_\tau e(\tau,t)| \leq C\, e^{\delta_1 \tau},\,
|\nabla''_\tau \xi_\CF(\tau,t)| \leq C\, e^{\delta_1 \tau},\,
|\nabla''_\tau \xi_G(\tau,t)| \leq C\, e^{\delta_1 \tau}
$$
where $w = \exp_Z(\xi_G + \xi_\CF + e)$ is the decomposition similarly as before.
Now we can apply exactly the same proof as the one given in the previous section to establish
the exponential decay property of $dw$.

For the component $a$, we can use \eqref{eq:delbarpiw2} and the argument used in
\cite{oh-wang2} and obtain the necessary exponential property as before.
\smallskip

\appendix

\section{Proof of Proposition \ref{prop:adapted}}
\label{sec:appendix-adapted}

In this appendix, we prove contractibility of the set of $Q$-adapted $CR$-almost complex structures
postponed from the proof of Proposition \ref{prop:adapted}.

We first notice that for any $d\lambda$-compatible $CR$-almost complex structure $J$,
$(TQ\cap JTQ)\cap T\CF=\{0\}$: This is because for any $v\in (TQ\cap JTQ)\cap T\CF$,
\beastar
|v|^2=d\lambda(v, Jv)=0,
\eeastar
since $Jv\in TQ$ and $v\in T\CF=\ker \omega_Q$.
Therefore $(TQ\cap JTQ)$ and $T\CF$ are linearly independent.

We now give the following lemma.

\begin{lem}\label{lem:J-identify-G} $J$ satisfies
the condition $JTQ\subset TQ+JT\CN$ if and only if it satisfies
$TQ =(TQ\cap JTQ)\oplus T\CF$.
\end{lem}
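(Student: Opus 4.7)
The plan is to prove both directions by directly manipulating decompositions of vectors using the splitting $T\CF = \R\{X_\lambda\} \oplus T\CN$, combined with the two key algebraic facts $JX_\lambda = 0$ and $J^2|_\xi = -\operatorname{id}|_\xi$. The preceding remark already establishes $(TQ \cap JTQ) \cap T\CF = \{0\}$ for every $d\lambda$-compatible $J$, so the right-hand condition $TQ = (TQ\cap JTQ) \oplus T\CF$ amounts to the single assertion $TQ \subset (TQ\cap JTQ) + T\CF$. Hence both sides of the equivalence can be recast as sum-decomposition conditions on $TQ$.

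For the forward implication, I would take an arbitrary $v \in TQ$ and apply the hypothesis $JTQ \subset TQ + JT\CN$ to write
\[
Jv = w + Ju, \qquad w \in TQ, \ u \in T\CN .
\]
Setting $v' := v - u \in TQ$ and using $JX_\lambda = 0$, this reads $Jv'_\xi = w \in TQ$, where $v' = aX_\lambda + v'_\xi$ is the decomposition along $TQ \cap \R\{X_\lambda\} \oplus (TQ \cap \xi)$. Applying $J$ once more and using $J^2 v'_\xi = -v'_\xi$ gives $v'_\xi = -Jw \in JTQ$, so $v'_\xi \in TQ \cap JTQ$. Rearranging,
\[
v = v'_\xi + (aX_\lambda + u)
\]
with the second summand in $T\CF$, which yields the required decomposition.

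For the converse, assume $TQ = (TQ \cap JTQ) \oplus T\CF$ and take $v \in TQ$, written $v = v_1 + aX_\lambda + u$ with $v_1 \in TQ\cap JTQ$ and $u \in T\CN$. Writing $v_1 = Jw$ for some $w \in TQ$ and absorbing its Reeb component (which is killed by $J$), one may take $w \in TQ \cap \xi$; then $Jv_1 = J^2 w = -w \in TQ$. Since $JX_\lambda = 0$ and $Ju \in JT\CN$, we conclude $Jv = -w + Ju \in TQ + JT\CN$, as desired.

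The only delicate point I anticipate is the careful book-keeping around the Reeb direction: because $J$ annihilates $X_\lambda$ while $X_\lambda \in TQ$, one must separate out the $\R\{X_\lambda\}$-component at the appropriate moment so that $J^2 = -\operatorname{id}|_\xi$ can be applied cleanly to conclude $v'_\xi \in JTQ$ (respectively $w \in TQ$). This is a matter of organizing the splitting rather than a genuine obstacle, and once handled, both implications are short.
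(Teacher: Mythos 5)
Your proof is correct and follows essentially the same route as the paper's: the forward direction rests on writing $Jv = w + Ju$ with $w \in TQ$, $u \in T\CN$ and applying $J$ once more to land in $TQ\cap JTQ$ modulo $T\CF$, while the converse is the short verification the paper dismisses as obvious, with the preliminary remark supplying the transversality $(TQ\cap JTQ)\cap T\CF=\{0\}$ in both treatments. Your explicit separation of the Reeb component before invoking $J^2|_\xi=-\operatorname{id}$ is in fact slightly more careful than the paper's own computation, which suppresses the $\R\{X_\lambda\}$-term when writing $v=-Jw+u$.
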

\begin{proof}
It is obvious to see that $TQ=(TQ\cap JTQ)\oplus T\CF$ indicates $J$ is $Q$-adapted.

It remains to prove the other direction.
For this, we only need to prove that $TQ \subset (TQ\cap JTQ) + T\CF$ by the discussion
right in front of the statement of the lemma.

Let $v \in TQ$. By the definition of the adapted condition,
$Jv\in TQ +  JT\CN$. Therefore we can write
$$
Jv=w+Ju,
$$
for some $w\in TQ$ and $u\in T\CN$.
Then it follows that $v=-Jw+u$,
Noting that $Jw\in TQ\cap JTQ$, we derive $v \in (TQ\cap JTQ) + T\CF$ and so
we have finished the proof.
\end{proof}

This lemma shows that any $Q$-adapted $J$ naturally defines a splitting
\be\label{eq:split1}
T\CF \oplus G_J = TQ, \quad G_J:= TQ \cap JTQ.
\ee
We also note that such $J$ preserves the subbundle $TQ + JT\CF \subset TM$ and so defines an
invariant splitting
\be\label{eq:split2}
TM = TQ \oplus JT\CF \oplus E_J; \quad E_J = (TQ \oplus JT\CF)^{\perp_{g_J}}.
\ee
Conversely, for a given splittings \eqref{eq:split1}, \eqref{eq:split2},
we can always choose $Q$-adapted $J$ so that $TQ \cap JTQ = G$
but the choice of such $J$ is not unique.

It is easy to see that the set of such splittings forms a contractible manifold
(see Lemma 4.1 \cite{oh-park} for a proof). We also note that the 2-form
$d\lambda$ induces nondegenerate (fiberwise) bilinear 2-forms on $G$ and $E$ which we denote by $\omega_G$ and
$\omega_E$. Now we denote by $\CJ_{G,E}(\lambda;Q)$ the subset of $\CJ(\lambda;Q)$ consisting of $J \in \CJ(\lambda;Q)$
that satisfy \eqref{eq:split1}, \eqref{eq:split2}. Then $\CJ(\lambda;Q)$ forms a fibration
$$
\CJ(\lambda;Q) = \bigcup_{G,E}\CJ_{G,E}(\lambda;Q).
$$
Therefore it is enough to prove that $\CJ_{G,E}(\lambda;Q)$ is contractible for each fixed $G, \, E$.

We denote each $J: TM \to TM$ as a block $4 \times 4$ matrix in terms of the splitting
$$
TM = T\CF \oplus G \oplus JT\CF \oplus E.
$$
Then one can easily check that the $Q$-adaptedness of $J$ implies $J$ must have the form
$$
\left(\begin{matrix} 0 & 0 & Id & 0 \\
0 & J_G & 0 & 0 \\
-Id & 0 & 0 & 0 \\
0 & B & 0 & J_E
\end{matrix} \right)
$$
where $J_G:G \to G$ is $\omega_G$-compatible and $J_E:E \to E$ is $\omega_E$-compatible, and $B$ satisfies the relation
$
BJ_G = 0
$
which in turn implies $B = 0$.
Since each set of such $J_G$'s or of such $J_E$'s is contractible,
it follows that $\CJ_{G,E}(\lambda;Q)$ is contractible.
This finishes the proof of contractibility of $\CJ(\lambda;Q)$.

\section{Proof of Theorem \ref{thm:subsequence}}
\label{appendix:subseqproof}

In this appendix, we provide the proof of Theorem \ref{thm:subsequence} borrowing
the exposition from \cite{oh-wang2}.

For a given contact instanton $w: [0, \infty)\times S^1\to M$, we define  maps
$w_s: [-s, \infty) \times S^1 \to M$ by
$
w_s(\tau, t) = w(\tau + s, t)$.
For any compact set $K\subset \R$, there exists sufficiently large $s_0$ such that for every $s\geq s_0$,
$K\subset [-s, \infty)$. For such $s\geq s_0$, we also get an $[s_0, \infty)$-family of maps by defining $w^K_s:=w_s|_{K\times S^1}:K\times S^1\to M$.

The asymptotic behavior of $w$ at infinity can be understood by studying the limit of the sequence of maps
$\{w^K_s:K\times S^1\to M\}_{s\in [s_0, \infty)}$, for any compact set $K\subset \R$.

First of all,
it is easy to check that under Hypothesis \ref{hypo:basic}, the family
$\{w^K_s:K\times S^1\to M\}_{s\in [s_0, \infty)}$ satisfies the following
\begin{enumerate}
\item $\delbar^\pi w^K_s=0$, $d((w^K_s)^*\lambda\circ j)=0$, for every $s\in [s_0, \infty)$
\item $\lim_{s\to \infty}\|d^\pi w^K_s\|_{L^2(K\times S^1)}=0$
\item $\|d w^K_s\|_{C^0(K\times S^1)}\leq \|d w\|_{C^0([0, \infty)\times S^1)}<\infty$.
\end{enumerate}

From (1) and (3) together with the compactness of the target manifold $M$ (which provides the uniform $L^2(K\times S^1)$ bound)
and the coercive estimate for contact instanton equation derived in \cite[Theorem 5.7]{oh-wang2}, we obtain
$$
\|w^K_s\|_{W^{3,2}(K\times S^1)}\leq C_{K;(3,2)}<\infty,
$$
for some constant $C_{K;(3,2)}$ independent of $s$.
Then it follows from the compactness of the embedding of $W^{3,2}(K\times S^1)$ into $C^2(K\times S^1)$ that the
set $\{w^K_s:K\times S^1\to M\}_{s\in [s_0, \infty)}$ is sequentially compact.
Therefore, for any sequence $s_k \to \infty$, there exists a subsequence, still denoted by $s_k$,
that converges to a map $w^K_\infty\in C^2(K\times S^1, M)$ in $C^2(K\times S^1, M)$ as $k\to \infty$.

Combined with (2), we derive the convergence
$$
dw^K_{s_k}\to dw^K_{\infty} \quad \text{and} \quad dw^K_\infty=(w^K_\infty)^*\lambda\otimes X_\lambda.
$$
Finally by taking (1) into consideration,
we also derive that both $(w^K_\infty)^*\lambda$ and $(w^K_\infty)^*\lambda\circ j$ are harmonic $1$-forms.

Recall that these limiting maps $w^K_\infty$ have common extension $w_\infty: \R\times S^1\to M$
by the nature of the diagonal argument which takes a sequence of compact sets $K$
in the way one including another and exhausting full $\R$ as $k \to \infty$.
Then $w_\infty$ is $C^2$ (actually $C^\infty$) and satisfies
$$
\|d w_\infty\|_{C^0(\R\times S^1)}\leq \|d w\|_{C^0([0, \infty)\times S^1)}<\infty
$$
and
$
dw_\infty=(w_\infty)^*\lambda \otimes X_\lambda.
$
We also note that both $(w_\infty)^*\lambda$ and $(w_\infty)^*\lambda\circ j$
are bounded harmonic one-forms on $\R\times S^1$.
Therefore they must be written into the forms
$$
(w_\infty)^*\lambda=a\,d\tau+b\,dt, \quad (w_\infty)^*\lambda\circ j=b\,d\tau-a\,dt,
$$
where $a$, $b$ are some constants.
Now we show that such $a$ and $b$ are actually related to $\CT$ and $\CQ$ as
follows
\begin{lem}
$$
a=-\CQ, \quad b= \CT.
$$
\end{lem}
\begin{proof} Take an arbitrary point $r\in K$. Using the $C^2$-convergence of some sequence $w_{s_k}|_{\{r\}\times S^1}$
to $w_\infty|_{\{r\}\times S^1}$, we derive
\beastar
b=\int_{\{r\}\times S^1}(w_\infty|_{\{r\}\times S^1})^*\lambda
&=&\int_{\{r\}\times S^1}\lim_{k\to \infty}(w_{s_k}|_{\{r\}\times S^1})^*\lambda\\
&=&\lim_{k\to \infty}\int_{\{r\}\times S^1}(w_{s_k}|_{\{r\}\times S^1})^*\lambda\\
&=&\lim_{k\to \infty}\int_{\{r+s_k\}\times S^1}(w|_{\{r+s_k\}\times S^1})^*\lambda.
\eeastar
On the other hand, recalling $w^*d\lambda = \frac{1}{2} |d^\pi w|^2)$ and applying Stokes'
formula and finiteness of the $\pi$-energy on $[0,\infty) \times S^1$, the latter becomes
$$
\lim_{k\to \infty}(T-\frac{1}{2}\int_{[r+s_k, \infty)\times S^1}|d^\pi w|^2)
=\CT-\lim_{k\to \infty}\frac{1}{2}\int_{[r+s_k, \infty)\times S^1}|d^\pi w|^2
=\CT
$$
whjch proves $a = \CT$.
On the other hand, using the closeness of $w^*\lambda \circ j$
and Stokes' formula, we easily compute
\beastar
-a=\int_{\{r\}\times S^1}(w_\infty|_{\{r\}\times S^1})^*\lambda\circ j
&=&\int_{\{r\}\times S^1}\lim_{k\to \infty}(w_{s_k}|_{\{r\}\times S^1})^*\lambda\circ j\\
&=&\lim_{k\to \infty}\int_{\{r\}\times S^1}(w_{s_k}|_{\{r\}\times S^1})^*\lambda\circ j\\
&=&\lim_{k\to \infty}\int_{\{r+s_k\}\times S^1}(w|_{\{r+s_k\}\times S^1})^*\lambda\circ j
=\CQ.
\eeastar
Here in our derivation, we used Remark \ref{rem:TQ}. This proves the lemma.
\end{proof}

By the connectedness of $[0,\infty) \times S^1$, the image of $w_\infty$ is contained in
a single leaf of the Reeb foliation. If $\gamma: \R \to M$ is a parametrization of
the leaf so that it satisfies $\dot \gamma = X_\lambda(\gamma)$,
then we can write $w_\infty(\tau, t)=\gamma(s(\tau, t))$, where
$s:\R\times S^1\to \R$ and $s=-\CQ\,\tau+\CT\,t+c_0$ since $ds=-\CQ\,d\tau+\CT\,dt$, where $c_0$ is some constant.

This implies that if $\CT\neq 0$
$\gamma$ defines a closed Reeb orbit of period $T$. On the other hand,
If $T=0$ but $\CQ\neq 0$, we can only conclude that $\gamma$ is some Reeb
trajectory parameterized by $\tau\in \R$.

\begin{rem} Of course, if both $\CT$ and $\CQ$ vanish, $w_\infty$ is a constant map.
In \cite{oh:energy}, it is shown that such a puncture is a removable singularity
under the finiteness of a suitably defined Hofer-type energy.
\end{rem}

\section{Sobolev's inequality for the sections of $\CE_1 \to \R$}
\label{sec:Sobolev}

In this section, we give the proof of \eqref{eq:Sobolev} for the sections of
the bundle $\CE_1 \to \R$ whose fiber is a Hilbert space possibly with infinite dimension.

As in the main text, we assume $\CE_2 \subset \CE_1$ a pair of Hilbert bundles that satisfies
all the properties and  is equipped with a compatible connection $\nabla$.
We denote
by $\Pi_s^\tau$ the parallel transport from the fiber $\CE_s$ to $\CE_\tau$.

\begin{prop}\label{prop:Sobolev} Let $I \subset \R$ be a closed
interval and $\zeta: I \to \CE_2$ be a smooth section.
Then there exists $C_3 = C_3(I) > 0$ depending only on the length $|I|$ of the interval
but independent of $\zeta$ such that
$$
\|\zeta(\tau)\|_{L^\infty(I,\CE_1)} \leq C_3 \|\zeta\|_{W^{1,2}(I,\CE_1)}.
$$
\end{prop}
\begin{proof}
Thanks to \eqref{eq:L2|zeta|}, there must be a point
$\tau_0 \in I_k$ such that
\be\label{eq:suptau0}
|\zeta(\tau_0)|_{\CE_1,\tau} \leq \frac{1}{\sqrt{|I|}}\|\zeta\|_{L^2(I,\CE_1)}
\ee
where $|I|$ is the length of the interval $I$.
Then for any $\tau \in I$, we write
$$
\zeta(\tau) - \Pi_{\tau_0}^\tau \zeta(\tau_0) = \int_{\tau_0}^\tau \Pi_{s}^\tau \nabla_s \zeta(s) \, ds
$$
Therefore we obtain
$$
|\zeta(\tau)|_{\CE_1,\tau} \leq |\zeta(\tau_0)|_{\CE_1,\tau_0} + \int_{\tau_0}^\tau |\nabla_s \zeta(s)|_{\CE_1,s} \, ds.
$$
Applying the H\"older's inequality, we derive
\beastar
\int_{\tau_0}^\tau |\nabla_s \zeta(s)|_{\CE_1,\tau} \, ds
&\leq & \sqrt{|I|} \sqrt{\int_{\tau_0}^\tau  |\nabla_s \zeta(s)|_{\CE_1,\tau}^2 \, ds}\\
&\leq & \sqrt{|I|}  \sqrt{\int_I |\nabla_s \zeta(s)|_{\CE_1,\tau}^2 \, ds}
 \leq  \sqrt{|I|}\,  \|\nabla_\tau \zeta\|_{L^2(I,\CE_1)}
\eeastar
since $\tau_0,\, \tau \in I$.
Combining the two, we have obtained
$$
|\zeta(\tau)|_{\CE_1,\tau} \leq \frac{1}{\sqrt{|I|}}\,\|\zeta\|_{L^2(I,\CE_1)} +
\sqrt{|I|}\, \|\nabla_\tau \zeta\|_{L^2(I,\CE_1)}
$$
for all $\tau \in I$.
By setting $C_3 = 2 \max\{\sqrt{|I|}, \frac{1}{\sqrt{|I|}}\}$, we have finished the proof.
\end{proof}

\bigskip

\textbf{Acknowledgements:}
Rui Wang would like to thank Erkao Bao and Ke Zhu for useful discussions.
Both authors are extremely grateful to the anonymous referee, who read the paper
carefully, pointed out many typos and errors and provided valuable
suggestions to improve the presentation of this paper.

\end{document}